\theoremstyle{plain}
\newtheorem{lema}{Lemma}
\newtheorem{prop}[lema]{Proposition}
\newtheorem{teo}[lema]{Theorem}
\newtheorem{conj}[lema]{Conjecture}
\newtheorem{ques}[lema]{Question}
\newtheorem{coro}[lema]{Corollary}
\theoremstyle{remark}
\newtheorem{obs}[lema]{Remark}
\theoremstyle{definition}
\newtheorem{defi}[lema]{Definition}
\newtheorem{ej}[lema]{Example}
\newcommand{\Z}{\mathbb{Z}}
\newcommand{\R}{\mathbb{R}}
\newcommand{\w}{\mathrm{w}}
\newcommand{\PP}{\mathcal{P}}
\newcommand{\Q}{\mathcal{Q}}
\newcommand{\F}{\mathbb{F}_2}
\newcommand{\FF}{\F'}
\newcommand{\FFF}{\F''}
\newcommand{\M}{\mathbb{M}_2}
\newcommand{\W}{\mathrm{W}}
\newcommand{\g}{\textrm{Cl}}
\newcommand{\lax}{\textrm{Cl}_{\M}}
\newcommand{\Sq}{\textrm{Sq}}
\newcommand{\ZZ}{\Z[X^{\pm 1},Y^{\pm 1}]}
\newcommand{\vv}{\mathrm{v}}
\begin{document}

\title[The winding invariant]{The winding invariant}

\author[J.A. Barmak]{Jonathan Ariel Barmak $^{\dagger}$}

\thanks{$^{\dagger}$ Researcher of CONICET. Partially supported by grant UBACyT 20020160100081BA}

\address{Universidad de Buenos Aires. Facultad de Ciencias Exactas y Naturales. Departamento de Matem\'atica. Buenos Aires, Argentina.}

\address{CONICET-Universidad de Buenos Aires. Instituto de Investigaciones Matem\'aticas Luis A. Santal\'o (IMAS). Buenos Aires, Argentina. }

\email{jbarmak@dm.uba.ar}

\begin{abstract}
Every element $w$ in the commutator subgroup of the free group $\F$ of rank 2 determines a closed curve in the grid $\Z\times \R \cup \R \times \Z\subseteq \R^2$. The winding numbers of this curve around the centers of the squares in the grid are the coefficients of a Laurent polynomial $P_w$ in two variables. This basic definition is related to well-known ideas in combinatorial group theory. We use this invariant to study equations over $\F$ and over the free metabelian group of rank $2$. We give a number of applications of algebraic, geometric and combinatorial flavor.  
\end{abstract}

\subjclass[2010]{20F70, 57M07, 20F14, 57M05, 20J05}

\keywords{Equations in free groups, metabelian groups, commutators, product of powers, relation modules, Andrews-Curtis conjecture, residual properties.}

\maketitle

\footnotesize
\tableofcontents
\normalsize

\section{Introduction}

We introduce an invariant which associates a Laurent polynomial in two variables with integer coefficients to every element $w$ in the commutator subgroup $\FF$ of the free group in two generators $x,y$. The \textit{winding invariant} $\W:\FF \to \ZZ$ is in fact a group homomorphism. Given a word $w\in \F$, there is a polygonal curve $\gamma_w$ in the square grid $\Z\times \R \cup \R \times \Z$, which is closed when $w\in \FF$. This curve starts at the origin $(0,0)$ and it is the path induced by $w$ in the Cayley graph $\Gamma(\Z\times \Z, \{x,y\})=\Z\times \R \cup \R \times \Z$. The coefficients of the polynomial $P_w=\W(w)=\sum\limits_{i,j} a_{i,j}X^iY^j$ are the winding numbers $a_{i,j}=\w(\gamma_w, c_{i,j})$ of the curve around the centers of the squares in the grid. Alternatively the winding invariant can be seen as the quotient $N\to N / N'$ where $N$ is the normal closure of $[x,y]$ in $\F$ and $N/N'$ is the relation module of the presentation $\langle x,y | [x,y] \rangle$. The definition of $\W$ is elementary and there exist several articles in the literature which deal with it in some way. Most of them use the winding invariant as an algebraic non-geometric tool \cite{BM6, BNNN, BaMe}. In contrast, some others use geometric ideas but do not exploit algebraic properties. For example, the family of coefficients of $P_w$ appears in an article of Conway and Lagarias \cite[Section 5]{CL}, but the authors do not use the ring structure of $\ZZ$. If $w, w'\in \FF$ are conjugate in $\F$, then $P_w=X^iY^jP_{w'}$ for some $i,j\in \Z$. In \cite{Sar}, Sarkar considers a map $S:\FF \to \ZZ$ without appealing to winding numbers. The coefficient of $X^iY^j$ in $S(w)$ is the number of times $\gamma_w$ travels through the edge $(i,j)$, $(i+1,j)$, taking into account orientations. Sarkar's invariant $S$ is in fact the Magnus embedding $\F' / \F '' \to \Z[\F / \F ']\oplus \Z[\F / \F ']$ composed with the projection onto the first coordinate (that is, the Fox derivative with respect to the generator $x$).  In the particular case that $\gamma_w$ is simple, positively-oriented and the boundary of a convex polygon, $P_w$ is known as the characteristic/generating function of the polygon \cite{Bri}.

In this article we will present a number of results in combinatorial group theory which can be obtained as an application of the winding invariant. Most of the results that we exhibit here are original, but some others are known and we only provide alternative proofs using our ideas. We will illustrate the strength of this tool with different applications which appear when we use the particular geometric description of $\W$. Since the results in this paper lie within several areas of group theory and algebraic topology, we have tried to give a self contained presentation of the subject.



We will use the winding invariant to study equations over $\F$. For instance we will study properties of elements of $\FF$ which are products of $k$-th powers. A result by Lyndon and Morris Newman \cite{LN} states that $[x,y]$ is not a product of two squares in $\F$. It is known now that there is an algorithm which decides for an element $w\in \F$ which is a product $a_1^2a_2^2\ldots a_n^2$ of squares, what is the minimum number $n$ in such an expression. This is the \textit{square length} of $w$ \cite{Cul, Edm, Edm2, GT}. The \textit{commutator length} of an element $w\in \F'$ is the minimum number of commutators whose product is $w$. This is also algorithmically computable, and in fact this generalizes to all quadratic equations (see \cite{Edm, Edm2} and \cite{CE0, KHAR}). The stable version of the commutator length has received a lot of attention in the last decade \cite{BBF,Cal,CF} because of its connections with Dynamics and two-dimensional bounded cohomology. We will give a three-line proof of Lyndon-Newman's result using the winding invariant and we will state a generalization in Proposition \ref{lngeneral}, providing a necessary condition for an element $w\in \F$ to be a product of two $k$-th powers. This is related to the problem of deciding whether the Burnside group $B(2,5)$ is finite. Kostrikin proved \cite{Kos} that if the sixth Engel word $e_6=[y,[y,[y,[y,[y,[y,x]]]]]]$ is not a product of fifth powers in $\F$, then $B(2,5)$ is infinite. We will see in Corollary \ref{coroengel2} that no Engel word is a product of two $k$-th powers for $k\ge 2$. It is known that $e_5$ is a product of fourth powers, but we will prove in Corollary \ref{etres} that $e_3$ is not. This could be proved alternatively using coset enumeration for some presentation of $B(2,4)$ (see \cite{Hav4, New}), but our computations are simpler. Using the Cayley graph of $B(2,3)$ it is not hard to check for a given element in $\F$ if it is a product of cubes. In Theorem \ref{caracterizacioncubos} we will use the winding invariant to obtain a different characterization of the products of cubes.  For $k\ge 4$ we provide necessary conditions for an element in $\F$ to be a product of $k$-th powers (Theorem \ref{npowers}, Corollary \ref{npowersf} and Proposition \ref{color}). Another result by Lyndon and Newman in \cite{LN} says that in the free group $F(x_1,x_2,\ldots,x_n)$ freely generated by $x_1,x_2,\ldots, x_n$, the element $x_1^2x_2^2\ldots x_n^2$ is not a product of fewer than $n$ squares. In Theorem \ref{lnnuevo} we generalize the case $n=3$ by showing that for $n\ge 3$ the equation $x_1^mx_2^m\ldots x_n^m=a^kb^k$ has no solution $a,b\in F(x_1,x_2,\ldots ,x_n)$ if $m\ge 1$ and $k\ge 2$.

Since the kernel of the winding invariant $W:\F' \to \ZZ$ is $\F''$, we can use it to study problems concerning the free metabelian group $\M=\F/\F''$ of rank $2$. Recall that a group is termed metabelian if its derived subgroup is abelian. Philip Hall noticed \cite{PHal} that if $G$ is metabelian, then $G'$ has a structure of $\Z[G/G']$-module, and we can use commutative algebra to study $G'$. In our case, $\M'$ is a free $\Z[\Z \times \Z]$-module of rank $1$ and the isomorphism $\M' \to \ZZ$ is given by the winding invariant. On the other hand, $\M'$ is the relation module of the presentation $\langle x,y | [x,y]\rangle$ of $\Z \times \Z$, and the relation module $N(R)/N(R)'$ of a presentation $\langle X| R\rangle$ of a group $G$ always has a structure of $\Z[G]$-module. Laurent polynomials have been used before to study metabelian groups \cite{BM6, BaMe}. For instance Bavard and Meigniez \cite{BaMe} proved that the commutator length of $\M$ is $2$ (the maximum of the commutator lengths of elements in $\M'$). The computation of the square length of $\M$ seems to be a more difficult problem and we will prove in Theorem \ref{mainsquarel} that this number is $4$ or $5$. This contrasts with the case of finite simple groups in which it is known that every element is a product of two squares \cite{GuMa, LOST}, and the case of non-cyclic free groups, which contain elements of arbitrarily large square length. Theorem \ref{mainsquarel} is one of the results which best represent the ideas exposed in this paper. We don't know whether $e_6$ is a product of fifth powers in $\F$, but this holds in $\M$. In fact, for every positive prime $p$ and every $n>p$, $e_n\in \M$ is a product of $p$-th powers (Proposition \ref{ppowers}). 

Square length and commutator length are just two examples of the notion of $u$-length. If $u$ is an element in the free group $F(x_1,x_2, \ldots, x_n)$ and $G$ is any group, there is a \textit{word map} $G^n\to G$ which maps $(g_1,g_2,\ldots ,g_n)$ to $u(g_1,g_2,\ldots ,g_n)$. The elements in the image of the word map and their inverses are the \textit{$u$-elements}. The \textit{verbal subgroup} $u(G)$ is the subgroup of $G$ generated by the $u$-elements and the \textit{$u$-length} of an element $g\in u(G)$ is the smallest $m$ such that $g$ can be written as a product of $m$ $u$-elements. The \textit{$u$-width} (or \textit{$u$-length}) of $u(G)$ is the supremum of the $u$-lengths of elements in $u(G)$. If $G$ is a finite simple group and $u(G)\neq 1$, then $u(G)=G$ and in fact there is a universal bound for the $u$-width of $G$ for all such finite simple groups \cite[Theorem 1.6]{LiSh}. The problem of finding a constant $c$ such that the $u$-width of $u(G)$ is smaller than or equal to $c$ for a family of groups $G$ is known as a Waring type problem for its resemblance with the classical problem of the same name. For $u=[x,y]$, the $u$-width of every non-abelian finite simple group $G$ is $1$, that is, every element of $G$ is a commutator. This is the statement of the Ore conjecture \cite{Ore} whose proof was finally completed in 2010 \cite{LOST}. On the other hand there are finitely generated infinite simple groups with infinite commutator length and infinite square length \cite{Mur}. Products of two $k$-th powers have also been studied in the context of finite simple groups \cite{GLOST}. For more references on the theory of word maps and Waring type problems see \cite{Sha2}. Shalev proposed in \cite[Conjecture 3.5]{Sha2} a generalization of the Ore conjecture: for every Engel word $e_n$ and every finite simple group $G$, the word map $e_n:G^2\to G$ is surjective. In Section \ref{secverbal} we study $e_n$-lengths of elements in $\M$ and we prove in Theorem \ref{nge3} that the verbal subgroup $e_n(\M)$ coincides with the $n$-th term $\gamma_{n+1}(\M)$ of the lower central series for $n=1,2$, but they are different for $n\ge 3$.   

We will study a question of Sims who asked whether the normal subgroup generated by the basic commutators of weight $m$ in $\mathbb{F}_n$ is $\gamma_m(\mathbb{F}_n)$ \cite{Sim} (Theorem \ref{sims}). Baumslag and Mikhailov ask in \cite{BM2} whether all one-relator groups with two generators, whose relator is a basic commutator, are residually torsion-free nilpotent. We prove in Theorem \ref{teobm} that this holds modulo $\F''$.

We have mentioned that the commutator length is algorithmically computable in $\F$. The first result in this direction, due to Wicks \cite{Wic}, was a characterization of those elements with commutator length 1, i.e. commutators.

\begin{teo}(Wicks) \label{wicks}
An element $w\in \F'$ is a commutator $[u,v]$ if and only if there is a cyclically reduced conjugate of $w$ which coincides with a cyclically reduced word of the form $ABCA^{-1}B^{-1}C^{-1}$.
\end{teo}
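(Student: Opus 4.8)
The ``if'' direction is immediate: for arbitrary words $A,B,C$ one has the identity $ABCA^{-1}B^{-1}C^{-1}=[AB,CA^{-1}]$ (expand the right-hand side and cancel the adjacent $A$ and $A^{-1}$ in the middle), and if one of $A,B,C$ is empty the word is directly $[A,B]$, $[B,C]$ or $[A,C]$. So a word of that shape is literally a commutator, and since $g[p,q]g^{-1}=[gpg^{-1},gqg^{-1}]$, so is any element conjugate to it. The content of the theorem is the converse.

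Suppose then that $w=[u,v]$ with $u,v\in\F$. If $\langle u,v\rangle$ is cyclic then $u$ and $v$ commute and $w=1$, already of the required form with $A,B,C$ empty; so assume $\langle u,v\rangle$ is free of rank two. Replacing $w$ by a conjugate and $u,v$ by their conjugates by the same element, we may assume $w$ is cyclically reduced. The plan is to replace $(u,v)$ by an equivalent pair that is as simple as possible and then to read the Wicks form off the cancellations that turn $uvu^{-1}v^{-1}$ into $w$. I would use the moves $(u,v)\mapsto(uv^{\pm1},v)$, $(u,v)\mapsto(u,vu^{\pm1})$, $(u,v)\mapsto(v,u)$ and the inversions $(u,v)\mapsto(u^{-1},v)$, $(u,v)\mapsto(u,v^{-1})$: a one-line expansion shows each changes $[u,v]$ only by conjugation or inversion, and they act (via Nielsen transformations) on the set of ordered bases of $\langle u,v\rangle$. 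Among all pairs with $[u,v]$ conjugate to $w^{\pm1}$, fix one minimizing $|u|+|v|$.

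For such a minimal pair I would analyse the free reduction of the cyclic word $u\,v\,u^{-1}\,v^{-1}$ to $w$. Every letter of $w$ survives from exactly one of the four blocks, and the cancelled letters are matched across the four junctions $u|v$, $v|u^{-1}$, $u^{-1}|v^{-1}$, $v^{-1}|u$; hence each of $u,v$ splits as (eaten prefix)(surviving infix)(eaten suffix), with the eaten pieces at the four junctions prescribed, up to inversion, by portions of $u$ and $v$. The role of minimality is that these pieces are short and cannot interleave badly: if a cancellation runs past the midpoint of $u$ or $v$, or two cancellations overlap ``too much'', one of the moves above yields a strictly shorter pair. Discarding those configurations (and the degenerate cases where $u$ or $v$ cancels entirely) leaves a single admissible pattern, and performing it exhibits $w$ --- up to the cyclic rotation coming from the choice of basepoint on the cyclic word --- as a concatenation $P\,Q\,R\,P^{-1}\,Q^{-1}\,R^{-1}$ of six blocks built from the surviving infixes; this subdivision is visibly cyclically reduced, which is the assertion.

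The main obstacle is this bookkeeping: one must enumerate how the cancelled pieces at adjacent junctions can nest, abut or overrun one another and check each time that minimality of $(u,v)$ is violated unless we are in the favourable pattern --- that case analysis is essentially the whole theorem. The organizing picture is topological: a solution of $[u,v]=w$ is the same datum (up to homotopy) as a map $f$ from a once-punctured torus $\Sigma$ to the wedge of circles $\bigvee S^1$ carrying $\partial\Sigma$ to the conjugacy class of $w$; making $f$ \emph{taut} --- transverse to the midpoints of the circles with no excess preimage arcs --- puts it in normal form with respect to a minimal arc system on $\Sigma$ dual to a trivalent spine, which for $\Sigma$ is a theta graph with two vertices and three edges. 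Those edges go to reduced words $A,B,C$ of $\F$, the boundary wraps around the spine reading the hexagonal word $ABCA^{-1}B^{-1}C^{-1}$, and tautness says exactly that no cancellation occurs between the six blocks. I would carry out the combinatorial argument but use this picture to anticipate the form and to decide which cancellation patterns to exclude.
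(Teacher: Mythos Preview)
The paper does not prove this theorem: it is stated in the introduction as Wicks' result with reference \cite{Wic} and is used throughout as a known tool, but no proof is offered. So there is nothing in the paper to compare your attempt to.

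On its own merits, your sketch identifies the right ingredients. The identity $ABCA^{-1}B^{-1}C^{-1}=[AB,CA^{-1}]$ is correct and settles the easy direction, and your outline of the hard direction --- minimize $|u|+|v|$ over pairs with $[u,v]$ conjugate to $w^{\pm1}$ via Nielsen moves, then classify the cancellation pattern in $uvu^{-1}v^{-1}$ --- is exactly Wicks' strategy; the topological picture via a taut map of a once-punctured torus to a wedge of circles is Culler's reformulation \cite{Cul}. So the plan is sound.

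That said, the proposal is not yet a proof: you explicitly say the case analysis ``is essentially the whole theorem'' and then do not carry it out. The specific claims that need justification are: (i) in a minimal pair, the cancellation at each of the four junctions is at most half the shorter word, so no block is consumed; (ii) the four cancelled pieces fit together to force the single hexagonal pattern; and (iii) the resulting six-block word is cyclically reduced. None of these is automatic --- for instance, one must rule out the possibility that the cancellation at $u|v$ and at $v^{-1}|u$ together eat all of $u$, which is where the move $(u,v)\mapsto(u,vu^{\pm1})$ comes in. If you want a complete argument, either follow Wicks' original enumeration or, more cleanly, use Culler's topological version: prove that a taut map exists (by excising disks bounding inessential arcs), show the dual spine of $\Sigma_{1,1}$ is a theta graph, and read off $A,B,C$ from the three edges. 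The tautness condition is precisely ``no cancellation between consecutive blocks'', which is the cyclically reduced assertion.
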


In Section \ref{secparalelogramos} we use the winding invariant to study commutators from a different perspective. A commutator $[u,v]$ in $\F$ determines a parallelogram in $\R^2$ and this is used to prove that the coefficients of $P_{[u,v]}$ can be partitioned in classes, the sum of the members in each class is a constant number $\iota \in \{-1,0,1\}$. In Proposition \ref{propce} we give an alternative proof of a result of Comerford and Edmunds that there exists a commutator $[u,v]$ which is a product of two squares but it is not of the form $[a^2,b]$. We also use the ideas of this section to give a short proof to a theorem of Baumslag-Neumann-Neumann-Neumann which says that if $w,u\in \F$ are independent modulo $\F'$, then the equation $[w,u]=z^k$ has no solution in $\M$ for $k\ge 2$ (Proposition \ref{propbnnn}). We also give a short proof of a result by Baumslag and Mahler which is the analogue of the Vaught conjecture for $\M$ (Proposition \ref{propbm}). The methods of this section can be used to study commutators in $\M$. Commutators in this group have been also studied in \cite[Théorème 2]{BaMe}.


In Section \ref{sectionac} we use the winding invariant to study $Q^*$-equivalence of presentations. Our article \cite{Barac} was motivated by this section. We prove there that there exist presentations which are not $Q^*$-equivalent but which have simple homotopy equivalent standard complexes, thus disproving a strong version of the Andrews-Curtis conjecture. The key ingredient here is the existence of a matrix $M\in GL_2(\ZZ)$ which is not in $GE_2(\ZZ)$. Bachmuth and Mochizuki proved in \cite{BM} that $\ZZ$ is not generalized Euclidean, and Evans was the first to give an example of a concrete matrix in \cite{Eva}. The existence of infinitely many presentations which are simple homotopy equivalent but pairwise $Q$-inequivalent is discussed at the end of Section \ref{seccayley} in connection with the relation lifting problem. In Theorem \ref{qineq} we prove that $\M$ contains infinitely many pairs with the same normal closure and which are pairwise $Q$-inequivalent. This is similar to a result obtained by Myasnikov-Myasnikov-Shpilrain in \cite{MMS}. Inspired by \cite{MMS} we use the winding invariant to study the following open question: is $GL_2(\Z[X,X^{-1}])$ generated by elementary and diagonal matrices? If we replace $2$ by $3$, the answer is affirmative and proved by Suslin \cite{Sus}.

In Section \ref{sectionhow} we introduce more general versions of the winding invariant. If $\mathcal{P}=\langle X| R\rangle$ is a presentation of a group $G$ and $N(R)$ is the normal closure of $R$ in $F(X)$, the winding invariant associated to $\mathcal{P}$ is the abelianization map from $N(R)$ to the relation module $N(R)/N(R)'$. The original winding invariant corresponds to $\langle x,y| [x,y]\rangle$. If $\mathcal{P}$ is aspherical, the relation module is a free $\Z[G]$-module (See Appendix A). We use this together with the fact that one-relator presentations whose relator is not a proper power are aspherical \cite{Lyn}, to prove a generalization (Lemma \ref{Baum}) of a result by Baumslag, Miller and Troeger \cite{BMT} which says that in a free group, two non-commuting elements $u,v$ cannot satisfy that $vuv^{-1}u^{-2}$ is a conjugate of $u$ or $u^{-1}$. We use this to prove in Propositions \ref{ressol} and \ref{resnil} that certain one-relator groups are not residually solvable or not residually nilpotent. The connection of the winding invariant with the Magnus embedding is mentioned in Proposition \ref{embedding}.

In Section \ref{seccayley} we use variants of the winding invariant to study presentations other than $\langle x,y | [x,y]\rangle$ with planar Cayley graphs, and derive new results on equations over free groups.

Although this article has followed a different path, the original motivation for the definition of the winding invariant was a combinatorial problem about tilings of regions in the plane. The bisection problem is presented in Section \ref{secbisec}, together with applications to equations over $\F$ and other tiling problems.  
   
\section{Products of two $k$-th powers} \label{secuno}

In \cite{LN}, Lyndon and Morris Newman proved the following result.

\begin{prop} \label{ln}
The commutator $[x,y]=xyx^{-1}y^{-1}$ is not a product of two squares in $\F$, the free group of rank 2 generated by $x$ and $y$.
\end{prop}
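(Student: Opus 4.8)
The plan is to use the winding invariant $\W$ directly. Suppose for contradiction that $[x,y]=a^2b^2$ with $a,b\in\F$. Since $[x,y]\in\FF$, abelianizing shows $a^2b^2$ lies in $\FF$, hence $a$ and $b$ have equal images $v:=\bar a=-\bar b\in\Z^2$ in $\F/\FF=\Z^2$ (writing the abelianization additively); in particular $ab^{-1}\in\FF$, say $ab^{-1}=c$. The key move is to rewrite $a^2b^2$ as a product of two commutator-subgroup elements whose winding invariants we can control. Indeed $a^2b^2=(a b^{-1})(b^2 a b^{-2})(b\,a b^{-1}b^{-1})\cdots$ — more cleanly, set $w_1=ab\cdot a\cdot (ab)^{-1}\cdot b^{-1}\in\FF$? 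I would instead use the standard identity $a^2b^2=[a,b^{-1}]\,(ba)^2$ is not quite it either; the clean one is $a^2b^2 = a^2b^2$, and since $ab^{-1}=c\in\FF$ we get $a = cb$, so $a^2b^2=cbcbbb=c\cdot(bcb^{-1})\cdot b^4$? That is wrong as $b^4\notin\FF$ in general. So the honest reduction is: from $a^2b^2\in\FF$ we get $2\bar a+2\bar b=0$ in $\Z^2$, i.e. $\bar a=-\bar b$ (no $2$-torsion), hence $ab\in\FF$, and then $a^2b^2=(ab)(b^{-1}ab)=:pq$ with $p=ab\in\FF$ and $q=b^{-1}(ab)b\in\FF$ being a conjugate of $p$ by $b$.

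Now apply $\W$. Since $\W$ is a homomorphism $\FF\to\ZZ$, we get $P_{[x,y]}=\W(p)+\W(q)$. By the conjugation property stated in the introduction, $\W(q)=\W(b^{-1}pb)=X^{i}Y^{j}\,\W(p)$ where $(i,j)=-\bar b\in\Z^2$ is the exponent-sum vector of $b$. Also $P_{[x,y]}=1$ (the single square $c_{0,0}$ has winding number $1$; all others $0$), by a direct reading of the definition, or one can just note $\W([x,y])$ is a unit. Hence we obtain the polynomial equation
\begin{equation}\label{eq:plan}
(1+X^{i}Y^{j})\,Q = 1 \quad\text{in }\ZZ,
\end{equation}
where $Q=\W(p)$. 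Taking total degree, or better, specializing $X\mapsto 1,Y\mapsto 1$ (a ring homomorphism $\ZZ\to\Z$), gives $2\cdot Q(1,1)=1$, which is impossible over $\Z$. This is the three-line proof the introduction promises, and the same specialization argument will generalize to $k$-th powers, giving $(1+X^{i}Y^{j}+\cdots+X^{(k-1)i}Y^{(k-1)j})$ as the relevant factor and the congruence obstruction mod $k$ for Proposition \ref{lngeneral}.

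The main obstacle — really the only point needing care — is justifying that $a^2b^2$ can be written as a product of \emph{two} elements of $\FF$, each a conjugate of a single element $p\in\FF$, and tracking the conjugating element's exponent-sum vector. The subtlety is that $a,b$ individually need not lie in $\FF$, so $\W(a^2)$ is not even defined; one must first pass to $ab\in\FF$ and use that $a^2b^2=(ab)\cdot\big(b^{-1}(ab)b\big)$. If instead one prefers the symmetric grouping, note $a^2b^2=a(ab)(b^{-1})\cdot\!$ — any such bracketing works as long as each factor is checked to be in $\FF$ by summing exponent vectors to zero. Once that bookkeeping is done, equation \eqref{eq:plan} and the evaluation $X=Y=1$ finish the argument immediately; there is no hard estimate or case analysis.
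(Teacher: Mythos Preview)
Your approach is essentially identical to the paper's: both note that $ab\in\FF$, decompose $a^2b^2=(ab)\cdot b^{-1}(ab)b$, apply the winding invariant to get $(1+X^iY^j)P_{ab}=1$, and evaluate at $(1,1)$ to obtain the contradiction $2\mid 1$. One typo to fix: you wrote $a^2b^2=(ab)(b^{-1}ab)$, which equals $a^2b$, not $a^2b^2$; the correct factor is $q=b^{-1}(ab)b$, as you in fact state immediately afterward. The exploratory false starts (trying $ab^{-1}$, the identity with $[a,b^{-1}]$, etc.) should be cut---the one-line decomposition $a^2b^2=(ab)(b^{-1}(ab)b)$ is all that is needed.
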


The article \cite{LN} contains in fact two proofs of Proposition \ref{ln}. The first one, due to Newman, considers a representation $\F \to PSL(2, \Z)$ and then performs many computations with matrices. The second proof, due to Lyndon, works in a quotient of $\F$ where the elements have a canonical form. Both proofs require a lot of clever ideas and hard work.

In 1973 Wicks \cite{Wic2} extended his ideas from \cite{Wic} to classify the elements of $\F$ which are products of two squares. Independently in 1975 Edmunds obtained the same result \cite{Edm}. Culler obtained the same description in \cite[Remark 3.2 (2)]{Cul} using maps from surfaces. This can also be proved using results from \cite{GT}.

\begin{teo}[Edmunds \cite{Edm}, Culler \cite{Cul}, Wicks \cite{Wic2}] \label{culler}
A word $w\in \F$ is a product of two squares in $\F$ if and only if it has a cyclically reduced conjugate which can be obtained by non-cancelling substitution from one of the four words $AABB$, $ABA^{-1}B$, $ABCA^{-1}CB$, $AABCCB^{-1}$.
\end{teo}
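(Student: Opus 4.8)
The plan is to handle the two implications separately, the forward one being routine and the converse carrying all the weight. For ``$\Leftarrow$'': the property of being a product of two squares depends only on the conjugacy class of an element and is preserved by every group homomorphism $F(A,B,C)\to\F$, and a non-cancelling substitution is such a homomorphism; so it suffices to exhibit, for each of the four template words, a factorization into two squares, possibly after a cyclic rotation (which is a conjugation): $AABB=A^{2}B^{2}$; $AABCCB^{-1}=A^{2}(BCB^{-1})^{2}$; $ABA^{-1}B=(AB)^{2}(B^{-1}A^{-1}B)^{2}$; and, after rotating $ABCA^{-1}CB$ to $BABCA^{-1}C$, the identity $BABCA^{-1}C=(BA)^{2}(A^{-1}C)^{2}$. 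Each of these is a one-line free reduction, and plugging the substituted words into these identities and conjugating back proves ``$\Leftarrow$''.

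\emph{The implication ``$\Rightarrow$''.} Suppose $w\neq 1$ is a product of two squares; replacing $w$ by a cyclically reduced conjugate we may assume $w$ is cyclically reduced. Among all cyclic conjugates $w'$ of $w$ and all pairs $(u,v)$ such that $u^{2}v^{2}$ freely reduces to $w'$, I would fix one minimizing $|u|+|v|$ --- a minimal form in the spirit of Wicks. Minimality lets one assume $u$ and $v$ are cyclically reduced, so that $u^{2}$ and $v^{2}$ are reduced words, and it bounds the cancellation that occurs when these two blocks are concatenated and reduced to $w'$. Recording the arcs of the four copies $u,u,v,v$ that survive, together with the pairing of the letters that cancel, and then gluing each cancelled pair, one obtains a combinatorial map to the wedge of two circles from the genus-$2$ non-orientable surface with one boundary component --- the punctured Klein bottle, whose fundamental group is free on the two squares and whose boundary word is $u^{2}v^{2}$ --- and minimality forces this map to be reduced. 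An Euler characteristic computation ($\chi=-1$) together with the absence of removable features then bounds the number of edges of the underlying graph, so that only finitely many combinatorial types occur; reading the boundary label off each of them, with the surviving arcs absorbed into the parameters $A,B,C$, yields exactly the four templates, up to the symmetries of the picture (cyclic rotation and relabelling of generators).

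The step I expect to be the main obstacle is this final enumeration: one must be scrupulous about what ``minimal'' and ``reduced'' actually rule out, so that the list of admissible diagrams is genuinely finite and short, and then verify that none of them produces a fifth word. This bookkeeping is the substantive content of the arguments of Wicks \cite{Wic2}, Edmunds \cite{Edm} and Culler \cite{Cul}, the last phrasing the combinatorics as the analysis of a minimal map from a surface. I would also stress that the winding invariant of the present paper does not deliver this classification on its own: it supplies only necessary conditions for an element of $\F$ to be a product of two squares --- this is the mechanism behind the short proof of Proposition \ref{ln} and behind the obstructions of the following sections --- so for the full ``if and only if'' the diagram combinatorics cannot be circumvented.
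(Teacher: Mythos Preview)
The paper does not prove Theorem~\ref{culler}; it is quoted as a result of Edmunds, Culler, and Wicks, with a pointer to the references and the remark that it can also be obtained from Goldstein--Turner. There is therefore no ``paper's own proof'' to compare against.

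That said, your write-up is accurate in spirit. The $\Leftarrow$ direction is correct and your explicit factorizations all check (including the cyclic rotation for $ABCA^{-1}CB$). For $\Rightarrow$ you correctly identify the mechanism used in the cited sources: a minimal surface/diagram argument on the punctured Klein bottle, with the Euler characteristic bounding the combinatorics so that only a finite list of reduced pictures survives, whose boundary labels are exactly the four templates. You are also right that this enumeration is where the real work lies and that the winding invariant of the present paper cannot substitute for it: the paper itself makes exactly this point, using the winding invariant only to derive \emph{necessary} conditions (Proposition~\ref{lngeneral}, Corollary~\ref{npowers2}) and to give the short proof of Proposition~\ref{ln}, while deferring to \cite{Wic2,Edm,Cul,GT} for the full classification. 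One small caution: your minimality hypothesis needs to do a bit more than you say --- in the cited arguments one must also rule out degenerate substitutions (empty letters) and folding along the boundary, which is why the templates require \emph{non-cancelling} substitution with \emph{non-trivial} words; this is part of the bookkeeping you flag as the main obstacle.
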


A non-cancelling substitution is a substitution of each letter $A,B,C$ by reduced non-trivial words $\phi (A)$, $\phi(B)$, $\phi(C)\in \F$ such that the resulting word is reduced.

Culler \cite{Cul}, Edmunds \cite{Edm,Edm2} and Goldstein-Turner's \cite{GT} results give in fact algorithms to decide for every $k\ge 1$ if a given element of $\mathbb{F}_n$ is a product of $k$ squares or a product of $k$ commutators. For the case of products of two squares, an algorithm was previously known to exist by Schupp \cite{Sch2}. Schupp proves in fact that for any word $w\in \F$ and any element $g$ in a free group $F$, there is an algorithm which decides if the equation $w(a,b)=g$ has a solution $a,b\in F$ (see also \cite[Theorem 4.4]{Edm2}).

Proposition \ref{ln} trivially follows from Theorem \ref{culler}. A much simpler proof of Proposition \ref{ln} can be found in \cite{Sar}, in which Sarkar proves more generally the following.

\begin{prop}[Sarkar] \label{sarkar}
$[x^m,y^n]$ is a product of two squares in $\F$ if and only if $mn$ is even.
\end{prop}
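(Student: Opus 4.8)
The plan is to use the winding invariant $\W:\FF\to\ZZ$ to obtain a clean necessary condition, and then handle the converse by an explicit construction. First, note that if $mn$ is even, then one of $x^m$, $y^n$ is an even power, say $x^m=(x^{m/2})^2$; then $[x^m,y^n]=x^my^nx^{-m}y^{-n}$, and using that a conjugate of a square is a square together with the identity $aba^{-1}b^{-1}=(ab)^2(b^{-1}a^{-1}b^{-1}\cdot ab^{-1})\cdots$ one writes the commutator as a product of two squares. Concretely, for $a=x^m$ a square one has $[a,b]=a b a^{-1} b^{-1}=(ab)^2 \cdot (b^{-1} a^{-1} b^{-1})(b)^{-1}$... rather, the cleaner route: $aba^{-1}b^{-1}= (ab a b^{-1})\cdot(b a^{-1} b^{-1} b^{-1})$ is not obviously a product of squares, so instead I would invoke Theorem \ref{culler}: the word $ABA^{-1}B$ under the non-cancelling substitution $A\mapsto x^{m/2}$, $B\mapsto y^n$ (assuming $m$ even, $m\ge 2$) becomes $x^{m/2}y^n x^{-m/2}y^n$, which is \emph{not} $[x^m,y^n]$, so I need the substitution $A\mapsto x^{m/2}$, $B\mapsto y^n$ into $ABA^{-1}B$ composed with a conjugation — one checks directly that $(x^{m/2}y^n)^2 (y^{-n} x^{-m/2}y^{-n} x^{-m/2})$ equals (a conjugate of) $[x^m, y^n]$ when $m$ is even, and symmetrically when $n$ is even. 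So the ``if'' direction reduces to writing down one word identity and is routine.

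For the ``only if'' direction, suppose $mn$ is odd and suppose for contradiction that $[x^m,y^n]=a^2b^2$ with $a,b\in\F$. Since $[x^m,y^n]\in\FF$, applying $\W$ (which is a homomorphism with kernel $\FF'$, hence a fortiori vanishing on the relevant abelianized obstruction) does not immediately see squares; instead the right tool is to reduce mod $\FF$ and mod $2$. The key observation is that $\F/\FF \cong \Z^2$, so $a^2b^2$ maps to $2(\bar a+\bar b)\in\Z^2$, which is always in $2\Z^2$, whereas $[x^m,y^n]$ maps to $0$; that gives no contradiction, so one must go one level deeper. The correct level is the free metabelian group $\M=\F/\F''$, or more precisely the winding invariant itself. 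I would compute $P_{[x^m,y^n]}$ explicitly: the curve $\gamma_{[x^m,y^n]}$ is the boundary of the $m\times n$ rectangle traversed once, so $P_{[x^m,y^n]}=\sum_{0\le i<m,\,0\le j<n} X^iY^j=(1+X+\cdots+X^{m-1})(1+Y+\cdots+Y^{n-1})$. Now I would reduce the equation $[x^m,y^n]=a^2b^2$ modulo $\F''$ and evaluate: in $\M$, a product of two squares $a^2b^2$ has winding polynomial (after suitable conjugation/normalization) expressible in terms of $P_a,P_b$ and the images $\bar a,\bar b\in\Z^2$ via the formula $\W(a^2)=(1+\bar a)P_a$ where $\bar a$ acts as the corresponding monomial; more generally $\W(uv)=P_u+\bar u\cdot P_v$. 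The upshot is a congruence: modulo $2$ and modulo $(X-1,Y-1)$, the polynomial $P_{a^2b^2}$ lies in the ideal generated by $(X+1)$ and $(Y+1)$ in $\F[X^{\pm1},Y^{\pm1}]$, i.e. its image in $\F[\Z/2\times\Z/2]=\F$ is $0$. On the other hand $P_{[x^m,y^n]}$ evaluated at $X=Y=-1$ over $\F$ is $(\underbrace{1+1+\cdots}_{m})(\underbrace{1+1+\cdots}_{n})$... over $\F$ this is $(m\bmod 2)(n\bmod 2)=1$ since $m,n$ are odd. That is the contradiction.

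Let me restate the heart of the argument cleanly: the winding invariant gives a homomorphism $\M'=\F'/\F''\to\ZZ$, and post-composing with the ring map $\ZZ\to\F$ sending $X,Y\mapsto -1$ (i.e. $\Z[\Z^2]\to\F[\Z/2]$) yields a homomorphism $\nu$ from the verbal subgroup generated by squares, restricted to $\M'$, with the property that $\nu(a^2b^2)=0$ for all $a,b$ whose images lie in $\M'$ — and more care is needed when $a,b$ are \emph{not} in $\M'$, which is the main obstacle. The fix: replace $a,b$ by $a'=x^{-p}y^{-q}a$, $b'=x^{-r}y^{-s}b$ for suitable $p,q,r,s$ so that $a',b'\in\FF$; then $a^2b^2$ and $(a')^2(b')^2$ differ by conjugation and by an element recording the ``linear part'', and one tracks how $\nu$ changes. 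This bookkeeping — showing that regardless of the linear parts $\bar a=(p,q)$, $\bar b=(r,s)$ the value $\nu(a^2b^2)$ is forced into a set not containing $\nu([x^m,y^n])$ when $mn$ is odd — is where the real work lies, and it mirrors exactly the ``three-line proof of Lyndon--Newman'' promised in the introduction (the case $m=n=1$). I expect the final computation to come down to: $\nu(a^2b^2) \in \{0\}$ or a single nonzero coset value depending only on the parities of $\bar a,\bar b$, and $(X-1)(Y-1)$-type divisibility of $P_{[x^m,y^n]} - P_{a^2b^2}$ eliminating the odd case.
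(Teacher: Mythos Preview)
Your proposal has a genuine gap in both directions, and in each case the missing ingredient is a single clean identity.

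\textbf{The ``if'' direction.} You cycle through several candidate identities and an appeal to Theorem~\ref{culler}, none of which you bring to a conclusion. The one-line fact you want is: if $2\mid m$ then
\[
[x^m,y^n]=(x^{m/2})^2\,(y^nx^{-m/2}y^{-n})^2,
\]
and symmetrically if $2\mid n$. This is exactly what the paper uses (see the corollary following Corollary~\ref{npowers2}).

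\textbf{The ``only if'' direction.} Your formulae $\W(a^2)=(1+\bar a)P_a$ and $\W(uv)=P_u+\bar u\cdot P_v$ are ill-posed: $P_a$ is only defined for $a\in\FF$, and in that case $\bar a$ is trivial, so the formulae collapse. You correctly identify this as ``the main obstacle'' and propose to subtract off linear parts and track the resulting bookkeeping, but you do not carry it out.

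The trick that dissolves the obstacle is the factorization
\[
a^2b^2=(ab)\cdot b^{-1}(ab)b.
\]
Since $a^2b^2\in\FF$ forces $ab\in\FF$, Proposition~\ref{basica} applies directly and gives $P_{a^2b^2}=(1+X^kY^l)P_{ab}$ with $(k,l)=\vv_{b^{-1}}$. Now evaluate at $(1,1)$: the left side is $P_{[x^m,y^n]}(1,1)=mn$, the right side is $2P_{ab}(1,1)$, so $mn$ is even. Your proposed evaluation at $X=Y=-1$ modulo $2$ would also work \emph{after} this factorization, but it is more complicated than simply reading off the signed area, and it cannot be reached by the linear-part bookkeeping you sketch. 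The whole argument is the three lines in the paper's proof of Proposition~\ref{ln}, with $P_{[x,y]}=1$ replaced by $P_{[x^m,y^n]}(1,1)=mn$.
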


In order to prove Proposition \ref{sarkar}, Sarkar introduces an invariant $[\F,\F]=\FF\to \Z$ which maps the product of two squares to an even number but maps $[x,y]$ to $1$. Sarkar's invariant is in fact a particular case of the winding invariant. The winding invariant appears in an article of Conway and Lagarias \cite{CL} where it is used to prove that certain regions in the square grid $\Z\times \R \cup \R \times \Z$ cannot be tiled with some particular sets of tiles. We define now this invariant and in order to illustrate its simplicity and strength, we will give a self contained proof of Proposition \ref{ln} in a couple of lines and a generalization of Proposition \ref{sarkar}. 


\begin{defi} \label{definicion}
Let $w\in \F$. Then $w=x_1^{\epsilon_1}x_2^{\epsilon_2}\ldots x_l^{\epsilon_l}$ where $x_i\in \{x,y\}$ and $\epsilon_i \in \{1,-1\}$ for each $i$. The word $w$ determines a path $\gamma_{w}$ in $\R^2$ which begins in $(0,0)$ and is a concatenation of paths $\gamma_1, \gamma_2,\ldots ,\gamma_l$. The path $\gamma_i$ moves one unit parallel to the $x_i$-axis and with positive or negative direction depending on the sign $\epsilon_i$. The image of $\gamma_w$ is contained in the grid $\Z\times \R \cup \R \times \Z$. Note that the ending point of $\gamma_w$ is $(n,m)$ where $n$ is the total exponent of $x$ in $w$ and $m$ is the total exponent of $y$. Recall that $w\in \FF$ if and only if $n=m=0$. Suppose $w\in \FF$, so $\gamma_w$ finishes in $(0,0)$. For each $(i,j)\in \Z \times \Z$, let $a_{i,j}$ be the winding number $\w(\gamma_w, i+\frac{1}{2},j+\frac{1}{2})$ of $\gamma_w$ around the point $p=(i+\frac{1}{2},j+\frac{1}{2})$, that is the number of times the curve $\gamma_w$ travels counterclockwise around $p$. Define the \textit{winding invariant} $P_w\in \ZZ$ of $w$ to be the Laurent polynomial $P_w= \sum a_{i,j} X^iY^j$. The map $\W: \FF \to \ZZ$, $w\mapsto P_w$ is the \textit{winding invariant map}. 
\end{defi}

The curve $\gamma_w$ can also be interpreted in the following way. The word $w\in \F$ induces a cycle in the standard complex $K=S^1\vee S^1$ of the presentation $\langle x,y | \ \rangle$. The curve $\gamma_w$ is the lift of this cycle to the maximal abelian cover $\widetilde{K}=\Z\times \R \cup \R \times \Z$, starting at $(0,0)$. The curve $\gamma_w$ can be defined easily as a product in the \textit{group of paths} introduced in \cite{Ken}. 

\begin{ej}
Let $w=[x,y]=xyx^{-1}y^{-1}$. Then $\gamma_w$ is the curve in $\R^2$ which begins in the origin and moves one unit to the right, then one unit upwards, one to the left and one downwards. Therefore all the winding numbers $a_{i,j}=\w(\gamma_w, i+\frac{1}{2},j+\frac{1}{2})$ are zero with exception of $a_{0,0}=\w(\gamma_w, 0+\frac{1}{2},0+\frac{1}{2})=1$. Hence, the winding invariant of $w$ is $P_w=1\in \ZZ$. In Figure \ref{ej1} at the left we can see a picture with the image of the curve $\gamma_w$. The origin is represented with a small black square and a small arrow indicates the orientation of the curve. Inside the square with vertices $(0,0)$, $(1,0)$, $(1,1)$, $(0,1)$, there is a number $1$ which indicates that the winding number of $\gamma_w$ around the center of the square is $1$. The remaining squares are labeled with the number $0$, which we will always omit in this graphical representation. A polynomial $P$ can be represented graphically as a finite subset of squares in the grid $\Z\times \R \cup \R \times \Z$, each of them labeled with an integer number. The square with center $(i+\frac{1}{2},j+\frac{1}{2})$ is labeled with the coefficient of the monomial $X^iY^j$ in $P$.
\begin{figure}[h] 
\begin{center}
\includegraphics[scale=1.6]{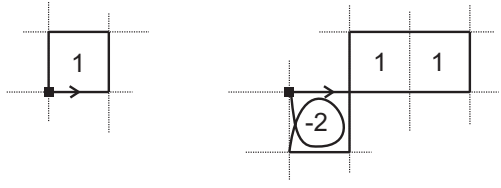}
\caption{The curves $\gamma_{[x,y]}$ and $\gamma_{x^3yx^{-2}y^{-2}x^{-1}yxy^{-1}x^{-1}y}$ and the graphical representation of the corresponding winding invariants.}\label{ej1}
\end{center}
\end{figure}
The second example is $w'=x^3yx^{-2}y^{-2}x^{-1}yxy^{-1}x^{-1}y$. Now the curve $\gamma_{w'}$ has self intersections. Moreover, some edges of the grid appear twice in $\gamma_{w'}$. In order to make the graphical representation clear, we choose to represent a curve $\gamma$ homotopic to $\gamma_{w'}$. Here we consider $\gamma, \gamma_{w'}:S^1\to \R^2\smallsetminus C$, where $C$ is the subset of $\R^2$ which contains the centers $(i+\frac{1}{2},j+\frac{1}{2})$ of the squares (see the picture in the right of Figure \ref{ej1}). In this case the winding numbers $a_{i,j}=\w(\gamma_w, i+\frac{1}{2},j+\frac{1}{2})$ are $a_{1,0}=a_{2,0}=1$, $a_{0,-1}=-2$, and all the others are zero. Therefore $P_{w'}=X+X^2-2Y^{-1}$.
\end{ej}

\begin{prop} \label{basica}
Let $w,w' \in \FF, u\in \F$. Then the following hold:

(i) $P_{w^{-1}}=-P_w$.

(ii) $P_{ww'}=P_w+P_{w'}$.

(iii) $P_{uwu^{-1}}=X^nY^mP_w$, where $n$ and $m$ are the total exponents of $x$ and $y$ in $u$.

(iv) $P_{[u,w]}=(X^nY^m-1)P_w$.
\end{prop}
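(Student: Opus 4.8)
The plan is to prove Proposition \ref{basica} directly from the geometric description of $\gamma_w$ in Definition \ref{definicion}, using only elementary facts about winding numbers of concatenated paths. The key observation throughout is that the winding number of a concatenation of loops (all based at the same point, or suitably translated) is additive, and that winding numbers are invariant under homotopy rel endpoints in $\R^2 \setminus \{p\}$.

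\textbf{Part (i).} If $w = x_1^{\epsilon_1}\cdots x_l^{\epsilon_l}$, then $w^{-1} = x_l^{-\epsilon_l}\cdots x_1^{-\epsilon_1}$, and the path $\gamma_{w^{-1}}$ is precisely $\gamma_w$ traversed in reverse. Reversing the orientation of a loop negates every winding number, so $a_{i,j}(\gamma_{w^{-1}}) = -a_{i,j}(\gamma_w)$ for all $i,j$, giving $P_{w^{-1}} = -P_w$.

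\textbf{Part (ii).} The path $\gamma_{ww'}$ is the concatenation $\gamma_w \cdot \gamma_{w'}$, where the second factor starts at the endpoint of $\gamma_w$; since $w \in \FF$ that endpoint is $(0,0)$, so $\gamma_{w'}$ is attached exactly at the basepoint and $\gamma_{ww'} = \gamma_w * \gamma_{w'}$ as loops based at $(0,0)$. Additivity of the winding number under concatenation of loops then gives $\w(\gamma_{ww'}, p) = \w(\gamma_w, p) + \w(\gamma_{w'}, p)$ for every $p \in C$, hence $P_{ww'} = P_w + P_{w'}$.

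\textbf{Part (iii).} Write $u = y_1^{\delta_1}\cdots y_r^{\delta_r}$ with endpoint $(n,m)$. The loop $\gamma_{uwu^{-1}}$ equals $\gamma_u * (\text{translate of } \gamma_w \text{ by } (n,m)) * \overline{\gamma_u}$, because after tracing $\gamma_u$ we are at $(n,m)$ and the subword $w$ then traces a translated copy of $\gamma_w$, after which $\gamma_{u^{-1}}$ returns to the origin. For a point $p \in C$, the contributions of $\gamma_u$ and $\overline{\gamma_u}$ cancel \emph{only after} we account for the fact that they are separated by the translated loop; more carefully, $\w(\gamma_u * \eta * \overline{\gamma_u}, p) = \w(\eta, p)$ whenever $\eta$ is a loop based at $(n,m)$, since $\gamma_u$ and $\overline{\gamma_u}$ together form a null-homotopic "there-and-back" path and conjugation by a path does not change the winding number of a loop about any point not on that path (and one checks $p$ can be taken off $\gamma_u$, or argues by homotopy). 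Finally $\w(\text{translate of }\gamma_w\text{ by }(n,m),\, (i+\tfrac12,j+\tfrac12)) = \w(\gamma_w,\, (i-n+\tfrac12, j-m+\tfrac12)) = a_{i-n,j-m}$, so the coefficient of $X^iY^j$ in $P_{uwu^{-1}}$ is $a_{i-n,j-m}$, which says exactly $P_{uwu^{-1}} = X^nY^m P_w$.

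\textbf{Part (iv).} Since $[u,w] = uwu^{-1}w^{-1}$, combine (ii), (iii) and (i): $P_{[u,w]} = P_{uwu^{-1}} + P_{w^{-1}} = X^nY^m P_w - P_w = (X^nY^m - 1)P_w$, where $n,m$ are the total exponents of $x,y$ in $u$ (note $w \in \FF$ contributes no shift, consistent with $[u,w]$ being obtained by conjugating $w$ by $u$ and multiplying by $w^{-1}$).

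I expect the only genuine subtlety to be in part (iii): making rigorous the claim that conjugating a loop $\eta$ by an arbitrary path $\gamma_u$ leaves $\w(\cdot, p)$ unchanged, since $p$ might a priori lie on the image of $\gamma_u$. This is handled by noting $p$ is a center $(i+\tfrac12, j+\tfrac12)$ of a grid square while $\gamma_u \subseteq \Z\times\R \cup \R\times\Z$, so $p$ never lies on $\gamma_u$, and then the identity $\w(\alpha * \eta * \bar\alpha, p) = \w(\eta, p)$ follows because $\alpha * \bar\alpha$ is null-homotopic in $\R^2 \setminus \{p\}$ rel endpoints. Everything else is a routine bookkeeping of concatenations.
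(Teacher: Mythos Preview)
Your proof is correct and follows essentially the same approach as the paper: the paper's proof simply notes that $\gamma_{w^{-1}}=\overline{\gamma}_w$, that $\gamma_{ww'}=\gamma_w*\gamma_{w'}$, that $\gamma_{uwu^{-1}}=\gamma_u(\gamma_w+(n,m))\overline{\gamma}_u$, and that (iv) follows from the first three. You have supplied more detail than the paper (in particular the observation that the centers $(i+\tfrac12,j+\tfrac12)$ lie off the grid so conjugation by $\gamma_u$ is harmless), but the underlying argument is identical.
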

\begin{proof}
(i) and (ii) follow from the fact that $\gamma_{w^{-1}}$ is the inverse $\overline{\gamma}_w$ \label{caminoinverso} of $\gamma_w$ and $\gamma_{ww'}$ is the concatenation $\gamma_w*\gamma_{w'}$, which we will also denote $\gamma_w \gamma_{w'}$. To prove (iii) observe that $\gamma_{uwu^{-1}}=\gamma_u(\gamma_w+(n,m))\overline{\gamma}_u$, where $\gamma+(n,m)$ is $\gamma$ composed with the translation $\R^2\to \R^2$ which maps $(x_1,x_2)$ to $(x_1+n,x_2+m)$. The last item follows from the previous three.
\end{proof}

\textit{Proof of Proposition \ref{ln}}. Suppose $w=[x,y]=a^2b^2$ for certain $a,b\in \F$. Note that $ab\in \FF$. Since $a^2b^2=(ab)b^{-1}(ab)b$, by Proposition \ref{basica}, $P_w=(1+X^nY^m)P_{ab}$, where $n,m$ are the total exponents of $x$ and $y$ in $b^{-1}$. In particular $P_w(1,1)\in \Z$ is even. However, $P_{[x,y]}=1\in \ZZ$. \qed

\medskip

The fact that a word $w\in \FF$ has winding invariant $P_w$ divisible by a factor $1+X^nY^m$ does not imply that $w$ is a product of two squares, however we will show below that $w$ is a product of two squares in certain quotient of $\F$. First we prove a generalization of Proposition \ref{ln}.

\begin{prop} \label{lngeneral}
Let $w\in \FF$ and let $k\ge 1$. Suppose there exist $a,b\in \F$ such that $w=a^kb^k$. Then there exist $n,m\in \Z$ such that $1+X^nY^m+X^{2n}Y^{2m}+\ldots +X^{(k-1)n}Y^{(k-1)m}$ divides $P_w$. 
\end{prop}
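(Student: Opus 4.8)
The plan is to mimic the three-line proof of Proposition \ref{ln} but with the factorization of $a^kb^k$ that exploits the commutator structure of $a b$. Write $w=a^kb^k$ with $a,b\in\F$. Since $w\in\FF$, the total exponents of $x$ and $y$ in $w$ are zero, and since $a^kb^k$ maps to $k(\bar a+\bar b)$ in $\Z^2$ under abelianization, we get $\bar a=-\bar b$ in $\Z^2$; in particular $ab\in\FF$. Let $n,m$ be the total exponents of $x$ and $y$ in $b$, so that $b^{-1}$ has total exponents $-n,-m$ and $a$ has total exponents $n,m$.

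The key step is to rewrite $a^kb^k$ as a product of conjugates of the single element $ab\in\FF$. The natural identity is
\[
a^kb^k=(ab)\,(b^{-1}(ab)b)\,(b^{-2}(ab)b^2)\cdots(b^{-(k-1)}(ab)b^{k-1}),
\]
which one checks by telescoping: the product of the first $j$ factors is $a^j b^j$ (indeed $a^jb^j\cdot b^{-j}(ab)b^j=a^jb^j\cdot b^{-j}ab\cdot b^j\cdot b^{-1}\cdot b = a^{j+1}b^{j+1}$ after simplification; I would verify this bookkeeping carefully, as it is the one place the argument can go wrong). Then I would apply Proposition \ref{basica}: each conjugate $b^{-i}(ab)b^{i}$ has winding invariant $X^{-in}Y^{-im}P_{ab}$ by (iii), since $b^{-i}$ has total exponents $-in,-im$; and by (ii) the winding invariant is additive over the product. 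Hence
\[
P_w=\bigl(1+X^{-n}Y^{-m}+X^{-2n}Y^{-2m}+\cdots+X^{-(k-1)n}Y^{-(k-1)m}\bigr)P_{ab}.
\]

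To match the statement exactly, note that the factor $\sum_{i=0}^{k-1}X^{-in}Y^{-im}$ differs from $\sum_{i=0}^{k-1}X^{in}Y^{im}$ only by multiplication by the unit $X^{-(k-1)n}Y^{-(k-1)m}$ in $\ZZ$; so $1+X^{n}Y^{m}+\cdots+X^{(k-1)n}Y^{(k-1)m}$ divides $P_w$ as claimed. (Alternatively one can start from the symmetric grouping $a^kb^k=a\,(a(b a)a^{-1})\cdots$ to get the positive exponents directly, but the unit adjustment is cleaner.)

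The main obstacle, such as it is, is purely the telescoping identity expressing $a^kb^k$ as a product of $k$ conjugates of $ab$ by powers of $b$; once that is in place everything else is a direct invocation of the homomorphism property and the conjugation formula in Proposition \ref{basica}, exactly as in the proof of Proposition \ref{ln} (which is the special case $k=2$, where $a^2b^2=(ab)(b^{-1}(ab)b)$).
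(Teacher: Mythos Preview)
Your proof is correct and follows essentially the same approach as the paper: the identical telescoping identity $a^kb^k=\prod_{i=0}^{k-1}b^{-i}(ab)b^{i}$ together with Proposition~\ref{basica}. The only cosmetic difference is that the paper takes $n,m$ to be the exponents of $x,y$ in $b^{-1}$ rather than in $b$, so it obtains the factor $1+X^nY^m+\cdots+X^{(k-1)n}Y^{(k-1)m}$ directly without the unit adjustment.
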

\begin{proof}
As before, since $a^kb^k\in \FF$, then $ab\in \FF$. Moreover $$a^kb^k=(ab) . b^{-1}(ab) b . b^{-2} (ab) b^2 \ldots b^{-k+1} (ab) b^{k-1}.$$ Then if $n,m$ are the total exponents of $x$ and $y$ in $b^{-1}$, $P_w=P_{ab}+X^nY^mP_{ab}+X^{2n}Y^{2m}P_{ab}+\ldots +X^{(k-1)n}Y^{(k-1)m}P_{ab}$ by Proposition \ref{basica}. 
\end{proof}

Most of the results in this section are consequences of Proposition \ref{lngeneral}. In Section \ref{secverbal} we will study products of $k$-th powers in general. Some of these results require that we develop first techniques to understand the winding invariants of commutators. This will be achieved in Section \ref{secparalelogramos}.

Proposition \ref{lngeneral} is related to the Burnside problem. Given $n,m\ge 1$, the Burnside group $B(n,m)$ is defined to be the group with generators $x_1,x_2,\ldots , x_n$ and with relators $w^m$ for each $w\in F(x_1,x_2,\ldots,x_n)$. \label{pagburnside} In particular $B(n,m)$ is finitely generated and every element has finite order. In general, a finitely generated torsion group need not be finite. This was proved by Golod and Shafarevich \cite{Gol} in 1964 and gave an answer to the General Burnside Problem. What about a finitely generated group $G$ such that each element $g\in G$ satisfies $g^m=1$ for a \textit{fixed} $m\ge 1$? Is such a group necessarily finite? This question, known as the the Burnside Problem was solved by Novikov and Adian \cite{NA}, who showed that in general a finitely generated group with bounded exponent $m$ can be infinite. Any group with $n$ generators and exponent $m$ is a quotient of $B(n,m)$. Therefore the Burnside Problem asks about finiteness of Burnside groups. It is known that $B(n,3)$, $B(n,4)$ and $B(n,6)$ are finite for every $n$, but it is still unknown whether $B(2,5)$ is.
The \textit{Engel words} are defined recursively by $e_1=[y,x]\in \F$ and $e_{n+1}=[y,e_n]$ for $n\ge 1$ \footnote{Some alternative definitions may be found in the literature, but they are all equivalent up to inversion and an automorphism of $\F$.}. It is not hard to see that the second Engel word $e_2$ is a product of cubes in $\F$. We recall the argument here. First note that $e_2$ is a conjugate of $[xy^{-1}x^{-1},y^{-1}]$. Now, in a group $H$ of exponent $3$ any element commutes with its conjugates. Burnside argument in \cite{Bur4} is the following. For $a,b\in H$, we have $1=(ab)^3=a^3(a^{-2}ba^2)(a^{-1}ba)b=(a^{-2}ba^2)(a^{-1}ba)b$. Replacing $a$ by $a^2$ we obtain $1=(a^{-1}ba)(a^{-2}ba^2)b$, so $(a^{-1}ba)b=b(a^{-1}ba)$. In particular, if $G$ is any group and $a,b\in G$, then $[a^{-1}ba,b]$ is trivial in the group $H=G/N$ where $N$ is the normal subgroup generated by all the cubes in $G$. Thus $[a^{-1}ba,b]\in G$ is a product of cubes in $G$. This proves that $e_2$ is a product of cubes in $\F$. Lyndon proved in \cite{Lyn4} that $[xy^{-1}x^{-1},y^{-1}]\in \F$ is a product of 4 cubes. Havas \cite{Hav4} and Akhavan-Malayeri \cite{Akh} show that in fact $e_2$ is a product of only $3$ cubes and that it is not a product of $2$. Concretely $$[xy^{-1}x^{-1},y^{-1}]=(xy^{-1}x^{-2})^3(x^{2}y)^3(y^{-1}x^{-1}y)^3.$$
Moreover, in \cite{Akh} it is proved that the equation $e_2^k=a^3b^3$ has a solution in $\F$ if and only if $3| k$. We will say more about the elements in $\F'$ which are products of cubes in Subsection \ref{subsecpowers}.

On the other hand, Havas proved \cite{Hav4} that $e_5$ is a product of $250$ fourth powers. Later, Korlyukov showed (unpublished) that $28$ fourth powers suffice. In Corollary \ref{etres} we will prove that $e_3$ is not a product of fourth powers. This can also be proved by showing that $e_3$ is nontrivial in $B(2,4)$ using coset enumeration for the presentations obtained in \cite{Hav4, New}, but our methods are simpler in this case.

Kostrikin proved that if the sixth Engel word $e_6$ is not a product of fifth powers, then $B(2,5)$ is infinite (see \cite[p. 184]{Kos}).

We we will deduce from Proposition \ref{lngeneral} that $e_6$ is not a product of two fifth powers and that $e_5$ is not a product of two fourth powers. Moreover, we give an alternative proof of the fact that $e_2$ is not a product of two cubes. In Proposition \ref{ppowers} we will prove that $e_6$ is a product of three fifth powers in the quotient $\F/\F''$. 

\begin{coro} \label{coroengel2}
The $n$-th Engel word $e_n$ is not a product of two $m$-th powers for any $n\ge 1$ and $m\ge 2$. Moreover, if $r\ge 1$ and $m\nmid r$, then $(e_n)^r$ is not a product of two $m$-th powers.
\end{coro}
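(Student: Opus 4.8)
The plan is to apply Proposition \ref{lngeneral} and extract a numerical obstruction by evaluating the winding invariant at a suitable point. First I would compute $P_{e_n}$, or at least enough about it to rule out the required divisibility. Since $e_1=[y,x]$, Proposition \ref{basica}(i) gives $P_{e_1}=-P_{[x,y]}=-1$. Then $e_{n+1}=[y,e_n]$, and by Proposition \ref{basica}(iv), $P_{e_{n+1}}=(Y-1)P_{e_n}$ because the total exponents of $x$ and $y$ in the first entry $y$ are $0$ and $1$. Hence by induction $P_{e_n}=-(Y-1)^{n-1}$ for all $n\ge 1$. More generally, Proposition \ref{basica}(ii) gives $P_{(e_n)^r}=rP_{e_n}=-r(Y-1)^{n-1}$.

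Next I would suppose, for contradiction, that $(e_n)^r=a^mb^m$ for some $a,b\in\F$ (the case $r=1$ being the first assertion). By Proposition \ref{lngeneral} there exist integers $p,q$ such that the polynomial $f=1+X^pY^q+\cdots+X^{(m-1)p}Y^{(m-1)q}$ divides $P_{(e_n)^r}=-r(Y-1)^{n-1}$ in $\ZZ$. The key observation is that $\ZZ$ is a UFD (a Laurent polynomial ring over a field — or over $\Z$ — is a UFD), so we can reason about irreducible factors. Up to units $X^iY^j$, the polynomial $-r(Y-1)^{n-1}$ has, as its only nonconstant irreducible factor, $Y-1$; so $f$ must be, up to a unit $X^iY^j$, a power of $Y-1$ times a constant dividing $r$. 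But $f$ has constant term $1$ (its monomials all have coefficient $1$), so up to the monomial unit it must actually equal $(Y-1)^s$ for some $0\le s\le n-1$, OR — if some constant $d>1$ with $d\mid r$ is a genuine factor — we use that $f$ evaluated appropriately still forces a contradiction. The cleanest route avoiding case analysis on constants: evaluate at $X=1, Y=1$. Then $f(1,1)=m$ while $P_{(e_n)^r}(1,1)=-r(1-1)^{n-1}$, which is $0$ for $n\ge 2$ and $-r$ for $n=1$. For $n\ge 2$ this says $m\mid 0$, no contradiction yet, so instead I evaluate at $X=1$ and treat $g(Y)=f(1,Y)=1+Y^q+\cdots+Y^{(m-1)q}$ dividing $-r(Y-1)^{n-1}$ in $\Z[Y^{\pm1}]$. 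If $q\ne 0$, then $g(Y)\cdot(Y^q-1)=Y^{mq}-1$, so every complex root of $g$ is a nontrivial $mq$-th root of unity, in particular $g(1)=m\ne 0$ forces $1$ not to be a root of $g$; but the only root of $-r(Y-1)^{n-1}$ is $Y=1$ (with the zero polynomial excluded since $m\ge 2$), so $g$ must be constant, i.e. $mq$ has only the trivial root among $m$-th partial sums — impossible unless $g$ is a nonzero constant, which happens only if $m=1$. If $q=0$ then $f=m$ is constant and divides $-r(Y-1)^{n-1}$, forcing $m\mid r$ (look at the coefficient of the top-degree term $(Y-1)^{n-1}$, whose leading coefficient is $\pm r$, so $m\mid r$), contradicting $m\nmid r$ in the ``moreover'' clause and, in the case $r=1$, contradicting $m\ge 2$.

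Assembling: in the first statement $r=1$, and both sub-cases ($q\ne 0$ and $q=0$) lead to $m=1$ or $m\mid 1$, contradicting $m\ge 2$; so $e_n$ is not a product of two $m$-th powers. In the ``moreover'' clause, the case $q=0$ gives $m\mid r$, excluded by hypothesis, and the case $q\ne 0$ gives $m=1$, excluded by $m\ge 2$; so $(e_n)^r$ is not a product of two $m$-th powers either. I would present this uniformly by noting that divisibility of $-r(Y-1)^{n-1}$ by $f$ in the UFD $\ZZ$ forces $f$ to be a constant times a power of $Y-1$ (times a monomial unit), then matching constant terms ($f$ has constant term $1$) to conclude $f$ is a unit times $(Y-1)^s$, which is impossible for $m\ge 2$ since $(Y-1)^s$ is either a unit ($s=0$, contradicting that $f$ has $m\ge 2$ terms) or has a zero at $Y=1$ ($s\ge 1$, contradicting $f(1,1)=m\ne 0$).

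The main obstacle I anticipate is bookkeeping the monomial units $X^iY^j$ correctly and handling the possibility that a nontrivial integer constant $d\mid r$ (with $d>1$) is absorbed into $f$; the evaluation-at-$Y=1$ / cyclotomic-root argument sidesteps this because $f(1,1)=m$ pins down the constant term regardless of $p,q$, so there is no room for an extra integer factor. The rest is the routine induction for $P_{e_n}=-(Y-1)^{n-1}$ and the elementary fact that $\ZZ$ is a UFD.
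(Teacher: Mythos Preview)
Your proposal is correct and follows essentially the same route as the paper: compute $P_{e_n}=-(Y-1)^{n-1}$ via Proposition~\ref{basica}, invoke Proposition~\ref{lngeneral}, substitute $X=1$, and derive a contradiction in $\Z[Y^{\pm 1}]$ by splitting into the cases $q=0$ (forcing $m\mid r$) and $q\ne 0$ (forcing a root of $g$ other than $1$, while $-r(Y-1)^{n-1}$ has only $1$ as a root). The paper phrases the $q\ne 0$ case as ``$Y^{ml}-1\mid r(Y^l-1)(Y-1)^{n-1}$, absurd'', which is your roots argument in compressed form. Your write-up is noisier---the UFD/constant-term detour is explored and then abandoned, and in the $q=0$ case you write ``$f=m$'' where you mean ``$g=f(1,Y)=m$'' (note $f$ itself need not be constant when $p\ne 0$)---but the underlying argument is the same.
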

\begin{proof}
By Proposition \ref{basica}, the winding invariant $P_{e_{n}}$ is $(Y-1)P_{e_{n-1}}$ for $n\ge 2$. Since $P_{e_1}=-1\in \ZZ$, we conclude that $P_{e_n}=-(Y-1)^{n-1}$ and then $P_{e_n^r}=-r(Y-1)^{n-1}$. By Proposition \ref{lngeneral}, if $e^r_n=a^mb^m$ for certain $a,b\in \F$, then there exist $k,l\in \Z$ such that $1+X^kY^l+X^{2k}Y^{2l}+\ldots +X^{(m-1)k}Y^{(m-1)l}$ divides $P_{e^r_n}$ in $\ZZ$. Putting $X=1$, we have that $Q=1+Y^l+Y^{2l}+\ldots +Y^{(m-1)l}$ divides $r(Y-1)^{n-1}$ in $\Z [Y^{\pm 1}]$. Since $Y^{(m-1)l}$ is a unit, we can assume $l\ge 0$ and then $Q$ divides $r(Y-1)^{n-1}$ in $\Z[Y]$. Since $m\nmid r$, then $l\neq 0$. Thus $Y^{ml}-1| r(Y^l-1)(Y-1)^{n-1}$, which is absurd since $m\ge 2$.
\end{proof}

Another direct consequence of Proposition \ref{lngeneral} is the following.

\begin{coro} \label{npowers2}
If $w\in \FF$ is a product of two $k$-th powers in $\F$, then $k$ divides $P_w(1,1)$.
\end{coro}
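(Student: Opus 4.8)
The plan is to deduce this directly from Proposition \ref{lngeneral}. Suppose $w\in \FF$ is a product of two $k$-th powers in $\F$, say $w=a^kb^k$. By Proposition \ref{lngeneral}, there exist $n,m\in\Z$ such that the polynomial $Q=1+X^nY^m+X^{2n}Y^{2m}+\cdots+X^{(k-1)n}Y^{(k-1)m}$ divides $P_w$ in $\ZZ$; write $P_w=QR$ for some $R\in\ZZ$.

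Next I would evaluate this factorization at $X=Y=1$. Since the evaluation map $\ZZ\to\Z$ sending $X\mapsto 1$, $Y\mapsto 1$ is a ring homomorphism, we get $P_w(1,1)=Q(1,1)\cdot R(1,1)$ in $\Z$. But $Q$ is a sum of $k$ monomials, each of which becomes $1$ under this substitution, so $Q(1,1)=k$. Hence $P_w(1,1)=k\cdot R(1,1)$, and since $R(1,1)\in\Z$, this says precisely that $k$ divides $P_w(1,1)$.

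There is essentially no obstacle here; this is a routine specialization of the previous proposition, and the only thing to check is that $Q(1,1)=k$, which is immediate from counting the $k$ monomial summands. The statement is included as a clean numerical corollary that is convenient to apply in practice (for instance it recovers the parity obstruction in the proof of Proposition \ref{ln}, and more generally gives a quick divisibility test on the integer $P_w(1,1)$).
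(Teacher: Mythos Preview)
Your proof is correct and is exactly the argument the paper intends: the corollary is stated as an immediate consequence of Proposition~\ref{lngeneral}, and specializing the divisibility $Q\mid P_w$ at $X=Y=1$ gives $k=Q(1,1)\mid P_w(1,1)$.
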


In Theorem \ref{npowers} we will see that Corollary \ref{npowers2} holds also if we replace the word ``two'' by any positive integer, when $k$ is odd. If $k$ is even, in this general version we will only be able to conclude that $\frac{k}{2}$ divides $P_w(1,1)$.

\begin{coro}
Let $r,s,k \ge 1$. Then $w=[x^r,y^s]$ is a product of two $k$-th powers in $\F$ if and only if $k$ divides $r$ or $s$. 
\end{coro}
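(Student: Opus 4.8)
The plan is to prove the two implications by rather different means: the ``if'' direction will be an explicit identity in $\F$, while the ``only if'' direction will rest on Proposition~\ref{lngeneral} together with a computation of the winding invariant of $[x^r,y^s]$.

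For the ``if'' direction, assume first $k\mid r$, say $r=kt$. The key is the telescoping identity $(y^sx^{-t}y^{-s})^k=y^sx^{-kt}y^{-s}$, which gives $[x^r,y^s]=x^{kt}y^sx^{-kt}y^{-s}=(x^t)^k(y^sx^{-t}y^{-s})^k$, a product of two $k$-th powers. If instead $k\mid s$, the same manipulation applied to $[x^r,y^s]^{-1}=[y^s,x^r]$ exhibits $[x^r,y^s]^{-1}$ as a product $c^kd^k$ of two $k$-th powers, and then $[x^r,y^s]=(d^{-1})^k(c^{-1})^k$ is again such a product. So either divisibility suffices.

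For the ``only if'' direction I may assume $k\ge2$ (the case $k=1$ is vacuous). First I would compute $P_{[x^r,y^s]}$: the curve $\gamma_{[x^r,y^s]}$ traverses the boundary of the rectangle $[0,r]\times[0,s]$ counterclockwise, so every interior square has winding number $1$ and
\[
P_{[x^r,y^s]}=\left(\sum_{i=0}^{r-1}X^i\right)\left(\sum_{j=0}^{s-1}Y^j\right)=:A(X)B(Y).
\]
Now suppose $[x^r,y^s]=a^kb^k$. By Proposition~\ref{lngeneral} there exist $n,m\in\Z$ such that $Q:=1+X^nY^m+\dots+X^{(k-1)n}Y^{(k-1)m}$ divides $A(X)B(Y)$ in $\ZZ$, hence in $\mathbb{C}[X^{\pm1},Y^{\pm1}]$. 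Over $\mathbb{C}$ the factor $A$ splits into distinct linear factors $X-\zeta$ ($\zeta$ an $r$-th root of unity, $\zeta\neq1$) and $B$ into distinct factors $Y-\xi$ ($\xi$ an $s$-th root of unity, $\xi\neq1$), so every irreducible factor of $A(X)B(Y)$ involves only one of the two variables. On the other hand, with $(n',m')=(n,m)/\gcd(n,m)$, the polynomial $Q$ factors into irreducibles of the form $X^{n'}Y^{m'}-\alpha$; if $n$ and $m$ were both nonzero, such a factor would genuinely involve both variables and could not be an associate of any $X-\zeta$ or $Y-\xi$, contradicting unique factorization. Hence $n=0$ or $m=0$ (the case $n=m=0$ would force $k\mid1$ by comparing constant terms, so it does not occur). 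Say $m=0$; substituting $X=1$, the ring homomorphism $\ZZ\to\Z[Y^{\pm1}]$ sends $Q$ to the constant $k$ and $A(X)B(Y)$ to $r\sum_{j=0}^{s-1}Y^j$, whose constant coefficient is $r$, so $k\mid r$. Symmetrically, $n=0$ gives $k\mid s$.

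The step I expect to be delicate is ruling out the ``mixed'' case $n\neq0$, $m\neq0$; the remaining steps are a one-line identity and specialization arguments of the kind used in Corollary~\ref{coroengel2}. If the unique-factorization bookkeeping over $\mathbb{C}$ turns out to be cumbersome (for instance because $X^nY^m-\alpha$ need not itself be irreducible when $\gcd(n,m)>1$), a clean alternative is a transcendence substitution: choose $\lambda\in\mathbb{C}$ transcendental and $\mu\in\mathbb{C}^*$ with $\lambda^n\mu^m$ a primitive $k$-th root of unity, so that $Q(\lambda,\mu)=0$ while $A(\lambda)\neq0$ and, because $\lambda^{ns}$ is transcendental when $n\neq0$, also $B(\mu)\neq0$; this contradicts $Q\mid A(X)B(Y)$.
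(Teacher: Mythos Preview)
Your proof is correct. The ``if'' direction is the same explicit identity as in the paper (your handling of $k\mid s$ via the inverse is a minor variant of the paper's symmetric statement).

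For the ``only if'' direction you take a somewhat different route. You first isolate and eliminate the ``mixed'' case $n\neq 0\neq m$ by a unique-factorization argument in $\mathbb{C}[X^{\pm1},Y^{\pm1}]$ (or the transcendence substitution), and only then specialise. The paper avoids this detour entirely: it substitutes $Y=1$ \emph{before} any case distinction. That single specialisation sends $Q$ to $1+X^n+\cdots+X^{(k-1)n}$ regardless of $m$, and divisibility of $s(1+X+\cdots+X^{r-1})$ by this one-variable polynomial (equivalently, $X^{kn}-1\mid (X^r-1)(X^n-1)$ over $\mathbb{Q}[X]$) forces $kn\mid r$ whenever $n\neq 0$. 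Thus $n\neq 0\Rightarrow k\mid r$ in one stroke, and the mixed case never needs separate treatment; the symmetric substitution $X=1$ gives $m\neq 0\Rightarrow k\mid s$, and $n=m=0$ yields the same $k\mid 1$ contradiction you found. Your argument buys nothing extra here, but it is perfectly valid, and the irreducibility of $X^{n'}Y^{m'}-\beta$ for $\gcd(n',m')=1$ (via a $GL_2(\Z)$ change of variables) is the right justification for the factorization step you flagged as delicate.
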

\begin{proof}
If $k | r$, say $r=nk$, then $w=(x^n)^k(y^sx^{-n}y^{-s})^k$. Similarly if $k|s$, $w$ is a product of two $k$-th powers.

Conversely, suppose $w$ is a product of two $k$-th powers. Then there exist $n,m\in \Z$ such that $1+X^nY^m+X^{2n}Y^{2m}+\ldots +X^{(k-1)n}Y^{(k-1)m}$ divides $P_w$. On the other hand it is easy to see that $P_w=(1+X+X^2+\ldots +X^{r-1})(1+Y+Y^2+\ldots +Y^{s-1})$. Putting $Y=1$, we deduce that $Q=1+X^n+\ldots +X^{(k-1)n}$ divides $s(1+X+X^2+\ldots +X^{r-1})$ in $\Z[X^{\pm 1}]$. We can assume $n\ge 0$. Thus $Q$ divides $sP$ in $\Z[X]$, where $P=1+X+X^2+\ldots +X^{r-1}$. Then $X^{kn}-1$ divides $(X^r-1)(X^n-1)$. If $n\ge 1$ and $k\ge 2$, this implies that $kn| r$ and we are done. If $k=1$, then $k|r$. Therefore, if $k\nmid r$, then $n=0$. Symmetrically, if $k\nmid s$, then $m=0$. Thus $k| P_w$ in $\ZZ$, so $k|1$, a contradiction.  
\end{proof}

A very simple but useful idea to extend our results on $\F$ to study the free group $\mathbb{F}_n$ of rank $n$ is to consider a map $p:\mathbb{F}_n\to \F$. Then the composition $Wp|_{\mathbb{F}_n'}: \mathbb{F}_n' \to \ZZ$ shares many of the properties of the winding invariant.

\begin{ej}
Let $\{x,y,z\}$ be a free basis of $\mathbb{F}_3$. The word $w=xy^2zx^{-1}y^{-2}z^{-1}$ is not a product of two squares in $\mathbb{F}_3$. If we consider the homomorphism $p:\mathbb{F}_3\to \F=\langle x,y\rangle$ which fixes $x,y$ and maps $z$ to $1$, $p(w)=[x,y^2]$ is a product of two squares in $\F$. However if $p$ fixes $x,y$ and maps $z$ to $y^{-1}$, instead, then $p(w)=[x,y]$ is not a product of two squares. Moreover $W(p(w))=1$ is not divisible by a polynomial of type $1+X^nY^m$, so Proposition \ref{lngeneral} applies.  
\end{ej}

In \cite[Theorem 2]{LN}, Lyndon and Newman prove that if $\mathbb{F}_n$ is the free group of rank $n$ freely generated by $x_1,x_2,\ldots, x_n$, then $x_1^2x_2^2\ldots x_n^2$ is not a product of fewer that $n$ squares. Their result follows also from the subsequent ideas in \cite{Cul, Edm, Edm2, GT}. The following is a generalization of the case $n=3$.

\begin{teo} \label{lnnuevo}
Let $\mathbb{F}_n$ be the free group of rank $n\ge 3$ freely generated by $x_1,x_2, \ldots, x_n$. Let $m\ge 1$ and let $k\ge 2$. Then the equation $$x_1^mx_2^m\ldots x_n^m=a^kb^k$$ has no solution $a,b\in \mathbb{F}_n$.  
\end{teo}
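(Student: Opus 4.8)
The plan is to pull the equation back to $\FF=\mathbb{F}_2'$ and apply Proposition \ref{lngeneral}. Define a homomorphism $p\colon\mathbb{F}_n\to\F$ on generators by $p(x_1)=x$, $p(x_2)=y$, $p(x_3)=x^{-1}y^{-1}$ and $p(x_i)=1$ for $4\le i\le n$; this is where the hypothesis $n\ge 3$ is used. The total exponents of $x$ and of $y$ in $x^{-1}y^{-1}$ are both $-1$, so $w:=p(x_1^mx_2^m\cdots x_n^m)=x^my^m(x^{-1}y^{-1})^m$ lies in $\FF$. If $x_1^mx_2^m\cdots x_n^m=a^kb^k$ had a solution $a,b\in\mathbb{F}_n$, then applying $p$ would give $w=p(a)^kp(b)^k$ with $p(a),p(b)\in\F$, so by Proposition \ref{lngeneral} there would exist $c,d\in\Z$ such that $Q:=1+X^cY^d+X^{2c}Y^{2d}+\cdots+X^{(k-1)c}Y^{(k-1)d}$ divides $P_w$ in $\ZZ$. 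It therefore suffices to show that no such $Q$ divides $P_w$.

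The first step is to compute $P_w$. The curve $\gamma_w$ starts at the origin, runs $m$ units to the right, then $m$ units up to $(m,m)$, and then returns to the origin alternating one unit left and one unit down, $m$ times. This is a simple closed curve traversed counterclockwise, and over the strip $j<X<j+1$ the bounded region is $\{0<Y<j+1\}$, for $j=0,1,\dots,m-1$. Hence every winding number is $0$ or $1$, and
$$P_w=\sum_{j=0}^{m-1}X^j\bigl(1+Y+\cdots+Y^j\bigr).$$
In particular the coefficient of $X^0Y^0$ in $P_w$ equals $1$, while $P_w(X,1)=1+2X+3X^2+\cdots+mX^{m-1}$ and $P_w(1,Y)=m+(m-1)Y+\cdots+2Y^{m-2}+Y^{m-1}$.

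To finish, I would distinguish cases according to $(c,d)$. If $c=d=0$ then $Q=k$ would divide every coefficient of $P_w$, which is impossible since $k\ge 2$ and one coefficient equals $1$. Suppose $c\ne 0$. Specialization $Y=1$ is a ring homomorphism $\ZZ\to\Z[X^{\pm1}]$, so $1+X^{|c|}+X^{2|c|}+\cdots+X^{(k-1)|c|}$ (which has positive degree because $k\ge 2$) divides $P_w(X,1)$; clearing denominators in $\Z[X]$, every complex root of this divisor is a root of $P_w(X,1)$, and these roots are roots of unity other than $1$. On the other hand, from the identity $(1-X)\,P_w(X,1)=1+X+\cdots+X^{m-1}-mX^m$ one sees that $P_w(X,1)$ has no zero on the unit circle: at a zero $X_0\ne 1$ with $|X_0|=1$ we would get $1+X_0+\cdots+X_0^{m-1}=mX_0^m$, hence $m=\bigl|1+X_0+\cdots+X_0^{m-1}\bigr|$, which by the triangle inequality forces $1,X_0,\dots,X_0^{m-1}$ to share one argument, i.e.\ $X_0=1$; and $X_0=1$ is excluded since $P_w(1,1)=m(m+1)/2\ne 0$. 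This contradiction handles $c\ne 0$. The case $d\ne 0$ is symmetric: specialize $X=1$ and use $(1-Y)\,P_w(1,Y)=m-Y\bigl(1+Y+\cdots+Y^{m-1}\bigr)$ to conclude that $P_w(1,Y)$ likewise has no zero on the unit circle.

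There is no serious obstacle here; the step requiring the most care is the honest determination of $P_w$, i.e.\ correctly reading off the winding numbers of the staircase curve (this could also be done purely algebraically from Proposition \ref{basica} by telescoping $x^my^m(x^{-1}y^{-1})^m$). Everything afterwards is elementary, modulo the routine facts that specialization at $Y=1$ and at $X=1$ respects divisibility and that a positive-degree divisor lying in $\Z[X]$, with nonzero constant term, of an element of $\Z[X]$ inside $\Z[X^{\pm1}]$ still has all its complex roots among the roots of that element.
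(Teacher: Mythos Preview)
Your proof is correct and follows the same overall strategy as the paper: project to $\F$ via the same map $x_1\mapsto x$, $x_2\mapsto y$, $x_3\mapsto x^{-1}y^{-1}$, invoke Proposition~\ref{lngeneral}, and derive a contradiction from a triangle-inequality argument on roots of unity. Your computation of $P_w$ agrees with the paper's (your $\sum_{j=0}^{m-1}X^j(1+Y+\cdots+Y^j)$ is a re-indexing of the paper's $\sum_{i=0}^{m-1}Y^iX^i(1+X+\cdots+X^{m-i-1})$).

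The one genuine difference is the choice of specialization. The paper uses a single substitution $Y=X^{-1}$, which collapses the two exponents $(r,s)$ into their difference $r-s$; this yields $Q\mid X+X^2+\cdots+X^m-m$, after which the case $r=s$ is killed by the coefficient of $X$ and the case $r\ne s$ by evaluating at a primitive $k|r-s|$-th root of unity. You instead split into $c\ne 0$ and $d\ne 0$, specializing at $Y=1$ and $X=1$ respectively, and run the triangle-inequality argument twice on the polynomials $P_w(X,1)$ and $P_w(1,Y)$. Both approaches are equally valid and equally elementary; the paper's is slightly more economical (one specialization, one root-of-unity argument), while yours is perhaps more obvious to find since $Y=1$ and $X=1$ are the natural first specializations to try.
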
  
\begin{proof}
Suppose the equation has a solution. Using the map $\mathbb{F}_n\to \F$ which maps $x_1$ to $x$, $x_2$ to $y$, $x_3$ to $x^{-1}y^{-1}$ and $x_i$ to $1$ for each $i\ge 4$, we deduce that the equation $x^my^m(x^{-1}y^{-1})^m=a^kb^k$ has a solution $a,b\in \F$. The winding invariant of the left hand side (Figure \ref{escalera}) is $$P=\sum\limits_{i=0}^{m-1} Y^iX^i(1+X+X^2+\ldots +X^{m-i-1}).$$

\begin{figure}[h] 
\begin{center}
\includegraphics[scale=0.5]{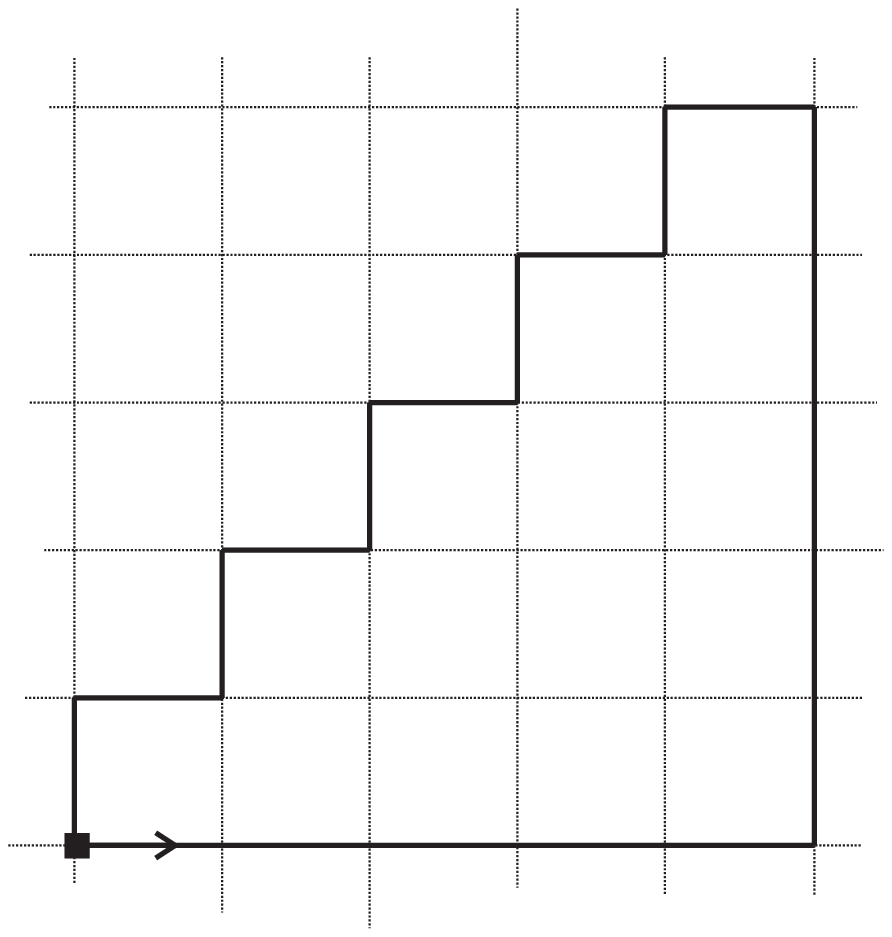}
\caption{The curve $\gamma_{w}$ for $w=x^5y^5(x^{-1}y^{-1})^5$.}\label{escalera}
\end{center}
\end{figure}

By Proposition \ref{lngeneral} there exist $r,s\in \Z$ such that $1+X^rY^s+X^{2r}Y^{2s}+\ldots +X^{(k-1)r}Y^{(k-1)s}$ divides $P$.
Taking $Y=X^{-1}$ we deduce that $Q=\sum\limits_{i=0}^{k-1} X^{i(r-s)}$ divides $P(X,X^{-1})=\sum\limits_{i=0}^{m-1} (1+X+X^2+\ldots +X^{m-i-1})$ in $\Z[X,X^{-1}]$. Multiplying by $(X-1)$ we obtain that $Q$ divides $\sum\limits_{i=0}^{m-1} (X^{m-i}-1)=X+X^2+\ldots+ X^{m}-m$. Since $k\nmid X+X^2+\ldots+ X^{m}-m$, then $r-s\neq 0$. Let $\xi \in \mathbb{C}$ be a primitive $(k|r-s|)$-th root of unity. Then $Q(\xi)=\frac{\xi^{k(r-s)}-1}{\xi^{r-s}-1}=0$, so $\xi+\xi^2+\ldots+\xi^m-m=0$. Then $m=|\xi+\xi^2+\ldots+\xi^m|\le |\xi|+|\xi^2|+\ldots+|\xi^m|=m$ implies that $\xi$ and $\xi^2$ have the same argument, so $\xi$ is a positive real number, a contradiction.   
\end{proof}

Conway and Lagarias prove in \cite[Theorem 5.1]{CL} that the winding invariant map $\FF \to \ZZ$ is surjective and its kernel is $\FFF$, the commutator of $\FF$. We exhibit here a much shorter proof. There is a shorter proof yet which is explained in Section \ref{sectionhow} and uses the fact that $\W$ is just the projection of $\pi_1(\widetilde{K})$ onto $H_1(\widetilde{K})$ for the covering $\widetilde{K}$ of the figure eight space $K$ corresponding to the subgroup $\F' \leqslant \pi_1(K)$. 

\begin{teo}[Conway-Lagarias] \label{conway}
The winding invariant map $\W: \FF\to \ZZ$ is a surjective group homomorphism whose kernel is $\FFF$.
\end{teo}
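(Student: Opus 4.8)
The plan is to recast the statement in module-theoretic terms, where it becomes an immediate consequence of the fact that a relation module is cyclic. By Proposition \ref{basica}(i)--(ii) the map $\W$ is a group homomorphism into the abelian group $\ZZ$, so it vanishes on $\FFF=[\FF,\FF]$ and induces a homomorphism $\overline{\W}\colon \FF/\FFF\to\ZZ$; in particular $\FFF\subseteq\ker\W$, and it remains to prove that $\overline{\W}$ is an isomorphism. Here I would make explicit that both sides are modules over $\Z[\Z\times\Z]$ and that $\overline{\W}$ is a morphism of such: on $\FF/\FFF$ the group $\F/\FF=\Z\times\Z$ acts by conjugation (this descends to $\FF/\FFF$ since $\FFF$ is normal in $\F$, and it factors through $\F/\FF$ since inner automorphisms by elements of $\FF$ fix $\FF/\FFF$); on $\ZZ\cong\Z[\Z\times\Z]$ the action is multiplication, with $(1,0)\mapsto X$ and $(0,1)\mapsto Y$; and with these identifications Proposition \ref{basica}(iii) is exactly the statement that $\overline{\W}$ is $\Z[\Z\times\Z]$-linear.

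The second step is to observe that $\FF/\FFF$ is cyclic as a $\Z[\Z\times\Z]$-module. Since $\langle x,y\mid[x,y]\rangle$ is a presentation of the abelian group $\Z\times\Z$, the normal closure $N$ of $[x,y]$ in $\F$ coincides with $\FF$: it is contained in $\FF$ because $[x,y]\in\FF$ and $\FF\trianglelefteq\F$, and it contains $\FF$ because $\F/N$ is abelian. Hence $\FF/\FFF=N/N'$ is the relation module of the presentation, and directly from the definition of normal closure it is generated by the single element $u_0:=[x,y]\FFF$: any element of $N$ is a product of conjugates $g[x,y]^{\pm1}g^{-1}$, whose class in $N/N'$ is the corresponding $\Z[\Z\times\Z]$-combination of $u_0$. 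Since $\overline{\W}(u_0)=P_{[x,y]}=1$, for every $f\in\ZZ$ we get $\overline{\W}(f\cdot u_0)=f\cdot\overline{\W}(u_0)=f$; thus $\overline{\W}$ is surjective. (Concretely this is the statement that $x^iy^j[x,y]y^{-j}x^{-i}$ has winding invariant $X^iY^j$, and these monomials span $\ZZ$.)

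Injectivity then follows in one line: if $m\in\FF/\FFF$ satisfies $\overline{\W}(m)=0$, choose $f\in\ZZ$ with $m=f\cdot u_0$ (possible by cyclicity); then $f=f\cdot\overline{\W}(u_0)=\overline{\W}(f\cdot u_0)=\overline{\W}(m)=0$, so $m=0\cdot u_0=0$. Equivalently, the previous computation exhibits $\overline{\W}$ as a left inverse of the surjection $\ZZ\to\FF/\FFF$, $f\mapsto f\cdot u_0$, which forces both maps to be mutually inverse isomorphisms. In either form one concludes $\ker\overline{\W}=0$, i.e.\ $\ker\W=\FFF$, together with surjectivity of $\W$.

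I do not expect a serious obstacle; the only care needed is the bookkeeping of the module structures in the first step --- checking that conjugation descends to $N/N'$ and factors through $\F/N$, and that Proposition \ref{basica}(iii) really is $\Z[\Z\times\Z]$-linearity under the chosen identification $\ZZ\cong\Z[\Z\times\Z]$. The point worth stressing is that this is precisely what replaces a much harder task: a direct argument that $P_w=0$ implies $w\in\FFF$ would demand a concrete handle on the free group $\FF$ and its second derived subgroup, roughly the kind of computations carried out in \cite{CL}, whereas the module reformulation turns the theorem into the assertion that the relation module of $\langle x,y\mid[x,y]\rangle$ is free of rank one, with the winding invariant furnishing the coordinate isomorphism.
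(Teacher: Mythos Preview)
Your proof is correct. The key observation---that $\FF/\FFF$ is cyclic as a $\Z[\Z\times\Z]$-module, generated by the class of $[x,y]$, and that $\overline{\W}$ sends this generator to $1$---immediately gives both surjectivity and injectivity, since a module map sending a cyclic generator to a unit is automatically an isomorphism onto the base ring.

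Your approach differs from the paper's written proof, which is more hands-on: after writing $w=\prod_i u_i[x,y]^{\epsilon_i}u_i^{-1}$ and observing $P_w=\sum_i\epsilon_iX^{n_i}Y^{m_i}=0$, the paper pairs up cancelling terms and inductively reduces the length of the product, working explicitly in $\FF/\FFF$. Your argument packages the same combinatorics into the single statement that $\FF/\FFF$ is a cyclic module and $\overline{\W}$ is a module map sending the generator to $1$; what this buys is brevity and a clear structural explanation for \emph{why} the kernel is exactly $\FFF$. The paper does acknowledge this viewpoint: just before the theorem it remarks that ``there is a shorter proof yet which is explained in Section~\ref{sectionhow}'', and Section~\ref{sectionhow} gives the topological version (identifying $\W$ with the abelianization $\pi_1(\widetilde K)\to H_1(\widetilde K)$ and $\FF/\FFF$ with the relation module). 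Your argument is the purely algebraic incarnation of that shorter proof.
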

\begin{proof}
Proposition \ref{basica} (ii) says that $\W$ is a homomorphism. If $n,m\in \Z$, then $w=x^ny^m[x,y]$ $y^{-m}x^{-n}\in \FF$ satisfies $P_w=X^nY^m$. This proves that $\W$ is surjective. Since $\ZZ$ is abelian, the commutator $\FFF$ of $\FF$ is contained in $\textrm{ker}(\W)$. We prove now that $\textrm{ker}(\W) \subseteq \FFF$. Let $w\in \textrm{ker}(\W)$. Since $\FF$ is the normal subgroup of $\F$ generated by $[x,y]$, then 
\begin{equation}\label{area}
w=\prod\limits_{i=1}^k u_i[x,y]^{\epsilon_i}u_i^{-1}
\end{equation}
for certain $k\ge 0$, $u_i\in \F$ and $\epsilon_i=\pm 1$. We will prove that the class $\overline{w}$ of $w$ in $\FF / \FFF$ is trivial by induction on $k$. If $k=0$, $w=1$. Suppose $k\ge 1$. By Proposition \ref{basica}, $P_w=\sum\limits_{i=1}^k \epsilon_iX^{n_i}Y^{m_i}$, where $n_i$ and $m_i$ are the total exponents of $x$ and $y$ in $u_i$. Since $P_w=\W(w)=0$, there exists $2\le i\le k$ such that $\epsilon_i=-\epsilon_1$ and $(n_i,m_i)=(n_1,m_1)$. Since the factors of (\ref{area}) lie in $\FF$, they commute in $\FF/\FFF$. Therefore $\overline{w}$ coincides with the class of $$w'=u_1[x,y]^{\epsilon_1}u_1^{-1}u_i[x,y]^{-\epsilon_1}u_i^{-1}u$$ where $u=\prod\limits_{2\le j\le k, j\neq i} u_j[x,y]^{\epsilon_j}u_j^{-1}$. Since $(n_i,m_i)=(n_1,m_1)$, $u_1^{-1}u_i\in \FF$, so $\overline{w'}$ coincides with the class of $$u_1[x,y]^{\epsilon_1}[x,y]^{-\epsilon_1}u_1^{-1}u_iu_i^{-1}u=u.$$

Since $\overline{w}=\overline{u}$, $u\in \textrm{ker}(\W)$. By induction $\overline{w}=\overline{u}=0$. 
\end{proof}

\begin{obs}
Theorem \ref{conway} provides an algorithm for deciding whether an element $w\in \F$ is in $\FFF$.
\end{obs}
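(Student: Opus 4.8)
The plan is to read off a decision procedure directly from the characterization $\FFF=\ker(\W)$ in Theorem \ref{conway}. Given a word $w=x_1^{\epsilon_1}x_2^{\epsilon_2}\ldots x_l^{\epsilon_l}$ in the generators $x,y$, the first step is to test membership in $\FF$, which is needed since $\FFF\leqslant \FF$: one simply computes the total exponents $n$ of $x$ and $m$ of $y$ occurring in $w$ and checks whether $n=m=0$. This is effective (it survives free reduction), and if $(n,m)\neq(0,0)$ the algorithm returns ``no''. So we may assume $w\in\FF$.

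The second step is to compute the winding invariant $P_w\in\ZZ$ effectively. The path $\gamma_w$ of Definition \ref{definicion} is a concatenation of $l$ unit segments starting at the origin, so its image lies in the box $[-l,l]\times[-l,l]$; hence $a_{i,j}=\w(\gamma_w,i+\frac12,j+\frac12)=0$ whenever $|i|>l$ or $|j|>l$, and only finitely many coefficients can be nonzero. For each of the remaining squares the winding number of the piecewise linear closed curve $\gamma_w$ about its center $(i+\frac12,j+\frac12)$ is computed mechanically, for instance by choosing a generic horizontal ray from that center and counting the signed number of transverse crossings with $\gamma_w$, or by summing signed angle increments along the edges of $\gamma_w$. (Both recipes are insensitive to $\gamma_w$ traversing an edge of the grid several times, as in the second example of the excerpt, so no genericity of $w$ itself is required.) This yields $P_w=\sum a_{i,j}X^iY^j$ after finitely many arithmetic operations, and by Proposition \ref{basica}(ii) the result depends only on the element $w\in\FF$, not on the chosen word.

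Finally, Theorem \ref{conway} gives $w\in\FFF$ if and only if $w\in\ker(\W)$, i.e.\ if and only if every computed coefficient $a_{i,j}$ vanishes, which is a finite check. Since each of the three steps is effective, the procedure halts and correctly decides membership in $\FFF$. The only point requiring a little care is the effectivity of the winding-number computation for a non-simple $\gamma_w$, and that is handled by the crossing-number formula above; the genuine mathematical content — that the kernel of $\W$ is exactly $\FFF$ — has already been supplied by Theorem \ref{conway}.
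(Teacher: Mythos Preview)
Your proposal is correct and follows exactly the reasoning the paper leaves implicit: the remark in the paper is stated without proof, and the intended justification is precisely the one you spell out --- check $w\in\FF$ via exponent sums, compute the finitely many winding numbers of $\gamma_w$, and invoke Theorem~\ref{conway} to conclude $w\in\FFF$ iff $P_w=0$. There is nothing to add.
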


Theorem \ref{conway} says that the winding invariant is useful to study equations in the free metabelian group $\M=\F/\F''$ of rank $2$. Recall that a group $G$ is said to be metabelian if $G'=[G,G]$ is abelian. The free metabelian group of rank $k$ is $\mathbb{F}_k/\mathbb{F}_k''$. Theorem \ref{conway} says then that $\W$ induces a well-defined bijective map $\M' \to \ZZ$ which we will also denote $\W$. 
Recall that an equation over a group $G$ in the variables $z_1,z_2,\ldots ,z_n$ is a word $w\in G*F(z_1,z_2,\ldots, z_n)$. A solution of $w=1$ in $G$ is an $n$-tuple $a_1,a_2,\ldots, a_n\in G$ such that $w$ lies in the kernel of the homomorphism $G*F(z_1,z_2,\ldots,z_n)\to G$ which maps $G$ identically to $G$ and each $z_i$ to $a_i$. Similarly a solution of $w=1$ in an overgroup $H\ge G$ is an $n$-tuple in $H$ which induces a map $G*F(z_1,z_2,\ldots,z_n)\to H$ which maps $w$ to $1\in H$. It is worth mentioning that the general solvability problem for equations in the free metabelian group of rank $2$ is algorithmically unsolvable \cite{Rom2}. However, for some \textit{quadratic equations} the problem is solvable \cite{LU1, LU2} and other problems over metabelian groups are known to be algorithmically solvable (see \cite{Bau8}, \cite{Bau7} and \cite[Section 9.5]{LR}, for example). 

Our proof of Proposition \ref{ln} shows that the equation $[x,y]=z_1^2z_2^2$ does not have a solution in $\F$ and moreover, it does not even have a solution in $\M$. The following result is a converse of Proposition \ref{lngeneral} which characterizes the elements $w\in \F'$ such that $w=z_1^kz_2^k$ has a solution in $\M$.

\begin{prop} \label{lnconverse}
Let $k$ be a positive integer and let $w\in \FF$. Then the following are equivalent

\noindent (i) There exist $a,b\in \F$ such that $w=a^kb^k$ in $\M$.

\noindent (ii) There exist $n,m\in \Z$ such that $1+X^nY^m+X^{2n}Y^{2m}+\ldots +X^{(k-1)n}Y^{(k-1)m}$ divides $P_w$ in $\ZZ$.
\end{prop}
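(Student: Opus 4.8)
The plan is to prove the nontrivial implication (ii) $\Rightarrow$ (i), since (i) $\Rightarrow$ (ii) is already contained in Proposition \ref{lngeneral} (the proof there only uses equalities valid in $\F$, hence a fortiori in $\M$, and the conclusion is about $P_w$, which is a well-defined invariant of the class of $w$ in $\M'$ by Theorem \ref{conway}). So assume there are $n,m\in\Z$ with $Q:=1+X^nY^m+\cdots+X^{(k-1)n}Y^{(k-1)m}$ dividing $P_w$ in $\ZZ$, say $P_w=Q\cdot R$ with $R\in\ZZ$. The idea is to run the computation in the proof of Proposition \ref{ln}/\ref{lngeneral} \emph{backwards}: I want to produce $a,b\in\F$ with $ab\in\FF$, such that the total exponents of $x,y$ in $b^{-1}$ are $n,m$, and such that $P_{ab}=R$ (equivalently, the winding invariant of $ab$ is $R$); then the identity $a^kb^k=(ab)\cdot b^{-1}(ab)b\cdots b^{-k+1}(ab)b^{k-1}$ together with Proposition \ref{basica} gives $P_{a^kb^k}=Q\cdot P_{ab}=QR=P_w$, and since the winding invariant is injective on $\M'$ (Theorem \ref{conway}), this forces $a^kb^k=w$ in $\M$.

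The key construction: by Theorem \ref{conway} the winding invariant map $\W\colon\FF\to\ZZ$ is surjective, so choose $v\in\FF$ with $P_v=R$. Now set $b:=y^{-m}x^{-n}$ (so the total $x,y$-exponents of $b^{-1}=x^ny^m$ are indeed $n$ and $m$), and set $a:=vb^{-1}=v\,x^ny^m$. Then $ab=v\in\FF$ as required, $P_{ab}=P_v=R$, and the displayed telescoping identity yields $a^kb^k=w$ in $\M$ exactly as above. One should double-check the bookkeeping in the telescoping product: $a^kb^k=a^kb^k$, and writing $a=(ab)b^{-1}$ we get $a^k=(ab)b^{-1}(ab)b^{-1}\cdots(ab)b^{-1}$ ($k$ factors), so $a^kb^k=(ab)b^{-1}(ab)b^{-1}\cdots(ab)b^{-1}b^k=(ab)\,(b^{-1}(ab)b)\,(b^{-2}(ab)b^2)\cdots(b^{-(k-1)}(ab)b^{k-1})$ after reassociating the trailing $b^k$; applying Proposition \ref{basica}(ii),(iii) gives $P_{a^kb^k}=\sum_{i=0}^{k-1}X^{in}Y^{im}P_{ab}=Q\,R=P_w$.

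The only point requiring slight care — and the one I would call the main (mild) obstacle — is the passage from "$P_{a^kb^k}=P_w$ in $\ZZ$" to "$a^kb^k=w$ in $\M$". This is precisely where Theorem \ref{conway} is used: its content is that $\ker(\W\colon\FF\to\ZZ)=\FFF$, so $\W$ descends to a \emph{bijection} $\M'\to\ZZ$; thus two elements of $\FF$ have the same image in $\M=\F/\F''$ iff they have the same winding invariant. Both $a^kb^k$ and $w$ lie in $\FF$ (indeed $w\in\FF$ by hypothesis, and $a^kb^k\in\FF$ since $ab\in\FF$ and the conjugates $b^{-i}(ab)b^{i}$ all lie in $\FF$, $\FF$ being normal in $\F$), so the equality of winding invariants gives $a^kb^k=w$ in $\M$. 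No further estimates or case analysis are needed; the whole argument is essentially the reverse of the proof of Proposition \ref{lngeneral}, made possible by the surjectivity and kernel computation in Theorem \ref{conway}.
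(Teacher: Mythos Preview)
Your proof is correct and follows essentially the same approach as the paper: both directions use Theorem \ref{conway} (surjectivity to produce an element $v\in\FF$ with prescribed winding invariant, and the kernel computation to pass from equality of winding invariants to equality in $\M$), and the construction $a=vb^{-1}$, $b=y^{-m}x^{-n}$ is the same as the paper's up to the cosmetic choice $b=x^{-n}y^{-m}$. Your write-up is in fact more explicit about the telescoping identity and the verification that $a^kb^k\in\FF$, but the underlying argument is identical.
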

\begin{proof}
The proof of the implication $(i)\Rightarrow (ii)$ is the proof of Proposition \ref{lngeneral} together with the fact that $\FFF$ is the kernel of the winding invariant map.

Conversely, suppose $(ii)$ holds. And let $P\in \ZZ$ be such that $P_w=(1+X^nY^m+X^{2n}Y^{2m}+\ldots +X^{(k-1)n}Y^{(k-1)m})P$. Since the winding invariant map is surjective, there exists $u\in \FF$ with $P_u=P$. Let $b=x^{-n}y^{-m}$, $a=ub^{-1}\in \F$. By the proof of Proposition \ref{lngeneral}, $P_{a^kb^k}=P_w$. By Theorem \ref{conway}, $a^kb^k$ and $w$ differ in an element of $\FFF$. 
\end{proof}

The proof of Corollary \ref{coroengel2} shows that no Engel word is a product of two $k$-th powers in $\M$. In Proposition \ref{ppowers} we will see that Engel words can be products of three $k$-th powers in $\M$.
The fact that for a metabelian group $G$, $G'$ has a structure of $G/G'$-module can be traced back to Philip Hall \cite[2.3]{PHal}. Baumslag and Mahler \cite{BM6} implicitly used the isomorphism $W:\F'/\F''\to \ZZ$ to show that if $w,u \in \F$ are linearly independent in $\F/\F'$ and $k\ge 2$, then the equation $w^ku^k=z^k$ has no solution $z$ in $\F/\F''$. This result is a version for free metabelian groups of the famous Vaught conjecture that $w^2u^2=z^2$ for $w,u,z$ in a free group $F$ implies $wu=uw$ \cite{Lyn2}. In Proposition \ref{propbm} we give a simple proof of Baumslag and Mahler's result for $k=2$. The Vaught conjecture holds as well when the exponent $2$ is replaced by any $k\ge 2$ \cite{Sch} and when the three exponents in the equation are greater than or equal to $2$, even if they are different \cite{LS}. In $\F/\F''$, however, the equation $w^ku^l=z^m$ may have solutions when $w$ and $u$ are independent modulo $\F'$ \cite{Lyn3}. Other important open conjectures of equations over free groups which have been studied for metabelian groups are the Kervaire-Laudenbach conjecture and the Levin conjecture \cite{Rom}.

\section{Winding invariants for groups with relators in the commutator subgroup} \label{subcoco}

The winding invariant may be used to study groups other than free and free metabelian. Suppose $G$ is a group generated by two elements, say $G$ is presented by $\langle x,y | S \rangle$ for some subset $S\subseteq \F$. Moreover, suppose $S\subseteq \FF$. Then $G'=\FF / N(S)$, where $N(S) \vartriangleleft \F$ is the normal subgroup generated by $S$. Let $I=I(\W (S))=\langle P_{s} \rangle _{s\in S}$ be the ideal of $\ZZ$ generated by the polynomials $P_{s}$ with $s\in S$, and let $q:\ZZ \to \ZZ /I$ be the quotient map. Then $q\W:\FF \to \ZZ/I$ induces a map $\overline{\W}:G' \to \ZZ /I$. This is the \textit{winding invariant map} of $G$. We will see that $\ZZ / I$ is well defined up to isomorphism (given two presentations $\langle x,y| S\rangle$ and $\langle x,y| T \rangle$ of $G$ with $S,T\subseteq \FF$, $\ZZ / I(\W(S))$ and $\ZZ / I(\W(T))$ are isomorphic) and so is the winding invariant $\overline{W}$.


\begin{lema} \label{pregunta}
Let $S$ be a subset of $\FF$. An element $w\in \FF$ is in the subgroup $N(S)\F ''$ if and only if $P_w\in I(\W(S))$.
\end{lema}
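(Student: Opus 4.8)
The statement claims that $w \in N(S)\F''$ if and only if $P_w \in I(\W(S))$. I would prove both directions separately, using that $\W: \FF \to \ZZ$ is a surjective homomorphism with kernel $\FFF = \F''$ (Theorem \ref{conway}) and that $\W$ converts conjugation by $u$ into multiplication by the monomial $X^nY^m$ (Proposition \ref{basica}(iii)).

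For the forward direction, suppose $w \in N(S)\F''$. Then $w = v z$ with $v \in N(S)$ and $z \in \F''$. Since $\W(z) = 0$ and $\W$ is a homomorphism, $P_w = \W(v)$. Now $v$ is a product of conjugates $u_i s_i^{\epsilon_i} u_i^{-1}$ with $s_i \in S$, $u_i \in \F$, $\epsilon_i = \pm 1$; applying Proposition \ref{basica}, $\W(v) = \sum_i \epsilon_i X^{n_i}Y^{m_i} P_{s_i}$, which is visibly an element of the ideal $I(\W(S)) = \langle P_s \rangle_{s\in S}$. Hence $P_w \in I(\W(S))$.

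For the converse, suppose $P_w \in I(\W(S))$. Then $P_w = \sum_{j} f_j(X,Y) P_{s_j}$ for finitely many $s_j \in S$ and $f_j \in \ZZ$. Writing each $f_j$ as a $\Z$-linear combination of monomials $X^{n}Y^{m}$, and noting that by Proposition \ref{basica}(iii) the monomial $X^{n}Y^{m}P_{s_j}$ equals $\W\big(u\, s_j\, u^{-1}\big)$ for any $u \in \F$ with total exponents $n$ and $m$ (and a minus sign corresponds to inverting $s_j$), we can build an element $v \in N(S)$ — a product of conjugates of the $s_j^{\pm 1}$ — with $\W(v) = P_w$. Then $\W(w v^{-1}) = P_w - P_w = 0$, so by Theorem \ref{conway}, $w v^{-1} \in \FFF = \F''$, giving $w = (w v^{-1}) v \in \F'' N(S) = N(S)\F''$ (the last equality since $\F''$ is normal in $\F$).

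\textbf{Main obstacle.} There is essentially no obstacle: both directions are just bookkeeping with Proposition \ref{basica} and Theorem \ref{conway}. The only point requiring a line of care is the ``realization'' step in the converse — that every element of the ideal $I(\W(S))$ is realized as $\W(v)$ for some $v \in N(S)$ — but this is immediate once one observes that $I(\W(S))$ is spanned over $\Z$ by the elements $X^nY^m P_s$ ($s \in S$, $n,m \in \Z$), each of which is $\W$ of an explicit conjugate of $s$, and that $\W|_{\FF}$ is an additive homomorphism so sums of such polynomials are realized by products of the corresponding conjugates. I would present the proof in essentially two short paragraphs mirroring the two implications.
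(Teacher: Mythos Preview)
Your proof is correct and is essentially identical to the paper's own argument: both directions use Proposition \ref{basica}(iii) to translate between conjugation and multiplication by monomials, and the converse builds an element $v\in N(S)$ with $\W(v)=P_w$ from the monomial expansion of the ideal element, then invokes Theorem \ref{conway} to conclude $wv^{-1}\in\F''$. The only cosmetic difference is that the paper writes the conjugating elements explicitly as $x^ky^l$, whereas you allow arbitrary $u\in\F$ with the prescribed exponents, which is of course equivalent.
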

\begin{proof}
The inclusion $\W(N(S)\F '')\subseteq I(\W(S))$ follows from the fact that $\W$ is a homomorphism. Conversely, suppose $P_w\in I(\W(S))$, that is $P_w=Q_1P_{s_1}+Q_2P_{s_2}+\ldots +Q_nP_{s_n}$ for certain $s_1, s_2, \ldots, s_n\in S$ and $Q_1, Q_2, \ldots, Q_n\in \ZZ$. Note that if $s\in S$ and $k,l\in \Z$, then $P_{x^ky^lsy^{-l}x^{-k}}=X^kY^lP_{s}$. Therefore if $Q=\sum\limits_{k,l} a_{k,l}X^kY^l \in \ZZ$, then $u=\prod\limits_{k,l} (x^ky^lsy^{-l}x^{-k})^{a_{k,l}} \in \FF$ satisfies $P_u=QP_{s}$ and $u$ is in the normal subgroup of $\F$ generated by $s$ (the element $u$ is in fact well-defined up to rearrangement of the factors in the product). We do this for every term $Q_iP_{s_i}$ and obtain $w'\in N(S)$, such that $P_{w'}=P_w$. Then by Theorem \ref{conway}, $w\in N(S)\F ''$.  
\end{proof}

\begin{prop} \label{conwayg}
The winding invariant $\overline{\W}:G' \to \ZZ /I$ is a surjective group homomorphism and its kernel is $G''$.
\end{prop}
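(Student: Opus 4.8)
The plan is to mimic the proof of Theorem \ref{conway} (the Conway--Lagarias theorem) and to leverage Lemma \ref{pregunta}, which is the natural analogue of the ``$\ker\W=\F''$'' statement in the quotient setting. First I would note that $\overline{\W}$ is a homomorphism because $q\W$ is a composition of homomorphisms ($\W$ is a homomorphism by Proposition \ref{basica}(ii), and $q$ is a ring, hence group, homomorphism), and it descends to $G'=\FF/N(S)$ precisely because $q\W$ kills $N(S)$: for $s\in S$ we have $\W(s)=P_s\in I$, and $\W$ of a conjugate $usu^{-1}$ is $X^nY^mP_s$ by Proposition \ref{basica}(iii), which again lies in the ideal $I$. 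So $q\W(N(S))=0$ and $\overline{\W}$ is well defined on the quotient.

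Surjectivity is immediate from surjectivity of $\W$ (Theorem \ref{conway}): given $\overline{Q}\in\ZZ/I$, lift to $Q\in\ZZ$, choose $w\in\FF$ with $P_w=Q$ (possible by Theorem \ref{conway}), and then $\overline{\W}$ applied to the class of $w$ in $G'$ is $\overline{Q}$. Since $\ZZ/I$ is abelian, $G''\subseteq\ker\overline{\W}$ is automatic. The substance is the reverse inclusion $\ker\overline{\W}\subseteq G''$.

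For that, suppose $w\in\FF$ has $\overline{\W}(\overline{w})=0$ in $\ZZ/I$, i.e.\ $P_w=q\W(w)$ vanishes mod $I$, which means $P_w\in I(\W(S))$. By Lemma \ref{pregunta} this gives $w\in N(S)\F''$. Write $w=nw''$ with $n\in N(S)$ and $w''\in\F''$. Then modulo $N(S)$ (i.e.\ in $G=\F/N(S)$), $w$ and $w''$ have the same image, so the class of $w$ in $G'=\FF/N(S)$ equals the class of $w''$. Now $w''\in\F''$, and $\F''\subseteq\FF$, so $w''$ maps into $G'=\FF/N(S)$; its image lies in $(\FF/N(S))' = G''$ because $\F''=(\FF)'$ is generated by commutators of elements of $\FF$ and these map to commutators in $G'$. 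Hence the class of $w$ lies in $G''$, as desired.

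The main obstacle, such as it is, is the last bookkeeping step: being careful that $\F''=[\FF,\FF]$ indeed maps onto a subgroup of $[G',G']=G''$ under the quotient $\FF\twoheadrightarrow\FF/N(S)=G'$ — this is just functoriality of the derived subgroup under surjections, but one should phrase it cleanly so that the reader sees $w\in N(S)\F''$ forces $\overline{w}\in G''$. Everything else (the homomorphism property, well-definedness, surjectivity) is formal, and the real content has already been extracted into Lemma \ref{pregunta}, so the proof should be short. I would present it in essentially the order above: homomorphism and well-definedness, then surjectivity, then $G''\subseteq\ker\overline\W$ trivially, then the Lemma \ref{pregunta} argument for $\ker\overline\W\subseteq G''$.
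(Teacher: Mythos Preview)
Your proof is correct and follows essentially the same approach as the paper: surjectivity from Theorem \ref{conway}, the inclusion $G''\subseteq\ker\overline{\W}$ from abelianness of the target, and the reverse inclusion via Lemma \ref{pregunta}. You are a bit more explicit about well-definedness and about why $w\in N(S)\F''$ forces $\overline{w}\in G''$, but the structure and the key use of Lemma \ref{pregunta} are identical.
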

\begin{proof}
The fact that $\overline{\W}$ is surjective follows from the surjectivity of $\W:\FF \to \ZZ$. If $g\in G''$, then $g$ is the class $\overline{w}$ of a word $w\in \F ''$, so $\overline{\W}(g)=q\W(w)=0$ by Theorem \ref{conway}. Conversely, let $g\in \textrm{ker}(\overline{\W})$. Let $w\in \FF$ be such that $\overline{w}=g$. Then $q\W(w)=\overline{\W}(g)=0$. Thus $P_w=\W(w)\in I$. By Lemma \ref{pregunta}, $w\in N(S)\F ''$ and then $g=\overline{w} \in G''$.   
\end{proof}

In particular, if $\langle x,y| S\rangle$ and $\langle x,y | T \rangle$ are two presentations of $G$ with $S,T\subseteq \F'$, then $\ZZ /I(W(S))$ and $\ZZ /I(W(T))$ are isomorphic groups. In fact, using a result by Dunwoody (\cite[Theorem 4.10]{Dun}), it is possible to show that these two rings are isomorphic rings. This is proved in Proposition \ref{isorings} below.

Recall that the Thompson group $F$ admits the following presentation $\langle x,y | [xy^{-1},x^{-1}yx],$ $[xy^{-1},x^{-2}yx^2]\rangle$. One important open question about $F$ is whether this group is amenable. It is known that $F$ is not simple, but its commutator subgroup $F'$ is. Here we give a short proof of the fact that $F'$ is perfect using the last result.
 
\begin{prop}
The commutator subgroup of the Thompson group $F$ is perfect.
\end{prop}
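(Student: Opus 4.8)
The plan is to apply Proposition \ref{conwayg} to the Thompson group presentation $\langle x,y \mid [xy^{-1},x^{-1}yx],\ [xy^{-1},x^{-2}yx^2]\rangle$. Here the relators $r_1=[xy^{-1},x^{-1}yx]$ and $r_2=[xy^{-1},x^{-2}yx^2]$ both lie in $\FF$, since they are commutators, so the hypotheses of Section \ref{subcoco} apply with $S=\{r_1,r_2\}$. By Proposition \ref{conwayg}, the kernel of the winding invariant $\overline{\W}\colon F'\to \ZZ/I$ is $F''$, so to prove that $F'$ is perfect, i.e. $F'=F''$, it suffices to prove that $\ZZ/I=0$, equivalently that $I=\ZZ$, i.e. that the ideal generated by $P_{r_1}$ and $P_{r_2}$ is the whole ring.

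The key computational step is therefore to compute $P_{r_1}$ and $P_{r_2}$ using Proposition \ref{basica}. Write $u=xy^{-1}$; its total exponents of $x$ and $y$ are $1$ and $-1$, so in the notation of Proposition \ref{basica}(iv), conjugation/commutator effects are governed by the monomial $XY^{-1}$. For $v_1=x^{-1}yx$ we have $P_{r_1}=P_{[u,v_1]}$, and applying Proposition \ref{basica}(iv) with the roles appropriately arranged, $P_{[u,v_1]}=(XY^{-1}-1)P_{v_1'}$ — more precisely I would first rewrite each relator as $[u,v]=u\,(v u^{-1} v^{-1})$ and use parts (ii) and (iii) of Proposition \ref{basica}, together with the fact that $v_1,v_2\notin\FF$ so I must instead directly track the curve. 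Concretely: $r_1 = (xy^{-1})(x^{-1}yx)(yx^{-1})(x^{-1}y^{-1}x)$; I would trace the polygonal path $\gamma_{r_1}$ step by step in the grid and read off the winding numbers, and similarly for $r_2$. One gets $P_{r_1}$ and $P_{r_2}$ as explicit Laurent polynomials, and then I would exhibit polynomials $A,B\in\ZZ$ with $AP_{r_1}+BP_{r_2}=1$, or observe that already $P_{r_1}=\pm 1$ (a single square of winding number $\pm1$), which would immediately give $I=\ZZ$.

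I expect the main obstacle to be purely the bookkeeping of tracing $\gamma_{r_1}$ and $\gamma_{r_2}$ correctly in the grid $\Z\times\R\cup\R\times\Z$ and computing the winding numbers around each square center without sign errors, since these words have length $8$ and $10$ and their paths may have self-intersections and repeated edges (as in the second example of the excerpt). Once $P_{r_1}$ and $P_{r_2}$ are in hand, the algebra showing $\langle P_{r_1},P_{r_2}\rangle=\ZZ$ should be short; in fact I anticipate that one of the two relators already has winding invariant a unit monomial $\pm X^iY^j$, in which case $I=\ZZ$ trivially and we conclude $F'=F''$, i.e. $F'$ is perfect, directly from Proposition \ref{conwayg}.
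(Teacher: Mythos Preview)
Your overall strategy is exactly the paper's: apply Proposition~\ref{conwayg} to the given presentation and show that the ideal $I=\langle P_{r_1},P_{r_2}\rangle$ is all of $\ZZ$, whence $F'=\ker\overline{\W}=F''$. That part is fine.

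The gap is in your expectations for the computation. Neither $P_{r_1}$ nor $P_{r_2}$ is a unit; in fact tracing the curves gives
\[
P_{r_1}=1+X^{-1}-Y^{-1},\qquad P_{r_2}=1+X^{-1}+X^{-2}-Y^{-1}-X^{-1}Y^{-1},
\]
so your anticipated shortcut ``one of them is already $\pm X^iY^j$'' does not materialize. You genuinely need the Bezout-type identity, and the paper supplies it:
\[
(1+X)P_{r_1}-XP_{r_2}=1.
\]
Also, a minor correction: the reduced words $r_1,r_2$ have lengths $10$ and $14$, not $8$ and $10$, and you cannot use Proposition~\ref{basica}(iv) directly since $x^{-1}yx$ and $x^{-2}yx^2$ are not in $\FF$; tracing $\gamma_{r_i}$ (as you eventually propose) is the right way. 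Once you actually carry out that trace and write down the two polynomials, the one-line linear combination above finishes the proof exactly as in the paper.
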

\begin{proof}
Call $r_1=[xy^{-1},x^{-1}yx]$ and $r_2=[xy^{-1},x^{-2}yx^2]$ the relators of the presentation of $F$. Then $P_{r_1}=1+X^{-1}-Y^{-1}$, $P_{r_2}=1+X^{-1}+X^{-2}-Y^{-1}-X^{-1}Y^{-1}$ (see Figure \ref{thompson}).

\begin{figure}[h] 
\begin{center}
\includegraphics[scale=0.6]{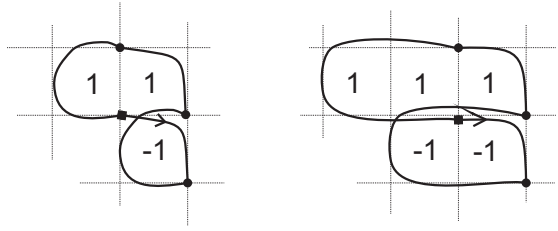}
\caption{The curves $\gamma_{r_1}$ and $\gamma_{r_2}$ together with the polynomials $P_{r_1}$ and $P_{r_2}$.}\label{thompson}
\end{center}
\end{figure}

Note that $(1+X)P_{r_1}-XP_{r_2}=1$. Therefore the ideal $I=\langle P_{r_1},P_{r_2} \rangle$ is the whole ring $\ZZ$, so $F'=\textrm{ker}(\overline{\W})=F''$ by Proposition \ref{conwayg}. That is, $F'$ is perfect.
\end{proof}

The previous example obviously generalizes to the following

\begin{coro} \label{gprimasimple}
Let $G$ be a group which admits a presentation $\langle x,y| S \rangle$ with $S\subseteq \FF$. Then $G$ is quasi-perfect (i.e. $G''=G'$) if and only if  $I(\W(S))=\ZZ$. In particular, if $I(\W(S))=\ZZ$ and $G'\neq 1$, $G$ is not residually solvable.
\end{coro}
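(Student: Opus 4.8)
The plan is to deduce this corollary directly from Proposition \ref{conwayg}, since it is essentially a restatement of that proposition together with an observation about the quotient ring $\ZZ/I$ being trivial. First I would recall that by Proposition \ref{conwayg}, the winding invariant map $\overline{\W}\colon G'\to \ZZ/I(\W(S))$ is a surjective homomorphism with kernel $G''$, so $G'/G''\cong \ZZ/I(\W(S))$ as groups (and in fact this is independent of the chosen presentation with relators in $\FF$). The group $G$ is quasi-perfect, i.e. $G''=G'$, precisely when $G'/G''$ is the trivial group, which happens if and only if $\ZZ/I(\W(S))=0$, i.e. if and only if $I(\W(S))=\ZZ$ (the ideal contains $1$).

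For the forward implication I would argue: if $G''=G'$ then $\overline{\W}$ is the zero map on $G'$, but it is also surjective onto $\ZZ/I$, hence $\ZZ/I=0$, which means $1\in I$, i.e. $I(\W(S))=\ZZ$. For the converse: if $I(\W(S))=\ZZ$ then $\ZZ/I=0$, so the codomain of $\overline{\W}$ is trivial, whence $\ker(\overline{\W})=G'$; but $\ker(\overline{\W})=G''$ by Proposition \ref{conwayg}, so $G''=G'$.

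The final assertion about residual solvability follows from a standard fact: if a nontrivial group $H$ is residually solvable, then the derived series $H\geq H'\geq H''\geq\cdots$ (or more generally a sufficiently long derived-type filtration) has trivial intersection, and in particular $H'\neq H$, because a perfect nontrivial group admits no nontrivial homomorphism to a solvable group. Concretely, if $G'\neq 1$ and $G''=G'$, take $1\neq g\in G'$; any homomorphism $G\to S$ with $S$ solvable kills $G^{(n)}$ for $n$ the derived length of $S$, and since $G^{(n)}=G'\ni g$, the element $g$ maps to $1$; hence $G$ is not residually solvable.

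I expect there is essentially no obstacle here — the content is entirely in Proposition \ref{conwayg} and the observation that a quotient ring vanishes iff the defining ideal is improper. The only point requiring a word of care is the last sentence, where one must invoke that a nontrivial perfect subgroup obstructs residual solvability; this is elementary but worth stating explicitly rather than leaving to the reader.
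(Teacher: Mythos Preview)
Your proposal is correct and matches the paper's approach exactly: the paper gives no explicit proof of this corollary, introducing it with ``The previous example obviously generalizes to the following,'' so the intended argument is precisely the one you wrote, namely reading off $G'/G''\cong \ZZ/I(\W(S))$ from Proposition~\ref{conwayg} and then noting that $G''=G'$ forces $G^{(n)}=G'$ for all $n\ge 1$, obstructing residual solvability when $G'\neq 1$.
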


In Subsection \ref{resfin} we will use a deeper result to obtain a class of non residually solvable one-relator groups.

The next result is an improved version of Proposition \ref{ln}, which applies for instance to groups presented by basic commutators of weight greater than or equal to 3 (see Lemma \ref{basiccom} below).

\begin{prop} \label{lnfuerte}
Let $G$ be a group presented by a presentation $\langle x,y | S\rangle$ with $S\subseteq \F'$. Moreover, suppose $P_{s}(1,1)\in \Z$ is even for every $s\in S$. Then the class of $[x,y]$ is not a product of two squares in $G$.
\end{prop}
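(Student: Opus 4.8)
The idea is to reduce the claim to the argument already used in the proof of Proposition~\ref{ln}, applied to the winding invariant $\overline{\W}$ of $G$ rather than $\W$ of $\F$. Suppose the class of $[x,y]$ in $G$ is a product of two squares, say $\overline{[x,y]}=g^2h^2$ for some $g,h\in G$. Lift $g$ and $h$ to words $a,b\in \F$, so that $[x,y]$ and $a^2b^2$ have the same image in $G$; equivalently $[x,y](a^2b^2)^{-1}\in N(S)$, and in particular $[x,y](a^2b^2)^{-1}\in N(S)\F''$. First I would observe that $a^2b^2\in\FF$ (since $[x,y]\in\FF$ and $\FF$ is characteristic, the total exponents of $x$ and $y$ in $a^2b^2$ are those of $[x,y]$, namely zero), hence $ab\in\FF$ as in the proof of Proposition~\ref{ln}.

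Next, by Proposition~\ref{basica} applied to the factorization $a^2b^2=(ab)\,b^{-1}(ab)b$, we get $P_{a^2b^2}=(1+X^nY^m)P_{ab}$ where $n,m$ are the total exponents of $x,y$ in $b^{-1}$. Evaluating at $X=Y=1$ shows $P_{a^2b^2}(1,1)=2\,P_{ab}(1,1)$ is even. Now I use Lemma~\ref{pregunta}: since $[x,y](a^2b^2)^{-1}\in N(S)\F''$, its winding invariant $P_{[x,y]}-P_{a^2b^2}=1-P_{a^2b^2}$ lies in the ideal $I(\W(S))$. Therefore $1-P_{a^2b^2}=\sum_i Q_iP_{s_i}$ for some $s_i\in S$ and $Q_i\in\ZZ$. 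Evaluating this identity at $X=Y=1$ gives $1-P_{a^2b^2}(1,1)=\sum_i Q_i(1,1)P_{s_i}(1,1)$, and by hypothesis every $P_{s_i}(1,1)$ is even, so the right-hand side is even, forcing $1-P_{a^2b^2}(1,1)$ to be even, i.e.\ $P_{a^2b^2}(1,1)$ odd. This contradicts the previous paragraph, completing the proof.

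The only mild subtlety — and the step I would be most careful about — is the passage from ``$g^2h^2$ in $G$'' to a statement in $\F$ modulo $N(S)\F''$, i.e.\ making sure Lemma~\ref{pregunta} is invoked on the right element. Everything else is an evaluation-at-$(1,1)$ parity computation, exactly parallel to the three-line proof of Proposition~\ref{ln}; the new content is only that the ideal $I(\W(S))$ consists of polynomials whose value at $(1,1)$ is even, which is immediate from the hypothesis on the generators $P_s$. I do not expect any genuine obstacle here, since the machinery (Proposition~\ref{basica}, Theorem~\ref{conway}, Lemma~\ref{pregunta}) is already in place.
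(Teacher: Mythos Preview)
Your proof is correct and follows essentially the same approach as the paper: both arguments reduce to the parity of the signed area, showing that any product of two squares has even value at $(1,1)$ while $[x,y]$ contributes $1$, and using the hypothesis to ensure this parity survives modulo $I(\W(S))$. The paper packages the argument via the composite map $\overline{\epsilon}:\ZZ/I(\W(S))\to\Z_2$ (evaluate at $(1,1)$ and reduce mod $2$), whereas you spell out the same computation directly; your invocation of Lemma~\ref{pregunta} is slightly more than needed (you only use the trivial direction, that $w\in N(S)$ implies $P_w\in I(\W(S))$), and the phrase ``$\FF$ is characteristic'' is not quite the right justification for $a^2b^2\in\FF$ --- what you actually use is that $N(S)\subseteq\FF$ since $S\subseteq\FF$.
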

\begin{proof}
The map $\epsilon: \ZZ \to \Z_2$ which evaluates in $(1,1)$ and reduces modulo $2$ induces a map $\overline{\epsilon}:\ZZ / I(\W(S))\to \Z_2$, by hypothesis. Let $g\in [G,G]$ be such that there exist $g_1,g_2\in G$ with $g=g_1^2g_2^2$. Let $u_1,u_2\in \F$ be representatives of $g_1$ and $g_2$. Let $u\in \FF$ be a representative of $g$. Thus $N(S)\ni u_1^2u_2^2u^{-1}=(u_1u_2)u_2^{-1}(u_1u_2)u_2u^{-1}$. Since $N(S)\vartriangleleft \FF$ and $u\in \FF$, then $u_1u_2\in \FF$. Therefore $\overline{\epsilon}\overline{\W}(g)=\epsilon \W ((u_1u_2)u_2^{-1}(u_1u_2)u_2)=\epsilon ((1+X^nY^m)P_{u_1u_2})=0\in \Z_2$. Here $n,m$ are the total exponents of $x,y$ in $u_2^{-1}$. On the other hand if $g=\overline{[x,y]}$, $\overline{\epsilon}\overline{\W}(g)=\epsilon \W([x,y])=\epsilon (1)=1\neq 0$.
\end{proof}

In the last result of this section we use the winding invariant $\overline{\W}:G\to \ZZ/I$ to give a necessary condition for an element $g\in G$ to be in the center $Z(G)$.


\begin{prop}
Let $G$ be a group with a presentation $\langle x,y | S \rangle$ for some $S\subseteq \FF$. If an element $g\in G$ is in the center of $G$, then any word $w\in \F$ representing $g$ satisfies the following. If $n,m$ are the total exponents of $x$ and $y$ in $w$, then $X^nY^m-1\in I(W(S))$. 
\end{prop}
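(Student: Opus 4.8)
The plan is to exploit the fact that $g$ central means $wxw^{-1}x^{-1}$ and $wyw^{-1}y^{-1}$ both represent the identity of $G$, hence lie in $N(S)$, and then to feed these relations into the winding invariant. More precisely, suppose $g\in Z(G)$ and let $w\in \F$ represent $g$, with total exponents $n,m$ of $x$ and $y$ in $w$. Since $[w,x]$ and $[w,y]$ both represent $1$ in $G$, they belong to $N(S)$, and they both lie in $\F'$ (indeed $[w,x]$ has total exponent $0$ in both generators). By Lemma \ref{pregunta}, $P_{[w,x]}$ and $P_{[w,y]}$ both belong to $I=I(\W(S))$.

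Next I would compute $P_{[w,x]}$ and $P_{[w,y]}$ in terms of $P_{[x,y]}$-type data. Writing $w$ as a word in $x^{\pm1},y^{\pm1}$, the curve $\gamma_{[w,x]}$ is $\gamma_w$ followed by $\gamma_x$ translated by $(n,m)$, followed by $\overline{\gamma}_w$ translated by $(1,0)$ — wait, more carefully: $\gamma_{wxw^{-1}x^{-1}} = \gamma_w \,(\gamma_x + (n,m))\,(\overline{\gamma}_w + (n+1,m))\,(\overline{\gamma}_x)$. A cleaner route is the identity $[w,x] = w\, x\, w^{-1}\, x^{-1}$ and the observation that, although $w\notin \F'$, the element $[w,x]$ is a product of conjugates of $[x,y]^{\pm 1}$, and its winding invariant can be read off. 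In fact the slickest formulation: by Proposition \ref{basica}(iii)-(iv) reasoning, for $w$ with exponent sum $(n,m)$ one has that $\gamma_{[w,x]}$ bounds the same region as the translate of $\gamma_w$ compared with $\gamma_w$, which yields $P_{[w,x]} = (X^nY^m - 1)\cdot(\text{something})$... Rather than guess, I would simply verify directly on generators: if $w = y$ then $[y,x]=yxy^{-1}x^{-1}$, $P=-Y^{-1}$; hmm, let me instead use the telescoping trick from Proposition \ref{lngeneral}'s proof. Write $w = x_1^{\epsilon_1}\cdots x_l^{\epsilon_l}$; then $[w,x] = w x w^{-1} x^{-1}$, and since $xw^{-1}x^{-1} = (xwx^{-1})^{-1}$, we get $[w,x] = w\,(xwx^{-1})^{-1}$, so $P_{[w,x]} = P_w - P_{xwx^{-1}}$ — but $w\notin \F'$ so these individual terms are not defined. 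The fix is to conjugate: $[w,x] = (w x^{-1} w^{-1} x)^{-1}$ conjugated appropriately; or better, note $[w,x]$ and $x^{-1}[x,w]x$ and similar are conjugate, and use a word $w' \in \F'$ obtained by appending $y^{-m}x^{-n}$, so that $P_{[w,x]}$ relates to $(X^{?}Y^{?}-1)P_{w y^{-m}x^{-n}}$-type expressions. The bookkeeping is routine modular arithmetic with the exponent sums.

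Granting the computation, the expected outcome is $P_{[w,x]} = (X^nY^m-1)Q_1$ and $P_{[w,y]} = (X^nY^m-1)Q_2$ for suitable $Q_1,Q_2\in\ZZ$ that together generate the unit ideal — indeed taking $w$ itself, the "defect" polynomials arising from commuting $w$ past $x$ and past $y$ can be combined (via an identity like $(X-1)\cdot(\text{shift in }y) - (Y-1)\cdot(\text{shift in }x) = \pm(X^nY^m-1)$, reflecting that $[x,y]$ measures the $2$-cell) to produce exactly $X^nY^m - 1$. Hence $X^nY^m-1 \in I$. The precise combination to check is the cocycle/Fox-calculus identity $w$ acts on the relation module, and $(x-1)\partial_y - (y-1)\partial_x$ applied to the commutator relation gives a unit multiple of $(\text{im }w - 1)$; this is the standard fact that $\langle x,y\mid [x,y]\rangle$ has a single $2$-cell whose attaching map is the commutator.

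The main obstacle is purely organizational: getting the winding invariant of the conjugation-commutators $[w,x], [w,y]$ written correctly when $w\notin\F'$, since $\W$ is only defined on $\F'$. I would handle this by replacing $w$ with $\tilde w = w y^{-m} x^{-n} \in \F'$ (same image in $G$ modulo nothing relevant to the argument — actually $\tilde w$ represents $g$ times a power of $x$ and $y$, so I must instead argue with $[w,x]$ directly as an element of $\F'$ and expand it as a product of conjugates of $[x,y]^{\pm1}$ whose exponents I can track), then invoke Proposition \ref{basica} and Lemma \ref{pregunta}. Once $P_{[w,x]}$ and $P_{[w,y]}$ are identified as $(X^nY^m-1)$ times explicit polynomials lying in $I$, extracting $X^nY^m - 1 \in I$ from their combination is immediate, and this is exactly the claimed necessary condition.
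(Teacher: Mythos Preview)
Your overall strategy---use that $[w,x],[w,y]\in N(S)$, hence $P_{[w,x]},P_{[w,y]}\in I$, and then combine---is sound, but the key intermediate claim you make is false: $P_{[w,x]}$ is \emph{not} of the form $(X^nY^m-1)Q_1$. For instance take $w=y$, so $(n,m)=(0,1)$; then $[y,x]=yxy^{-1}x^{-1}$ has $P_{[y,x]}=-1$, which is not divisible by $Y-1$. What is actually true is the Fox-type identity
\[
(1-Y)\,P_{[w,x]}-(1-X)\,P_{[w,y]}=X^nY^m-1,
\]
and this \emph{would} finish your argument since the left side lies in $I$. You gesture at exactly this (``$(X-1)\cdot(\text{shift in }y)-(Y-1)\cdot(\text{shift in }x)=\pm(X^nY^m-1)$''), but you never prove it, and the bookkeeping you attempt for $P_{[w,x]}$ when $w\notin\F'$ stays unresolved. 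So the proposal, as written, has a genuine gap: one wrong structural claim and one correct identity left unestablished.

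The paper avoids all of this with a single clean choice: instead of commuting $w$ with $x$ and $y$ separately, commute $w$ with $[x,y]$. Concretely, set $u=[x,y]w^{-1}$; since $g$ is central, $[w,u]\in N(S)$, and one checks $[w,u]=[w,[x,y]]$. Now $[x,y]\in\F'$, so Proposition~\ref{basica}(iv) applies directly and gives $P_{[w,[x,y]]}=(X^nY^m-1)P_{[x,y]}=X^nY^m-1$, done. The moral: because the winding invariant is only defined on $\F'$, commuting $w$ with an element already in $\F'$ lets you invoke the existing machinery without any Fox-calculus detour.
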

\begin{proof}
Suppose $\overline{w}=g\in Z(G)$. Let $u=[x,y]w^{-1}\in \F$. By assumption $\overline{[w,u]}=1 \in G$, so $0=\overline{\W}(\overline{[w,u]})=q\W([w,u])$. But $\W([w,u])=P_{[w,u]}=P_{[w,[x,y]]}=(X^nY^m-1)P_{[x,y]}=X^nY^m-1$, by Proposition \ref{basica}. Then $q(X^nY^m-1)=0 \in \ZZ /I$.
\end{proof}

Given a group $G$ presented by a presentation with two generators and relators in $\FF$ there is a method which decides for every $w\in \F$ whether $g=\overline{w}\in G$ is in $G''$. First, $g\in [G,G]$ if and only if $w\in \FF$. Now, if $g\in G'$, then by Proposition \ref{conwayg}, $g\in G''$ if and only if $0=\overline{W}(g)$, which is equivalent to $P_w$ being in $I$, the ideal of $\ZZ$ generated by the winding invariants of the relators in the presentation. Now, ideal membership can be determined using Gr\"obner bases. Usually Gr\"obner bases are used for studying ideals in the polynomial ring $\Z [X,Y]$, but the ideas can be extended to Laurent polynomial rings (see \cite[Section 10.7]{Sim2}. Section 11.6 of the same book studies polycyclicity of the group $G/G''$ using Gr\"obner bases).     

\section{Lower central series and a problem of G. Baumslag and R. Mikhailov}

Recall that the lower central series of a group $G$ is defined inductively by $\gamma_1(G)=G$ and $\gamma_{n+1}(G)=[G,\gamma _n(G)]$. In particular $\gamma_2(\F)=\FF$ and $\gamma_3(\F)=[\F,\FF]$.

Wick's Theorem \ref{wicks} provides an algorithm for deciding whether an element $w\in \F$ is a commutator. On the other hand, Philip Hall's collecting process \cite[Chapter 11]{Hal} gives an algorithm that decides whether a given word $w$ lies in $\gamma_n(\F)$ for any given $n$. The following result gives a necessary condition for a word $w$ to be a simple commutator $[a_1,a_2,\ldots , a_k]$. Recall that the simple commutator $[a_1,a_2,\ldots , a_k]$ of weight $k$ is defined recursively as $[a_1,[a_2,\ldots , a_k]]$. Of course, the simple commutators of weight $k$ lie in $\gamma_k(\F)$ and for $k=2$ they are just the usual commutators.

\begin{prop} \label{lowercs}
Let $w\in \FF$ and let $k\ge 1$. Then the following are equivalent:

\noindent (i) There exist $a_1,a_2,\ldots , a_{k}\in \F$ and $u\in \FF$ such that $w=[a_1,a_2,\ldots , a_k,u]$ in $\M=\F/\FFF$.

\noindent (ii) There are monomials $M_i=X^{n_i}Y^{m_i}$ for $i=1,2,\ldots, k$ such that $(M_1-1)(M_2-1)\ldots (M_{k}-1)$ divides $P_w$ in $\ZZ$.
\end{prop}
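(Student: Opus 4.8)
The plan is to mimic the structure of the proof of Proposition \ref{lnconverse}, using that the winding invariant identifies $\M'$ with $\ZZ$ as a $\Z[\Z\times\Z]$-module and that conjugation by an element with total exponents $(n,m)$ corresponds to multiplication by $X^nY^m$ (Proposition \ref{basica}(iii), (iv)). The key computational fact is Proposition \ref{basica}(iv): for $v\in \FF$ and $a\in\F$ with total exponents $n,m$, one has $P_{[a,v]}=(X^nY^m-1)P_v$. Iterating this, if $w=[a_1,a_2,\ldots,a_k,u]$ with $u\in\FF$ and $a_i$ having total exponents $(n_i,m_i)$, then $P_w=(M_1-1)(M_2-1)\cdots(M_k-1)P_u$ where $M_i=X^{n_i}Y^{m_i}$. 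Since equality in $\M$ means the two words differ by an element of $\FFF=\ker(\W)$ (Theorem \ref{conway}), this immediately gives $(i)\Rightarrow(ii)$ with the divisor being exactly this product of binomials.

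For the converse $(ii)\Rightarrow(i)$, suppose $P_w=(M_1-1)(M_2-1)\cdots(M_k-1)P$ for some $P\in\ZZ$ and monomials $M_i=X^{n_i}Y^{m_i}$. By surjectivity of $\W$ (Theorem \ref{conway}) choose $u\in\FF$ with $P_u=P$. For each $i$, pick $a_i\in\F$ with total exponents $(n_i,m_i)$ — for instance $a_i=x^{n_i}y^{m_i}$. Then form the iterated commutator $v=[a_1,a_2,\ldots,a_k,u]$. By the computation above, $P_v=(M_1-1)\cdots(M_k-1)P_u=(M_1-1)\cdots(M_k-1)P=P_w$. Hence $\W(v)=\W(w)$, so $v$ and $w$ differ by an element of $\FFF$ by Theorem \ref{conway}, i.e. $w=[a_1,a_2,\ldots,a_k,u]$ in $\M$. (One should note that $[a_2,\ldots,a_k,u]$, being a commutator, already lies in $\FF$, so repeatedly applying (iv) is legitimate; more precisely, one proves by a short induction on $k$ that $P_{[a_1,\ldots,a_k,u]}=(M_1-1)\cdots(M_k-1)P_u$ whenever $u\in\FF$, the base case $k=0$ being trivial and the inductive step being a single application of Proposition \ref{basica}(iv) to the element $[a_2,\ldots,a_k,u]\in\FF$.)

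The only genuinely delicate point is making sure Proposition \ref{basica}(iv) is being applied correctly: in part (iv) the second slot must be an element of $\FF$, and the ``total exponents'' that appear are those of the \emph{first} slot. In the iterated commutator $[a_1,[a_2,\ldots,[a_k,u]\ldots]]$ the inner bracket $[a_k,u]$ lies in $\FF$ since $u\in\FF$ and $\FF$ is normal; inductively every nested bracket from $[a_k,u]$ outward lies in $\FF$, so each application of (iv) peels off exactly one factor $(M_i-1)$. There is no obstacle beyond bookkeeping; everything reduces to the ring structure of $\ZZ$ together with Theorem \ref{conway}. I would write the induction out explicitly to avoid any ambiguity about the order in which the factors $M_i-1$ appear, though since $\ZZ$ is commutative the order is immaterial for the divisibility statement.
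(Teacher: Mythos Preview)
Your proposal is correct and follows essentially the same approach as the paper: iterate Proposition \ref{basica}(iv) to peel off the factors $(M_i-1)$ for the forward direction, and for the converse use surjectivity of $\W$ to find $u$ with $P_u=P$, set $a_i=x^{n_i}y^{m_i}$, and invoke Theorem \ref{conway} to conclude equality in $\M$. Your write-up is in fact slightly more careful than the paper's, since you make explicit why the inner brackets lie in $\FF$ so that (iv) applies at each step.
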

\begin{proof} Suppose (i) holds. Then $P_w=P_{[a_1,a_2,\ldots , a_k,u]}$ by Theorem \ref{conway}. By Proposition \ref{basica}, $P_w=(X^{n_1}Y^{m_1}-1)P_{[a_2,\ldots , a_k, u]}$, where $n_1,m_1$ are the total exponents of $x$ and $y$ in $a_1$. By induction $P_w=(M_1-1)(M_2-1)\ldots (M_{k}-1)P_u$. Here $M_i=X^{n_i}Y^{m_i}$ where $n_i,m_i$ are the total exponents in $a_i$.

Conversely if (ii) holds, there exists $P\in \ZZ$ such that $P_w=(M_1-1)(M_2-1)\ldots (M_{k}-1)P$. By Theorem \ref{conway}, there exists $u\in \FF$ with $P_u=P$. Then $$[x^{n_1}y^{m_1},x^{n_2}y^{m_2}, \ldots x^{n_k}y^{m_k}, u]$$ has the same winding invariant as $w$, so by Theorem \ref{conway} they are equal modulo $\FFF$.
\end{proof} 

%

\begin{coro} \label{derivadas}
Let $k\ge 3$ and let $w\in \gamma_k(\F)$. Then $$ \frac{\partial ^{i} P_w}{\partial ^j X \partial ^{i-j} Y} (1,1)=0$$ for every $0\le i\le k-3$, $0\le j\le i$.
\end{coro}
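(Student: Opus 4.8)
The plan is to derive this directly from Proposition \ref{lowercs}. First I would note that since $w\in\gamma_k(\F)$ and $k\ge 3$, we may write $w$ as a product of simple commutators of weight $k$ (and their conjugates and inverses), and since the winding invariant is a homomorphism it suffices to treat the case where $w$ itself is a conjugate of a simple commutator $[a_1,a_2,\ldots,a_k]$ of weight $k$; conjugation only multiplies $P_w$ by a monomial $X^aY^b$, which does not affect whether the partial derivatives of order $\le k-3$ vanish at $(1,1)$ provided we know this for $P_{[a_1,\ldots,a_k]}$ — more precisely, one reduces to showing the vanishing statement is preserved under multiplication by a monomial, which is the routine point below. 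Actually the cleanest route is: by Theorem \ref{conway} and the computation in the proof of Proposition \ref{lowercs}, writing $a_k = u$ we have $P_w$ equal to a $\Z$-linear combination of products $(M_1-1)(M_2-1)\cdots(M_{k-1}-1)\,P_u$ with each $M_i=X^{n_i}Y^{m_i}$ a monomial. So $P_w$ lies in the ideal generated by all products of $k-1$ elements of the form $X^nY^m-1$.

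The key step is then the following observation about such factors. For a monomial $M=X^nY^m$, the function $(X,Y)\mapsto M-1 = X^nY^m-1$ vanishes at $(1,1)$, so $M-1$ lies in the maximal ideal $\mathfrak{m}=(X-1,Y-1)$ of the local ring of $\ZZ$ at $(1,1)$. Hence any product $(M_1-1)\cdots(M_{k-1}-1)$ lies in $\mathfrak{m}^{k-1}$, and therefore so does $P_w$. Now a Laurent polynomial $f\in\mathfrak{m}^{t}$ has all partial derivatives of order $\le t-1$ vanishing at $(1,1)$: this is the standard fact that $\mathfrak{m}^t$ is exactly the set of functions vanishing to order $\ge t$ at the point, which one can see by the change of variables $X=1+s$, $Y=1+r$ (legitimate since $X,Y$ are units near $(1,1)$, so $X^{-1},Y^{-1}$ expand as power series in $s,r$), under which $\mathfrak m$ becomes $(s,r)$ and "vanishing to order $t$" is literally "no monomials $s^ar^b$ with $a+b<t$ appear." Applying this with $t=k-1$ gives that all partials of order $i\le k-2$ vanish at $(1,1)$; in particular those of order $i$ with $0\le i\le k-3$ vanish, which is the claim. (One gets in fact the slightly stronger statement with $i\le k-2$.)

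The only genuinely careful point — and the one I would expect to be the main obstacle to write cleanly — is justifying that passing to Laurent polynomials causes no trouble: one must check that $\ZZ\subseteq \Z[[s,r]]$ via $X=1+s$, $Y=1+r$, $X^{-1}=\sum_{j\ge 0}(-1)^j s^j$, etc., is a ring map, that it is injective, and that it carries $\mathfrak m^t$ into $(s,r)^t$ and reflects the vanishing-of-derivatives condition; alternatively one avoids power series entirely by clearing denominators, i.e. multiply $P_w$ by a suitable monomial $X^NY^N$ to land in $\Z[X,Y]$, note this monomial is a unit and equals $1$ at $(1,1)$ together with all its relevant derivative contributions being lower-order, and then argue inside $\Z[X,Y]$ with the honest maximal ideal $(X-1,Y-1)$, where the statement "$f\in(X-1,Y-1)^t$ iff $f$ and all its partials of order $<t$ vanish at $(1,1)$" is completely standard (Taylor expansion around $(1,1)$). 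Either way the argument is short; I would present the clearing-denominators version to keep everything polynomial, and state the substitution $X\mapsto 1+s$, $Y\mapsto 1+r$ as the bookkeeping device that makes the order-of-vanishing claim transparent.
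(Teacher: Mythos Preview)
Your overall strategy is correct and is essentially the paper's argument repackaged in ideal-theoretic language: the paper proceeds by induction on $k$, peeling off one factor $(X^nY^m-1)$ at a time via $P_{[a,u]}=(X^nY^m-1)P_u$ for $u\in\gamma_{k-1}(\F)$ and implicitly invoking the Leibniz rule, whereas you unfold the recursion all at once to land in a power of $\mathfrak m=(X-1,Y-1)$ and then read off the order of vanishing. Both routes are short and amount to the same computation.

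There is, however, an off-by-one error in your execution. A weight-$k$ simple commutator is $[a_1,\ldots,a_k]=[a_1,[a_2,\ldots,[a_{k-1},a_k]\cdots]]$, and the innermost object lying in $\F'$ is $u=[a_{k-1},a_k]$, not $a_k$ itself; you cannot write $P_{a_k}$ since $a_k$ need not lie in $\F'$. Applying Proposition~\ref{basica}(iv) repeatedly therefore yields
\[
P_{[a_1,\ldots,a_k]}=(M_1-1)(M_2-1)\cdots(M_{k-2}-1)\,P_{[a_{k-1},a_k]},
\]
with $k-2$ factors of the form $M_i-1$, not $k-1$. Hence $P_w\in\mathfrak m^{\,k-2}$, and you recover exactly the stated bound $i\le k-3$, not the ``slightly stronger'' $i\le k-2$. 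That stronger claim is in fact false: for $k=3$ take $w=e_2=[y,[y,x]]\in\gamma_3(\F)$, so $P_w=-(Y-1)$ and $\partial P_w/\partial Y\,(1,1)=-1\neq 0$.

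One minor further point: the paper's inductive version uses only the definition $\gamma_k(\F)=[\F,\gamma_{k-1}(\F)]$, so it sidesteps the (true, but additional) fact that $\gamma_k(\F)$ is generated by weight-$k$ simple commutators.
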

\begin{proof}
Since $w\in \gamma_k(\F)=[\F, \gamma_{k-1}(\F)]$, $w$ is a product of commutators $[a,u]$ with $a\in \F$ and $u\in \gamma_{k-1}(\F)$. Therefore we may assume $w=[a,u]$ is one such commutator. By Proposition \ref{basica}, $P_w=(X^nY^m-1)P_u$, and then the corollary follows by induction.
\end{proof}

Let $w\in \FF$ and suppose $P_{w}=\sum\limits_{i,j} a_{i,j} X^iY^j \neq 0$ . Let $(n,m)\in \Z^2$ be such that $a_{n,m}\neq 0$ and $a_{i,j}=0$ for every $(n,m)\neq (i,j)\in \Z^2$ with $i\ge n$ and $j\ge m$. There is always at least one pair $(n,m)$ with this property. On the other hand, let $(n',m')$ be such that $a_{i,j}=0$ for every $(i,j)\in \Z^2$ with $i< n'$ or $j< m'$. Then $w\notin \gamma_{n+m-n'-m'+3}(\F)$. Otherwise, $u=x^{-n'}y^{-m'}wy^{m'}x^{n'}$ would also be in $\gamma_{n+m-n'-m'+3}(\F)$ and $P_u=X^{-n'}Y^{-m'}P_w\in \Z[X,Y]$. But $\frac{\partial ^{n+m-n'-m'} P_{u}}{\partial ^{n-n'} X \partial ^{m-m'} Y} (1,1)=(n-n')!(m-m')!a_{n,m}\neq 0$, which contradicts Corollary \ref{derivadas}.

Note that in particular we deduce that for every $w\in \FF \smallsetminus \FFF$, there exists $k\ge 3$ such that $w$ is not in $\gamma_k(\F)$. In fact any free group $F$ is residually nilpotent, i.e. $\bigcap\limits_{k\ge 1} \gamma_k(F)=0$. The inclusion $\bigcap\limits_{k\ge 1} \gamma_k(G)\subseteq G''$ however, does not hold for every group. For instance the alternating group $A_4$ is metabelian ($A_4''=0$) but not nilpotent.

\begin{obs}
$\F$ is not nilpotent, and moreover we can easily prove that $\gamma_k(\F) \nsubseteq \FFF$ for every $k\ge 1$. We may assume that $k\ge 2$. The Engel word $e_{k-1}=[\underbrace{y,y,\ldots, y}_{k-1},x]\in \gamma_k(\F)$ and $P_{e_{k-1}}=-(Y-1)^{k-2}\neq 0$. By Theorem \ref{conway}, $w\notin \FFF$.
\end{obs}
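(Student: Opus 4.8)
The plan is to produce, for every $k\ge 1$, an explicit element of $\gamma_k(\F)$ that does not lie in $\FFF$; non-nilpotency of $\F$ then comes for free. The case $k=1$ is immediate, since $\gamma_1(\F)=\F$ already contains the generator $x$, which is not even in $\FF$. So I would assume $k\ge 2$ and take as witness the Engel word $e_{k-1}=[\underbrace{y,\ldots,y}_{k-1},x]$, a simple commutator of weight $k$; by the definition of the lower central series it belongs to $\gamma_k(\F)$, and since its total $x$- and $y$-exponents vanish it lies in $\FF$, so its winding invariant is defined.

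The core of the argument is a one-line induction computing this invariant. First I would evaluate $P_{e_1}$ straight from Definition \ref{definicion}: the curve $\gamma_{[y,x]}$ runs $(0,0)\to(0,1)\to(1,1)\to(1,0)\to(0,0)$, a clockwise loop around the single unit square it bounds, so the only nonzero winding number is $a_{0,0}=-1$ and $P_{e_1}=-1\in\ZZ$. For $n\ge 2$ we have $e_n=[y,e_{n-1}]$ with $e_{n-1}\in\FF$, so Proposition \ref{basica}(iv) applies with $u=y$ and gives $P_{e_n}=(Y-1)P_{e_{n-1}}$. Iterating, $P_{e_{k-1}}=-(Y-1)^{k-2}$, a nonzero element of $\ZZ$ for every $k\ge 2$ (for $k=2$ this just reads $P_{e_1}=-1$).

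It remains to invoke Theorem \ref{conway}: the kernel of $\W\colon\FF\to\ZZ$ equals $\FFF$, so $P_{e_{k-1}}\neq 0$ forces $e_{k-1}\notin\FFF$. Hence $\gamma_k(\F)\nsubseteq\FFF$ for all $k\ge 1$; in particular every term of the lower central series is nontrivial, so $\F$ is not nilpotent. There is no genuine obstacle here — the only detail worth flagging is that the base case $P_{e_1}=-1$ has to be read off the geometric definition directly, since Proposition \ref{basica}(iv) cannot be applied to $[y,x]$ (its inner entry $x$ is not in $\FF$); once the base case is in hand the induction is forced.
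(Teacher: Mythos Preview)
Your argument is correct and follows exactly the same route as the paper: exhibit the Engel word $e_{k-1}\in\gamma_k(\F)$, compute $P_{e_{k-1}}=-(Y-1)^{k-2}$ via Proposition~\ref{basica}(iv) from the base case $P_{e_1}=-1$, and conclude $e_{k-1}\notin\FFF$ by Theorem~\ref{conway}. You merely spell out a few details (the $k=1$ case, the geometric computation of $P_{e_1}$, the reason the induction must start there) that the remark leaves implicit.
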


In \cite{BM2}, G. Baumslag and R. Mikhailov proposed the following problem: are all one-relator groups $G$ defined by basic commutators residually torsion-free nilpotent? The aim of this section is to prove that the $\textrm{mod } G''$-version of this holds. First we recall the definitions of the concepts involved.

Recall that $[a,b]$ denotes $aba^{-1}b^{-1}$. Let $\mathbb{F}_n$ be the free group with generators $x_1,x_2,\ldots,$ $x_n$. The \textit{basic commutators of weight 1} are the inverses $x_i^{-1}$ of the generators \footnote{Because of our notation for commutators, we have chosen our definition of basic commutator in such a way that it differs from the standard by the automorphism of $\mathbb{F}_n$ which inverts each generator $x_i$.}. Suppose that $m>1$, that the set of basic commutators of weight $<m$ has already been constructed and, moreover, a total order in that set has been defined. A \textit{basic commutator of weight $m$} is a commutator $[u^{-1},v^{-1}]$ where $u$ and $v$ are basic commutators of weights $l,k<m$ such that $l+k=m$, $u>v$, and, if $u$ is a basic commutator $[u_1^{-1},u_2^{-1}]$ of weight $>1$, then $v\ge u_2$. Extend the total order to the set of basic commutators of weight $\le m$ in such a way that every commutator of weight $m$ is greater than any commutator of weight $<m$. For instance, for $n=2$, $\F$ is generated by $x,y$ and if we order $x^{-1}>y^{-1}$, then the basic commutators of weight $\le 3$ are $y^{-1},x^{-1}, [x,y], [[x,y]^{-1},y], [[x,y]^{-1},x]$. Magnus proved \cite{Mag} that the basic commutators of weight $m$ are a basis of the free abelian group $\gamma_m(\mathbb{F}_n) / \gamma_{m+1}(\mathbb{F}_n)$. By means of the collecting process, any word $w\in \mathbb{F}_n$ can be rewritten as a product $c_1^{n_1} c_2^{n_2} \ldots c_k^{n_k}$ where $c_1<c_2<\ldots <c_k$ are basic commutators and $n_1, n_2,\ldots ,n_k$ are integers. This process can be used to determine the maximum number $m$ such that $w\in \gamma_m(\mathbb{F}_n)$. For more details on basic commutators see \cite[Chapter 11]{Hal}.

\begin{lema} \label{basiccom}
If $w\in \F$ is a basic commutator of weight $m>1$, $P_w=(X-1)^l(Y-1)^s$ for certain $l,s\ge 0$ with $l+s=m-2$ or $P_w=0$. Moreover, every polynomial $(X-1)^l(Y-1)^s$ with $l,s\ge 0$, $l+s=m-2$, is the winding invariant of a basic commutator of weight $m$. 
\end{lema}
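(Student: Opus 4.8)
The plan is to prove this by induction on the weight $m$, using the recursive structure of basic commutators together with Proposition \ref{basica}(iv). The base case $m=2$: the only basic commutator of weight $2$ is $[x,y]$ (since the generators are ordered $x^{-1}>y^{-1}$, and $[x^{-1{}^{-1}},y^{-1{}^{-1}}]=[x,y]$), and $P_{[x,y]}=1=(X-1)^0(Y-1)^0$, with $l+s=0=m-2$, so the statement holds and is never $0$ here. For the inductive step, a basic commutator $w$ of weight $m>2$ has the form $w=[u^{-1},v^{-1}]$ where $u,v$ are basic commutators of weights $l',k'<m$ with $l'+k'=m$ and $u>v$. Crucially, since $m\ge 3$, at least one of $u,v$ has weight $\ge 2$; in fact $u$ has weight $\ge 2$ unless both have weight $1$, which forces $l'=k'=1$ and $m=2$, excluded. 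So $u$ is a basic commutator of weight $l'\ge 2$, hence $u\in\FF$, and by the inductive hypothesis $P_u=(X-1)^a(Y-1)^b$ with $a+b=l'-2$, or $P_u=0$.

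First I would handle the case where $v$ has weight $1$, so $v\in\{x,y\}$, i.e. $v^{-1}\in\{x^{-1},y^{-1}\}$. Then $w=[u^{-1},v^{-1}]=[v,u]^{\text{conj}}$... more carefully: applying Proposition \ref{basica}(iv) with the roles set so that $w=[u^{-1},v^{-1}]$, I get $P_w=(X^nY^m-1)P_{v^{-1}}$ where... wait — Proposition \ref{basica}(iv) is stated for $[u,w]$ with $w\in\FF$; here the element in $\FF$ is $u^{-1}$ (as $u$ has weight $\ge 2$), so I should write $w=[v^{-1},u^{-1}]^{-1}$ or better reorganize: $[u^{-1},v^{-1}] = u^{-1}v^{-1}uv$. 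Using $P_{[a,z]}=(X^nY^m-1)P_z$ for $z\in\FF$ and $a\in\F$ with total exponents $n,m$, and noting $[u^{-1},v^{-1}]=(v^{-1})(v u^{-1} v^{-1})(v)\cdot$(rearranged)… the clean way is: $[u^{-1},v^{-1}]$ and $[v^{-1},u^{-1}]$ are inverses of conjugates of each other, so their winding invariants differ by a unit times $-1$. Since $u^{-1}\in\FF$, I apply the formula to $[v^{-1},u^{-1}]=(X^{n}Y^{m}-1)P_{u^{-1}}$ where $n,m$ are the total exponents of $x,y$ in $v^{-1}$; these are $(-1,0)$ or $(0,-1)$. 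So $P_w = \pm X^iY^j(X^{-1}-1)P_u$ or $\pm X^iY^j(Y^{-1}-1)P_u$. Since $X^{-1}-1 = -X^{-1}(X-1)$ and multiplication by monomials and units $\pm1$ preserves the property of being $\pm(X-1)^l(Y-1)^s$ up to — hmm, but the claim wants $P_w$ exactly equal to $(X-1)^l(Y-1)^s$, not up to sign/units. Here the hard part surfaces: I must track signs and monomial factors carefully. Because $P_w$ is genuinely the winding invariant of a specific word, the signs work out; I expect that the recursive definition of basic commutators (the constraint $u>v$ and $v\ge u_2$) combined with the sign in Proposition \ref{basica}(iv) conspires so that the monomial powers are all nonnegative and the overall sign is $+1$. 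I would verify this by computing $P_{[[x,y]^{-1},y]}$ and $P_{[[x,y]^{-1},x]}$ directly (the weight-$3$ basic commutators listed in the text) and checking they equal $(Y-1)$ and $(X-1)$ respectively, which pins down the sign convention, then propagate.

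For the case where $v$ also has weight $\ge 2$, both $u^{-1},v^{-1}\in\FF$, so $P_{[u^{-1},v^{-1}]}=(X^nY^m-1)P_{v^{-1}}$ where $n,m$ are the total exponents in $u^{-1}$ — but $u\in\FF$ means $n=m=0$, so $X^nY^m-1=0$ and hence $P_w=0$. This is consistent with the ``or $P_w=0$'' clause. (This matches: a basic commutator all of whose constituents have weight $\ge 2$ lies deep in the derived series and its winding invariant vanishes.) So the nonzero case only arises when exactly one of the two innermost arguments traces back to a weight-$1$ generator at each nesting level — which, unwinding the recursion, shows $P_w\ne 0$ precisely for the ``right-normed'' basic commutators $[\dots[[x,y]^{-1},z_3],\dots,z_m]$ with each $z_i\in\{x,y\}$, whose winding invariant is $\pm(X-1)^l(Y-1)^s$ where $l$ counts the $x$'s and $s$ the $y$'s among $z_3,\dots,z_m$. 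Then $l+s=m-2$ as required. Conversely, for the ``moreover'' statement: given any $l,s\ge0$ with $l+s=m-2$, the commutator $[[x,y]^{-1},\underbrace{x,\dots,x}_{l},\underbrace{y,\dots,y}_{s}]$ — interpreted in the right-normed/Magnus-ordered sense — is a basic commutator of weight $m$ (one checks it satisfies the ordering constraints: $[x,y]$ is the largest weight-$2$ commutator under $x^{-1}>y^{-1}$, so inserting it as the deepest term is legal, and subsequent insertions of generators respect $v\ge u_2$), and by the computation above its winding invariant is $\pm(X-1)^l(Y-1)^s$; adjusting by replacing the commutator with its inverse if needed gives exactly $(X-1)^l(Y-1)^s$.

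The main obstacle I anticipate is the bookkeeping of signs and which generator-ordering produces a legitimate \emph{basic} commutator (as opposed to an arbitrary iterated commutator). The winding-invariant computation itself is immediate from Proposition \ref{basica}(iv) once one knows the exponents, but verifying that the specific words exhibiting each monomial $(X-1)^l(Y-1)^s$ actually satisfy the basic-commutator conditions $u>v$ and $v\ge u_2$ requires care with the total order, and confirming the overall sign is $+1$ (rather than $-1$) requires either a careful induction tracking $P_{w^{-1}}=-P_w$ against the $-1$ in $(X^nY^m-1)$, or simply the freedom to replace $w$ by $w^{-1}$ — which is itself a basic commutator up to the standing convention. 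I would lean on the latter: since the statement is about \emph{some} basic commutator realizing each polynomial, and since $w\mapsto w^{-1}$ negates $P_w$, it suffices to produce the monomial up to sign and then fix orientation, sidestepping the most tedious sign-chasing.
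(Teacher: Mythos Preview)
Your approach matches the paper's (induction on weight, split on whether $v$ has weight $1$ or $\ge 2$), but there are two concrete errors that make your write-up more tangled than it needs to be.

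First, you have the weight-$1$ convention backwards. In this paper the basic commutators of weight $1$ are $x^{-1}$ and $y^{-1}$, so when $v$ has weight $1$ we have $v\in\{x^{-1},y^{-1}\}$ and hence $v^{-1}\in\{x,y\}$. With this, the sign computation is immediate: since $u^{-1}\in\FF$, write $[u^{-1},v^{-1}]=[v^{-1},u^{-1}]^{-1}$ and apply Proposition~\ref{basica}(iv) and (i) to get
\[
P_w=-P_{[v^{-1},u^{-1}]}=-(X^nY^m-1)P_{u^{-1}}=(X^nY^m-1)P_u,
\]
where $(n,m)$ is $(1,0)$ or $(0,1)$ (the exponents of $v^{-1}=x$ or $y$). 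So $P_w=(X-1)P_u$ or $(Y-1)P_u$ exactly, with no stray monomial factors or signs. The sign-chasing you anticipated as the ``main obstacle'' evaporates once the convention is right.

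Second, your fallback ``replace $w$ by $w^{-1}$ if the sign is wrong'' does not work, because basic commutators are \emph{not} closed under inversion: e.g.\ $[x,y]$ is basic but $[y,x]=[x,y]^{-1}$ is not (it would require $y^{-1}>x^{-1}$, contrary to the fixed order). So for the ``moreover'' direction you cannot adjust orientation after the fact; you must build the basic commutator so that the sign comes out $+1$ directly. Fortunately, by the clean formula above, this is automatic---but the \emph{order} of the appended generators matters for the basic-commutator constraints. Your candidate $[[x,y]^{-1},x,\ldots,x,y,\ldots,y]$ fails the condition $v\ge u_2$ at the first $y$-step (you would need $y^{-1}\ge x^{-1}$). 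The paper's construction appends the $y$'s first and the $x$'s afterwards: set $u_0=[x,y]$, $u_{k+1}=[u_k^{-1},y]$, then $v_0=u_s$, $v_{k+1}=[v_k^{-1},x]$; one checks that at each stage the condition $v\ge u_2$ reduces to $y^{-1}\ge y^{-1}$ or $x^{-1}\ge y^{-1}$ or $x^{-1}\ge x^{-1}$, all of which hold, and $P_{v_l}=(X-1)^l(Y-1)^s$ exactly.
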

\begin{proof}
If the weight of $w$ is 2, $w=[x,y]$, so $P_w=1$. If the weight of $w$ is greater that 2, then $w=[u^{-1},v^{-1}]$ for $u$ a basic commutator of weight $\ge 2$. If the weight of $v$ is also $\ge 2$, then $w\in \FFF$ and $P_w=0$. Otherwise, $v=x^{-1}$ or $v=y^{-1}$, so $P_w=(X-1)P_u$ or $P_w=(Y-1)P_u$, and by induction $P_w$ is of the form $(X-1)^l(Y-1)^s$ with $l+s=m-2$. Conversely, let $l,s\ge 0$ with $l+s=m-2$. Define recursively $u_0=[x,y]$, $u_{k+1}=[u_k^{-1}, y]$ for $k\ge 0$. Then $u_k$ is a basic commutator for every $k\ge 0$. Now, let $v_0=u_s$, $v_{k+1}=[v_k^{-1},x]$ for every $k\ge 0$. Then $v_k$ is a basic commutator of weight $s+2+k$ for every $k$ and $P_{v_l}=(X-1)^l(Y-1)^s$.    
\end{proof}

\begin{defi}
A group $G$ is said to be \textit{residually torsion-free nilpotent} if $\bigcap\limits_{k\ge 1} \tau_k(G)=\{1\}$, where $\tau_k(G)=\{g\in G | g^n\in \gamma_k(G) $ for some $n\ge 1 \}$. Equivalently $G$ is residually torsion-free nilpotent if for every $1\neq g\in G$ there exists a normal subgroup $N\vartriangleleft G$ not containing $g$ such that $G/N$ is torsion-free and nilpotent.
\end{defi}

In particular, residually torsion-free nilpotent implies residually nilpotent. Magnus proved \cite{Mag} that finitely generated free groups are residually torsion-free nilpotent. The Hydra groups introduced by Dison and Riley in \cite{DR} are one-relator groups with two generators and where the relator is a basic commutator. Baumslag and Mikhailov \cite{BM2} prove that these and more general groups are residually torsion-free nilpotent but leave as an open question whether all one-relator groups defined by a basic commutator have this property. The main result of this section is the following

\begin{teo} \label{teobm}
Let $G=\langle x,y | r\rangle$, where $r\in \F$ is a basic commutator. Then $\bigcap\limits_{k\ge 1}\tau_k(G) \subseteq G''$.
\end{teo}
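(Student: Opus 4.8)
The plan is to pass to the free metabelian quotient and use the winding invariant to control the lower central series of $G/G''$. Write $G = \langle x,y \mid r\rangle$ with $r$ a basic commutator of weight $m>1$. By Lemma \ref{basiccom}, either $r \in \F''$, or $P_r = (X-1)^l(Y-1)^s$ with $l+s = m-2$. In the first case $G'' $ already contains the image of $r$, so $G/G''$ is the free metabelian group $\M$ modulo nothing new coming from $r$ in the abelianized relation module — more precisely $G' = \F'/N(r)$ and $N(r)\F''/\F'' = 0$ in $\M'$, so $G/G'' \cong \M \times$ (nothing), i.e. the statement reduces to residual torsion-free nilpotence of $\M$ itself, which is classical (Magnus; or: $\M$ is a finitely generated torsion-free nilpotent-by-abelian group, residually torsion-free nilpotent by results on metabelian groups). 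So the heart of the matter is the case $P_r = (X-1)^l(Y-1)^s$ with $l+s \geq 1$.

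In that case, set $H = G/G''$. Then $H$ is metabelian, $H' = \M' / (\text{the }\Z[\Z^2]\text{-submodule generated by }\overline{W}(r))$, and by Proposition \ref{conwayg} (applied to the one-relator presentation, so $I = \langle (X-1)^l(Y-1)^s\rangle$) the winding invariant gives an isomorphism $H' \cong \ZZ / ((X-1)^l(Y-1)^s)$ of $\Z[\Z^2]$-modules, where $\Z^2 = H/H'$ acts by multiplication by $X,Y$. The first step is to identify $\bigcap_k \tau_k(H)$. Since $H/H' \cong \Z^2$ is torsion-free nilpotent and $H$ is finitely generated nilpotent-by-abelian, the obstruction to residual torsion-free nilpotence lives entirely inside $H'$: I would show $\bigcap_k \tau_k(H) \subseteq H'$ by noting that $H/H'$ is already a quotient that is torsion-free nilpotent, hence detects everything outside $H'$. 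Then I need to understand $\gamma_k(H) \cap H'$ and the torsion modulo it. Using that $\gamma_{n+1}(\M)$ corresponds under $\W$ to the ideal power $(X-1,Y-1)^{n-1}$ inside $\ZZ$ (this is the standard identification of the lower central series of the free metabelian group with powers of the augmentation ideal, and it is implicit in Corollary \ref{derivadas} and the paragraph following it), the image of $\gamma_k(H)$ in $H' \cong \ZZ/((X-1)^l(Y-1)^s)$ is $\mathfrak{a}^{k-2}$ where $\mathfrak{a} = (X-1,Y-1)$ is the augmentation ideal of $\ZZ$ modulo the relation.

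So everything comes down to the commutative-algebra claim: in the ring $R = \ZZ / ((X-1)^l(Y-1)^s)$, the intersection $\bigcap_{k} \mathfrak{a}^k$ contains no element that is torsion modulo some $\mathfrak{a}^k$ together with... — more precisely, that the "torsion-free nilpotent radical" $\bigcap_k \{\xi \in R : n\xi \in \mathfrak{a}^k \text{ for some } n \geq 1, \text{ all } k\}$ is zero. Equivalently: if $\xi \in R$ and for every $k$ there is $n_k \geq 1$ with $n_k \xi \in \mathfrak{a}^k$, then $\xi = 0$. Here I would use that $R$ is a finitely generated $\Z$-algebra which is $\mathfrak{a}$-adically separated after inverting nothing — the key point is Krull's intersection theorem: $\bigcap_k \mathfrak{a}^k = 0$ in the (Noetherian) local-ish setting, but one must be careful because $R$ is not local and has the $\Z$-torsion subtlety. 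The clean way: lift $\xi$ to $P \in \ZZ$; the hypothesis says $n_k P \in (X-1)^l(Y-1)^s \cdot \ZZ + (X-1,Y-1)^k$ for all $k$; reduce modulo a prime $p$ not dividing some fixed $n$ if needed, or better, work over $\mathbb{Q}$: over $\Q[X^{\pm},Y^{\pm}]$, Krull's theorem gives $\bigcap_k (X-1,Y-1)^k \cdot (\text{localization at }(X-1,Y-1)) = 0$, hence $P$ is divisible by $(X-1)^l(Y-1)^s$ modulo the localization, and then a denominator-clearing / primitivity argument over $\Z$ finishes it.

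The main obstacle I anticipate is exactly this last step: handling the interaction between $\Z$-torsion and the $\mathfrak{a}$-adic filtration, i.e. making the Krull-intersection argument work integrally rather than rationally, since $\tau_k$ by definition allows arbitrary roots. The cleanest route is probably: for each prime $p$, the group $H/\gamma_k(H)$ modulo its $p$-torsion is handled by the mod-$p$ winding invariant (working in $\F_p[X^{\pm},Y^{\pm}]$, where the augmentation ideal is again Noetherian local after localization and Krull applies), and an element of $\bigcap_k \tau_k(H)$ must die in all these quotients simultaneously for all $p$; combining over all $p$ and all $k$ forces the lifted polynomial $P$ to be divisible by $(X-1)^l(Y-1)^s$ in $\ZZ$, i.e. $\xi = 0$ in $R$. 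Once that is done, $\bigcap_k \tau_k(H) = 0$, and since $\bigcap_k \tau_k(G)$ maps into $\bigcap_k \tau_k(G/G'') = \bigcap_k \tau_k(H) = 0$, its image in $G/G''$ is trivial, which is precisely $\bigcap_k \tau_k(G) \subseteq G''$.
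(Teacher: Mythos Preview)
Your reduction is the same as the paper's: pass to $H=G/G''$, identify $H'\cong \ZZ/I$ with $I=\langle P_r\rangle$ via Proposition~\ref{conwayg}, and reduce to the claim that if $P\in\ZZ$ satisfies $n_kP\in \mathfrak a^{k}+I$ for all $k$ (where $\mathfrak a=(X-1,Y-1)$), then $P\in I$. Your identification of $\gamma_k(H)$ with the image of $\mathfrak a^{k-2}$ is exactly what underlies Corollary~\ref{derivadas} and Theorem~\ref{sims}. The paper also treats the case $P_r=0$ by the same mechanism rather than by citing residual torsion-free nilpotence of $\M$, but your shortcut there is legitimate.

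Where you diverge is in the final commutative-algebra step, and the paper's route is much lighter than what you propose. Instead of Krull's intersection theorem, localization, and a Gauss's lemma clean-up, the paper simply Taylor expands at $(1,1)$. After conjugating so that $P\in\Z[X,Y]$, write $P=\sum a_{i,j}(X-1)^i(Y-1)^j$. For each $(i,j)$ with $i<l$ or $j<s$, choose $k=i+j+3$ and apply $\partial_X^i\partial_Y^j\big|_{(1,1)}$ to the identity $n_kP=P_{w_k}+Q_k(X-1)^l(Y-1)^s$ (with $w_k\in\gamma_k(\F)$). The right side vanishes: $P_{w_k}$ contributes $0$ by Corollary~\ref{derivadas}, and the second term contributes $0$ by Leibniz since not enough $X$- or $Y$-derivatives hit $(X-1)^l(Y-1)^s$. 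Hence $n_k\,i!\,j!\,a_{i,j}=0$ in $\Z$, so $a_{i,j}=0$. All surviving terms have $i\ge l$ and $j\ge s$, so $(X-1)^l(Y-1)^s\mid P$ in $\Z[X,Y]$, i.e.\ $P\in I$. The torsion issue you flag as ``the main obstacle'' evaporates here: each Taylor coefficient is already an integer, and an integer killed by a positive integer is zero. Your Krull/localization/Gauss argument can be made to work (over $\Q$ the localization $\Q[X,Y]_{\mathfrak a}/I_{\mathfrak a}$ is Noetherian local so $\bigcap\mathfrak a^k=0$; then UFD and primitivity arguments push divisibility back to $\Z[X,Y]$), but it is considerably more machinery for what is a one-line derivative computation.
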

\begin{proof}
If the weight of $r$ is 1, $G$ is an infinite cyclic group. We may assume $r\in \FF$. Let $g\in \bigcap\limits_{k\ge 1}\tau_k(G)$ and let $w\in \F$ be such that its class $\overline{w}$ in $G$ is $g$. Since $g\in \tau_2(G)$, there exists $n=n(2)\ge 1$ such that $g^n\in G'$. Thus, $\overline{w^n}=g^n=\overline{w}_2$ for certain $w_2\in \FF$. Since $r\in \FF$, then $w\in \FF$. We want to prove that $g\in G''$. We may assume that $P_w\in \Z[X,Y]$. Otherwise, take $p,q\ge 0$ such that $P_{x^py^qwy^{-q}x^{-p}}=X^pY^qP_w\in \Z[X,Y]$. Then $g'=\overline{x^py^qwy^{-q}x^{-p}}\in G$ is a conjugate of $g$, so $g' \in \bigcap\limits_{k\ge 1}\tau_k(G)$ and if we prove $g'\in G''$, we will have $g\in G''$.

Given any $k\ge 1$, since $g\in \tau_k(G)$, there exists $n=n(k)\ge 1$ such that $g^n\in \gamma_k(G)$. Therefore, for some $w_k\in \gamma_k(\F)$, $\overline{w^n}=\overline{w}_k$, so $w^nw_k^{-1}$ is contained in the normal subgroup $N(r)$ of $\F$ generated by $r$. In particular 
\begin{equation}
nP_w-P_{w_k}=P_{w^nw_k^{-1}}=Q_kP_r  	
\label{eqhydra}
\end{equation}
for certain $Q_k\in \ZZ$.

Let $P_w=\sum\limits_{i,j\ge 0} a_{i,j}(X-1)^i(Y-1)^j$ be the Taylor expansion of $P_w$ about $(1,1)$, i.e. $a_{i,j}=\frac{1}{i!j!}\frac{\partial^{i+j}P_w}{\partial^iX \partial^j Y}(1,1)$.

By Lemma \ref{basiccom}, $P_r=0$ or $P_r=(X-1)^l(Y-1)^s$ for some $l,s\ge 0$. In the first case, given $i,j\ge 0$, take $k=i+j+3$. Then $nP_w=P_{w_k}$ for certain $w_k\in \gamma_k(\F)$ and $n\ge 1$. Corollary \ref{derivadas} says that $\frac{\partial^{i+j}P_{w_k}}{\partial^iX \partial^j Y}(1,1)=0$, so $a_{i,j}=0$. Thus $P_w=0$, $w\in \FFF$ and then $g=\overline{w} \in G''$.

In the case that $P_r=(X-1)^l(Y-1)^s$ for some $l,s\ge 0$, given $i,j\ge 0$ with $i<l$ or $j<s$, take $k=i+j+3$, so Equation (\ref{eqhydra}) holds for certain $w_k\in \gamma_k(\F)$, $n\ge 1$ and $Q_k\in \ZZ$. Since $i<l$ or $j<s$, then $\frac{\partial^{i+j}(Q_kP_r)}{\partial^iX \partial^j Y}(1,1)=0$. By Corollary \ref{derivadas}, $\frac{\partial^{i+j}P_{w_k}}{\partial^iX \partial^j Y}(1,1)=0$. Then $a_{i,j}=0$ for each $(i,j)$ with $i<l$ or $j<s$. Thus $P_r$ divides $P_w$ in $\Z[X,Y]$. Then there exists $u\in N(r)$ such that $P_u=P_w$. By Theorem \ref{conway}, $wu^{-1}\in \FFF$ and then $g=\overline{wu^{-1}} \in G''$, as we wanted to prove.
\end{proof}


In \cite[Theorem 11]{Sim} Sims proves that the normal subgroup $N_m$ generated by the basic commutators of $\mathbb{F}_n$ of weight $m$ is $\gamma_m (\mathbb{F}_n)$ provided that $m\le 4$ or that $m=5$ and $n=2$. It is unknown whether this holds for every $n,m$. In \cite[Theorem 3.8]{JGS} it is proved that the answer is affirmative modulo $\mathbb{F}''_n$. Here we give a simple proof of this fact for the case $n=2$.

\begin{teo}(Jackson-Gaglione-Spellman) \label{sims}
Let $m\ge 1$ and let $N_m$ be the normal subgroup of $\F$ generated by the basic commutators of weight $m$. Then $N_m\F''=\gamma_m(\F)\F''$.
\end{teo}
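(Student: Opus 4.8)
The plan is to reduce the statement $N_m \F'' = \gamma_m(\F)\F''$ to a purely polynomial computation via the winding invariant, exactly in the spirit of Lemma \ref{pregunta} and Proposition \ref{conwayg}. First I would dispose of the small cases: if $m=1$ then $N_1 = \F$ and there is nothing to prove, and if $m=2$ then $N_2$ contains $[x,y]$ whose normal closure is $\gamma_2(\F)=\F'$, so the equality is immediate. So assume $m\ge 3$. The inclusion $N_m\F'' \subseteq \gamma_m(\F)\F''$ is clear since every basic commutator of weight $m$ lies in $\gamma_m(\F)$ and $\gamma_m(\F)$ is normal. For the reverse inclusion it suffices, by Theorem \ref{conway} together with the argument of Lemma \ref{pregunta}, to show that for every $w \in \gamma_m(\F)$ the polynomial $P_w$ lies in the ideal $I = I(\W(S))$ of $\ZZ$ generated by the winding invariants $P_s$ of the basic commutators $s$ of weight $m$.

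The key point is to identify this ideal $I$. By Lemma \ref{basiccom}, the winding invariants of the basic commutators of weight $m$ are precisely the polynomials $(X-1)^l(Y-1)^s$ with $l,s\ge 0$ and $l+s=m-2$ (together with $0$, coming from basic commutators that lie in $\FFF$). Hence $I$ is the ideal generated by $\{(X-1)^l(Y-1)^s : l+s = m-2\}$, which is exactly the ideal $\mathfrak{a}^{m-2}$, where $\mathfrak{a} = (X-1, Y-1)$ is the augmentation-type ideal of $\ZZ$. So the task becomes: show that $w\in\gamma_m(\F)$ implies $P_w \in \mathfrak{a}^{m-2}$. Conversely (for completeness, though only this direction is strictly needed for the theorem) one checks that every element of $\mathfrak{a}^{m-2}$ is realized as $P_w$ for some product of conjugates of basic commutators of weight $m$, which follows from the surjectivity machinery in Lemma \ref{pregunta}.

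To prove $P_w \in \mathfrak{a}^{m-2}$ for $w\in\gamma_m(\F)$, I would argue by induction on $m$, using that $\gamma_m(\F) = [\F,\gamma_{m-1}(\F)]$ is generated as a subgroup by commutators $[a,u]$ with $a\in\F$, $u\in\gamma_{m-1}(\F)$; since $\W$ is a homomorphism it is enough to treat $w=[a,u]$. By Proposition \ref{basica}(iv), $P_w = (X^nY^m-1)P_u$ where $n,m$ are the total exponents of $x,y$ in $a$ — note $X^nY^m - 1 \in \mathfrak{a}$ since $X-1, Y-1 \in \mathfrak{a}$ and $X^nY^m-1$ is a $\ZZ$-combination of these — and by induction $P_u \in \mathfrak{a}^{m-3}$ (base case $m=3$: $u\in\F'$, so $P_u\in\ZZ = \mathfrak{a}^0$). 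Hence $P_w \in \mathfrak{a}^{m-2}$, completing the induction. The only real subtlety — and the step I expect to require the most care — is making sure the two descriptions of the relevant ideal genuinely coincide, i.e. that the $(X-1)^l(Y-1)^s$ with $l+s=m-2$ generate all of $\mathfrak{a}^{m-2}$ and not a smaller ideal; this is the standard fact that the degree-$(m-2)$ monomials in two generators generate the $(m-2)$-th power of the ideal they generate, which is elementary but worth stating. Everything else is a direct application of results already established: Lemma \ref{basiccom} for the relators' invariants, Lemma \ref{pregunta}/Theorem \ref{conway} for translating ideal membership back into the group, and Proposition \ref{basica} for the inductive step.
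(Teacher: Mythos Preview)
Your proof is correct and follows essentially the same strategy as the paper: reduce via Theorem~\ref{conway} and Lemma~\ref{basiccom} to showing that $P_w$ lies in the ideal $\mathfrak{a}^{m-2}=\langle (X-1)^l(Y-1)^s : l+s=m-2\rangle$ for $w\in\gamma_m(\F)$. The only difference is in packaging: the paper establishes this by conjugating so that $P_w\in\Z[X,Y]$, invoking Corollary~\ref{derivadas} to kill the low-order Taylor coefficients at $(1,1)$, and reading off the result from the Taylor expansion; you instead run the same induction (which is exactly the proof of Corollary~\ref{derivadas}) directly in ideal language, which is arguably cleaner since it avoids the conjugation step and the detour through partial derivatives. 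One trivial remark: you reuse the letter $m$ for both the weight in $\gamma_m$ and the $y$-exponent of $a$ in Proposition~\ref{basica}(iv); pick a different letter there.
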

\begin{proof}
We can assume $m\ge 2$. Since $N_m\subseteq \gamma_m(\F)$, in view of Theorem \ref{conway}, we only need to prove that for every $w\in \gamma_m(\F)$ there exists $w'\in N_m$ with $P_{w'}=P_w$, and, in view of Lemma \ref{basiccom}, this is equivalent to proving that there exist polynomials $Q_0,Q_1,\ldots, Q_{m-2}\in \ZZ$ such that $P_w=\sum\limits_{p=0}^{m-2} Q_p(X-1)^p(Y-1)^{m-2-p}$. By conjugating $w$ we may assume that $P_w\in \Z[X,Y]$. Let $P_w=\sum\limits_{i,j\ge 0} a_{i,j}(X-1)^i(Y-1)^j$ be the Taylor expansion of $P_w$ about $(1,1)$. By Corollary \ref{derivadas}, $a_{i,j}=0$ if $i+j\le m-3$. Then every term $a_{i,j}(X-1)^i(Y-1)^j$ is multiple of a polynomial $(X-1)^l(Y-1)^s$ with $l,s\ge 0$, $l+s=m-2$.
\end{proof}


\section{Commutators and parallelograms} \label{secparalelogramos}

Commutators in $\F$ are completely understood by Wicks' result, Theorem \ref{wicks}. However, we will associate a number $\iota$ to each commutator $[a,b]\in \F$, related to the winding invariant, and we will use it to obtain information about commutators that is harder to visualize using Wick's ideas.

\subsection{The invariant $\iota$} \label{ssc1p}

Given $w\in \F$, denote by $\vv_w \in \Z^2$ the endpoint of $\gamma_w$, that is the vector $(\exp(x,w),\exp(y,w))$ of exponents of $x$ and $y$ in $w$. Denote by $\sigma_w:[0,1]\to \R^2$ the straight path from $(0,0)$ to $\vv_w$. In general this path will not be contained in the grid $\Z\times \R \cup \R\times \Z$. Given $a,b\in \F$, the vectors $\vv_a$ and $\vv_b$ generate a second grid $(\vv_a \Z + \vv_b \R) \cup (\vv_a \R + \vv_b \Z)$ whose elements are integers multiples of $\vv_a$ plus real multiples of $\vv_b$ and symmetrically real multiples of $\vv_a$ plus integer multiples of $\vv_b$. This second grid is homeomorphic to the first one unless $\vv_a$ and $\vv_b$ are $\R$-linearly dependent. If $\dim _{\R} (\langle \vv_a,\vv_b\rangle)=1$, it is just one line, and if $\vv_a=\vv_b=0$, it has only one point. The double grid $D$ is the union of the first and second grids (see Figure \ref{paral}).

Note that

\begin{multline}
 \gamma_{[a,b]}=\gamma_{a}(\gamma_b+\vv_a)(\overline{\gamma}_a+\vv_b)\overline{\gamma}_b\simeq \\ 
\simeq \underbrace{\gamma_{a}\overline{\sigma}_a}_{\alpha}\sigma_a\underbrace{(\gamma_{b}+\vv_a)(\overline{\sigma}_b+\vv_a)}_{\overline{\beta}+\vv_a}(\sigma_b+\vv_a)(\overline{\sigma}_a+\vv_b)\underbrace{(\sigma_a+\vv_b)(\overline{\gamma}_a+\vv_b)}_{\overline{\alpha}+\vv_b}\overline{\sigma}_b\underbrace{\sigma_b\overline{\gamma}_b}_{\beta}, \ \ \ \label{para}
\end{multline}
where $\alpha=\gamma_{a}\overline{\sigma}_a$ and $\beta=\sigma_b\overline{\gamma}_b$ are loops based in $(0,0)$. The symbol $\simeq$ denotes here homotopy of paths in the double grid $D$.

\begin{figure}[h] 
\begin{center}
\includegraphics[scale=0.6]{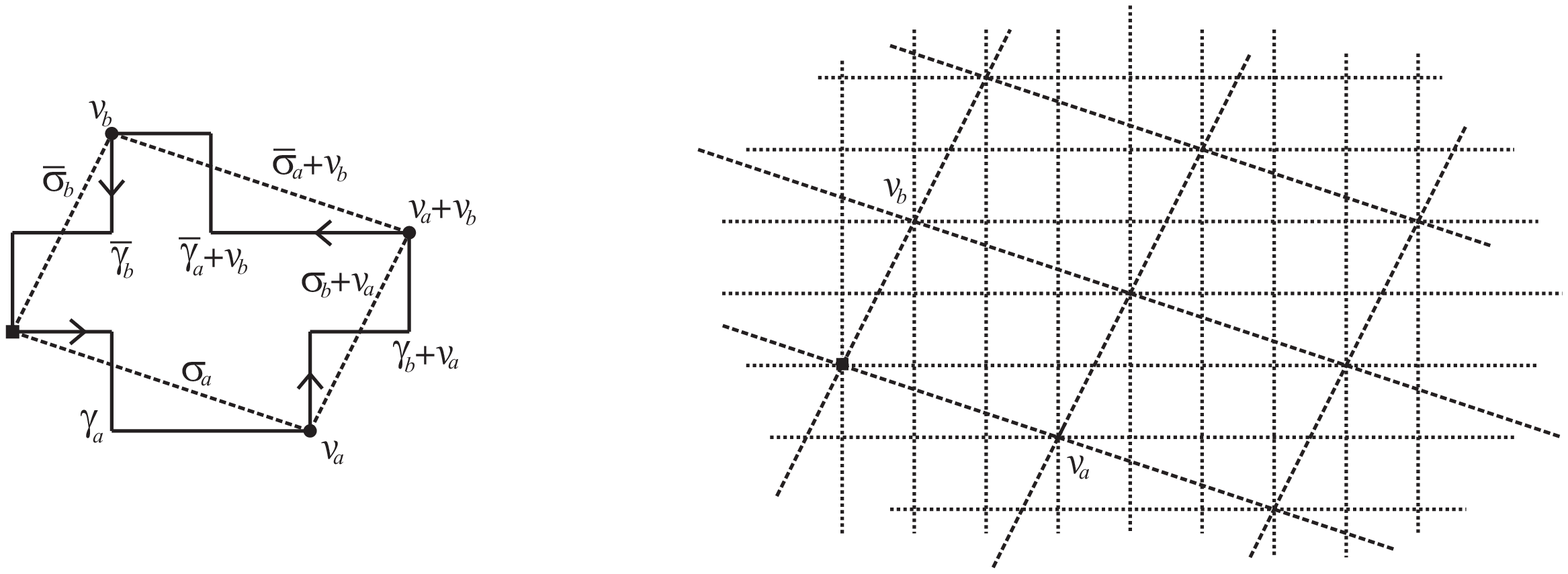}
\caption{The curve $\gamma_{[a,b]}$ and the parallelogram with vertices $(0,0)$, $\vv_a$, $\vv_a+\vv_b$, $\vv_b$ at the left. The double grid $D$ generated by the parallelogram at the right.}\label{paral}
\end{center}
\end{figure}

If $p\in \R^2$ is a point which is not in the double grid, then the winding number $\w(\alpha, p)$ coincides with $-\w(\overline{\alpha}+\vv_b, p+\vv_b)$ and $\w(\beta,p)=-\w(\overline{\beta}+\vv_a,p+\vv_a)$. Thus, if $L=L_{a,b}$ denotes the subgroup of $\Z^2$ generated by $\vv_a$ and $\vv_b$ we have $$ \iota_{p,a,b}=\sum\limits_{\lambda \in L} \w(\gamma_{[a,b]},p+\lambda)=\sum\limits_{\lambda \in L} \w(\sigma_a(\sigma_b+\vv_a)(\overline{\sigma}_a+\vv_b)\overline{\sigma}_b,p+\lambda).$$ \label{pagiota}

But among all the points $p+\lambda$ with $\lambda\in L$, there is only one inside the parallelogram described by the curve $\sigma_a(\sigma_b+\vv_a)(\overline{\sigma}_a+\vv_b)\overline{\sigma}_b$, in the case that $\vv_a$ and $\vv_b$ are $\R$-linearly independent, so $\iota_{p,a,b}=1$ or $-1$. If $\vv_a, \vv_b$ and the origin are collinear, $\iota_{p,a,b}=0$. Note that $\iota_{p,a,b}$ does not depend on the point $p$ but only on $\vv_a$ and $\vv_b$, so we may also write $\iota_{a,b}$. In conclusion we have the following result.

\begin{teo} \label{iota}
Let $a,b\in \F$. Then for any $p\in \R^2\smallsetminus D$ we have that $$\iota_{p,a,b}=\sum\limits_{\lambda \in L_{a,b}} \w(\gamma_{[a,b]},p+\lambda)$$ is $1$ if the vectors $\vv_a,\vv_b$ are positively oriented, $-1$ if they are negatively oriented and $0$ if $\{\vv_a,\vv_b\}$ is dependent.
\end{teo}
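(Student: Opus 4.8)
The plan is to make rigorous the informal argument sketched in the paragraphs immediately preceding the statement. The starting point is the homotopy (in the double grid $D$) displayed in equation~(\ref{para}), which rewrites $\gamma_{[a,b]}$ as a concatenation involving the loops $\alpha=\gamma_a\overline\sigma_a$, $\beta=\sigma_b\overline\gamma_b$, translated copies $\overline\beta+\vv_a$, $\overline\alpha+\vv_b$, and the "straight-line parallelogram loop" $\delta:=\sigma_a(\sigma_b+\vv_a)(\overline\sigma_a+\vv_b)\overline\sigma_b$. First I would note that for a point $p\in\R^2\smallsetminus D$, the winding number is additive over this concatenation (all pieces avoid $p$), so $\w(\gamma_{[a,b]},p)=\w(\alpha,p)+\w(\overline\beta+\vv_a,p)+\w(\overline\alpha+\vv_b,p)+\w(\beta,p)+\w(\delta,p)$, using that the remaining factors $\sigma_a$, $(\sigma_b+\vv_a)$, etc., are reparametrizations of the corresponding pieces of $\delta$.

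Next I would exploit the translation invariance of winding numbers: $\w(\overline\alpha+\vv_b,\,p)=\w(\overline\alpha,\,p-\vv_b)=-\w(\alpha,\,p-\vv_b)$ and similarly $\w(\overline\beta+\vv_a,\,p)=-\w(\beta,\,p-\vv_a)$. Now sum over $\lambda\in L_{a,b}$. The contributions of $\alpha$ and the translated $\overline\alpha$ telescope: $\sum_{\lambda\in L}\bigl(\w(\alpha,p+\lambda)-\w(\alpha,p+\lambda-\vv_b)\bigr)=0$ since translation by $\vv_b$ permutes $L$ and $\w(\alpha,p+\lambda)$ is zero for all but finitely many $\lambda$ (the loop $\alpha$ is a finite polygonal path, hence has bounded image, so only finitely many lattice translates of $p$ fall in a region where the winding number is nonzero — this finiteness is what makes the rearrangement legitimate). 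The $\beta$ and translated $\overline\beta$ terms cancel the same way. Hence $\iota_{p,a,b}=\sum_{\lambda\in L}\w(\gamma_{[a,b]},p+\lambda)=\sum_{\lambda\in L}\w(\delta,p+\lambda)$, which is the identity already displayed in the text.

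It then remains to evaluate $\sum_{\lambda\in L}\w(\delta,p+\lambda)$, and here I would split into the three cases of the statement. If $\vv_a,\vv_b$ are $\R$-linearly independent, then $\delta$ traverses the boundary of the genuine parallelogram with vertices $0,\vv_a,\vv_a+\vv_b,\vv_b$; the lattice $L$ has this parallelogram as a fundamental domain, so exactly one translate $p+\lambda$ lies in its interior and all others lie outside. The winding number of $\delta$ about that interior point is $+1$ if the parametrization $\sigma_a,(\sigma_b+\vv_a),(\overline\sigma_a+\vv_b),\overline\sigma_b$ runs counterclockwise around the parallelogram — equivalently if $(\vv_a,\vv_b)$ is a positively oriented basis, i.e. $\det(\vv_a\mid\vv_b)>0$ — and $-1$ if negatively oriented; about all exterior translates it is $0$. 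If $\{\vv_a,\vv_b\}$ is $\R$-dependent (including the case one or both are zero), then $\delta$ is a degenerate path whose image lies on a line (or a point), so $\R^2\smallsetminus D$ meets the complement of that line, $\w(\delta,q)=0$ for every $q$ outside the line, and the sum is $0$. I expect the only genuinely delicate point to be the justification of the infinite-sum manipulations — making sure the rearrangements and telescoping are valid because all the relevant loops have bounded image and hence only finitely many lattice translates of $p$ give a nonzero winding number; once that is pinned down the rest is the elementary geometry of a parallelogram and its fundamental domain.
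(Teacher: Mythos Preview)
Your proposal is correct and follows essentially the same approach as the paper: the paper's proof is precisely the informal discussion preceding the theorem (culminating in the displayed identity $\iota_{p,a,b}=\sum_{\lambda\in L}\w(\delta,p+\lambda)$ and the sentence about exactly one translate lying inside the parallelogram), and you have simply spelled out the telescoping cancellation and the finiteness justification that the paper leaves implicit. The only additions you make---the explicit check that only finitely many terms are nonzero so the rearrangement is legitimate, and the separate treatment of the degenerate case---are natural fleshings-out rather than a different argument.
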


\begin{prop} \label{conmutador}
Let $w\in \FF$ be a word such that all the coefficients of the winding invariant $P_w$ are non-negative and at least one of them is greater than or equal to $2$. Then $w$ is not a commutator in $\F$. 
\end{prop}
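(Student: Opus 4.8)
The plan is to use Theorem \ref{iota}, which says that for a commutator $w=[a,b]$ the winding numbers of $\gamma_{[a,b]}$ partition into classes indexed by cosets of $\Z^2/L_{a,b}$, and within each class the sum of the winding numbers equals the constant $\iota_{a,b}\in\{-1,0,1\}$. So suppose for contradiction that $w=[a,b]$ with $P_w=\sum a_{i,j}X^iY^j$ having all coefficients $a_{i,j}\ge 0$ and some $a_{i_0,j_0}\ge 2$. The coefficient $a_{i,j}$ is precisely the winding number $\w(\gamma_w, c_{i,j})$, so all winding numbers of $\gamma_w$ are non-negative and at least one is $\ge 2$.

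First I would dispose of the degenerate case $\iota_{a,b}=0$: then $\vv_a$ and $\vv_b$ are $\R$-linearly dependent, so either some $\vv_c=0$ (and then $\gamma_c$ is a null-homotopic loop, so $P_w=0$ has no coefficient $\ge 2$) or $L_{a,b}$ spans a line; either way the double grid $D$ is a line or a point, and I should argue $P_w=0$ — indeed if $\vv_a,\vv_b$ are dependent then by Theorem \ref{iota} every coset-sum of winding numbers is $0$, but since $a_{i,j}\ge0$ this forces $a_{i,j}=0$ for all $i,j$, contradicting $a_{i_0,j_0}\ge2$. Now assume $\iota_{a,b}=\pm1$. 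Consider the coset of $L_{a,b}$ in $\Z^2$ containing $(i_0,j_0)$. By Theorem \ref{iota}, $\sum_{\lambda\in L}\w(\gamma_w,c_{i_0,j_0}+\lambda)=\iota_{a,b}=\pm1$. But the term corresponding to $\lambda=0$ is $a_{i_0,j_0}\ge2$, and every other term in the sum is $a_{i,j}\ge 0$ for the appropriate $(i,j)$ in the same coset. Hence the total sum is $\ge 2 > 1 \ge \iota_{a,b}$, and in the negative-orientation case $\ge 2 > -1 = \iota_{a,b}$; in all cases this contradicts the value $\iota_{a,b}\in\{-1,0,1\}$. Therefore $w$ is not a commutator.

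I do not expect a serious obstacle here, since all the real work is packaged in Theorem \ref{iota}. The one point requiring a little care is making sure the sums $\sum_{\lambda\in L}\w(\gamma_w,p+\lambda)$ are genuinely finite (so that comparing them to $\pm1$ is legitimate): this holds because $\gamma_w$ is a closed bounded curve, so $\w(\gamma_w,q)=0$ for all $q$ outside a bounded region, hence only finitely many $\lambda\in L$ contribute — exactly the finitely many nonzero coefficients of $P_w$. A second minor point is to be explicit that the sum in question really does contain the term $a_{i_0,j_0}$ with the correct sign (coefficient, not its negative): here $a_{i_0,j_0}=\w(\gamma_w,c_{i_0,j_0})$ by Definition \ref{definicion}, and Theorem \ref{iota} is stated for this same winding number, so there is no sign mismatch. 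With these two remarks the argument is complete.

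Remark: the hypothesis that \emph{all} coefficients are non-negative is essential and used twice — once to conclude that a coset-sum being $0$ forces every term to vanish, and once to bound a coset-sum below by its single term $a_{i_0,j_0}\ge2$. Without it, cancellation among terms of opposite sign could restore the value $\pm1$, as indeed happens for genuine commutators.
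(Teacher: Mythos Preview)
Your proof is correct and follows essentially the same approach as the paper: pick a square where the coefficient is at least $2$, observe that the coset-sum from Theorem \ref{iota} is then $\ge 2$ since all other terms are non-negative, and this contradicts $\iota\in\{-1,0,1\}$. The paper does this in one stroke without splitting into the cases $\iota=0$ versus $\iota=\pm1$; your case analysis is harmless but unnecessary. One small caution: the parenthetical claim that $\vv_c=0$ makes $\gamma_c$ null-homotopic and hence $P_w=0$ is not right (if $a\in\F'$ then $P_{[a,b]}=(1-X^kY^l)P_a$ need not vanish), but you immediately recover with the correct argument that $\iota=0$ plus non-negativity forces all coefficients to be zero.
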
 
\begin{proof}
Assume $w=[a,b]$ for certain $a,b\in \F$. Let $(n,m)\in \Z^2$ be such that the coefficient $c \in \Z$ of $X^nY^m$ in $P_w$ is strictly greater that $1$. Let $p\in \R^2 \smallsetminus D$ be a point in the interior of the square of vertices $(n,m), (n+1,m), (n+1,m+1), (n,m+1)$. Then $\iota_{p,a,b}=\sum\limits_{\lambda \in L} \w(\gamma_w,p+\lambda)\ge \w(\gamma_w, p)=c\ge 2$, which contradicts Theorem \ref{iota}.
\end{proof}

\begin{ej}
$[x^n,y^m]^k$ is not a commutator in $\F$ if $nm\neq 0$ and $k\neq 0,1,-1$. We can assume $n,m\ge 1$ since $[x^n,y^m], [y^m,x^{-n}], [x^{-n}, y^{-m}]$ and $[y^{-m},x^n]$ differ in a cyclic permutation. Furthermore, we can assume $k$ is positive since $[a,b]=[b,a]^{-1}$. The winding invariant of $w=[x^n,y^m]^k$ is $P_w=k\sum\limits_{i=0}^{n-1}\sum\limits_{j=0}^{m-1}X^iY^j$, so the last proposition applies. 
\end{ej}

\begin{coro}
If $w\in \FF$ is such that no proper subword of $w$ lies in $\FF$, then $w^k$ is not a commutator for $k\neq 0,1,-1$.
\end{coro}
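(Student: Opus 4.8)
The plan is to translate the combinatorial hypothesis into the statement that $\gamma_w$ is a simple closed curve, from which the shape of $P_w$ is forced, and then invoke Proposition~\ref{conmutador}. Write $w=s_1s_2\ldots s_l$ with $s_i\in\{x^{\pm1},y^{\pm1}\}$, and let $v_i=(\exp(x,s_1\ldots s_i),\exp(y,s_1\ldots s_i))\in\Z^2$ be the $i$-th vertex visited by $\gamma_w$, so $v_0=v_l=(0,0)$. For $0\le i<j\le l$ the subword $s_{i+1}\ldots s_j=(s_1\ldots s_i)^{-1}(s_1\ldots s_j)$ lies in $\FF$ precisely when $v_i=v_j$, and it is a nonempty proper subword precisely when $(i,j)\neq(0,l)$. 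Hence the hypothesis is equivalent to: $v_0,v_1,\ldots,v_{l-1}$ are pairwise distinct. In particular (assuming $w\neq1$, the only interesting case) $w$ is reduced, since a cancelling pair would be a nonempty proper subword equal to $1\in\FF$ unless it exhausts $w$; so $\gamma_w$ never backtracks, its successive vertices are distinct, and — using that two distinct edges of the grid $\Z\times\R\cup\R\times\Z$ meet at most at a common endpoint — $\gamma_w$ is injective on $[0,1)$, i.e. a simple closed polygonal curve (necessarily of length $l\geq 4$).

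From here I would record what this says about the coefficients $a_{i,j}$ of $P_w$. For a simple closed curve the winding number about any point not on the curve is $0$ on the unbounded complementary component and a fixed value $\varepsilon\in\{1,-1\}$ (the orientation of the curve) on the bounded one; thus every $a_{i,j}$ lies in $\{0,\varepsilon\}$. Moreover $P_w\neq0$: the bounded component of $\R^2\smallsetminus\gamma_w$ is a nonempty open set, hence not contained in the $1$-dimensional grid, so it meets the interior of some unit square of the grid and therefore contains that whole square; its center has winding number $\varepsilon\neq0$. Replacing $w$ by $w^{-1}$ if necessary — this preserves the hypothesis, because subwords of $w^{-1}$ are inverses of subwords of $w$ and $\FF$ is closed under inversion, and it replaces $w^k$ by $(w^k)^{-1}$ — I may assume $\varepsilon=1$. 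So all coefficients of $P_w$ are non-negative and at least one equals $1$.

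Finally, fix $k$ with $|k|\geq2$. By Proposition~\ref{basica}(ii), $P_{w^{|k|}}=|k|\,P_w$ has non-negative coefficients, at least one of which is $|k|\geq2$, so Proposition~\ref{conmutador} says $w^{|k|}$ is not a commutator in $\F$. If $w^k$ were a commutator $[a,b]$, then its inverse $w^{-k}=[b,a]$ would be one too; since one of $w^k,w^{-k}$ equals $w^{|k|}$, this is a contradiction. Hence $w^k$ is not a commutator for $k\neq0,1,-1$. The only step requiring care is the first one — the geometric reformulation of ``no proper subword lies in $\FF$'' as ``$\gamma_w$ is a simple closed curve'' — together with the standard fact that a simple closed curve has winding numbers $0$ or $\pm1$ with a constant sign; once these are in hand, the rest is bookkeeping built on Propositions~\ref{basica} and~\ref{conmutador}.
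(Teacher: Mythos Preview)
Your proof is correct and follows essentially the same route as the paper's: translate the hypothesis into ``$\gamma_w$ is a simple closed curve,'' deduce that all coefficients of $P_w$ lie in $\{0,\varepsilon\}$ for a fixed $\varepsilon\in\{1,-1\}$ with $P_w\neq 0$, and then apply Proposition~\ref{conmutador} to $w^k$ or $w^{-k}$. Your write-up is more explicit about why the vertices being distinct forces injectivity of $\gamma_w$ and why $P_w\neq 0$, but the underlying argument is identical.
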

\begin{proof}
The hypothesis implies that $\gamma_w$ is a simple closed curve, so for all the squares outside the curve the corresponding coefficient in $P_{w^k}$ is $0$ and the coefficients corresponding to squares inside the curve are all $k$ or all $-k$. So Proposition \ref{conmutador} applies either to $w^k$ or $w^{-k}$.
\end{proof}   

If $P\in \ZZ$ and $(n,m)\in \Z^2$, we denote by $P\{(n,m)\}$ the coefficient of $X^nY^m$ in $P$. Let $a,b\in \F$. If $p$ is a point in the plane not in the double grid, and $p$ lies in the square of vertices $(n,m), (n+1,m), (n+1,m+1), (n, m+1)$, then $\w(\gamma_{[a,b]},p+\lambda)=P_{[a,b]}\{(n,m)+\lambda\}$ for every $\lambda \in L_{a,b}$. Therefore, Theorem \ref{iota} can be restated in the following way:

\begin{teo} \label{iotaprima}
Let $a,b\in \F$. Then for any $(n,m)\in \Z^2$ we have that $$\iota_{a,b}=\sum\limits_{\lambda \in L_{a,b}} P_{[a,b]}\{(n,m)+\lambda)\}$$ is $1$ if the vectors $\vv_a,\vv_b$ are positively oriented, $-1$ if they are negatively oriented and $0$ if $\{\vv_a,\vv_b\}$ is dependent.
\end{teo}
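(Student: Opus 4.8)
The plan is to read off Theorem \ref{iotaprima} directly from Theorem \ref{iota}: the two statements differ only in whether one phrases the quantity $\iota_{a,b}$ in terms of winding numbers at a generic point or in terms of coefficients of $P_{[a,b]}$, and the bridge between the two formulations is precisely the observation recorded in the paragraph preceding the statement.

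First I would fix $(n,m)\in\Z^2$ and pick a point $p\in\R^2$ in the interior of the square with vertices $(n,m),(n+1,m),(n+1,m+1),(n,m+1)$ which does not lie on the double grid $D$. Such a $p$ exists: inside this bounded square the standard grid $\Z\times\R\cup\R\times\Z$ does not meet the open square at all, and only finitely many lines of the second grid $(\vv_a\Z+\vv_b\R)\cup(\vv_a\R+\vv_b\Z)$ pass through it, so $D$ has empty interior there (in the degenerate cases $\dim_\R\langle\vv_a,\vv_b\rangle\le 1$ the second grid is a single line or a point and the same conclusion holds). Next I would note that $D$ is invariant under translation by every $\lambda\in L_{a,b}$: the standard grid is $\Z^2$-periodic, hence $L_{a,b}$-periodic since $L_{a,b}\subseteq\Z^2$, and the second grid is $L_{a,b}$-periodic because $\vv_a+\vv_a\Z=\vv_a\Z$ and $\vv_b+\vv_b\Z=\vv_b\Z$. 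Consequently $p+\lambda$ again avoids $D$ and lies in the interior of the unit square with lower-left vertex $(n,m)+\lambda$, for every $\lambda\in L_{a,b}$.

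With this in place the conclusion is immediate. By Definition \ref{definicion}, $\w(\gamma_{[a,b]},p+\lambda)=P_{[a,b]}\{(n,m)+\lambda\}$ for each $\lambda\in L_{a,b}$, and only finitely many of these terms are nonzero since $P_{[a,b]}$ has finite support. Summing over $\lambda\in L_{a,b}$ and applying Theorem \ref{iota} to this particular $p$ gives
\[
\sum_{\lambda\in L_{a,b}}P_{[a,b]}\{(n,m)+\lambda\}=\sum_{\lambda\in L_{a,b}}\w(\gamma_{[a,b]},p+\lambda)=\iota_{p,a,b},
\]
and the right-hand side equals $1$, $-1$ or $0$ according as $\vv_a,\vv_b$ are positively oriented, negatively oriented, or dependent, which is exactly the assertion; in particular the left-hand side is independent of $(n,m)$, justifying the notation $\iota_{a,b}$.

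I do not expect any real obstacle here, since the entire content is already carried by Theorem \ref{iota}. The only step deserving an explicit line of justification is the existence of the generic point $p$, i.e. that $D$ does not fill the unit square; this follows at once from the fact that, within any bounded region, $D$ is a union of finitely many straight lines together with the standard grid lines that miss the open square.
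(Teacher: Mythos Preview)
Your proposal is correct and follows exactly the paper's approach: the paper presents Theorem \ref{iotaprima} as a direct restatement of Theorem \ref{iota}, using the observation (recorded in the paragraph immediately preceding the theorem) that for $p$ in the open unit square with lower-left vertex $(n,m)$ and $p\notin D$, one has $\w(\gamma_{[a,b]},p+\lambda)=P_{[a,b]}\{(n,m)+\lambda\}$ for all $\lambda\in L_{a,b}$. You have supplied slightly more detail than the paper (existence of the generic $p$ and the $L_{a,b}$-invariance of $D$), but the argument is the same.
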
 

\begin{coro} \label{obsiotaprima}
If $P\in \ZZ$ is the winding invariant of a commutator, there exists a number $\iota \in \{-1,0,1\}$ and a partition of $\Z^2$ (given by the cosets of a subgroup $L$) such that the sum of the coeficients $P\{(n,m)\}$ corresponding to pairs $(n,m)$ in each part is $\iota$.
\end{coro}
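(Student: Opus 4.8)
The plan is to read off the statement almost directly from Theorem \ref{iotaprima}. Suppose $P \in \ZZ$ is the winding invariant of a commutator, say $P = P_{[a,b]}$ for some $a,b \in \F$. Set $L = L_{a,b} \leqslant \Z^2$, the subgroup generated by the exponent-sum vectors $\vv_a$ and $\vv_b$, and let $\iota = \iota_{a,b} \in \{-1,0,1\}$ be the integer supplied by Theorem \ref{iotaprima}, which is $1$, $-1$ or $0$ according to whether $\{\vv_a,\vv_b\}$ is positively oriented, negatively oriented, or linearly dependent. The cosets of $L$ in $\Z^2$ form a partition of $\Z^2$; I claim this is the desired partition.

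First I would check that the sum appearing in Theorem \ref{iotaprima} is taken over exactly one coset of $L$: fixing a coset $C = (n,m) + L$, the set $\{(n,m) + \lambda : \lambda \in L\}$ is precisely $C$. Hence Theorem \ref{iotaprima} says $\sum_{(i,j) \in C} P\{(i,j)\} = \iota$ for that particular coset. The only thing remaining is to observe that the value $\iota$ is the same for every coset — but this is immediate, since Theorem \ref{iotaprima} already asserts that $\iota_{a,b}$ is independent of the choice of $(n,m)$, i.e. it is one fixed number $\iota \in \{-1,0,1\}$, not one depending on the coset. So for every coset $C$ of $L$ we have $\sum_{(i,j)\in C} P\{(i,j)\} = \iota$, which is exactly the assertion of the corollary.

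There is essentially no obstacle here: the corollary is a restatement of Theorem \ref{iotaprima} with the language "for any $(n,m)$" repackaged as "for each part of the partition given by the cosets of $L$". The only point that deserves a sentence of care is the convergence/finiteness of the sum $\sum_{(i,j) \in C} P\{(i,j)\}$, but this is automatic because $P$ is a Laurent polynomial and hence has only finitely many nonzero coefficients, so in each coset all but finitely many terms vanish. I would therefore keep the proof to two or three lines: fix $P = P_{[a,b]}$, take $L = L_{a,b}$ and $\iota = \iota_{a,b}$, and invoke Theorem \ref{iotaprima} coset by coset.
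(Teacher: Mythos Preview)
Your proposal is correct and matches the paper's approach: the corollary is stated immediately after Theorem \ref{iotaprima} with no separate proof, precisely because it is the restatement you describe—take $L=L_{a,b}$, $\iota=\iota_{a,b}$, and read the conclusion of Theorem \ref{iotaprima} as a statement about each coset of $L$. Your remark about finiteness of the sum is a nice point of care but not something the paper pauses over.
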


\begin{ej}
Let $w\in \FF$ be such that $P_w$ has only three non-zero coefficients: they are $2,2,-1$. Then $w$ is not a commutator. This is immediate from Corollary \ref{obsiotaprima}, since there is no partition of the multiset $\{2,2,-1\}$ in $2+2+(-1)=3$ parts such that the sum of the members of each part is $1$.
\end{ej}  

\begin{ej} \label{doscom}
The equation $[x^2,y]=[x,u]$ has no solution $u\in \F$. Otherwise, $P_{[x,u]}=P_{[x^2,y]}=1+X$. But then for every $(n,m)\in \Z^2$, $1=\iota_{x,u}=\sum\limits_{\lambda \in L_{x,u}} P_{[x^2,y]}\{(n,m)+\lambda \}$. Since $\vv_x=(1,0)\in L_{x,u}$, for $(n,m)=(0,0)$ we obtain $1=P_{[x^2,y]}\{(0,0)\}+P_{[x^2,y]}\{(0,1)\}=2$, a contradiction.
\end{ej}

We will generalize the previous example in Corollary \ref{doscom2}, using more advanced techniques.

We define an invariant which we have already used. The \textit{signed area} of a word $w\in \FF$ is defined as $P_w(1,1)\in \Z$. Note that the signed area of a commutator $[a,b]$ is by Theorem \ref{iotaprima} the number of cosets of $L_{a,b}$ in $\Z^2$ times $\iota$. Therefore we deduce:

\begin{coro} \label{signed}
Let $a,b\in \F$. Then $P_{[a,b]}(1,1)=\iota_{a,b}|\Z^2:L_{a,b}|$ (if the index of $L_{a,b}$ in $\Z^2$ is infinite, this equation says that $P_{[a,b]}(1,1)=\iota_{a,b}=0$). In particular, $\iota_{a,b}=1$ if the signed area of $[a,b]$ is positive, $\iota_{a,b}=-1$ if the signed area is negative and $\iota_{a,b}=0$ if the signed area is trivial.
\end{coro}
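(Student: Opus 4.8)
The plan is to read off everything from Theorem \ref{iotaprima}, which already gives, for the commutator $[a,b]$ and for any chosen base pair $(n,m)\in\Z^2$, the identity $\sum_{\lambda\in L_{a,b}}P_{[a,b]}\{(n,m)+\lambda\}=\iota_{a,b}$, with $\iota_{a,b}\in\{-1,0,1\}$ according to the orientation of $\{\vv_a,\vv_b\}$. The left-hand side is the sum of the coefficients of $P_{[a,b]}$ over a single coset of the subgroup $L=L_{a,b}\leqslant\Z^2$, and Theorem \ref{iotaprima} tells us this sum is the same constant $\iota_{a,b}$ for \emph{every} coset. So the first step is just to sum this identity over a complete set of coset representatives of $L$ in $\Z^2$: the total is $P_{[a,b]}(1,1)$ (the sum of \emph{all} coefficients, i.e.\ evaluation at $X=Y=1$, which is finite because $P_{[a,b]}$ is a Laurent polynomial), and it equals $|\Z^2:L|\cdot\iota_{a,b}$.

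The second step handles the degenerate case $\{\vv_a,\vv_b\}$ dependent, i.e.\ $|\Z^2:L|=\infty$. Here Theorem \ref{iota}/\ref{iotaprima} already gives $\iota_{a,b}=0$; alternatively, note that since $P_{[a,b]}$ has only finitely many nonzero coefficients and each coset of $L$ has constant coefficient-sum $\iota_{a,b}$, but there are infinitely many cosets, all but finitely many cosets contribute $0$, forcing $\iota_{a,b}=0$, and then $P_{[a,b]}(1,1)=0$ as well. This is consistent with the formula under the convention stated in the corollary.

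The last step is the ``in particular'': reading the equation $P_{[a,b]}(1,1)=\iota_{a,b}|\Z^2:L_{a,b}|$ backwards. Since $|\Z^2:L_{a,b}|$ is a positive integer (or $+\infty$, handled above) and $\iota_{a,b}\in\{-1,0,1\}$, the sign of the signed area $P_{[a,b]}(1,1)$ is exactly the sign of $\iota_{a,b}$: positive signed area forces $\iota_{a,b}=1$, negative forces $\iota_{a,b}=-1$, and zero signed area forces $\iota_{a,b}=0$ (using that in the finite-index case $\iota_{a,b}=0$ iff the product vanishes iff the signed area vanishes, and in the infinite-index case both are $0$).

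There is essentially no obstacle here — the content is entirely in Theorem \ref{iotaprima}, and the corollary is a one-line summation argument plus bookkeeping for the dependent case. The only point requiring a word of care is the interchange of summation when passing from ``sum over one coset, for each coset'' to ``sum over all of $\Z^2$'': this is legitimate precisely because $P_{[a,b]}$ has finite support, so only finitely many terms are nonzero regardless of how the double sum is organized.
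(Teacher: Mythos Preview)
Your proof is correct and follows essentially the same approach as the paper: sum the per-coset identity of Theorem \ref{iotaprima} over a set of coset representatives of $L_{a,b}$ in $\Z^2$ to obtain $P_{[a,b]}(1,1)=\iota_{a,b}\,|\Z^2:L_{a,b}|$, and read off the sign statement. Your treatment of the infinite-index case and of the finiteness of the support is more explicit than the paper's one-line remark, but the content is the same.
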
 

The signed area of $w\in \F$ is the sum of the winding numbers $\w(\gamma_w, n+\frac{1}{2},m+\frac{1}{2})$ for $n,m\in \Z $. Since each square in the grid $\Z \times \R \cup \R \times \Z$ has area $1$, $P_w(1,1)=\int{ \w(\gamma_w, p) dp}$, where the domain of integration is the complement of the grid. If $w=[a,b]$ is a commutator, by Equation (\ref{para}), its signed area is $$\int_{\R^2 \smallsetminus D} \w(\alpha, p)+ \w(\overline{\alpha}+\vv_b, p)+\w(\overline{\beta}+\vv_b, p)+\w(\beta, p)+\w(\sigma_a(\sigma_b+\vv_a)(\overline{\sigma}_a+\vv_b)\overline{\sigma}_b, p) dp. $$
Now, $\overline{\alpha}+\vv_b$ is a translation of $\alpha$ with the opposite orientation, and $\overline{\beta}+\vv_b$ is a translation of $\beta$ with the opposite orientation. Therefore $P_{w}(1,1)=\int \w(\sigma_a(\sigma_b+\vv_a)(\overline{\sigma}_a+\vv_b)\overline{\sigma}_b, p) dp $ is, up to sign, the area of the (possibly degenerated) parallelogram with vertices $(0,0)$, $\vv_a$, $\vv_a+\vv_b$, $\vv_b$. When the parallelogram is non-degenerated, its area is the index $|\Z^2:L_{a,b}|$ and when it is degenerated, this index is infinite.



\subsection{Four examples, by Baumslag, Comerford, Edmunds, Mahler and  B., H. and P. Neumann} \label{ssc2p}

\begin{ej} \label{convexo}
In \cite{Bau}, Baumslag defines two classes of groups. The class $\mathcal{Y}$ contains those groups which admit a presentation $\langle x_1,x_2,\ldots , x_n | uv^{-1}\rangle$ where each word $u$ and $v$ is positive (the generators occur only with positive exponent) and $uv^{-1}\in [\mathbb{F}_n, \mathbb{F}_n]$. The class $\mathcal{X}$ consists of the groups which admit a presentation $\langle x_1,x_2,\ldots , x_n | [u,v]\rangle$ in which $u,v$ are positive. Obviously $\mathcal{X}$ is contained in $\mathcal{Y}$. The groups in $\mathcal{Y}$ are examples of residually finite one-realtor groups (see Subsection \ref{resfin}). In order to show that the classes are different, Baumslag considers the presentation $\langle x,y| (xy)^3(yx^2y^2x)^{-1}\rangle$ and uses Wicks' result, Theorem \ref{wicks} to prove that $w=(xy)^3(yx^2y^2x)^{-1}$ is not a commutator.

\begin{figure}[h] 
\begin{center}
\includegraphics[scale=0.6]{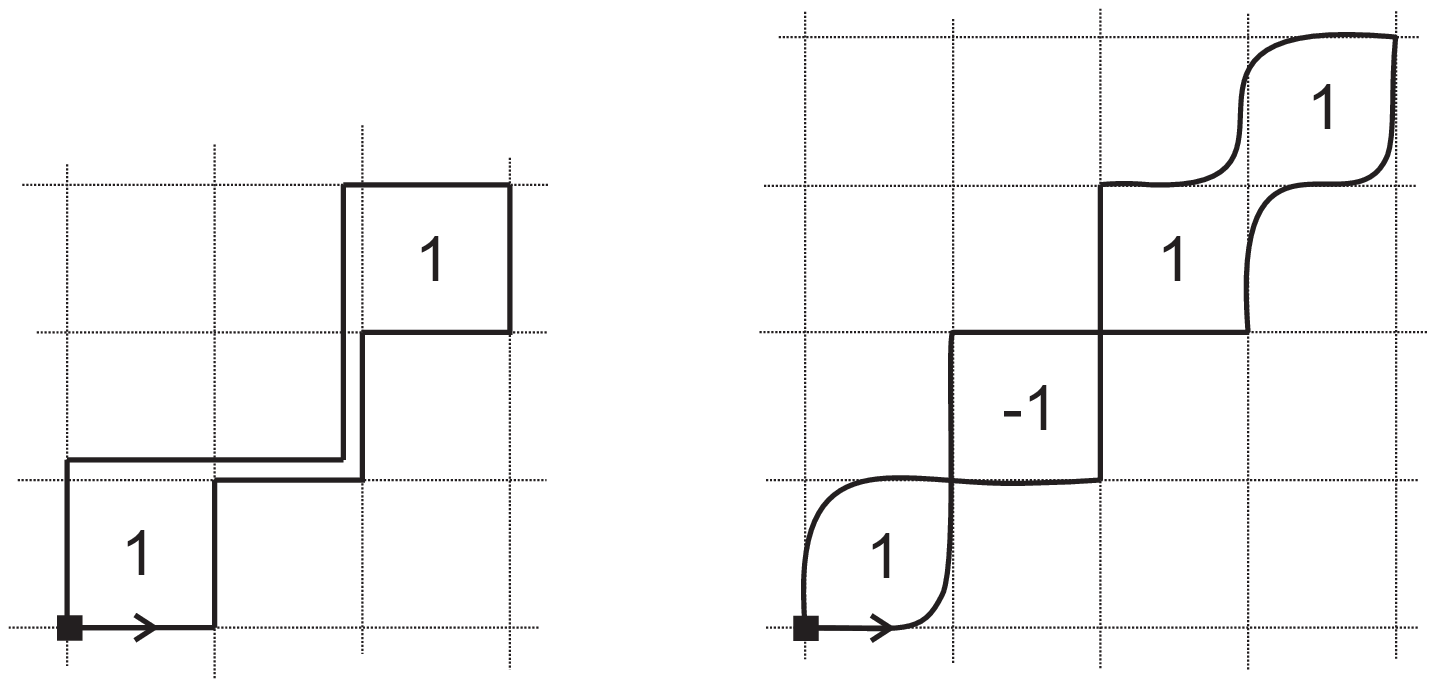}
\caption{The curves of $w=(xy)^3(yx^2y^2x)^{-1}$ and $w=xy^2x^2yxyx^{-1}y^{-1}x^{-1}y^{-2}x^{-2}y^{-1}$.}\label{positiva}
\end{center}
\end{figure}

In order to show that $w$ is not a commutator, instead of using Wicks' ideas, we propose to use a convexity argument together with Corollary \ref{obsiotaprima}. Suppose $w$ is a commutator. The winding invariant of $w$ is $P_w=1+X^2Y^2$ (see Figure \ref{positiva}). Since the sum of the coefficients of $P_w$ is positive, $\iota=1$. Let $L$ be a subgroup of $\Z^2$ as in the statement of Corollary \ref{obsiotaprima}. In the coset $(1,1)+L$ there must be a pair $(n,m)$ with $P_w\{(n,m)\}\neq 0$, that is $(n,m)=(0,0)$ or $(n,m)=(2,2)$. Therefore $(1,1)\in L$ and then $(0,0), (2,2)$ lie in the same coset, so the sum of the coefficients for that coset is $2$, a contradiction.

Many other examples of words $uv^{-1}\in \F'$ which are not commutators while $u,v$ are positive, can be constructed using this idea. We mention one more: $$w=xy^2x^2yxyx^{-1}y^{-1}x^{-1}y^{-2}x^{-2}y^{-1}\in \FF$$ is a product of a positive and a negative word. It is not a commutator since $P_w=1-XY+X^2Y^2+X^3Y^3$ (see Figure \ref{positiva}): as $P_w(1,1)=2>0$, we should have $\iota=1$. Since $P_w\{(1,1)\}=-1$, in the coset $(1,1)+L$ there must be exactly two pairs of the set $S=\{(0,0),(2,2),(3,3)\}$. Then $(1,1)\in L$ and all the elements of $S$ are in this coset.
\end{ej}


If $a,b\in \F$, then $[a^2,b]=a^2(ba^{-1}b^{-1})^2$ is a product of two squares. In \cite[Section 4]{CE}, Comerford and Edmunds prove that there exist commutators $[u,v]$ which are products of two squares and which are not of the form $[a^2,b]$. They observe \footnote{The convention for commutators used in \cite{CE} is not ours, so we have adapted their example.} that for $u=y^{-1}x^{-1}y^2xy^{-1}$ and $v=x$, $$[u,v]=(xy^{-1}x^{-1}yx^{-1}y^{-1}xy^{-1}xyx^{-1})^2(xy^{-1}x^{-1}yx^{-1}y^2xy^{-1}xyx^{-1}y^{-2}xyx^{-1})^2$$ is a product of two squares and they prove that there are no $a,b\in \FF$ such that $[u,v]=[a^2,b]$. Their proof studies the possible factorizations of $[u,v]$ as Wicks words of commutators. Here we give an alternative proof using the winding invariant.

\begin{prop}[Comerford-Edmunds] \label{propce}
Let $u=y^{-1}x^{-1}y^2xy^{-1}, v=x \in \F$. Then there are no $a,b\in \F$ such that $[u,v]=[a^2,b]$.
\end{prop}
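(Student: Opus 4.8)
The strategy is to compute the winding invariant $P_{[u,v]}$ explicitly and then derive a contradiction with what $P_{[a^2,b]}$ would have to look like, using the divisibility condition of Proposition \ref{lngeneral} (or \ref{lnconverse}) together with the parallelogram constraint from Corollary \ref{obsiotaprima}. First I would draw the curve $\gamma_{[u,v]}$ for $u=y^{-1}x^{-1}y^2xy^{-1}$, $v=x$, read off the winding numbers $a_{i,j}$ square by square, and write down $P_{[u,v]}=\sum a_{i,j}X^iY^j$ (this is a short finite computation, best displayed with a figure). Since $\vv_u=(0,0)$ and $\vv_v=(1,0)$, the commutator $[u,v]$ is in particular a commutator, so by Corollary \ref{obsiotaprima} the coefficients of $P_{[u,v]}$ partition into cosets of $L_{u,v}$ each summing to $\iota\in\{-1,0,1\}$; here $L_{u,v}$ is generated by $(1,0)$ and $(0,0)$, i.e. $L_{u,v}=\Z\times\{0\}$, so the constraint is simply that summing the coefficients along each horizontal line gives a constant $\iota$.

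\textbf{Key steps.} (1) Compute $P_{[u,v]}$; call it $P$. (2) Observe that if $[u,v]=[a^2,b]$ in $\F$ then in particular (applying $\W$, whose kernel is $\FFF$) we have $P=P_{[a^2,b]}$, and by Proposition \ref{basica}(iv), $P=(X^{2n}Y^{2m}-1)P_b$ where $(n,m)=\vv_a$, so $(X^{2n}Y^{2m}-1)$ divides $P$ in $\ZZ$; moreover $a^2$ is a $2$nd power, so by Proposition \ref{lngeneral}, $1+X^nY^m$ divides $P$. (3) Factor $P$ in $\ZZ$: the divisibility of $P$ by $1+X^nY^m$ forces $X=1,Y=1$ to make $P$ even — check $P(1,1)$ — and more importantly, by Corollary \ref{signed}, $P(1,1)=\iota_{a,b}\,|\Z^2:L_{a,b}|=\pm|\Z^2:\langle(2n,2m),(n',m')\rangle|$ where $(n',m')=\vv_b$; since $a^2$ has even exponent vector, this index is even (it is $2\times$ the index coming from $a$), which must be compatible with the value of $P(1,1)$ computed in step (1). (4) If $P(1,1)$ alone does not kill it, use the full coset partition: for $[a^2,b]$ the relevant subgroup $L_{a^2,b}=\langle 2\vv_a,\vv_b\rangle$ is an index-$2$ subgroup of $L_{a,b}=\langle\vv_a,\vv_b\rangle$, so the partition of $\Z^2$ realizing the $\iota$-constant-sum property must be by cosets of a group containing $2\vv_a$ but the refinement by $L_{a,b}$ must \emph{also} give constant sum $\iota_{a,b}$ — combining these two incompatible partition requirements against the explicit coefficient pattern of $P$ should yield the contradiction.

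\textbf{Main obstacle.} The hard part is step (4): $P(1,1)$ and the bare existence of \emph{some} factor $1+X^nY^m$ will likely not suffice, because $[u,v]$ genuinely is a product of two squares, so $P$ genuinely is divisible by some $1+X^nY^m$. The real distinction between "$=[a^2,b]$" and "$=$ product of two squares" is that in the former case the \emph{same} direction $(n,m)=\vv_a$ must serve both as the square-root direction and as (half of) the commutator direction $\vv_{a^2}=2\vv_a$, and the second coordinate of the commutator, $\vv_b$, is unconstrained but must still make $(2\vv_a,\vv_b)$ span a sublattice whose coset-sums against $P$ are all equal to $\pm1$. So I expect the crux is a careful case analysis: enumerate the finitely many $(n,m)$ for which $1+X^nY^m\mid P$, and for each, show that \emph{no} choice of $\vv_b$ makes the coset-sum condition of Corollary \ref{obsiotaprima} hold for $L=\langle(2n,2m),\vv_b\rangle$. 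This is the step where the explicit shape of $P$ from step (1) does the work, and it is the genuinely delicate part of the argument.
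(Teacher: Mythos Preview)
Your overall strategy---compute $P_{[u,v]}$, extract a divisibility constraint on $\vv_a$, then invoke the coset-sum constraint of Theorem~\ref{iotaprima}---is the same as the paper's. But there are two concrete errors and one missing simplification that would prevent your plan from going through as written.

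\textbf{The formula $P=(X^{2n}Y^{2m}-1)P_b$ is wrong.} Proposition~\ref{basica}(iv) requires the second argument of the commutator to lie in $\F'$; here $b\in\F$ is arbitrary, so ``$P_b$'' is meaningless. The correct identity, which the paper uses, comes from $[a^2,b]=[a,b]\cdot bab^{-1}[a,b]ba^{-1}b^{-1}$, giving
\[
P_{[a^2,b]}=(1+X^nY^m)\,P_{[a,b]},\qquad (n,m)=\vv_a.
\]
So the only divisibility you get is by $1+X^nY^m$, not by $X^{2n}Y^{2m}-1$.

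\textbf{Your opening paragraph about $L_{u,v}$ is a red herring.} Of course $[u,v]$ is a commutator and satisfies Corollary~\ref{obsiotaprima} for $L_{u,v}$; that tells you nothing about whether it equals $[a^2,b]$. The relevant lattice is $L_{a^2,b}=\langle 2\vv_a,\vv_b\rangle$, which you only get to after pinning down $\vv_a$.

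\textbf{The paper's route through step~(4) is cleaner than your ``two incompatible partitions'' idea.} First compute $P_{[u,v]}=(1-X^{-1})(1+Y^{-1})$. Setting $Y=1$ in the divisibility $1+X^nY^m\mid P$ forces $n=0$. Since $P(1,1)=0$, Corollary~\ref{signed} gives $\iota_{a^2,b}=0$, so $\{2\vv_a,\vv_b\}$ is linearly dependent; with $\vv_a=(0,m)$ this forces $\vv_b\in 0\times\Z$, hence $L_{a^2,b}\subseteq 0\times\Z$. Now a \emph{single} application of Theorem~\ref{iotaprima} at $(0,0)$ finishes it: the sum $\sum_{\lambda\in L_{a^2,b}}P\{\lambda\}$ picks up $P\{(0,0)\}=1$ and possibly $P\{(0,-1)\}=1$, so equals $1$ or $2$, never $0=\iota_{a^2,b}$. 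There is no need to enumerate all $(n,m)$ with $1+X^nY^m\mid P$ and then test every $\vv_b$; the signed-area-zero step collapses the case analysis.
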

\begin{proof}
The winding invariant of $w=[u,v]$ is $P_w=(1-X^{-1})(1+Y^{-1})$. Suppose $a,b\in \F$ are such that $w=[a^2,b]=[a,b]bab^{-1}[a,b]ba^{-1}b^{-1}$. Then $P_w=P_{[a,b]}(1+X^nY^m)$, where $(n,m)=\vv_a\in \Z^2$ is the endpoint of $\gamma_a$. Thus, $1+X^nY^m$ divides $(1-X^{-1})(1+Y^{-1})$. Putting $Y=1$ gives that $1+X^n$ divides $2(1-X^{-1})$ in $\Z[X,X^{-1}]$. If $n\ge 0$, $1+X^n | 2(X-1)$ in $\Z[X]$ and if $n<0$, $X^{-n}+1|2(X-1)$ in $\Z[X]$. We conclude that $n=0$, so $\vv_a=(0,m)$ ($m$ can be proved to be $1$ or $-1$, but we will not need that). 

Since the signed area $P_w(1,1)$ is $0$, $\iota_{a^2,b}=0$ by Corollary \ref{signed} and then $\{2\vv_a,\vv_b\} \subseteq \R^2$ is linearly dependent, so $\vv_b=(0,r)$ for some $r\in \Z$. Thus, $L_{a^2,b}=\langle 2\vv_a, \vv_b \rangle \leqslant 0\times \Z \leqslant \Z^2$.

We apply Theorem \ref{iotaprima} to the pair $(0,0)$ to obtain $0=\iota_{a^2,b}=\sum\limits_{\lambda \in L_{a^2,b}} P_{w}\{\lambda\}$. But $P_{w}\{(0,0)\}=P_w\{(0,-1)\}=1$, and $P_w\{(0,s)\}=0$ for every $s\in \Z\smallsetminus \{0,1\}$. Therefore, $\iota_{a^2,b}=1$ or $\iota_{a^2,b}=2$, a contradiction. 
\end{proof}

Our proof of Proposition \ref{propce} shows in fact that the equation $[u,v]=[a^2,b]$ has no solution in $\M=\F/\FFF$. The next two results concern equations in $\M$ which assume independence modulo $\F'$. The first one was proved in \cite[Theorem 5.1]{BNNN}. We provide here a short proof as an application of the invariant $\iota$.

\begin{prop} (Baumslag, Neumann, Neumann, Neumann) \label{propbnnn}
Let $w,u\in \F$ be two elements which are independent modulo $\F'$. Then the equation $[w,u]=z^k$ has no solution in $\M=\F/ \F''$ for any $k\ge 2$.
\end{prop}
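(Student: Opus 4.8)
The plan is to use the winding invariant together with the invariant $\iota$ and the independence hypothesis. Suppose $[w,u]=z^k$ in $\M$, and let $\vv_w=(n_1,m_1)$, $\vv_u=(n_2,m_2)$ be the exponent vectors. The independence of $w$ and $u$ modulo $\F'$ means precisely that $\vv_w$ and $\vv_u$ are $\R$-linearly independent, so by Corollary \ref{signed} we have $\iota_{w,u}=\pm 1$ and $P_{[w,u]}(1,1)=\pm|\Z^2:L_{w,u}|$, a nonzero integer. On the other hand, writing $z=z_0\zeta$ where $\zeta\in\F'$ and $z_0$ is a word with $\vv_{z}=(p,q)$, one computes (exactly as in the proof of Proposition \ref{lngeneral}, using that $z^k$ is in $\F'$ only if $\vv_z=0$ when $\gcd$... — more precisely, $z^k\in\F'$ forces $k\vv_z=0$, hence $\vv_z=(0,0)$) that $P_{z^k}=(1+X^pY^q+X^{2p}Y^{2q}+\cdots+X^{(k-1)p}Y^{(k-1)q})P_z$. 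But since $\vv_z=0$ here we get $p=q=0$, so $P_{[w,u]}=k\,P_z$; in particular $k$ divides every coefficient of $P_{[w,u]}$.

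Now I would bring in $\iota$. By Theorem \ref{iotaprima}, for the commutator $[w,u]$ there is a subgroup $L=L_{w,u}\leqslant\Z^2$ of finite index such that for every coset $(n,m)+L$, the sum $\sum_{\lambda\in L}P_{[w,u]}\{(n,m)+\lambda\}$ equals $\iota_{w,u}=\pm 1$. Since $k$ divides each coefficient $P_{[w,u]}\{(n,m)+\lambda\}$, it divides this coset sum, hence $k\mid 1$, forcing $k=1$. This contradicts $k\ge 2$ and proves the claim.

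The step I expect to need the most care is verifying that $z^k\in\F'$ forces $\vv_z=(0,0)$: since $\Z^2$ is torsion-free and $\vv_{z^k}=k\vv_z$, the condition $\vv_{z^k}=(0,0)$ immediately gives $\vv_z=(0,0)$, so this is in fact automatic — the only subtlety is simply noting that $[w,u]=z^k$ in $\M$ means the two sides have equal winding invariants by Theorem \ref{conway}, and that $\W$ being a homomorphism lets us compute $P_{z^k}=k P_z$ once $\vv_z=0$. The genuinely essential ingredient is the combination of two divisibility-type facts pulling in opposite directions: $k$ divides all coefficients of $P_{[w,u]}$ (from the $k$-th power structure, via independence killing the $X^pY^q$ shifts), while the $\iota$-partition of Corollary \ref{obsiotaprima} forces a coset sum equal to $\pm1$. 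I would present this as: first establish $\iota_{w,u}=\pm1$ from independence and Corollary \ref{signed}; then establish $k\mid P_{[w,u]}\{(n,m)\}$ for all $(n,m)$; then conclude via Theorem \ref{iotaprima} that $k\mid 1$.
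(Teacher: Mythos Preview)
Your proof is correct and follows essentially the same approach as the paper: both arguments reduce to $P_{[w,u]}=kP_z$ (using $\vv_z=0$) and then observe that $k$ divides the coset sum $\iota_{w,u}\in\{-1,0,1\}$, contradicting independence. The paper's version is simply terser, phrasing it as ``$k\mid\iota$ forces $\iota=0$, hence $w,u$ are not independent''; your detour through $z=z_0\zeta$ and the trailing ``when $\gcd$\dots'' is unnecessary and should be cleaned up.
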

\begin{proof}
Suppose $z\in \F$ is a solution. Then $z\in \F'$ and $P_{[w,u]}=kP_z$. The invariant $\iota_{w,u}$ is $1,-1$ or $0$, and since $k$ divides $\iota$, then $\iota=0$, which means that $w$ and $u$ are not independent in $\F/\F'$.
\end{proof}

In the next example we provide a simple proof of a particular case of a result of Baumslag and Mahler \cite{BM6}, which is the analogue of the original Vaught conjecture (see Conjecture \ref{vaughtconjecture}) for the free metabelian group $\M$. 

\begin{prop}(Baumslag-Mahler) \label{propbm}
Let $w,u\in \F$ be two elements which are independent modulo $\F'$. Then the equation $w^2u^2=z^2$ has no solution in $\M$.
\end{prop}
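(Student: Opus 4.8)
The plan is to assume that $z\in\F$ satisfies $w^2u^2=z^2$ in $\M$ and to derive a contradiction from the parallelogram invariant $\iota$ of Section \ref{secparalelogramos}. Passing to the abelianization $\M/\M'=\Z^2$, the equation forces $2\vv_w+2\vv_u=2\vv_z$, i.e. $\vv_z=\vv_w+\vv_u$; in particular $c:=wuz^{-1}\in\FF$. Moreover $w^2u^2z^{-2}\in\FFF$, so by Theorem \ref{conway} we have $\W(w^2u^2z^{-2})=0$.

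The key step is the factorization
$$w^2u^2z^{-2}=c\cdot\bigl((zu^{-1})[w,u](zu^{-1})^{-1}\bigr)\cdot\bigl(zcz^{-1}\bigr),$$
whose three factors all lie in $\FF$ (the middle one is a conjugate of the commutator $[w,u]$, the outer ones are $c$ and a conjugate of $c$); it is checked by a direct cancellation starting from $w^2u^2=(wu)\bigl(u^{-1}(wu)u\bigr)$ and inserting trivial words $z^{-1}z$ and $w^{-1}w$ appropriately. Writing $\vv_w=(n,m)$ and $\vv_u=(n',m')$, so that $\vv_z=(n+n',m+m')$ and $\vv_{zu^{-1}}=\vv_w$, and applying $\W$ to the displayed product (Proposition \ref{basica}), one gets
$$0=P_c+X^nY^mP_{[w,u]}+X^{n+n'}Y^{m+m'}P_c,$$
hence $P_{[w,u]}=(1+X^{n+n'}Y^{m+m'})R$ with $R:=-X^{-n}Y^{-m}P_c\in\ZZ$. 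Thus the factor $1+X^{n+n'}Y^{m+m'}$, whose exponent vector is exactly $\vv_z=\vv_w+\vv_u$, divides $P_{[w,u]}$.

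To finish, note that independence of $w,u$ modulo $\FF$ means that $\vv_w,\vv_u$ are $\R$-linearly independent, so $L:=L_{w,u}$ has finite index in $\Z^2$ and $\iota_{w,u}\in\{1,-1\}$ by Theorem \ref{iotaprima}. Using $P_{[w,u]}\{\nu\}=R\{\nu\}+R\{\nu-(n+n',m+m')\}$, Theorem \ref{iotaprima} applied at the pair $(0,0)$ gives
$$\iota_{w,u}=\sum_{\lambda\in L}P_{[w,u]}\{\lambda\}=\sum_{\lambda\in L}R\{\lambda\}+\sum_{\lambda\in L}R\{\lambda-(n+n',m+m')\}.$$
Since $(n+n',m+m')=\vv_w+\vv_u\in L$, translation by this vector permutes $L$, so the two sums coincide and $\iota_{w,u}=2\sum_{\lambda\in L}R\{\lambda\}$ is even, contradicting $\iota_{w,u}=\pm1$. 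The main obstacle is the middle step: one must pin down the exact three-term factorization into elements of $\FF$, and one must realize that merely evaluating $P_{[w,u]}$ at $(1,1)$ would only show that $|\Z^2:L_{w,u}|$ is even — which is not a contradiction — so the refined coset statement of Theorem \ref{iotaprima}, together with the fact that $\vv_z$ lies in $L_{w,u}$, is what is really needed.
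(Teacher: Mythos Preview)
Your proof is correct and follows essentially the same strategy as the paper's: both derive an identity of the form $P_{[\cdot,\cdot]}=-(1+X^aY^b)Q$ for a suitable commutator whose invariant $\iota$ must be $\pm1$ by the independence hypothesis, then note that the exponent vector $(a,b)$ lies in the lattice $L$, so the coset sum of Theorem~\ref{iotaprima} forces $\iota$ to be even. The only difference is cosmetic: the paper writes $wu=zc$, rearranges $z^2=(wu)u^{-1}(wu)u=zcu^{-1}zcu$ to get $(1+X^nY^m)P_c+P_{[u^{-1},z]}=0$ with $(n,m)=\vv_{u^{-1}z}$, and concludes $\iota_{u^{-1},z}$ is even; you instead factor $w^2u^2z^{-2}$ explicitly so that the commutator $[w,u]$ appears and conclude $\iota_{w,u}$ is even. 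Since $L_{u^{-1},z}=\langle -\vv_u,\vv_w+\vv_u\rangle=\langle\vv_w,\vv_u\rangle=L_{w,u}$ and both translation vectors lie in this common lattice, the two arguments are interchangeable.
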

\begin{proof}
Suppose that $z\in \F$ is a solution of the equation. Then in $\F/\F'$, $(wu)^2=w^2u^2=z^2$. Thus $wu$ is equal to $z$ modulo $\F'$. Let $c\in \F'$ be such that $wu=zc$. We have then $z^2=w^2u^2=(wu)u^{-1}(wu)u=zcu^{-1}zcu$ in $\M$. This implies that $cu^{-1}zcuz^{-1}\in \F''$. In particular, the winding invariant is $$0=(1+X^nY^m)P_c+P_{[u^{-1},z]},$$ where $n,m$ are the exponents of $x,y$ in $u^{-1}z$. The invariant $\iota_{u^{-1},z}$ is $1,-1$ or $0$, but by the equation above this number must be even. Thus, $\iota_{u^{-1},z}=0$ and then $u^{-1}$ and $z$ are not independent modulo $\F'$. Thus, the same is true for $w$ and $u$, a contradiction.
\end{proof}

\subsection{Endomorphisms, the area formula and commutators determining the same parallelogram}

Now we will study the effect of an endomorphism $\phi:\F \to \F$ on the winding invariant, that is the relationship between $P_{\phi(w)}$ and $P_w$ for a word $w\in \FF$.

Since $w\in \FF$, Equation (\ref{area}) in page \pageref{area} holds for certain $k\ge 0$, $u_i\in \F$ and $\epsilon_i=\pm 1$. Then 
\begin{equation}
\phi(w)=\prod\limits_{i=1}^k \phi(u_i)[\phi(x),\phi(y)]^{\epsilon_i}\phi(u_i)^{-1},
\end{equation}
so by Proposition \ref{basica}, 

\begin{equation} \label{eqendo}
P_{\phi(w)}=P_{[\phi(x),\phi(y)]}\sum\limits_{i=1}^k \epsilon_i X^{n_i'}Y^{m_i'},
\end{equation}
where $n_i',m_i'$ are the total exponents of $x$ and $y$ in $\phi(u_i)$. In other words, $(n_i',m_i')=\overline{\phi}(n_i,m_i)$ where $\overline{\phi}:\Z^2\to \Z^2$ is the map induced by $\phi$ in the abelianizations and $n_i,m_i$ are the exponents of $x,y$ in $u_i$. If $\Z[\overline{\phi}]:\ZZ\to \ZZ$ denotes the ring homomorphism induced by $\overline{\phi}$, then Equation (\ref{eqendo}) says:

\begin{prop} \label{propzdephi}
If $\phi \in \textrm{End}(\F)$ and $w\in \F'$, then $P_{\phi(w)}=P_{\phi([x,y])}\Z[\overline{\phi}](P_w)$.
\end{prop}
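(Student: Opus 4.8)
The plan is to derive the formula directly from the structural description of elements of $\F'$ as products of conjugates of $[x,y]$, exactly as the excerpt has already set up. Since $w\in \F'$, by Equation (\ref{area}) we may write $w=\prod_{i=1}^k u_i[x,y]^{\epsilon_i}u_i^{-1}$ for some $k\ge 0$, $u_i\in \F$ and $\epsilon_i=\pm 1$. The first step is to apply the endomorphism $\phi$ to this product; since $\phi$ is a group homomorphism it commutes with products, inverses, and the formation of commutators, so $\phi(w)=\prod_{i=1}^k \phi(u_i)[\phi(x),\phi(y)]^{\epsilon_i}\phi(u_i)^{-1}$, which is the displayed equation preceding (\ref{eqendo}). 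Note that $\phi(w)\in \F'$ because $w\in \F'$, so its winding invariant is defined.

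The second step is to compute $P_{\phi(w)}$ using Proposition \ref{basica}. Each factor $\phi(u_i)[\phi(x),\phi(y)]^{\epsilon_i}\phi(u_i)^{-1}$ is a conjugate of a power of the (generally non-primitive) element $[\phi(x),\phi(y)]\in \F'$, so by parts (ii), (i) and (iii) of Proposition \ref{basica} its winding invariant is $\epsilon_i X^{n_i'}Y^{m_i'} P_{[\phi(x),\phi(y)]}$, where $(n_i',m_i')$ is the pair of total exponents of $x$ and $y$ in $\phi(u_i)$. Summing over $i$ and factoring out the common polynomial $P_{[\phi(x),\phi(y)]}$ gives $P_{\phi(w)}=P_{[\phi(x),\phi(y)]}\sum_{i=1}^k \epsilon_i X^{n_i'}Y^{m_i'}$, which is (\ref{eqendo}).

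The third step is the bookkeeping that identifies the remaining sum with $\Z[\overline{\phi}](P_w)$. The key observation is that the pair of total exponents $(n_i',m_i')$ of $\phi(u_i)$ is precisely $\overline{\phi}(n_i,m_i)$, where $\overline{\phi}:\Z^2\to\Z^2$ is the abelianization of $\phi$ and $(n_i,m_i)$ is the pair of total exponents of $u_i$; this is because taking total exponents is exactly the abelianization map $\F\to\Z^2$, and $\overline{\phi}$ is defined so that this square commutes. Therefore $X^{n_i'}Y^{m_i'}=\Z[\overline{\phi}](X^{n_i}Y^{m_i})$, and since by Proposition \ref{basica} we already know $P_w=\sum_{i=1}^k \epsilon_i X^{n_i}Y^{m_i}$, linearity of the ring map $\Z[\overline{\phi}]$ gives $\sum_{i=1}^k \epsilon_i X^{n_i'}Y^{m_i'}=\Z[\overline{\phi}](P_w)$. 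Substituting into (\ref{eqendo}) yields $P_{\phi(w)}=P_{\phi([x,y])}\,\Z[\overline{\phi}](P_w)$, as claimed.

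I do not expect any genuine obstacle here; the statement is essentially a repackaging of the computations the excerpt performs in the lines leading up to it, and the only point requiring a word of care is the compatibility $(n_i',m_i')=\overline{\phi}(n_i,m_i)$, which follows immediately from the fact that "total exponent vector" is the abelianization homomorphism and $\overline{\phi}$ is by definition the induced map on abelianizations. One should also remark that the expression is independent of the chosen decomposition (\ref{area}) of $w$: both sides are well-defined functions of $w$ (the right side because $P_w$ is, the left because $P_{\phi(w)}$ is), so the derivation via any particular decomposition is legitimate.
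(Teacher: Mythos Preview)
Your proof is correct and follows essentially the same approach as the paper: both write $w$ as a product of conjugates of $[x,y]^{\pm1}$ via Equation~(\ref{area}), apply $\phi$, use Proposition~\ref{basica} to arrive at Equation~(\ref{eqendo}), and then identify the sum $\sum_i \epsilon_i X^{n_i'}Y^{m_i'}$ with $\Z[\overline{\phi}](P_w)$ by the observation $(n_i',m_i')=\overline{\phi}(n_i,m_i)$. Your added remark on independence of the chosen decomposition is a nice clarification but not essential, since both sides are intrinsically defined.
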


If for $\phi \in \textrm{End}(\F)$ we define $W(\phi):\ZZ \to \ZZ$ by $W(\phi)(P)=P_{\phi([x,y])}\Z[\overline{\phi}](P)$, then the previous result simply says that $W(\phi)W(w)=W(\phi(w))$ for every $w\in \F'$.

Since $\Z[\overline{\phi}]$ preserves the augmentation $\ZZ\to \Z$ (the signed area), then we obtain the following

\begin{coro}(The area formula) \label{endo}
If $w\in \FF$ and $\phi \in \textrm{End}(\F)$, then the signed area of $\phi(w)$ is the product of the signed area of $w$ and the signed area of $\phi([x,y])$.
\end{coro}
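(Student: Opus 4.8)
The plan is to read off the area formula directly from Proposition \ref{propzdephi}, which has just been established. Recall that the signed area of a word $v\in\FF$ is by definition $P_v(1,1)\in\Z$, i.e. the value of the augmentation homomorphism $\mathrm{ev}_{(1,1)}\colon \ZZ\to\Z$ applied to $P_v$. So I would start from the identity $P_{\phi(w)}=P_{\phi([x,y])}\cdot \Z[\overline{\phi}](P_w)$ and simply evaluate both sides at $X=Y=1$.

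The only point that needs a word of justification is the claim that $\mathrm{ev}_{(1,1)}\bigl(\Z[\overline{\phi}](P)\bigr)=\mathrm{ev}_{(1,1)}(P)$ for every $P\in\ZZ$; this is the assertion already quoted in the text that $\Z[\overline{\phi}]$ preserves the augmentation. The reason is that $\Z[\overline{\phi}]$ is the ring homomorphism induced by the group homomorphism $\overline{\phi}\colon\Z^2\to\Z^2$, hence it sends each monomial $X^iY^j$ to another monomial $X^{i'}Y^{j'}$ (the image of $(i,j)$ under $\overline\phi$), and in particular sends monomials to elements whose coefficients sum to $1$; extending linearly, the sum of the coefficients — which is exactly $\mathrm{ev}_{(1,1)}$ — is unchanged. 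Equivalently, $\mathrm{ev}_{(1,1)}$ is the augmentation of the group ring $\Z[\Z^2]$ and any ring map induced by a group homomorphism commutes with augmentations.

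Putting these together: applying $\mathrm{ev}_{(1,1)}$ to Proposition \ref{propzdephi} gives
\[
P_{\phi(w)}(1,1)=P_{\phi([x,y])}(1,1)\cdot \Z[\overline{\phi}](P_w)(1,1)=P_{\phi([x,y])}(1,1)\cdot P_w(1,1),
\]
because $\mathrm{ev}_{(1,1)}$ is a ring homomorphism (evaluation at a point) and by the augmentation-preserving property of $\Z[\overline\phi]$. The left side is the signed area of $\phi(w)$, the first factor on the right is the signed area of $\phi([x,y])$, and the second is the signed area of $w$, which is precisely the statement.

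There is essentially no obstacle here: the content has all been done in Proposition \ref{propzdephi}, and the corollary is a one-line specialization obtained by evaluating at $(1,1)$. The only thing to be careful about is not to conflate ``signed area'' (a single integer, the augmentation) with the full winding invariant (a polynomial); once that is kept straight, the proof is immediate.
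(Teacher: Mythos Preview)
Your proof is correct and follows the same approach as the paper: apply the augmentation $\mathrm{ev}_{(1,1)}$ to the identity of Proposition~\ref{propzdephi}, using that $\Z[\overline{\phi}]$ preserves the augmentation. The paper's own proof is exactly this one-line specialization.
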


\begin{coro}
Let $u\in \FF$ and let $n,m\in \Z$. If there exist $a,b\in \F$ such that $u=[a^n,b^m]$, then $nm$ divides the signed area of $u$.
\end{coro}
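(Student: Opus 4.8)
The plan is to deduce this directly from the area formula (Corollary \ref{endo}). First note that the hypothesis $u\in\FF$ is automatic, since $u=[a^n,b^m]$ is a commutator. Define the endomorphism $\phi\in\textrm{End}(\F)$ by $\phi(x)=a$ and $\phi(y)=b$. Then $\phi([x^n,y^m])=[\phi(x)^n,\phi(y)^m]=[a^n,b^m]=u$, and $[x^n,y^m]\in\FF$, so Corollary \ref{endo} applies to $w=[x^n,y^m]$ and this $\phi$.

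Next I would compute the signed area of $[x^n,y^m]$, i.e. $P_{[x^n,y^m]}(1,1)$. The curve $\gamma_{[x^n,y^m]}$ is the boundary of the axis-parallel rectangle spanned by $\vv_{x^n}=(n,0)$ and $\vv_{y^m}=(0,m)$, traversed exactly once; its orientation is positive if $nm>0$ and negative if $nm<0$, so every lattice square in the interior has winding number $+1$ (resp. $-1$), and there are $|nm|$ such squares. Hence $P_{[x^n,y^m]}(1,1)=nm$ (this is $0$ when $nm=0$, in which case the curve degenerates). Alternatively, this follows from Corollary \ref{signed} applied to the pair $(x^n,y^m)$: the subgroup $L_{x^n,y^m}=\langle (n,0),(0,m)\rangle$ has index $|nm|$ in $\Z^2$ and $\iota_{x^n,y^m}$ has the sign of $nm$, so $P_{[x^n,y^m]}(1,1)=\iota_{x^n,y^m}\,|\Z^2:L_{x^n,y^m}|=nm$.

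Finally, by Corollary \ref{endo} the signed area of $u=\phi([x^n,y^m])$ equals the product of the signed area of $[x^n,y^m]$ and the signed area of $\phi([x,y])=[a,b]$; that is, $P_u(1,1)=nm\cdot P_{[a,b]}(1,1)$. Since $P_{[a,b]}(1,1)\in\Z$, it follows that $nm$ divides $P_u(1,1)$, the signed area of $u$, which is exactly the claim.

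I do not anticipate any serious obstacle here: the proof is a two-line application of the area formula once it is set up. The only point that requires a small verification is the value $P_{[x^n,y^m]}(1,1)=nm$ for all signs of $n$ and $m$, which is the elementary rectangle computation above (or, more cheaply, a one-line invocation of Corollary \ref{signed}).
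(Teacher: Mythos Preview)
Your proof is correct and follows essentially the same approach as the paper: define $\phi\in\textrm{End}(\F)$ by $\phi(x)=a$, $\phi(y)=b$, observe $u=\phi([x^n,y^m])$, compute $P_{[x^n,y^m]}(1,1)=nm$, and apply the area formula (Corollary~\ref{endo}). Your version just spells out the computation of $P_{[x^n,y^m]}(1,1)$ in more detail than the paper does.
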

\begin{proof}
If $u=[a^n,b^m]$, then $u=\phi([x^n,y^m])$ for the endomorphism $\phi:\F \to \F$ which maps $x$ to $a$ and $y$ to $b$. By Corollary \ref{endo}, $nm=P_{[x^n,y^m]}(1,1)$ divides $P_u(1,1)$. 
\end{proof}
 
The following result is mentioned in \cite{Barac}.

\begin{coro} \label{corounit}
If $\phi \in \textrm{Aut}(\F)$, then $P_{\phi([x,y])}$ is a unit of $\ZZ$.
\end{coro}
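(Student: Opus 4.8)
The plan is to combine Proposition \ref{propzdephi} with the fact that an automorphism $\phi$ of $\F$ is invertible, so that its inverse $\psi=\phi^{-1}$ also satisfies $\psi\phi=\phi\psi=\mathrm{id}$. The key observation is that $W:\mathrm{End}(\F)\to \mathrm{End}(\ZZ)$ behaves multiplicatively on the image of $[x,y]$: from the identity $W(\phi)W(w)=W(\phi(w))$ for $w\in\F'$ (the reformulation of Proposition \ref{propzdephi} noted right after it), applied to $w=[x,y]$, we get $P_{\phi\psi([x,y])}=P_{\phi([x,y])}\,\Z[\overline{\phi}](P_{\psi([x,y])})$.

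First I would set $\psi=\phi^{-1}$ and apply the previous display. Since $\phi\psi=\mathrm{id}$, the left-hand side is $P_{[x,y]}=1\in\ZZ$. Hence $P_{\phi([x,y])}\cdot \Z[\overline{\phi}](P_{\psi([x,y])})=1$. This exhibits $P_{\phi([x,y])}$ as a factor of $1$ in the commutative ring $\ZZ$, with $\Z[\overline{\phi}](P_{\psi([x,y])})$ as a multiplicative inverse; therefore $P_{\phi([x,y])}$ is a unit of $\ZZ$. That is the whole argument.

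I do not expect any real obstacle here: the only point that needs a line of justification is that Proposition \ref{propzdephi} applies with $w=[x,y]$ (it does, since $[x,y]\in\F'$) and that $\phi([x,y])=\phi\psi(\phi([x,y]))$ etc.\ — i.e.\ bookkeeping that $\phi^{-1}$ is again an endomorphism so Proposition \ref{propzdephi} is available for it too. One could equally phrase the conclusion via Corollary \ref{endo} (the area formula): the signed area of $\phi([x,y])$ times that of $\phi^{-1}([x,y])$ equals the signed area of $[x,y]$, which is $1$; this shows $P_{\phi([x,y])}(1,1)=\pm 1$, but that is weaker than unit-ness in $\ZZ$, so the module-theoretic factorization above is the right route. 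As a sanity check, for an automorphism coming from a $GL_2(\Z)$ matrix acting on $x,y$ monomially one gets $P_{\phi([x,y])}$ a monomial $\pm X^iY^j$, which is indeed a unit.
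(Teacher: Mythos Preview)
Your proof is correct and is essentially the same as the paper's: both apply Proposition \ref{propzdephi} with $w=\phi^{-1}([x,y])$ to obtain $1=P_{[x,y]}=P_{\phi(\phi^{-1}([x,y]))}=P_{\phi([x,y])}\,\Z[\overline{\phi}](P_{\phi^{-1}([x,y])})$. One small wording slip: you say the identity is ``applied to $w=[x,y]$'', but the displayed equation you actually use comes from taking $w=\psi([x,y])=\phi^{-1}([x,y])$; the computation itself is fine.
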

\begin{proof}
We have $1=P_{[x,y]}=P_{\phi (\phi^{-1}([x,y]))}=P_{\phi([x,y])}\Z[\overline{\phi}](P_{\phi^{-1}([x,y])})$.
\end{proof}

The signed area of $[a,b]$ (and $\iota_{a,b}$) depends only on the vectors $\vv_a, \vv_b\in \Z^2$. What else can be said about commutators $[a_1,b_1]$, $[a_2,b_2]$ with equal associated vectors $\vv_{a_1}=\vv_{a_2}$, $\vv_{b_1}=\vv_{b_2}$? The following result gives a partial answer. 

\begin{prop} \label{paralelogramoposta}
Let $a_1,a_2,b_1,b_2\in \F$ and suppose $a_1a_2^{-1},b_1b_2^{-1}\in \FF$. Let $n, m\in \Z$ be the total exponents of $x$ and $y$ in $a_1$ and $n',m'$ the total exponents of $x,y$ in $b_1$. Then $P_{[a_1,b_1]}-P_{[a_2,b_2]}$ is in the ideal $\langle X^nY^m-1, X^{n'}Y^{m'}-1\rangle \subseteq \ZZ$. 
\end{prop}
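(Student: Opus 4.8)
The plan is to exploit the basic homomorphism properties of $\W$ (Proposition \ref{basica}) together with the fact (Theorem \ref{conway}) that $P_{w}$ only depends on the class of $w$ modulo $\FFF$, so that everything can be computed in $\M$. Write $a_2 = a_1 c$ and $b_2 = b_1 d$ with $c = a_1^{-1}a_2 \in \FF$ (hmm, more precisely $a_1a_2^{-1}\in\FF$, so set $c = a_2 a_1^{-1}\in\FF$ or work with the appropriate side — I will fix conventions so that the exponent vectors of $a_1$ and $a_2$ agree, and likewise for $b_1,b_2$). The key point is that $a_1,a_2$ have the same image $(n,m)$ in $\Z^2$ and $b_1,b_2$ have the same image $(n',m')$.

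First I would expand $P_{[a_2,b_2]}$ using Proposition \ref{basica} and elementary commutator identities. Writing $[a_2,b_2] = [a_1c, b_1d]$ and using the commutator expansion $[xy,z] = x[y,z]x^{-1}[x,z]$ and $[x,yz] = [x,y]y[x,z]y^{-1}$, one gets $[a_1c,b_1d]$ as a product of conjugates of $[a_1,b_1]$, $[a_1,d]$, $[c,b_1]$, $[c,d]$. Applying $\W$, which kills the conjugating factors up to the monomial $X^{(\cdot)}Y^{(\cdot)}$ coming from their exponent vectors, yields
$$P_{[a_2,b_2]} = X^{?}Y^{?}P_{[a_1,b_1]} + (\text{terms involving } P_{[a_1,d]},\ P_{[c,b_1]},\ P_{[c,d]}),$$
where the monomial prefactor in front of $P_{[a_1,b_1]}$ has exponent vector lying in the subgroup generated by $\vv_c$ and $\vv_d$; but $\vv_c,\vv_d$ are multiples of... no — $c,d\in\FF$ means $\vv_c = \vv_d = (0,0)$, so that prefactor is just $1$. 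Hence $P_{[a_2,b_2]} - P_{[a_1,b_1]}$ equals a sum of terms each of which is a monomial times one of $P_{[a_1,d]}$, $P_{[c,b_1]}$, $P_{[c,d]}$.

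Now I would observe, via Proposition \ref{basica}(iv), that $P_{[a_1,d]} = (X^{n}Y^{m}-1)P_d$ since $\vv_{a_1} = (n,m)$, so $P_{[a_1,d]} \in \langle X^nY^m - 1\rangle$; symmetrically $P_{[c,b_1]} = (X^{n'}Y^{m'}-1)P_c$ — wait, $[c,b_1]$: here $\vv_c = (0,0)$, so $P_{[c,b_1]} = (1-1)P_{b_1}$... that is not quite the form I want; instead use $P_{[c,b_1]} = -P_{[b_1,c]} = -(X^{n'}Y^{m'}-1)P_c \in \langle X^{n'}Y^{m'}-1\rangle$. And $P_{[c,d]} = 0$ since $\vv_c = (0,0)$ gives $P_{[c,d]} = (1 - X^{0}Y^{0})P_d\cdot(\pm1)= 0$ (or directly $[c,d]\in\FFF$). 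Therefore each term of $P_{[a_2,b_2]} - P_{[a_1,b_1]}$ lies in the ideal $\langle X^nY^m - 1,\ X^{n'}Y^{m'}-1\rangle$, which is exactly the claim.

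The main obstacle is bookkeeping: carefully expanding $[a_1c, b_1d]$ into a product of conjugates of the four basic commutators, tracking which conjugating elements appear and verifying that every resulting monomial prefactor is either trivial (when it comes from $\vv_c$ or $\vv_d$, both zero) or can be absorbed into the ideal generators; also being careful that the two relations $X^nY^m-1$ and $X^{n'}Y^{m'}-1$ suffice — i.e. that the prefactors multiplying $P_{[a_1,d]}$ and $P_{[c,b_1]}$, even if nontrivial, stay in the ideal since the ideal is closed under multiplication by $\ZZ$. A cleaner route that avoids most of the expansion: work in $\M$ and in the quotient ring $\ZZ / \langle X^nY^m - 1, X^{n'}Y^{m'}-1\rangle$; modulo this ideal the monomials $X^nY^m$ and $X^{n'}Y^{m'}$ both equal $1$, so by Proposition \ref{basica}(iii) conjugation by any element whose exponent vector lies in $\langle (n,m),(n',m')\rangle$ acts trivially on winding invariants — in particular conjugation by $a_1$, $b_1$ (and hence, since $\vv_{a_2}=\vv_{a_1}$ etc., one can slide $a_2,b_2$ past each other freely modulo the ideal) — and a direct computation of $P_{[a_2,b_2]}$ reduces it to $P_{[a_1,b_1]}$.
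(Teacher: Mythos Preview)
Your approach is correct and genuinely different from the paper's. You proceed algebraically: write $a_2=a_1c$, $b_2=b_1d$ with $c,d\in\FF$, expand $[a_1c,b_1d]$ via the identities $[xy,z]=x[y,z]x^{-1}[x,z]$ and $[x,yz]=[x,y]\,y[x,z]y^{-1}$, apply $\W$, and then use Proposition~\ref{basica}(iv) to see that $P_{[c,d]}=0$, $P_{[a_1,d]}=(X^nY^m-1)P_d\in\langle X^nY^m-1\rangle$, and $P_{[c,b_1]}=-P_{[b_1,c]}=-(X^{n'}Y^{m'}-1)P_c\in\langle X^{n'}Y^{m'}-1\rangle$. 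The monomial prefactors coming from the conjugations are harmless since the ideal is a $\ZZ$-module. Once the bookkeeping is written out cleanly this is a complete proof.

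The paper instead argues geometrically at the level of curves in the grid: it rewrites $\gamma_{[a_1,b_1]}$ by inserting and cancelling the paths $\gamma_{a_2},\overline{\gamma}_{a_2},\gamma_{b_2},\overline{\gamma}_{b_2}$, producing the closed loops $\alpha=\gamma_{a_1}\overline{\gamma}_{a_2}=\gamma_{a_1a_2^{-1}}$ and $\beta=\gamma_{b_2}\overline{\gamma}_{b_1}=\gamma_{b_2b_1^{-1}}$ together with their translates by $(n',m')$ and $(n,m)$; comparing winding numbers gives the explicit formula
\[
P_{[a_1,b_1]}-P_{[a_2,b_2]}=(1-X^{n'}Y^{m'})P_\alpha+(1-X^nY^m)P_\beta.
\]
Your commutator expansion yields exactly the same identity after substituting $P_\alpha=-X^nY^mP_c$ and $P_\beta=X^{n'}Y^{m'}P_d$, so the two arguments are the algebraic and geometric faces of one computation. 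Your route avoids path manipulation and generalizes more readily to other relation modules; the paper's route is visually parallel to the derivation of Theorem~\ref{iota} earlier in the section and makes the appearance of the two translates transparent.
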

\begin{proof}
We copy the idea of the beginning of the section
\begin{multline}
 \gamma_{[a_1,b_1]}=\gamma_{a_1}(\gamma_{b_1}+(n,m))(\overline{\gamma}_{a_1}+(n',m'))\overline{\gamma}_{b_1}\simeq \\ 
\simeq \underbrace{\gamma_{a_1}\overline{\gamma}_{a_2}}_{\alpha}\gamma_{a_2}\underbrace{(\gamma_{b_1}+(n,m))(\overline{\gamma}_{b_2}+(n,m))}_{\overline{\beta}+(n,m)}(\gamma_{b_2}+(n,m))(\overline{\gamma}_{a_2}+(n',m')) \\
\underbrace{(\gamma_{a_2}+(n',m'))(\overline{\gamma}_{a_1}+(n',m'))}_{\overline{\alpha}+(n',m')}\overline{\gamma}_{b_2}\underbrace{\gamma_{b_2}\overline{\gamma}_{b_1}}_{\beta} 
\end{multline}
Now $\alpha=\gamma_{a_1}\overline{\gamma}_{a_2}$ and $\beta=\gamma_{b_2}\overline{\gamma}_{b_1}$ are loops based at $(0,0)$ and $\simeq$ denotes homotopy of paths in the grid. For each $p\in \R^2$ out of the grid, we have $$\w(\gamma_{[a_1,b_1]},p)=\w(\gamma_{[a_2,b_2]},p)+\w(\alpha,p)+\w(\overline{\beta}+(n,m),p)+\w(\overline{\alpha}+(n',m'),p) +\w(\beta,p).$$

Thus, $P_{[a_1,b_1]}=P_{[a_2,b_2]}+P_{\alpha}-X^{n}Y^{m}P_{\beta}-X^{n'}Y^{m'}P_{\alpha}+P_{\beta}$, where $P_\alpha$ and $P_\beta$ denote the polynomials associated to the curves $\alpha$ and $\beta$ (i.e. $P_{a_1a_2^{-1}}$ and $P_{b_2b_1^{-1}}$). Then $P_{[a_1,b_1]}-P_{[a_2,b_2]}\in \langle 1-X^{n}Y^{m},1-X^{n'}Y^{m'}\rangle$.
\end{proof}

If in the previous proposition we have $b_1=b_2$, then $\beta$ is null-homotopic and $P_{\beta}=0$. Thus we deduce the following

\begin{coro} \label{postafuerte}
Let $a_1,a_2,b\in \F$ and suppose $a_1a_2^{-1}\in \FF$. Let $n', m'\in \Z$ be the total exponents of $x$ and $y$ in $b$. Then $P_{[a_1,b]}-P_{[a_2,b]}$ is in the ideal $\langle X^{n'}Y^{m'}-1\rangle \subseteq \ZZ$. 
\end{coro}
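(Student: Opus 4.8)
The plan is to obtain this as a corollary of Proposition \ref{paralelogramoposta} by specializing $b_1=b_2=b$. With that choice the hypothesis $b_1b_2^{-1}\in\FF$ is automatic (it equals $1$), so Proposition \ref{paralelogramoposta} applies and gives $P_{[a_1,b]}-P_{[a_2,b]}\in\langle X^nY^m-1,\;X^{n'}Y^{m'}-1\rangle$, where $n,m$ are the total exponents of $x,y$ in $a_1$. This is already close, but it places the difference in a possibly larger ideal than claimed, so a small refinement is needed to shrink it to the principal ideal $\langle X^{n'}Y^{m'}-1\rangle$.

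To get the sharper statement I would reopen the identity established in the proof of Proposition \ref{paralelogramoposta}, namely
\[
P_{[a_1,b_1]}=P_{[a_2,b_2]}+P_{\alpha}-X^{n}Y^{m}P_{\beta}-X^{n'}Y^{m'}P_{\alpha}+P_{\beta},
\]
where $\alpha=\gamma_{a_1}\overline{\gamma}_{a_2}$ and $\beta=\gamma_{b_2}\overline{\gamma}_{b_1}$ are loops at the origin, and $P_{\alpha}=P_{a_1a_2^{-1}}$, $P_{\beta}=P_{b_2b_1^{-1}}$. Setting $b_1=b_2=b$ makes $\beta$ a constant (null-homotopic) loop, so $P_{\beta}=0$; the two terms containing $P_{\beta}$ vanish and the identity collapses to $P_{[a_1,b]}-P_{[a_2,b]}=(1-X^{n'}Y^{m'})\,P_{a_1a_2^{-1}}$, which lies in $\langle X^{n'}Y^{m'}-1\rangle\subseteq\ZZ$. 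Here the hypothesis $a_1a_2^{-1}\in\FF$ is exactly what guarantees that $P_{a_1a_2^{-1}}$ is defined.

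Alternatively one can argue directly with curves, without reopening the earlier proof: using the concatenation bookkeeping of Section \ref{secparalelogramos}, with the $b$-part the same on both sides, one homotopes $\gamma_{[a_1,b]}$ in the grid to $\alpha\,\gamma_{[a_2,b]}\,(\overline{\alpha}+(n',m'))$, so that for every $p\in\R^2$ off the grid $\w(\gamma_{[a_1,b]},p)=\w(\gamma_{[a_2,b]},p)+\w(\alpha,p)-\w(\alpha,p-(n',m'))$, which is the polynomial identity $P_{[a_1,b]}-P_{[a_2,b]}=(1-X^{n'}Y^{m'})P_{\alpha}$ again. I do not expect any genuine obstacle: the only points needing care are the sign and translate of $\alpha$ that appear, and the observation that the $\beta$-contribution genuinely disappears rather than merely landing in the ideal, which is what upgrades the two-generator ideal to the principal one.
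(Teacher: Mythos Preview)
Your proposal is correct and is exactly the paper's approach: set $b_1=b_2=b$ in Proposition \ref{paralelogramoposta}, observe that $\beta$ is null-homotopic so $P_\beta=0$, and conclude $P_{[a_1,b]}-P_{[a_2,b]}=(1-X^{n'}Y^{m'})P_{a_1a_2^{-1}}$. The paper states this in a single sentence just before the corollary; your write-up simply spells out the same idea.
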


\section{Word maps and verbal subgroups for commutators, Engel words, squares and higher powers} \label{secverbal}

The \textit{genus} or \textit{commutator length} $\g _G(g)$ of an element $g$ in the derived subgroup $G'$ of a group $G$ is the smallest integer $n$ such that there exist $a_1,b_1,a_2,b_2,\ldots, a_n,b_n \in G$ which satisfy
\begin{equation}
w=\prod\limits_{i=1}^n  [a_i,b_i]=[a_1,b_1][a_2,b_2]\ldots [a_n,b_n].
\label{genero}
\end{equation}
The supremum of the commutator lengths is the \textit{commutator length} or \textit{commutator width} of $G'$. This invariant has been extensively studied for different classes of groups, including finite, simple, free and metabelian \cite{BaMe, CCE, CE, Cul, DH, Gri, Gur, Gur2, Mac, Mur, Ros}. For free groups there exists an algorithm which computes the commutator length \cite{Cul, Edm}. Culler proves in \cite[2.6]{Cul} that $\g _{\F}([x,y]^n)$ is $\frac{n+1}{2}$ if $n$ is odd and $\frac{n}{2}+1$ if $n$ is even. So in particular the commutator length of $\FF$ is infinite. 
There are many groups for which the commutator width is known to be finite (see \cite{AM1, AM2, BGL, Rhe2, Str2} for some examples). On the other hand the commutator width of non virtually cyclic hyperbolic groups is infinite (see Theorem \ref{teomn} below). The (already proved) Ore conjecture states that the commutator width of a non-abelian finite simple group is $1$.

The \textit{square length} $\Sq _G(g)$ of an element $g\in \Sq(G)=\langle h^2 | h\in G\rangle$ is the minimum $n\ge 0$ such that $g$ can be expressed as a product of $n$ squares, and the \textit{square width} or \textit{square length} of $\Sq (G)$ is the supremum of these numbers. \label{pagsq} This invariant has also been widely studied both for finite and infinite groups. Each commutator $[u,v]$ is a product of squares. Namely $[u,v]=u^2(u^{-1}v)^2v^{-2}$. Therefore, for finitely generated groups, finite commutator width implies finite square width. If $G=\F$, then the subgroup of $G$ generated by the squares is the set of words $w\in \F$ whose total exponent in $x$ and total exponent in $y$ are even. Proposition \ref{ln} says for instance that $\Sq _{\F}([x,y])\ge 3$.

The notions of commutator and square length generalize as follows. Given a group $G$, a word $u\in \mathbb{F}(x_1,x_2,\ldots, x_n)$ determines a function $G^n=G\times G\times \ldots \times G\to G$ which maps $(g_1,g_2,\ldots, g_n)$ to $u(g_1,g_2, \ldots, g_n)$. This is called the \textit{$u$ word map} and the elements in its image, along with their inverses, are the \textit{$u$-elements}. The \textit{verbal subgroup} $u(G)$ \label{pagverbal} is the subgroup of $G$ generated by the $u$-elements. The \textit{$u$-length} of an element $g\in u(G)$ is the least $n$ for which $g$ can be expressed as a product of $n$ $u$-elements. The $u$-width of $u(G)$ is defined as the supremum of the $u$-lengths of its elements. In the case that $u=[x,y]$, the verbal subgroup is $u(G)=G'$ and the $u$-length of $g\in G'$ is its commutator length. For $u=x^2$, $u(G)=\Sq (G)$ and the $u$-length is the square length. Culler's results imply that for $G=\F$, the commutator width of $\F'$ is infinite. This follows from a far more general result by Rhemtulla \cite{Rhe}.

\begin{teo}[Rhemtulla] \label{rhemtulla}
If $G$ is a free product of two non-trivial groups which is not infinite dihedral and $u\in F(x_1,x_2,\ldots, x_n)$ is non-trivial and $u(G)\neq G$, then the $u$-width of $u(G)$ is infinite.
\end{teo}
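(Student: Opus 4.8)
The plan is to prove the statement in its constructive form: assuming $u(G) \ne G$, we produce for every $m \in \N$ an element of $u(G)$ whose $u$-length exceeds $m$. Write $G = A \ast B$ with $A, B \neq 1$. Since $G$ is not infinite dihedral, it is not $\Z_2 \ast \Z_2$, so after interchanging $A$ and $B$ we may assume $B$ contains at least two distinct non-trivial elements $b_0, b_1$; fix also some $a \in A \setminus \{1\}$. Fix $g_0 \in G \setminus u(G)$, which exists by hypothesis, and work throughout with the syllable length $\ell(g)$ of $g$ in the free product $A \ast B$ and with the cyclic syllable length $\ell_c(g)$ of a cyclically reduced representative of the conjugacy class of $g$.

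The heart of the matter is a cancellation estimate for products of $u$-elements, and I would establish this first. If $u$ has length $\lambda$ in $F(x_1,\dots,x_n)$, then a $u$-element $v = u(g_1,\dots,g_n)^{\pm 1}$ is, by definition, a product of at most $\lambda$ factors, each equal to one of $g_1^{\pm 1},\dots,g_n^{\pm 1}$, in a fixed order dictated by $u$; hence in $A \ast B$ its reduced normal form is assembled from the normal forms of the $g_i$ with cancellation occurring only at the $\lambda - 1$ junctions, and the same $g_i$ reappears in a pattern forced by $u$. The claim to prove is that there is a constant $K = K(u)$ with the following property: if $z = v_1 v_2 \cdots v_m$ with each $v_i$ a $u$-element, then a cyclically reduced representative of $z$ can be cut into at most $Km$ maximal blocks such that, after discarding the boundedly-many syllables per junction where cancellation between consecutive factors takes place, the surviving blocks are constrained by the repetition pattern of $u$ — in particular, $\ell_c(z)$ cannot be realized by a "rigid" word (one with no short-window periodicities) unless $m$ is large. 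Pinning down this bookkeeping of cancellation in a free product is precisely where the argument is delicate, and it is exactly the step that collapses for $\Z_2 \ast \Z_2$, whose factors are too small to obstruct total cancellation; this is why the infinite dihedral case is excluded and must be.

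Second, I would build the rigid test elements and place them inside $u(G)$. Using the symbols $b_0, b_1 \in B$ and the fixed $a \in A$, for each $N$ form the cyclically reduced word $w_N$ whose $A$-syllables are all equal to $a$ and whose $B$-syllables spell a prefix of length $\asymp N$ of an aperiodic binary sequence (say a Sturmian word); then $\ell_c(w_N) \asymp N$ and $w_N$ is $K$-rigid for $N \gg K$. Since $u(G)$ is a verbal, hence fully invariant and normal, subgroup and $G/u(G) \ne 1$, rather than insisting $w_N \in u(G)$ I would replace $w_N$ by $z_N := w_N h_N$, where $h_N \in u(G)$ is chosen — by a pigeonhole over the finitely many coset corrections needed on a bounded-length tail — to be supported on a window of bounded length at the end of $w_N$; this keeps the bulk of $z_N$ $K$-rigid while forcing $z_N \in u(G)$. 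Feeding $z = z_N$ into the cancellation estimate of the previous paragraph shows that any expression of $z_N$ as a product of $u$-elements must use at least $\asymp N/K$ of them, so the $u$-lengths of elements of $u(G)$ are unbounded and the $u$-width of $u(G)$ is infinite. The main obstacle, as indicated, is the precise cancellation lemma for products of $u$-elements in a free product; the construction of the rigid words and their correction into $u(G)$ are comparatively routine once $\Z_2 \ast \Z_2$ has been set aside.
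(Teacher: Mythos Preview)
The paper does not prove this theorem: it is stated with attribution to Rhemtulla and cited to \cite{Rhe}, with no argument given. So there is no ``paper's own proof'' to compare against; you are attempting to reconstruct Rhemtulla's result from scratch.

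Your outline has two genuine gaps. First, the correction step is broken as written. You set $z_N = w_N h_N$ with $h_N \in u(G)$ and claim this forces $z_N \in u(G)$; but since $u(G)$ is a subgroup, $w_N h_N \in u(G)$ is equivalent to $w_N \in u(G)$, which is exactly what you did not know. If instead you meant to choose $h_N$ in the coset $w_N^{-1} u(G)$ (so not in $u(G)$), then the ``pigeonhole over the finitely many coset corrections'' fails: $G/u(G)$ need not be finite (e.g.\ $u=[x,y]$ gives $G/u(G)=A_{\mathrm{ab}}\times B_{\mathrm{ab}}$, typically infinite), so there is no bounded-length system of coset representatives to pigeonhole over. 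You would need a different mechanism to place your test words inside $u(G)$ without destroying their rigidity.

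Second, and more seriously, your ``cancellation estimate'' is asserted rather than proved. You acknowledge this yourself (``Pinning down this bookkeeping \ldots\ is precisely where the argument is delicate''), but the content of Rhemtulla's theorem lies entirely in that lemma. A single $u$-element $u(g_1,\dots,g_n)$ can have arbitrarily large syllable length, so a bound of the form $\ell_c(z)\le C m$ for a product of $m$ $u$-elements is simply false; what one needs is a structural constraint on \emph{which} long words can occur, and your description of this constraint (``no short-window periodicities'') is not made precise enough to carry an argument. Rhemtulla's original proof handles this with a careful combinatorial analysis in the free product; your sketch does not yet supply a substitute.
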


This was extended by Myasnikov and Nikolaev in \cite{MN}.

\begin{teo}[Myasnikov-Nikolaev] \label{teomn}
In a non-elementary hyperbolic group $G$ every proper verbal subgroup $u(G)$ has infinite width.
\end{teo}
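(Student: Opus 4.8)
The plan is to reduce the statement to Rhemtulla's theorem (Theorem \ref{rhemtulla}) by finding, inside a non-elementary hyperbolic group $G$, an amount of "free-product-like" structure on which a proper verbal subgroup has unbounded width, and then transferring unboundedness back to $G$. The first step is to observe that a non-elementary hyperbolic group contains a nonabelian free subgroup $F$, in fact of rank $2$, and more usefully that $G$ acts on its Cayley graph (a $\delta$-hyperbolic space) with unbounded orbits; this is where the geometry enters. The second step is to recall that in a hyperbolic group, for a word $u$ with $u(G)\neq G$, the verbal subgroup $u(G)$ is a proper subgroup and one can build elements of arbitrarily large $u$-length using \emph{quasimorphisms}: by Bavard duality, it suffices to exhibit a homogeneous quasimorphism $\phi\colon G\to\R$ that is bounded on $u$-elements but unbounded on $u(G)$. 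Non-elementary hyperbolic groups are rich in quasimorphisms (Epstein--Fujiwara, Bestvina--Fujiwara), and the point is to arrange one whose defect controls the $u$-length.

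First I would set up the quasimorphism. Fix two independent hyperbolic isometries in $G$ (loxodromic elements generating a free group), and use the Bestvina--Fujiwara construction of counting quasimorphisms on the Cayley graph to produce an infinite-dimensional space of homogeneous quasimorphisms. The key algebraic input is that every $u$-element lies in a bounded neighbourhood (in the stable-length sense) determined by $u$: concretely, if $g = u(g_1,\dots,g_n)$ then the stable translation length of $g$, and hence $|\phi(g)|$, is bounded in terms of the defect $D(\phi)$ and the word length of $u$ only. This is the step where one must be careful: I would show that for a homogeneous quasimorphism $\phi$, the value $|\phi(u(h_1,\dots,h_n))|$ is bounded by a constant multiple of $D(\phi)$ times the length of $u$, uniformly in the $h_i$ — the same phenomenon that makes stable commutator length a quasimorphism invariant. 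Granting this, a product of $m$ $u$-elements has $|\phi|$ at most $(m-1)D(\phi)$ plus $m$ times that uniform bound, so if $\phi$ is unbounded on $u(G)$ then the $u$-length is unbounded, proving infinite width.

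The remaining point is to guarantee that some homogeneous quasimorphism is \emph{unbounded} on $u(G)$ while the space of quasimorphisms is large enough. Since $u(G)$ is a proper verbal subgroup, it is normal and $G/u(G)$ is a nontrivial group; if $G/u(G)$ is infinite one can pull back an unbounded homomorphism or quasimorphism, but in general $G/u(G)$ could be finite. In that case $u(G)$ has finite index, hence is itself non-elementary hyperbolic, and one instead applies the Bestvina--Fujiwara machinery directly to $u(G)$: it carries infinitely many homogeneous quasimorphisms, and these extend (after averaging over coset representatives) to $G$, giving quasimorphisms on $G$ unbounded on $u(G)$. Combining this with the uniform bound on $u$-elements from the previous paragraph yields that the $u$-width of $u(G)$ is infinite. \textbf{The main obstacle} is the uniform bound: proving that $|\phi|$ evaluated on an arbitrary $u$-element is controlled purely by $D(\phi)$ and the combinatorial length of $u$, independently of the substituted elements — this is the technical heart, and it is exactly the mechanism by which Rhemtulla's free-product argument generalizes to the hyperbolic setting via $\delta$-hyperbolic geometry rather than normal-form combinatorics.
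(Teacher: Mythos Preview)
The paper does not contain a proof of this theorem: it is stated with attribution to Myasnikov--Nikolaev and cited as reference \cite{MN}, with no argument given. So there is no ``paper's own proof'' to compare against.

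That said, your proposal has a genuine gap that is not merely technical. The claimed uniform bound --- that for a homogeneous quasimorphism $\phi$ the value $|\phi(u(h_1,\ldots,h_n))|$ is controlled by $D(\phi)$ and the length of $u$ alone, independently of the $h_i$ --- is simply false for most words $u$. Take $u=x^2$: then $u$-elements are squares, and for any homogeneous quasimorphism $\phi$ one has $\phi(h^2)=2\phi(h)$, which is unbounded in $h$ whenever $\phi$ is nontrivial. Yet $x^2(G)$ is a proper verbal subgroup in, say, a nonabelian free group (the quotient is $(\Z/2)^2$), so this is exactly a case the theorem must cover. The Bavard-duality mechanism you invoke is special to the commutator word $[x,y]$ (and more generally to words lying in the commutator subgroup of the free group on their variables), where one genuinely has $|\phi([a,b])|\le D(\phi)$; it does not extend to arbitrary $u$. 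Your identification of this step as ``the main obstacle'' is correct, but the obstacle is fatal to the approach as written rather than merely technical.

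The actual argument in \cite{MN} proceeds differently, exploiting specific structural properties of hyperbolic groups (in particular the existence of free subgroups together with control on how verbal elements can be expressed, extending Rhemtulla's free-product combinatorics rather than replacing it by a quasimorphism bound). If you want to pursue a quasimorphism route, you would need to restrict to words $u$ in the commutator subgroup of $F(x_1,\ldots,x_n)$, which is not the full statement.
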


Note that if $G$ is an epimorphic image of a group $H$ then the $u$-width of $u(H)$ is greater than or equal to the $u$-width of $u(G)$. In particular this holds for $G=\M$ and $H=\F$. The aim of this section is to study verbal subgroups and $u$-lengths in $\M$ and the consequences of this for $\F$. Note that since $\M$ contains a subgroup isomorphic to $\Z \times \Z$, it is not hyperbolic and the last result cannot be applied. We will recall a result of Bavard and Meigniez about the commutator length of $\M'$, we will analyze the difference between $u$-length and verbal subgroups for $u=c_n=[x_1,x_2, \ldots, x_{n+1}]$ and $u=e_n=[y,y,\ldots, y,x]$. We will find bounds for the square length of $\Sq(\M)$. Then we will characterize the verbal subgroup of $\M$ for $u=x^3$, that is the elements which are products of cubes. We will obtain a necessary condition for an element of $\M$ to be a product of $n$-th powers. This gives in turn a criterion for $\F$. For the case $n=4$ we will give a particular argument based on a coloring of the squares in the grid.
	
\subsection{Commutators, Engel words and higher commutators}

Given $w\in \F'$ we write $\lax (w)$ for the commutator length of the class of $w$ in $\M$. Theorem \ref{conway} implies that $\lax(w)$ is the minimum $n$ such that there are $a_1,b_1,\ldots, a_n,b_n\in \F$ with $P_w=\sum\limits_{i=1}^n P_{[a_i,b_i]}$.    

The results in Subsections \ref{ssc1p} and \ref{ssc2p} can be interpreted as statements on the commutator length of elements in $\M$. For instance, Proposition \ref{conmutador} says that if $w\in \FF$ is a word such that all the coefficients of the winding invariant $P_w$ are non-negative and at least one of them is greater than or equal to $2$, then $\lax(w)\ge 2$. 

The difference between $\g_{\F}(w)$ and $\lax(w)$ can be arbitrarily big as the next example shows.

\begin{ej} \label{vueltitas}
Let $n\ge 2$. Then $\lax([x,y]^n)=2$. Indeed, Proposition \ref{conmutador} implies that $\lax([x,y]^n)\ge 2$ and for $a_1=x$, $b_1=\prod\limits_{i=1}^n ([y,x^{-1}]^{n-i} y)$, $a_2=b_1y^{-n}$, $b_2=yx \in \F$ we have $$P_{[x,y]^n}=n=P_{[a_1,b_1]}+P_{[a_2,b_2]}.$$
This can be seen graphically (see Figure \ref{genus2}) or proved algebraically. We don't include here the complete proof since we will show that something much more general holds.


\begin{figure}[h] 
\begin{center}
\includegraphics[scale=1.6]{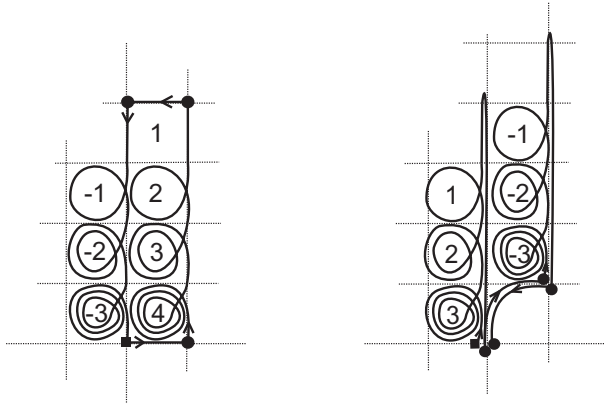}
\caption{The curves $\gamma_{[a_1,b_1]}$ and $\gamma_{[a_2,b_2]}$ for $n=4$.}\label{genus2}
\end{center}
\end{figure}

%
%
%
%
\end{ej}

Example \ref{vueltitas} leads to a natural question: what is the commutator length of $\M'$? In other words, how big can $\lax (w)$ be? The fact that this number is finite follows from another result by Rhemtulla \cite[Theorem 1]{Rhe2}. The answer to the question above was given by Bavard and Meigniez in \cite{BaMe} (see also Allambergenov and Roman'kov's paper \cite{AR}). We write here their proof using the concepts we have studied so far.

\begin{prop}[Bavard-Meigniez]   \label{bavard}
For every $w\in \F'$, $\lax (w)\le 2$.
\end{prop}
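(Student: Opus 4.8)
The plan is to show that for every Laurent polynomial $P \in \ZZ$ there exist $a_1,b_1,a_2,b_2 \in \F$ with $P = P_{[a_1,b_1]} + P_{[a_2,b_2]}$; by Theorem \ref{conway} (together with the fact that $\lax$ is computed from winding invariants, as noted after Theorem \ref{conway}) this gives $\lax(w) \le 2$ for every $w \in \F'$. So the problem is purely one about writing an arbitrary Laurent polynomial as a sum of two ``commutator polynomials''. By Proposition \ref{basica}(iv) the winding invariant of a commutator $[u,v]$ has the form $(X^nY^m - 1)P_v$ where $(n,m) = \vv_u$, so I want to write $P$ as a sum of two polynomials each divisible by some binomial $X^nY^m - 1$. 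The natural choice is to split $\Z^2$ into two halves using the lines $Y = 0$ and handle the two pieces separately.

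Concretely, I would first reduce to the case where $P$ is supported in a single ``column'', i.e. $P = X^n Q(Y)$ for a one-variable Laurent polynomial $Q$, since the general case follows by summing over columns — but that would use more than two commutators, so instead the real reduction is the opposite: collect the whole polynomial and split it by the coefficient of $X^0$ in a smart way. The cleaner route: after multiplying by a monomial (which corresponds to conjugation, Proposition \ref{basica}(iii), hence does not change commutator length), assume $P \in \Z[X,Y]$ with no negative exponents, and write $P = (X - 1)A + B$ where $B = P\!\restr_{X=1}$ is a polynomial in $Y$ alone and $A \in \Z[X,Y]$. The term $(X-1)A$ is the winding invariant of a single commutator $[x, u]$ by Proposition \ref{basica}(iv) and surjectivity of $\W$ (Theorem \ref{conway}). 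It then remains to realize an arbitrary $B = B(Y) \in \Z[Y^{\pm 1}]$ with $B(1) = P(1,1)$ as the winding invariant of a single commutator; but this need not be possible for one commutator when $B(1) \ne 0,\pm 1$ because of the constraint from the invariant $\iota$ (Theorem \ref{iotaprima}, Corollary \ref{obsiotaprima}). So the genuinely delicate point is dealing with the ``area'' $P(1,1)$, and this is where I expect the main obstacle to lie.

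To get around the area obstruction I would allow the first commutator to absorb some of the area. Write $P(1,1) = N$. Choose the first commutator to be $[x^k, y]$-like for suitable $k$: $P_{[x^k,y]} = 1 + X + \cdots + X^{k-1}$ has area $k$, so by taking an appropriate endomorphic image (Proposition \ref{propzdephi}, the area formula Corollary \ref{endo}) or by directly choosing $a_1,b_1$ with $\vv_{a_1}, \vv_{b_1}$ spanning a sublattice of index dividing $N$, I can make $P_{[a_1,b_1]}$ have any prescribed area $d \mid N$ with the coefficient pattern under my control along one column. Then $P - P_{[a_1,b_1]}$ has area $N - d$, which I arrange to be $0$ (take $d = N$ if $N \ne 0$, or handle $N = 0$ directly), and the residual polynomial of area $0$ should be realizable as a single commutator. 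For the area-zero case I would use a long thin ``staircase'' or ``ladder'' curve: a polynomial with all coefficients supported in one row or along a diagonal and summing to zero is the winding invariant of a simple closed curve going out, around, and back, which is visibly a commutator — or more systematically, a polynomial of the form $(X^n Y^m - 1)R$ is a commutator by Proposition \ref{basica}(iv) and Theorem \ref{conway}, and every area-zero $P \in \Z[X^{\pm 1}]$ (one variable) is divisible by $X - 1$.

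Putting it together, the steps in order are: (1) conjugate so $P \in \Z[X,Y]$ with nonnegative exponents; (2) split off $P = (X-1)A + B(Y)$, so $(X-1)A$ is handled by one commutator $[x,u]$ with $\W(u) = A$; (3) for the remaining $B(Y) \in \Z[Y]$, note $B(1) = N$, and if $N = 0$ then $B(Y) = (Y-1)C(Y)$ is a commutator $[y, v]$ with $\W(v) = -C$, done; (4) if $N \ne 0$, instead of the naive split, first subtract a carefully chosen commutator $[a_1,b_1]$ whose winding invariant has area $N$ and is supported so that $P - P_{[a_1,b_1]}$ has area $0$ and is still divisible by a binomial $X^nY^m - 1$ after a monomial shift — e.g. choose $\vv_{a_1} = (N,0)$, $\vv_{b_1} = (0,1)$ with $P_{[a_1,b_1]}$ shaped as a block whose ``corner'' coefficients cancel the extreme terms of $P$; then $P - P_{[a_1,b_1]}$ is a single commutator by step (3). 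The main obstacle, and the part requiring the most care, is choosing $b_1$ (equivalently the curve $\gamma_{b_1}$) so that $P_{[a_1,b_1]} = (X^N - 1)\cdot(\text{something})$ actually cancels enough of $P$ to leave a \emph{divisible}, area-zero remainder; this is exactly the graphical construction sketched for $[x,y]^n$ in Example \ref{vueltitas} (two interlocking curves, one a big loop of area $N$, the other correcting it), generalized from $P = n$ to an arbitrary target, and I expect the bookkeeping of exponents in $b_1 = \prod_i (\cdots)$ to be the only real work.
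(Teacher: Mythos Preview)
Your outline for the area-zero case ($P(1,1)=0$) is correct and is exactly what the paper does there: write $P=(1-X)A+(1-Y)C$ and realise each summand as $P_{[u,x]}$, $P_{[v,y]}$ for suitable $u,v\in\F'$.

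For $N=P(1,1)\neq 0$ you correctly locate the obstacle but do not overcome it. Your step (4) asserts that one can choose a single commutator $[a_1,b_1]$ of area $N$ so that $P-P_{[a_1,b_1]}$ is divisible by some $X^nY^m-1$; this is the whole content of the proposition and you leave it as ``the bookkeeping of exponents in $b_1$''. That bookkeeping is not automatic: an arbitrary area-zero polynomial is \emph{not} divisible by a single binomial $X^nY^m-1$ (it only lies in the ideal $\langle X-1,Y-1\rangle$), so you must build $[a_1,b_1]$ to match $P$ along an entire line, not just in total area.

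The paper resolves this differently. It subtracts the fixed commutator $[x,y^k]$ with $k=N$, writes the area-zero remainder as $(1-X)Q_1'+(1-Y)Q_2'$ (three commutator pieces in total), and then uses the group identity
\[
[w_1,x]\,[x,y^{k}] \;=\; [\,w_1y^{-k},\,y^{k}xy^{-k}\,]
\]
to merge the $(1-X)$-piece with the area piece into a \emph{single} commutator. This one-line identity is the missing ingredient in your sketch.

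Your route can also be completed, but it needs a calculation you did not supply. Keep $b_1=y$ and let $a_1=x^{N}c$ with $c\in\F'$; then $[x^{N}c,y]=(x^{N}[c,y]x^{-N})[x^{N},y]$, so
\[
P_{[x^{N}c,\,y]} \;=\; (1+X+\cdots+X^{N-1}) \;+\; X^{N}(1-Y)P_c .
\]
Evaluating at $X=1$ gives $N+(1-Y)P_c(1,Y)$, which ranges over \emph{all} Laurent polynomials in $Y$ with value $N$ at $Y=1$ as $c$ varies (Theorem~\ref{conway}). Hence you can choose $c$ so that $P_{[x^{N}c,y]}(1,Y)=P(1,Y)$; then $P-P_{[x^{N}c,y]}$ is divisible by $X-1$ and is therefore $P_{[x,w]}$ for some $w\in\F'$. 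So the freedom you need sits in $a_1$, not in $b_1$ as you guessed, and once this is written down the proof is complete. The paper's merging identity and this explicit formula are two faces of the same phenomenon; either one finishes the argument, but neither is present in your proposal.
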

\begin{proof}
By Theorem \ref{conway} we only have to prove that for any polynomial $P\in \ZZ$ there exist commutators $c_1,c_2\in \F$ such that $P=P_{c_1}+P_{c_2}$.

Let $P\in \ZZ$. Call $k=P(1,1)\in \Z$. Let $n,m\in \Z$ be such that $X^nY^m(P-P_{[x,y^k]})\in \Z[X,Y]$. We apply the Euclidean algorithm to obtain polynomials $Q_1\in \Z[X,Y]$ and $Q_2\in \Z[Y]$ such that $$X^nY^m(P-P_{[x,y^k]})-(1-X)Q_1-(1-Y)Q_2\in \Z.$$

Since $P_{[x,y^k]}(1,1)=k$, we deduce that the integer above is $0$. Thus $$P=(1-X)Q_1X^{-n}Y^{-m}+P_{[x,y^k]}+(1-Y)Q_2X^{-n}Y^{-m}.$$

By the surjectivity of the winding map, there exist $w_1,w_2\in \F'$ such that $P_{w_i}=Q_iX^{-n}Y^{-m}$ for $i=1,2$. Thus, $P_{[w_1,x]}=(1-X)Q_1X^{-n}Y^{-m}$ and $P_{[w_2,y]}=(1-Y)Q_2X^{-n}Y^{-m}$. But $[w_1,x][x,y^{k}]=[w_1y^{-k},y^kxy^{-k}]$, so $$P=P_{[w_1y^{-k},y^kxy^{-k}]}+P_{[w_2,y]}$$ is the sum of the winding invariants of two commutators.
\end{proof}

Every element $w\in \gamma_3(\F)=[\F, [\F,\F]]$ is a product $w=\prod\limits_{i=1}^n [a_i,[b_i,c_i]]$ for some $n\ge 0$ and $a_i,b_i,c_i\in \F$ (see \cite[pp. 297]{MKS}). Let $c_2=[x,y,z]=[x,[y,z]]\in \mathbb{F}_3$. The verbal subgroup $c_2(\F)$ is $\gamma_3(\F)$ and the $c_2$-length of an element $w\in \gamma_3(\F)$ is the minimum possible $n$ in the expression above.

\begin{ej}
Let $w=[x,[x,y]x^2[x,y]x^{-2}]\in \F$. Since for every $a,b,c\in \F$ the identity $[a,bc]=[a,b][bab^{-1},bcb^{-1}]$ holds, we have that $$w=[x,[x,y]][[x,y]x[x,y]^{-1}, ([x,y]x^2)[x,y]([x,y]x^2)^{-1}].$$ Therefore the $c_2$-length of $w$ is smaller than or equal to $2$. We use the winding invariant to prove that it is exactly $2$.
$P_w=(X-1)(1+X^2)$. Suppose that $w=[a,[b,c]]\in \F$ for some $a,b,c\in \F$. Then $P_w=(X^nY^m-1)P_{[b,c]}$, where $n,m$ are the total exponents of $x,y$ in $a$. Then $m=0$ and $X^n-1|(X-1)(1+X^2)$. We deduce that $n=\pm 1$ so $P_{[b,c]}$ is equal to either $1+X^2$ or $-X(1+X^2)$. In Example \ref{convexo} we proved that $1+X^2Y^2$ cannot be the winding invariant of a commutator. The same convexity argument can be used to prove that $1+X^2$ and $-X(1+X^2)$ are not winding invariants of commutators. Thus, the $c_2$-length of $w$ is $2$.
\end{ej}    	

By Theorem \ref{rhemtulla} there are elements in $\gamma_3(\F)$ of arbitrarily large $c_2$-length. The $c_n$-length of elements in $\gamma_n(\F)$ can be defined similarly and can also be studied using the winding invariant.

\begin{ej} \label{eje2}
If $P_w=X^3-1=(X-1)(1+X+X^2)$, we will not be able to prove that the $c_2$-length of $w$ is greater than $1$ without extra assumptions. In fact $w=[x,[x^3,y]]$ has such winding invariant. However, we can prove that any word with $P_w=X^3-1$ is not in the image of the second Engel word map $e_2=[y,[y,x]]$, that is $w=[a,[a,b]]$ has no solution $a,b\in \F$. Otherwise, $P_w=(X^nY^m-1)P_{[a,b]}$ for $n,m$ the exponents of $x,y$ in $a$. From this we conclude that $m=0$ and $X^n-1|X^3-1$. Thus $n\in \{1,-1,3,-3\}$. If $n=1$, then $P_{[a,b]}=1+X+X^2$. We use the ideas of Section \ref{secparalelogramos}. Denote by $P_{[a,b]}\{(k,l)\}$ the coefficients of $P_{[a,b]}$ and by $L$ the subgroup of $\Z^2$ generated by $\vv_a, \vv_b$. Then $\iota=\iota_{a,b}=\sum\limits_{\lambda \in L} P_{[a,b]}\{(k,l)+\lambda \}$ for each $(k,l)\in \Z^2$ (see Theorem \ref{iotaprima}). In this case $\vv_a=(n,m)=(1,0)$, so in particular, for $k=l=0$, we obtain $\iota=3\neq -1,0,1$, which is absurd. If $n=3$, then $P_{[a,b]}=1$. In this case $\vv_a=(3,0)$, $\iota=1$ and we again arrive to a contradiction: taking $(k,l)=(-1,0)$ we deduce that $(1,0)\in \langle (3,0), \vv_b\rangle$ and taking $(k,l)=(0,-1)$ we obtain that $(0,1)\in \langle (3,0), \vv_b \rangle$. For $n=-1,-3$ we obtain contradictions with the same arguments. 

As an example, $w=[x,[x,y]][x,[x,xyx^{-1}]][x,[x,x^2yx^{-2}]]$ is in the verbal subgroup $e_2(\F)$ and $P_w=X^3-1$. Thus, the $e_2$-width of $w$ is greater than or equal to $2$.
\end{ej}

The element $w=[x,[x^3,y]]\in \M$ is clearly in the image of the word map $\M\times \M \times \M \to \M$ associated to $c_2=[x,y,z]=[x,[y,z]]$, and Example \ref{eje2} shows that it is not in the image of the word map associated to $e_2=[y,[y,x]]$. However, we will prove that the verbal subgroup $c_2(\M)=\gamma_3(\M)$ coincides with the verbal subgroup $e_2(\M)$. In general, for $n\neq 1,2$ we have $\gamma_{n+1}(\M)\neq e_n(\M)$.

\begin{teo} \label{nge3}
Let $e_n\in \F$ denote the $n$-th Engel word, and $c_n=[x_1,x_2, \ldots, x_{n+1}] \in \mathbb{F}_{n+1}$. Then the verbal subgroup $e_n(\M)$ coincides with $c_n(\M)=\gamma_{n+1}(\M)$ for $n=1,2$, but $e_n(\M)\neq \gamma_{n+1}(\M)$ for any $n\ge 3$. 
\end{teo}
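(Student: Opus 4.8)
The plan is to analyze the winding invariants of the relevant word maps. By Theorem \ref{conway}, working in $\M$ means working with Laurent polynomials, so $e_n(\M)$ and $\gamma_{n+1}(\M)$ correspond to subsets of $\ZZ$. First I would compute the winding invariant of a generic $c_n$-element and a generic $e_n$-element. Since $w\in\gamma_{n+1}(\F)$ is a product of commutators $[a,u]$ with $u\in\gamma_n(\F)$, Proposition \ref{basica}(iv) and induction give that $P_w$ for such $w$ ranges exactly over the ideal generated by products $(M_1-1)(M_2-1)\cdots(M_n-1)$ of $n$ factors $M_i-1$ with $M_i=X^{n_i}Y^{m_i}$ a monomial; in fact by Corollary \ref{derivadas} and Proposition \ref{lowercs} (applied with $k=n$) the image $\W(\gamma_{n+1}(\M))$ is precisely the ideal $J_n$ of $\ZZ$ of polynomials all of whose partial derivatives of order $\le n-2$ vanish at $(1,1)$, equivalently the $n$-th power of the augmentation ideal $\mathfrak{m}=\langle X-1,Y-1\rangle$. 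For $e_n=[y,e_{n-1}]$, Proposition \ref{basica}(iv) with $u=y$ gives $P_{e_n(a,b)}=(X^{n'}Y^{m'}-1)\cdot(\text{winding invariant of }e_{n-1}(a,b))$ where $(n',m')=\vv_a$ (reading off the first slot of the Engel word as the repeated variable), so by induction the winding invariant of an $e_n$-element is of the form $\pm(X^{n'}Y^{m'}-1)^{n-1}P_{[a,b]}$ for some commutator $[a,b]$ and a single monomial $X^{n'}Y^{m'}$ attached to $a$. Hence $e_n(\M)$ consists of those $w\in\FF$ whose $P_w$ is a $\Z$-linear combination of such expressions.

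For $n=1,2$ I would show these linear combinations generate all of $\mathfrak{m}$ and $\mathfrak{m}^2$ respectively. For $n=1$: $e_1=[y,x]$ has $e_1(a,b)$ with $P=(M-1)Q$ for arbitrary monomial $M$ and $Q=P_{[a,b]}$, and by Bavard--Meigniez (Proposition \ref{bavard}) every polynomial is a sum of two commutator invariants, so products $(M-1)P_{[a,b]}$ already generate $\mathfrak{m}=\W(\gamma_2(\M))=\W(\FF)$'s relevant part — actually $\gamma_2(\M)=\M'$ and $e_1(\M)=\M'$ trivially since $e_1$ is itself a commutator word; this case is immediate. For $n=2$: I need every element of $\mathfrak m^2$ to be a finite product of $e_2$-elements in $\M$. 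Using $P_{e_2(a,b)}=(M-1)P_{[a,b]}$ with $M=X^{n'}Y^{m'}$ from $a$, and freedom to choose $[a,b]$ with $P_{[a,b]}$ any multiple of $(M'-1)$ (take $b$ with prescribed $\vv_b=$ the exponents giving $M'$, using Corollary \ref{postafuerte} to control $P_{[a,b]}$ flexibly), I would realize each generator $(X-1)^i(Y-1)^j$, $i+j=2$, of $\mathfrak m^2$ as a sum of a bounded number of $e_2$-invariants; since $\mathfrak m^2$ is generated by these three monomials over $\ZZ$ and each can be hit, $e_2(\M)=\mathfrak m^2=\gamma_3(\M)$.

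For $n\ge 3$ I would exhibit an explicit element of $\gamma_{n+1}(\M)$ not in $e_n(\M)$. The natural candidate is (the class of) a word $w$ with $P_w=(X-1)^{n}$, or more likely $P_w=(X-1)(Y-1)^{n-1}$ or a mixed monomial that cannot arise from the constrained Engel shape. The key structural constraint is that $P_{e_n(a,b)}=\pm(M-1)^{n-1}P_{[a,b]}$ where $M$ is a \emph{single} monomial and, crucially, the invariant $\iota_{a,b}\in\{-1,0,1\}$ of Theorem \ref{iota}/\ref{iotaprima} together with Corollary \ref{signed} forces $P_{[a,b]}(1,1)=\iota_{a,b}|\Z^2:L_{a,b}|$; combined with the fact that $M-1$ already contributes a zero of order $n-1$ at $(1,1)$, one gets tight control on how a sum of few such terms can look near $(1,1)$. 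The main obstacle is ruling out $w$ being a product of \emph{several} $e_n$-elements, not just one: I expect to use the invariant $\iota$ as in Examples \ref{convexo} and \ref{eje2} — for each $e_n$-element, $\iota$ of the underlying commutator lies in $\{-1,0,1\}$, and a careful coset/partition argument (Corollary \ref{obsiotaprima}) on the multiset of coefficients of a target polynomial like $(X-1)(Y-1)^{n-1}$ should show it is incompatible with \emph{any} finite sum of invariants of the form $(M-1)^{n-1}\cdot(\text{commutator invariant})$. Identifying the cleanest target polynomial and pushing this partition argument through for all $n\ge 3$ uniformly is where the real work lies.
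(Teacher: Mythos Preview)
Your treatment of $n=1$ is fine, and your $n=2$ sketch can be made to work, but it is more tangled than necessary and carries an indexing slip: $\W(\gamma_{n+1}(\M))=\mathfrak m^{\,n-1}$, not $\mathfrak m^{\,n}$ (for $n=2$ you need $\mathfrak m$, not $\mathfrak m^2$). The paper's route here is cleaner: since $e_2(\M)$ is a \emph{normal} subgroup of $\M$, its image $\W(e_2(\M))$ is an \emph{ideal} of $\ZZ$, and it visibly contains $X-1=\W([x,[x,y]])$ and $1-Y=\W([y,[y,x]])$. Hence $\W(e_2(\M))\supseteq\mathfrak m$, while every $c_2$-element has invariant $(X^kY^l-1)P\in\mathfrak m$; so $c_2(\M)\subseteq e_2(\M)$ with no need to realize individual generators via Bavard--Meigniez or Corollary \ref{postafuerte}.

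The real gap is in your $n\ge 3$ argument. The invariant $\iota$ and the partition statement of Corollary \ref{obsiotaprima} constrain the winding invariant of a \emph{single} commutator $[a,b]$; they do not give an obstruction that is additive under forming products of several $e_n$-elements, which is what you must rule out. You yourself flag this (``pushing this partition argument through for all $n\ge 3$ uniformly is where the real work lies''), and indeed it does not go through as stated. The paper bypasses this entirely with a linear invariant: for any $e_n$-element $w'$ one has $\W(w')=(X^kY^l-1)^{\,n-1}P_{[u,v]}$, and applying $\partial^{\,n-1}/\partial X^{\,n-2}\partial Y$ at $(1,1)$, every term in the Leibniz expansion either retains a factor $(X^kY^l-1)$ (hence vanishes at $(1,1)$) or differentiates $(X^kY^l-1)^{\,n-1}$ all $n-1$ times, producing a factor $(n-1)!$. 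Thus this derivative at $(1,1)$ is divisible by $(n-1)!$ for every $e_n$-element, and therefore for every element of $e_n(\M)$ by additivity. The witness $w=[\underbrace{x,\ldots,x}_{n-2},y,x,y]\in\gamma_{n+1}(\M)$ has $\W(w)=(X-1)^{\,n-2}(Y-1)$, so that same derivative at $(1,1)$ equals $(n-2)!$, which is not a multiple of $(n-1)!$ once $n\ge 3$. Replace your $\iota$-based plan with this divisibility-by-$(n-1)!$ invariant and the proof becomes short and uniform.
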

\begin{proof}
For $n=1$ this is trivial since $e_1=[y,x]$ and $c_1=[x,y]$. For $n=2$ we have seen that the word maps associated to $e_2$ and $c_2$ do not have the same image. However, if $w\in \M$ is in the image of the word map associated to $c_2$, then by Proposition \ref{lowercs}, $W(w)=(X^kY^l-1)P$ for certain $k,l\in \Z$ and $P\in \ZZ$. Now, $X-1=W([x,[x,y]])$ and $1-Y=W([y,[y,x]])$ lie in $W(e_2(\M))$. Since $e_2(\M)$ is a normal subgroup of $\M$, $W(e_2(\M))$ is an ideal of $\ZZ$. Now $X^kY^l-1=(X^k-1)Y^l+Y^l-1\in \langle X-1,1-Y \rangle$, so $W(w)\in W(e_2(\M))$ and then $w\in e_2(\M)$. Since each generator of $c_2(\M)$ lies in $e_2(\M)$, then $c_2(\M)\subseteq e_2(\M)$.

Let $n\ge 3$. Then $w=[\underbrace{x,x,\ldots, x}_{n-2},y,x,y]\in \gamma_{n+1}(\M)=c_n(\M)$ and $W(w)=(X-1)^{n-2}(Y-1)$. Thus, $\frac{\partial^{n-1} W(w)}{\partial^{n-2}X \partial Y}=(n-2)!$. On the other hand, if $w'\in \M$ is an $e_n$-element (is in the image of the word map associated to $e_n$), say $w'=[\underbrace{u,u,\ldots, u}_{n},v]$ for certain $u,v\in \M$, then $W(w')=(X^kY^l-1)^{n-1}P_{[u,v]}$. It is easy to see that $$\frac{\partial^{n-1} W(w')}{\partial^{n-2}X \partial Y}=(n-1)!A+(X^kY^l-1)B$$ for certain $A, B \in \ZZ$. In particular $(n-1)!$ divides $\frac{\partial^{n-1} W(w')}{\partial^{n-2}X \partial Y}(1,1)$, and this holds as well for any $w'\in e_n(\M)$. Thus, $w \notin e_n(\M)$.   
\end{proof}

To finish this subsection we mention a conjecture of Shalev \cite[Conjecture 3.5]{Sha2} connected to the previous result and Example \ref{eje2}. 

\begin{conj}[Shalev]
For every finite simple non-abelian group $G$ and every $n\ge 1$, the $n$-th Engel word map $G\times G\to G$ is surjective.
\end{conj}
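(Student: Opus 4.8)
Since $e_n$ is a nontrivial element of $\F$ and $G$ is a nonabelian finite simple group, the verbal subgroup $e_n(G)$ is a nontrivial normal subgroup and hence equals $G$; the content of the conjecture is therefore the stronger assertion that the word map $e_n\colon G\times G\to G$ is itself surjective, not merely that its image generates $G$. This remains open, but the natural plan of attack, as for all Waring-type problems \cite{Sha2, LiSh}, is to invoke the classification of finite simple groups and treat the families in turn. The sporadic groups would be disposed of by inspection of character tables or direct computer search (in the spirit of the small-case analysis behind the Ore conjecture), and the alternating groups $A_m$ by explicit permutation-group combinatorics; the crux, and the part for which no uniform argument is currently known, is the family of groups of Lie type.

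For groups of Lie type the natural device is the Frobenius character formula, which expresses, for fixed conjugacy classes, the number of ways an element is a product of class representatives in terms of sums of character ratios. This does not apply to $e_n$ directly, so one would argue by induction on $n$ using the recursion $e_n=[y,e_{n-1}]$: it suffices to show that every $g\in G$ can be written as $[b,c]$ with $b$ in a conveniently chosen conjugacy class (say regular semisimple) and $c$ lying in the image of the $(n-1)$-st Engel map with second entry fixed to $b$. The commutator fibre $\{c\colon [b,c]=g\}$ is large by character-ratio estimates of Frobenius--Gluck--Liebeck--Shalev type \cite{LiSh}, while the inductive hypothesis --- quantified as an equidistribution statement for Engel values rather than a bare surjectivity statement --- should guarantee that the Engel values based at $b$ meet that fibre. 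The base case $n=1$ is precisely the (now proven) Ore conjecture \cite{Ore, LOST}.

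\emph{The main obstacles.} Two difficulties stand in the way. The first is uniformity in $n$: the Engel word degenerates as $n$ grows --- already in $\M$ one has $e_n(\M)\subsetneq\gamma_{n+1}(\M)$ for $n\ge 3$ (Theorem \ref{nge3}) --- so the induction cannot be allowed to lose ground, and one must exploit that $G$ is perfect, i.e.\ that the abelianized winding-type invariants that govern the metabelian picture are identically trivial here because $G''=G$. The second is the small cases: for groups of Lie type of bounded rank over small fields the asymptotic character estimates are too crude, and these would have to be handled by Deligne--Lusztig theory or, for tiny $q$, by computation; even $\mathrm{PSL}_2(q)$ for small $q$ requires a delicate separate argument. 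Reconciling a clean induction valid for all $n$ with an ad hoc treatment of the small Lie-type groups is exactly why the conjecture is still open, and a complete proof would have to confront both issues simultaneously.
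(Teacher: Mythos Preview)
The statement is a \emph{conjecture}, not a theorem, and the paper does not prove it: it merely records Shalev's conjecture and notes that the case $n=1$ is the Ore conjecture, now proved in \cite{LOST}, and that partial results toward the general case appear in \cite{BGG, Gor, Elk, Kan}. You explicitly acknowledge that the conjecture remains open, so there is no discrepancy to flag on that front.

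Your write-up is therefore not a proof but a sketch of a possible program, which is reasonable given the circumstances. The plan you describe (classification of finite simple groups, character estimates for groups of Lie type, induction via $e_n=[y,e_{n-1}]$, ad hoc treatment of small cases) is the standard shape of attacks on Waring-type problems and is in line with what the paper's reference \cite{Sha2} discusses. One caution: your remark that ``the abelianized winding-type invariants that govern the metabelian picture are identically trivial here because $G''=G$'' is heuristic at best --- the winding invariant lives in the free metabelian setting and has no direct meaning for a finite simple $G$, so while it is true that the obstruction of Theorem~\ref{nge3} evaporates in a perfect group, this observation contributes nothing concrete toward surjectivity of the Engel map. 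It would be cleaner simply to say that the metabelian degeneration is irrelevant for simple $G$, without suggesting that the winding machinery plays any role in the approach.
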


Recall that the statement of the Ore conjecture \cite{Ore} is that for every finite simple non-abelian group $G$ the $u=[x,y]$ word map $G\times G\to G$ is surjective (the $u$-width of $u(G)=G$ is $1$). In other words, every element of $G$ is a commutator. This conjecture stated in 1951 is the case $n=1$ of Shalev's conjecture. The proof of the Ore conjecture was completed in 2010 by Liebeck, O’Brien, Shalev and Tiep \cite{LOST}. Note that the Ore conjecture implies that for any simple group $G$, the word map $G^{n+1}\to G$ associated to $c_n=[x_1,x_2,\ldots, x_{n+1}]$ is surjective. Shalev's conjecture has been verified in several cases \cite{BGG, Gor, Elk, Kan}.


\subsection{Square length in $\M$}

An element $w\in \M$ is a product of squares if and only if the exponents $\exp(x,w)$, $\exp(y,w)$ of $x$ and $y$ in $w$ are both even. In particular the elements in $\M'$ are products of squares. We have seen in Proposition \ref{lnconverse} that not every $w\in \M'$ is a product of two squares. On the other hand, since every $w\in \M'$ is a product of two commutators (Proposition \ref{bavard}) and each commutator is a product of three squares, then every element in $\M'$ is a product of six squares. The following result improves this bound and establishes bounds for the square length of $\Sq (\M)$.

\begin{teo} \label{mainsquarel}
Every element of $\M$ which is a product of squares, is a product of five squares. There exists an element in $\M'$ which is not a product of three squares.
\end{teo}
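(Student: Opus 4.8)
Theorem \ref{mainsquarel} has two parts, and the plan is to treat them separately: an upper bound ("five squares suffice") and a lower bound ("three squares do not always suffice"). I expect the upper bound to be the harder and more delicate half, while the lower bound should follow from a refinement of the divisibility obstruction already developed in Section \ref{secuno}.

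For the lower bound, the plan is to produce an explicit polynomial $P\in \ZZ$, realize it as $P_w$ for some $w\in \F'$ via surjectivity of $\W$, and show $w$ is not a product of three squares in $\M$. By Theorem \ref{conway} it suffices to show $P$ cannot be written as $P_{a_1}(1+X^{n_1}Y^{m_1})+P_3$ where the third square contributes just a single translated copy — more precisely, following the computation in Proposition \ref{lngeneral}, if $w=a^2b^2c^2$ in $\M$ then writing $abc\in \F'$ and collecting, $P_w$ decomposes as a sum of three terms each of the form $X^{n_i}Y^{m_i}\cdot(\text{something})$ coming from the partial products; the cleanest formulation is that $P_w=(1+M_1)Q_1+(1+M_2)Q_2+\dots$ for monomials $M_i$. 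I would extract the numerical consequence by evaluating at a carefully chosen root of unity (as in the proof of Corollary \ref{coroengel2} and Theorem \ref{lnnuevo}): pick $P$ so that, for every choice of monomials $M_i=X^{n_i}Y^{m_i}$, substituting $X,Y$ by suitable roots of unity forces $1+M_i=0$ simultaneously for enough $i$ to make the right side vanish while $P$ does not — the candidate $P_w(1,1)$ being odd already kills the possibility of two squares, so the work is to find $P$ blocking three. A natural first attempt is something like $P_w = 1 + X + XY$ or a small triangle-shaped support with an odd signed area and no "translation symmetry"; I would also exploit the $\iota$-invariant of Section \ref{secparalelogramos} if the root-of-unity bookkeeping gets unwieldy.

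For the upper bound, the plan is purely algebraic via the isomorphism $\W:\M'\to \ZZ$ together with the fact (Proposition \ref{basica}) that a product of squares $a^2b^2$ with $ab\in\F'$ has winding invariant $(1+X^nY^m)P_{ab}$, and more generally that the "square of a coset element times a translate" construction lets us realize, as the $\W$-image of a product of $k$ squares, polynomials of the shape $\sum_{i} X^{n_i}Y^{m_i}(1+X^{r_i}Y^{s_i})P_i$ modulo the kernel $\F''$. So it suffices to prove: every $P\in \ZZ$ (every element of $\Sq(\M)$ that lies in $\M'$; the general element of $\Sq(\M)$ differs from one in $\M'$ by a single honest square $g^2$, costing one of the five) can be written as a $\Z[\Z^2]$-combination of five polynomials of the form $1+X^nY^m$ — or rather as $P_{a_1}^2\text{-type}$ sums adding to five squares. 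The concrete target is: show that $\langle 1+X, 1+Y, 1+XY, \ldots \rangle$-type ideals, using at most five generators $1+(\text{monomial})$ with multiplicities, span all of $\ZZ$ in the appropriate sense; equivalently, realize an arbitrary $P$ as $P_{a_1b_1\cdots}$ plus error. The key algebraic input I expect to need is that $\ZZ$ is generated as a ring by $X,X^{-1},Y,Y^{-1}$ together with the identity $X-1 = -(1+(-X))\cdot 1$ type manipulations are not available since $-X$ is not a square-translate; instead one uses $1-X^2 = (1-X)(1+X)$ and that $1+X$, $1+X^3=(1+X)(1-X+X^2)$, etc. are available, so products of squares can produce anything divisible by $1+X$ in the $X$-variable, and a symmetric statement for $Y$; combining two squares to get a $1+X$-multiple of an arbitrary polynomial, two more to get a $1+Y$-multiple, plus one correction square should give five. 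Writing an arbitrary $P$ as $(1+X)A + (1+Y)B + (\text{small remainder})$ is a division-algorithm argument exactly parallel to Proposition \ref{bavard}, where the remainder lies in $\Z$ (it equals $P(1,1)$ up to the substitution used) and can be absorbed by one further square of the form $(x^{k})^2$ adjusting the total $x$-exponent, or by $P_{[x,y^{P(1,1)}]}$-type terms.

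The main obstacle, I expect, is the precise bookkeeping in the upper bound: controlling exactly how many squares are spent, and in particular getting the count down from the naive six (two per $(1+X)$-factor, two per $(1+Y)$-factor, two for the remainder) to five by sharing a square between two of these tasks — for instance, choosing the base points $n_i,m_i$ so that the last square of the "$X$-block" and the first of the "$Y$-block" can be merged, or by noting that $[x,y]$ itself is a product of three squares and reusing structure. A secondary subtlety is passing from "$w\in\M'$ is a product of five squares" to "every product of squares in $\M$ is a product of five squares": an arbitrary element $g$ of $\Sq(\M)$ has even $x$- and $y$-exponents, so $g = (x^{a}y^{b})^{?}\cdots$ — one extracts a single square $h^2$ with $h\in\M$ matching the abelianization of $g$, so that $gh^{-2}\in\M'$, and then $gh^{-2}$ being a product of four squares (not five!) would be needed to keep the total at five, which circles back to sharpening the $\M'$ bound to four squares for elements in the image of $(1+X)\cdot$; I would resolve this by proving directly that every element of $\M'$ which is itself a genuine product of squares "with room to spare" is a product of four squares, or by absorbing $h^2$ into the construction from the start rather than as an afterthought.
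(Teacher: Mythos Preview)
Your plan has a genuine gap on both halves.

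For the lower bound: your candidate element has odd signed area, but this is exactly the wrong direction. The paper's Step~2 proves that \emph{every} $z\in\M'$ with $P_z(1,1)$ odd \emph{is} a product of three squares. So any odd-area candidate will fail. The correct example must have even area, and the paper's $P=X^{-2}Y^{-2}+X^2Y^2+X^{-1}+X+X^{-1}Y+XY^{-1}$ does. More importantly, your decomposition formula for three squares is wrong: if $z=u^2v^2w^2$ with $uvw=a\in\F'$, one does not get a sum of terms $(1+M_i)Q_i$; rather $P_z=P_a(1+m_{w^{-1}v^{-1}})+m_{w^{-1}}P_{[v^{-1},w^{-1}]}$, a single $(1+M)Q$ term plus the winding invariant of a \emph{commutator}. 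This commutator term is the crux: it is what lets the $\iota$-invariant of Section~\ref{secparalelogramos} enter. The paper's argument shows (via parity of $n_{(0,0)}$) that $\iota_{v^{-1},w^{-1}}=0$, hence $L=\langle \vv_{v^{-1}},\vv_{w^{-1}}\rangle$ is rank~$\le 1$; then one checks by hand that no line through the support of $P$ hits an even number of nonzero coefficients on every translate. Root-of-unity substitutions will not easily produce this.

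For the upper bound: you correctly identify the obstacle (getting from six to five) but have no mechanism to overcome it, and the Bavard--Meigniez division is not the right template here. The paper's route is different and sharper: it first characterizes, via explicit constructions, two cosets of the ideal $\langle 2,Y-1,X^2-1\rangle$ in $\ZZ$ whose elements are \emph{always} products of three squares in $\M$ --- namely the odd-area class and the class of $1+X$. The quotient $\ZZ/\langle 2,Y-1,X^2-1\rangle$ has four elements, and these two classes plus their complement cover everything. Then for arbitrary $r\in\Sq(\M)$ one writes $r=(x^{k+1}y^l)^2x^{-2}z$ and also $r=(x^{k+1}y^l)^2(x^{-1}yxy^{-1}x^{-1})^2z'$ with $P_{z'}=1+X+P_z$; since $P_z$ and $P_{z'}$ lie in different cosets, at least one of $z,z'$ falls into a three-square class, giving $2+3=5$. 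The saving comes not from merging squares across blocks but from proving that three squares already suffice on a codimension-zero subset.
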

\begin{proof}
\textit{Step 1. An element which is not a product of three squares.} Let $z\in \F'$ and let $u,v,w\in \F$ be such that $z=u^2v^2w^2$ in $\M$. Note that $u^2v^2w^2=(uvw)w^{-1}v^{-1}(uvw)w^{-1}vw^2$. Then $a=uvw\in \F'$ and $P_z=P_a(1+m_{w^{-1}v^{-1}})+P_{w^{-1}v^{-1}w^{-1}vw^2}$, where for $t\in \F$, $m_{t}$ denotes the monomial $X^kY^l$ for $k,l$ the exponents of $x,y$ in $t$. Thus, 

\begin{equation}
P_z=P_a(1+m_{w^{-1}v^{-1}})+m_{w^{-1}}P_{[v^{-1},w^{-1}]}.
\label{squarel}
\end{equation}

Consider the polynomial $P=X^{-2}Y^{-2}+X^2Y^2+X^{-1}+X+X^{-1}Y+XY^{-1}$ and let $z\in \F'$ be an element with winding invariant $P_z=P$ (see Figure \ref{seisunosfig}). We will prove that $z\in \M$ is not a product of three squares in $\M$. Otherwise there exist $a\in \F'$, $v,w\in \F$ such that the equation above holds. We claim that $\iota=\iota _{v^{-1},w^{-1}}= \sum\limits_{\lambda \in L} P_{[v^{-1},w^{-1}]}\{(s,t)+\lambda\}=0$ (for any $(s,t)\in \Z^2$). As always $L=\langle \vv_{v^{-1}}, \vv_{w^{-1}} \rangle$. For any $(s,t)\in \Z^2$ we have 
$$\sum\limits_{\lambda \in L} P_z\{(s,t)+\lambda \}=$$ $$=\sum\limits_{\lambda \in L} P_a\{(s,t)+\lambda \}+\sum\limits_{\lambda \in L} (m_{w^{-1}v^{-1}}P_a)\{(s,t)+\lambda\}+\sum\limits_{\lambda \in L} (m_{w^{-1}}P_{[v^{-1},w^{-1}]})\{(s,t)+\lambda\}.$$ 

\begin{figure}[h] 
\begin{center}
\includegraphics[scale=0.5]{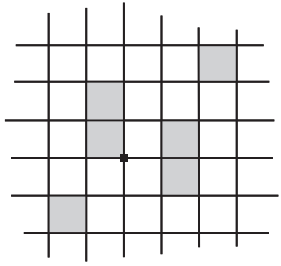}
\caption{The support of $P$. The shadow squares represent monomials with coefficient $1$.}\label{seisunosfig}
\end{center}
\end{figure}

The first and second summands on the right hand side are equal, and the third summand is $\iota$. Therefore the number $$n_{(s,t)}=\sum\limits_{\lambda \in L} P_z\{(s,t)+\lambda \}$$ and $\iota$ have the same parity for any $(s,t)\in \Z^2$. If we take now $(s,t)=(0,0)$ we note that $n_{(s,t)}$ is even since $P\{ \alpha\}=P\{-\alpha\}$ for every $\alpha \in \Z^2$, and $P\{(0,0)\}=0$. Therefore $\iota$ is even and then it is $0$.

We deduce then that $\vv_{v^{-1}}, \vv_{w^{-1}} \in \Z^2$ are linearly dependent, so $L$ is a cyclic group (a ``line"). Since $n_{(s,t)}$ is even for any $(s,t)\in \Z^2$, it means that if the coefficient $P\{(s,t)\}$ is $1$, then $P\{(s,t)+\lambda\}=1$ for some $0\neq \lambda \in L$. In other words, each line parallel to $L$ in $\Z^2$ which touches a non-trivial coefficient of $P_z$ must pass through another non-trivial coefficient. But this is not possible. If $(s,t)=(-2,-2)$, then $P\{(s,t)\}=1$, so $L=\langle (1,2)\rangle$, $\langle (3,2)\rangle$, $\langle (1,3)\rangle$, $\langle (3,1)\rangle$ or $L$ contains the point $(4,4)$, so $L\subseteq \langle (1,1)\rangle$. In any of these cases there is another non-trivial coefficient $(s',t')$ such that $L+(s',t')$ contains no other non-trivial coefficient. 

\textit{Step 2. The elements with odd area are a product of three squares.} In Equation (\ref{squarel}) take $v=c^{-1}x^{-1}$ and $w=d^{-1}y^{-1}$ for some $c,d\in \F'$. Then $m_{v^{-1}w^{-1}}=XY$, $m_{w^{-1}}=Y$ and the winding invariant of $[v^{-1},w^{-1}]=xcydc^{-1}x^{-1}d^{-1}y^{-1}$ is $P_c(X-XY)+P_d(XY-Y)+1$. Thus 

\begin{equation}
P_z=P_a(1+XY)+XYP_c(1-Y)+Y^2P_d(X-1)+Y.
\label{squarel2}
\end{equation} 

Then $P_z\in \langle 1+XY, Y-1,X-1\rangle+Y=\langle 2,Y-1,X-1\rangle +1 \subseteq \ZZ$. Conversely, if $P_z$ lies in $\langle 2,Y-1,X-1\rangle +1$, then we can choose $a,c,d\in \F'$ so that Equation (\ref{squarel2}) holds, which means that $z$ is a product of three squares in $\M$. But the ideal $\langle 2,Y-1,X-1\rangle$ is exactly the set of polynomials with even signed area, so if $P_z(1,1)$ is odd, then $z\in \M$ is a product of three squares. 

\textit{Step 3. If $P_z \equiv 1+X \textrm{ mod } \langle 2,Y-1,X^2-1\rangle$, $z$ is a product of three squares}. Take now $v=c^{-1}x^{-2}$, $w=d^{-1}y^{-1}$, so Equation (\ref{squarel}) becomes

\begin{equation}
P_z=P_a(1+X^2Y)+X^2YP_c(1-Y)+Y^2P_d(X^2-1)+Y(1+X).
\label{squarel3}
\end{equation}

Then $P_z\in \langle 1+X^2Y, Y-1, X^2-1 \rangle+Y(1+X)=\langle 2, Y-1, X^2-1 \rangle+1+X$. And conversely, if $P_z$ satisfies this, then $z\in \M$ is a product of three squares. So, when the signed area of $P_z$ is even, still $z$ can be a product of three squares. If $z$ is not a product of three squares then $P_z\in \langle 2, Y-1, X^2-1 \rangle$. 

\textit{Step 4. Every element in $\M$ which is a product of squares is a product of five squares}. Let $r \in \M$ be a product of squares, that is, $\exp(x,r)=2k$ and $\exp(y,r)=2l$ for some $k,l\in \Z$. Then $r=(x^{k+1}y^l)^2x^{-2}z$ for some $z\in \M'$. On the other hand $r=(x^{k+1}y^l)^2(x^{-1}yxy^{-1}x^{-1})^2z'$ for some $z'\in \M'$. Concretely $z'=[x,y]x[x,y]x^{-1}z$. The winding invariant of $z'$ is $P_{z'}=1+X+P_z$. Thus, $z$ or $z'$ (maybe both) is a product of three squares in $\M$ and then $r$ is a product of five squares.
\end{proof}

%
%
%
%

Theorem \ref{mainsquarel} leaves open the question whether the square length of $\Sq(\M)$ is $4$ or $5$. An alternative to Equation (\ref{squarel}) which characterizes the elements in $\M'$ which are products of three squares, can be obtained by applying Edmunds idea \cite{Edm3} that there is an automorphism of $\mathbb{F}_3=F(x,y,z)$ which maps $x^2y^2z^2$ to $[x,y]z^2$. Then an element $z\in \F'$ is a product of three squares if and only if there exist $u,v,w\in \F$ with $z=[u,v]w^2$. Thus, $z\in \M'$ is a product of three squares in $\M$ if and only if $P_z=2Q+P_{[u,v]}$ for certain $Q\in \ZZ$, $u,v\in \M$. Using this idea, an element $z\in \M'$ is a product of four squares if and only if there exist $u,v,w,t\in \M$ with $z=[u,v]w^2t^2$. This is equivalent to $P_z$ being of the form $P_{[u,v]}+Q(1+X^kY^l)$ for certain $u,v\in \M$, $Q\in \ZZ$, $k,l\in \Z$.

Other groups which are known to have finite square length have been studied in \cite{AM1, AM12, AM2}.

\subsection{Products of higher powers} \label{subsecpowers}

An element $z\in \F$ is a product of squares if and only if the class of $z$ in $\M$ is a product of squares in $\M$. The same is true for cubes.

\begin{prop} \label{propcubes}
Let $z\in \F$. Then $z$ is a product of cubes in $\F$ if and only if the class of $z$ in $\M$ is a product of cubes.
\end{prop}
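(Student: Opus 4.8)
The plan is to prove the two implications separately. The forward implication is immediate: if $z=a_1^3a_2^3\cdots a_n^3$ in $\F$ for suitable $a_i\in \F$, then applying the canonical projection $\F\to \M$ writes the class of $z$ as a product of cubes in $\M$. So all the content is in the converse.

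For the converse, suppose the class $\overline{z}$ of $z$ in $\M$ is a product of cubes. Lifting such an expression to $\F$, we may write $z=wc$ where $w\in \F$ is a product of cubes and $c\in \FFF$. Hence it suffices to prove that every element of $\FFF$ is a product of cubes in $\F$; equivalently $\FFF\subseteq x^3(\F)$, where $x^3(\F)$ denotes the verbal subgroup of $\F$ generated by all cubes. The next step is to identify $x^3(\F)$ with $N$, the normal closure in $\F$ of the set of all cubes: since $x^3(\F)$ is a (fully invariant, hence normal) subgroup containing every cube we get $N\subseteq x^3(\F)$, and since $x^3(\F)$ is generated by cubes, each of which lies in $N$, we get $x^3(\F)\subseteq N$. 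Thus it remains only to show $\FFF\subseteq N$, i.e.\ that $\F/N$ is metabelian. But $\F/N$ is precisely the Burnside group $B(2,3)$ on the two generators $x,y$ of exponent $3$, which by Burnside has order $3^3=27$; a group of order $p^3$ is nilpotent of class at most $2$, so its derived subgroup is abelian, hence $B(2,3)$ is metabelian and $\FFF\subseteq N=x^3(\F)$. Therefore $c\in x^3(\F)$ and $z=wc$ is a product of cubes in $\F$.

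There is no real obstacle here; the only points requiring a word of justification are the identification of the verbal subgroup $x^3(\F)$ with the normal closure of the cubes, and the metabelianity of $B(2,3)$, both of which are standard. As an alternative to invoking the order of $B(2,3)$, one can argue directly that $\F/N$ is metabelian using the fact (recalled in Section \ref{secuno}) that in a group of exponent $3$ every element commutes with its conjugates, which makes $\F/N$ a $2$-Engel group, hence nilpotent of class $\le 3$ and so metabelian since $[\gamma_2,\gamma_2]\subseteq\gamma_4=\{1\}$.
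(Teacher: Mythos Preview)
Your proof is correct and follows the same reduction as the paper (both directions, and the observation that it suffices to show $\FFF\subseteq x^3(\F)$), but the final step is handled differently. The paper argues concretely: every element of $\FFF$ is a product of double commutators $[[u_i,v_i],[w_i,t_i]]$, and each such double commutator is a product of cubes by the result of Levi and van der Waerden. You instead argue structurally: $\F/x^3(\F)=B(2,3)$ has order $27$, hence is nilpotent of class at most $2$, hence metabelian, so $\FFF$ dies in the quotient. Your route is cleaner in that it avoids citing the specific Levi--van der Waerden identity and the generation of $\FFF$ by double commutators; the paper's route has the mild advantage of being more explicit (one sees exactly which elements are products of cubes and why) and of not relying on the computation of $|B(2,3)|$. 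Your alternative at the end, via the $2$-Engel property, is essentially the mechanism behind the Levi--van der Waerden result the paper invokes, so the two arguments converge there.
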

\begin{proof}
The result follows from the fact that any element in $\F''$ is a product of elements of the form $[[u_i,v_i],[w_i,t_i]]$ for $u_i,v_i,w_i,t_i \in \F$ (see \cite[pp. 297]{MKS}) and each of these commutators is a product of cubes by \cite{LW} (see also \cite[p. 97]{Lyn4}). 
\end{proof}

Since $P_{[x,y]}=1$, the signed area $P_z(1,1)\in \Z$ of an element $z\in \F'$ which is a product of squares can be arbitrary. The signed area of an element $z\in \F'$ which is a product of cubes must be a multiple of $3$. More generally we prove:

\begin{teo} \label{npowers}
Let $z\in \F'$ be an element which is a product of $n$-th powers in $\F$ for some $n\ge 2$. Then the signed area $P_z(1,1)$ is a multiple of $n$ if $n$ is odd and it is a multiple of $\frac{n}{2}$ if $n$ is even. 
\end{teo}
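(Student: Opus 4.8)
The plan is to reduce the statement to a divisibility fact about the single polynomial $1+t+t^2+\dots+t^{n-1}\in\Z[t]$ evaluated at roots of unity, using the factorization of $z$ as a product of $n$-th powers together with the behavior of the winding invariant under the operations in Proposition \ref{basica}. First I would write $z=a_1^na_2^n\ldots a_r^n$ in $\F$ for some $r\ge 1$ and $a_i\in\F$. Exactly as in the proof of Proposition \ref{lngeneral}, for each single power $a^n$ one has, after noting that a product of $n$-th powers lying in $\FF$ forces the product $a_1a_2\cdots a_r$ to lie in $\FF$ only in the two-factor case, a more careful bookkeeping: grouping, one rewrites the whole product so that $P_z$ becomes a $\Z$-linear combination $P_z=\sum_{i} M_i\,(1+X^{n_i}Y^{m_i}+\dots+X^{(n-1)n_i}Y^{(n-1)m_i})$ of ``cyclotomic-type'' blocks, one block for each $a_j$, where $M_i$ is a monomial recording the accumulated exponents of the preceding factors and $(n_i,m_i)$ is the exponent vector of $a_j$. (This is the same telescoping identity $a^n=a\cdot a^{-1}(a)a\cdots$, applied factor by factor, already used for $k=2$ in the proof of Proposition \ref{ln} and for general $k$ in Proposition \ref{lngeneral}.)

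Next I would evaluate. Setting $X=Y=1$ in the displayed expression gives $P_z(1,1)=n\sum_i M_i(1,1)$ only if every block contributes $n$; but a block with $(n_i,m_i)=(0,0)$ would still contribute $n$, so that is harmless, and a block with $(n_i,m_i)\ne(0,0)$ also contributes $n$ at $(1,1)$. Hence crudely $n\mid P_z(1,1)$ — which is already the odd case, and more. So the genuine content is the improvement for even $n$, where the theorem only claims $\tfrac n2\mid P_z(1,1)$, and the real question is whether the stronger divisibility by $n$ can fail. The efficient route is the other direction: I expect one does \emph{not} get $n\mid P_z(1,1)$ in general for even $n$, so the theorem is stated optimally, and the proof should simply establish $\tfrac n2$ when $n$ is even and $n$ when $n$ is odd. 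For the clean argument, substitute a primitive complex root of unity. If $n$ is odd, let $\zeta$ be a primitive $n$-th root of unity; then each block $1+(X^{n_i}Y^{m_i})+\dots+(X^{n_i}Y^{m_i})^{n-1}$, being $\frac{t^n-1}{t-1}$ in $t=X^{n_i}Y^{m_i}$, specializes to $0$ unless $X^{n_i}Y^{m_i}$ specializes to $1$, in which case it specializes to $n$. Choosing a suitable homomorphism $\Z^2\to\Z/n$ (or evaluating $X,Y$ at appropriate powers of $\zeta$) one arranges that $P_z\equiv 0\pmod{(\text{the ideal generated by }\zeta)}$ forces $n\mid P_z(1,1)$ in $\Z$; concretely one compares the constant term of $P_z$ written in the basis $\{1,t,\dots,t^{n-1}\}$ after the substitution. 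For even $n$ the same works with $\tfrac n2$ in place of $n$ because $\frac{t^n-1}{t-1}$ evaluated at $t=-1$ equals $0$, whereas $\frac{t^{n/2}-1}{t-1}$-type phenomena only guarantee the weaker statement; more precisely one uses that $1+t+\dots+t^{n-1}=(1+t)(1+t^2+t^4+\dots)$ when $n$ is even, and reads off divisibility by $\tfrac n2$ after specializing $t\mapsto -1$.

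The main obstacle I anticipate is organizing the telescoping for a product of \emph{more than two} powers so that one really obtains the clean linear combination of cyclotomic blocks above: with two factors $a^nb^n$ the rewriting $a^nb^n=(ab)\,b^{-1}(ab)b\cdots$ is exactly Proposition \ref{lngeneral}, but with $r$ factors one must iterate and carefully track the monomials $M_i$, making sure each $M_i$ is indeed a single monomial (so that $M_i$ times a block still evaluates to $n$ or $0$ under the root-of-unity substitution) and that no cross terms appear. Once that combinatorial rewriting is in hand, the number-theoretic step — specializing $X,Y$ at roots of unity and extracting the divisibility of the integer $P_z(1,1)$ — is routine, and the odd/even dichotomy falls out of the elementary factorization of $1+t+\dots+t^{n-1}$.
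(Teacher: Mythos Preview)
Your core claim --- that $P_z$ decomposes as a $\Z$-linear combination $\sum_i M_i(1+X^{n_i}Y^{m_i}+\dots+X^{(n-1)n_i}Y^{(n-1)m_i})$ with each $M_i$ a monomial --- is false, and this is fatal rather than merely an obstacle to be organized. Indeed, if such a decomposition existed then evaluating at $(1,1)$ would give $n\mid P_z(1,1)$ for \emph{every} $n$, but $[x,y]=x^2(x^{-1}y)^2y^{-2}$ is a product of three squares with signed area $P_{[x,y]}(1,1)=1$. The telescoping from Proposition~\ref{lngeneral} works for two factors because $a^nb^n$ rewrites as a product of conjugates of the single element $ab\in\F'$. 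With three factors this breaks: for $n=2$ one gets
\[
P_{u^2v^2w^2}=(1+m_{(vw)^{-1}})P_{uvw}+m_{w^{-1}}P_{[v^{-1},w^{-1}]},
\]
which is exactly Equation~(\ref{squarel}) in the paper. The second summand is a commutator term, not a cyclotomic block, and its signed area need not be divisible by $2$. For general $n$ the correction terms are similar commutator-type expressions, and the root-of-unity substitutions you sketch do not control them.

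The paper's proof is entirely different and geometric. It first reduces to a product of \emph{three} $n$-th powers by inserting auxiliary factors $v_i^{\pm n}$. For $z=u^nv^nw^n\in\F'$ it then observes that replacing any of $u,v,w$ by another element of $\F$ with the same exponent vector changes $P_z(1,1)$ by a multiple of $n$ (this uses Proposition~\ref{lngeneral} applied to $u^{-n}\widetilde{u}^n$). Hence one may take $u,v,w$ to be specific words of the form $x^ay^b$ or $y^bx^a$, chosen so that $\gamma_{u^nv^nw^n}$ bounds a staircase region. The signed area is then computed as the area of a circumscribed rectangle (side lengths divisible by $n$) minus areas of staircase regions, each of which is a multiple of $\frac{n(n-1)}{2}$. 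The claimed divisor is $\gcd\bigl(n^2,\tfrac{n(n-1)}{2}\bigr)$, which equals $n$ for $n$ odd and $\tfrac{n}{2}$ for $n$ even.
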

\begin{proof}
Suppose $z=u_1^nu_2^n\ldots u_k^n$ for some $u_1,u_2,\ldots, u_k\in \F$. Since $\exp(x,u_i^n)$ and $\exp(y,u_i^n)$ are multiples of $n$ for each $i$, there exist $v_2,v_3,\ldots, v_{k-2}\in \F$ such that $u_1^nu_2^nv_2^n\in \F'$, $v_i^{-n}u_{i+1}^nv_{i+1}^n\in \F'$ for every $2\le i\le k-3$ and $v_{k-2}^{-n}u_{k-1}^nu_k^n \in \F'$. Thus, it suffices to prove the theorem in the case that $z$ is a product of three $n$-th powers in $\F$.

Suppose then $z=u^nv^nw^n$ for some $u,v,w\in \F$. If we replace $u$ by an element $\widetilde{u}\in \F$ with the same total exponents in $x$ and $y$, then we obtain $\widetilde{z}=(\widetilde{u})^nv^nw^n\in \F'$ and $P_{\widetilde{z}}(1,1)-P_{z}(1,1)$ is a multiple of $n$. Indeed, $z^{-1}\widetilde{z}=w^{-n}v^{-n}u^{-n}(\widetilde{u})^nv^nw^n$ is a conjugate of $u^{-n}\widetilde{u}^n$, which has signed area multiple of $n$ by Proposition \ref{lngeneral}.

If instead we replace $v$ by $\widetilde{v}\in \F$ or $w$ by $\widetilde{w}\in \F$ with the same total exponents, we again obtain an element $\widetilde{z}=u^n(\widetilde{v})^nw^n$ or $\widetilde{z}=u^nv^n(\widetilde{w})^n$ such that $P_{\widetilde{z}}(1,1)\equiv P_z(1,1) \mod (n)$. Therefore the class of $P_z(1,1) \mod (n)$ depends only on the total exponents of $x,y$ in $u,v,w$. Let $(a,b)=\vv_u=(\exp (x,u),\exp(y,u))$, $(c,d)=\vv_u+\vv_v=(\exp(x, uv), \exp(y,uv)) \in \Z^2$. Consider the rectangle $R$ contained in the grid $\Z \times \R \cup  \R \times \Z$ circumscribed about the triangle $T$ with vertices $(0,0)$, $(na,nb)$, $(nc,nd)$. Either 1. one vertex of $T$ is a vertex $\mathfrak{v}$ of $R$ and the other two vertices of $T$ lie in the two sides of $R$ which are not adjacent to $\mathfrak{v}$, or 2. two vertices of $T$ are opposite vertices of $R$ and the third vertex of $T$ lies in the interior or boundary of $R$ (see Figure \ref{triangulos}).

\begin{figure}[h] 
\begin{center}
\includegraphics[scale=0.6]{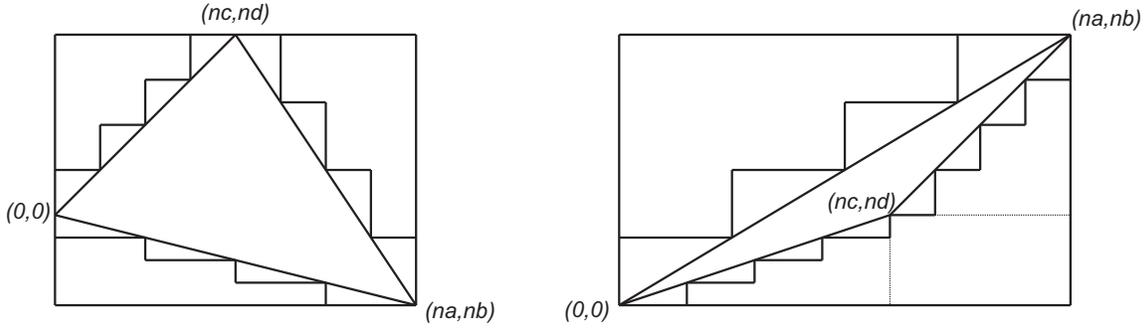}
\caption{The cases 1. and 2. in an example for $n=4$.}\label{triangulos}
\end{center}
\end{figure}

We choose $u=x^ay^b$ or $u=y^bx^a$, $v=x^{c-a}y^{d-b}$ or $v=y^{d-b}x^{c-a}$ and $w=x^{-c}y^{-d}$ or $w=y^{-d}x^{-c}$ in such a way that the curve $\gamma_{u^nv^nw^n}$ does not touch the interior of $T$. In case 1, the signed area of $z=u^nv^nw^n$ up to sign is obtained from the area of $R$, which is a multiple of $n^2$, by subtracting the areas of three regions, each of area a multiple of $\frac{n(n-1)}{2}$. In case 2, $P_z(1,1)$ up to sign is obtained from the area of $R$ by subtracting the area of four regions. Three of them have area multiple of $\frac{n(n-1)}{2}$ and the fourth (a rectangle) area a multiple of $n^2$. In any case the greatest common divisor of $n^2$ and $\frac{n(n-1)}{2}$ divides $P_z(1,1)$. This is the number in the statement of the theorem.
\end{proof}

\begin{coro} \label{npowersf}
Let $z\in \F$ and let $k=\exp(x,z)$, $l=\exp(y,z)$. If $z$ is a product of $n$-th powers then the signed area of $y^{-l}x^{-k}z$ is divisible by $n$ if $n$ is odd and by $\frac{n}{2}$ if $n$ is even.
\end{coro}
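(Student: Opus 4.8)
The plan is to reduce Corollary \ref{npowersf} to Theorem \ref{npowers} by a single normalization move. Given $z\in\F$ with $k=\exp(x,z)$ and $l=\exp(y,z)$, the element $z'=y^{-l}x^{-k}z$ has total exponents $\exp(x,z')=\exp(y,z')=0$, so $z'\in\FF$ and its signed area $P_{z'}(1,1)$ is defined. The task is then to show that if $z$ is a product of $n$-th powers in $\F$, so is $z'$, after which Theorem \ref{npowers} applied to $z'$ gives exactly the divisibility claimed ($n\mid P_{z'}(1,1)$ when $n$ is odd, $\tfrac{n}{2}\mid P_{z'}(1,1)$ when $n$ is even).

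The first step is to observe that if $z=u_1^nu_2^n\cdots u_k^n$, then $x^{-k}$ and $y^{-l}$ can be absorbed into the product of $n$-th powers at the cost of introducing an element of $\FF$ whose signed area is a multiple of $n$ (hence harmless for the divisibility we want, since $n$ and $\tfrac n2$ both divide $n$). Concretely, since $\exp(x,z)=k$ and $\exp(y,z)=l$, one has $k\equiv 0$ and $l\equiv 0$ modulo nothing a priori — but note $\exp(x,u_i^n)$ and $\exp(y,u_i^n)$ are each multiples of $n$, so $k$ and $l$ are multiples of $n$ as well. Write $k=n k_0$, $l=n l_0$. Then $x^{-k}=(x^{-k_0})^n$ and $y^{-l}=(y^{-l_0})^n$ are themselves $n$-th powers. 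Hence $z'=y^{-l}x^{-k}z=(y^{-l_0})^n(x^{-k_0})^n u_1^nu_2^n\cdots u_k^n$ is a product of $n$-th powers in $\F$, and it lies in $\FF$ by construction. Now Theorem \ref{npowers} applies directly to $z'$ and yields the conclusion.

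The only subtle point — and the step I expect to be the ``main obstacle,'' though it is a mild one — is verifying that $k$ and $l$ really are divisible by $n$, which is what makes the prepended correction terms into honest $n$-th powers rather than merely elements of $\FF$ with area divisible by $n$. This follows because the abelianization $\F\to\Z^2$ sends each $u_i^n$ to $n\cdot(\exp(x,u_i),\exp(y,u_i))$, so the image of $z=u_1^n\cdots u_k^n$ is $n$ times a vector in $\Z^2$; thus $(k,l)\in n\Z^2$. (Even if one did not wish to invoke this, the alternative is to note that $x^{-k}y^{-l}\cdot(\text{any element with the same abelianization as }z)$ has signed area a multiple of $n$ by Proposition \ref{lngeneral}-type reasoning, but the clean route is the divisibility of $k,l$ by $n$.) With $z'\in\FF$ a product of $n$-th powers, Theorem \ref{npowers} gives that $P_{z'}(1,1)$ is a multiple of $n$ for $n$ odd and of $\tfrac n2$ for $n$ even, which is precisely the statement of the corollary. \qed
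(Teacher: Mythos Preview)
Your proof is correct and follows essentially the same approach as the paper: observe that $n\mid k$ and $n\mid l$ (since each factor $u_i^n$ has exponents divisible by $n$), so $y^{-l}$ and $x^{-k}$ are themselves $n$-th powers, whence $y^{-l}x^{-k}z\in\FF$ is a product of $n$-th powers and Theorem \ref{npowers} applies. The paper's proof is just a terser one-line version of your argument.
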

\begin{proof}
If $z$ is a product of $n$-th powers, $n |k,l$, so $y^{-l}x^{-k}z\in \F'$ is a product of $n$-th powers. 
\end{proof}

For $n=3$ we have the following characterization of the products of cubes in $\F$.

\begin{teo} \label{caracterizacioncubos}
Let $z\in \F$. Then $z$ is a product of cubes in $\F$ if and only the total exponents $k=\exp (x,z)$ and $l=\exp (y,z)$ are divisible by $3$ and the signed area of $y^{-l}x^{-k}z$ is divisible by $3$.
\end{teo}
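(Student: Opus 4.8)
The plan is to reduce both implications to results already available, the one new ingredient being the identification of the set of Laurent polynomials that occur as winding invariants of products of cubes lying in $\FF$. \emph{Necessity} is essentially Corollary \ref{npowersf}: if $z$ is a product of cubes in $\F$, then $k=\exp(x,z)$ and $l=\exp(y,z)$ are multiples of $3$ by counting exponents, and $y^{-l}x^{-k}z$ is then a product of cubes lying in $\FF$, so by Theorem \ref{npowers} (with $n=3$) its signed area $P_{y^{-l}x^{-k}z}(1,1)$ is divisible by $3$.

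For \emph{sufficiency}, put $w=y^{-l}x^{-k}z\in\FF$. Since $x^ky^l=(x^{k/3})^3(y^{l/3})^3$ is a product of cubes and products of cubes form a subgroup of $\F$, it is enough to show that $w$ is a product of cubes in $\F$, and by Proposition \ref{propcubes} it is enough to show that the class of $w$ in $\M$ is a product of cubes. Let $J=\{P\in\ZZ : P(1,1)\equiv 0 \bmod 3\}=\langle 3,\,X-1,\,Y-1\rangle$ (the kernel of the ring homomorphism $\ZZ\to\Z_3$ that evaluates at $(1,1)$), and let $\mathcal{W}=\W\!\left(x^3(\M)\cap\M'\right)$ be the set of winding invariants of products of cubes lying in $\M'$. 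One checks that $\mathcal{W}$ is an ideal of $\ZZ$: it is an additive subgroup because $\W$ is a homomorphism and $x^3(\M)\cap\M'$ is a subgroup, and it is stable under multiplication by monomials since a conjugate by $x^ny^m$ of a product of cubes is again a product of cubes (verbal subgroups are normal) and this multiplies the winding invariant by $X^nY^m$ (Proposition \ref{basica}).

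The key claim is $\mathcal{W}=J$. The inclusion $\mathcal{W}\subseteq J$ is Theorem \ref{npowers} for $n=3$, after lifting each cube appearing in a product of cubes in $\M'$ to $\F$, which keeps the product in $\FF$. For $\mathcal{W}\supseteq J$ it suffices to exhibit $3$, $X-1$ and $Y-1$ in $\mathcal{W}$. First, $3=\W([x,y]^3)\in\mathcal{W}$, since $[x,y]^3$ is a cube lying in $\M'$. Second, by the Burnside argument recalled in the introduction the element $[xy^{-1}x^{-1},y^{-1}]$ is a product of three cubes; it lies in $\FF$ and is a conjugate of $e_2$, so its winding invariant is a monomial times $P_{e_2}=-(Y-1)$ (see the proof of Corollary \ref{coroengel2}). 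Hence $Y-1\in\mathcal{W}$ because $\mathcal{W}$ is an ideal, and applying the automorphism of $\F$ interchanging $x$ and $y$ gives $X-1\in\mathcal{W}$. Note that the ``obvious'' products of cubes do not suffice: $\W([x^3,y])=1+X+X^2\equiv(X-1)^2\bmod 3$, so $\langle 3,\,1+X+X^2,\,1+Y+Y^2\rangle$ is strictly smaller than $J$, and reaching $X-1$ and $Y-1$ genuinely requires a word of Engel type.

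To conclude: by hypothesis $P_w\in J=\mathcal{W}$, so there is $g\in\M'$ which is a product of cubes in $\M$ with $\W(g)=P_w$; lifting $g$ to $w'\in\FF$, Proposition \ref{propcubes} makes $w'$ a product of cubes in $\F$, and $P_{w'}=P_w$ gives $w'^{-1}w\in\FFF$ by Theorem \ref{conway}. Since $\FFF\subseteq x^3(\F)$ — every element of $\FFF$ is a product of commutators $[[u,v],[s,t]]$, each a product of cubes by \cite{LW}, as used in the proof of Proposition \ref{propcubes} — and products of cubes form a subgroup, $w=w'\cdot(w'^{-1}w)$ is a product of cubes in $\F$, and therefore so is $z=x^ky^lw$. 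The main obstacle is exactly the equality $\mathcal{W}=J$, specifically the inclusion $X-1,Y-1\in\mathcal{W}$: the direct constructions only yield the proper sub-ideal $\langle 3,\,1+X+X^2,\,1+Y+Y^2\rangle$, so one must import the nontrivial fact that an Engel-type word such as $[xy^{-1}x^{-1},y^{-1}]$ is a product of cubes in order to recover all of $J$.
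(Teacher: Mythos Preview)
Your proof is correct and follows the same overall strategy as the paper: both argue that the set $\mathcal{W}$ of winding invariants of products of cubes lying in $\FF$ is exactly the ideal $J=\langle 3,X-1,Y-1\rangle$ of polynomials with signed area divisible by $3$, and then conclude via Proposition \ref{propcubes}. The only difference is in which witnesses are exhibited for the inclusion $J\subseteq\mathcal{W}$. The paper writes down explicit products of cubes directly and reads off their winding invariants: $x^3(x^{-1}y)^3y^{-3}$ gives $1+X+Y$, and $x^3(x^{-1}y)^3y^{-3}(yx)^3(x^{-1}y^{-1}x)^3x^{-3}$ gives $Y(1-X)$ (and symmetrically $X(1-Y)$); these three generate $J$ since $3=(1+X+Y)+(1-X)+(1-Y)$. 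You instead use $[x,y]^3$ for $3$ and import the fact, quoted in Section \ref{secuno} with the explicit formula $[xy^{-1}x^{-1},y^{-1}]=(xy^{-1}x^{-2})^3(x^{2}y)^3(y^{-1}x^{-1}y)^3$, that a conjugate of $e_2$ is a product of cubes, yielding $Y-1$ (and then $X-1$ by the automorphism swapping $x$ and $y$). Both routes are valid; the paper's has the small advantage of being self-contained (one simply checks the winding invariants of two short words), whereas your route shows that the ``Engel word is a product of cubes'' observation already suffices. Your aside that $\langle 3,1+X+X^2,1+Y+Y^2\rangle\subsetneq J$ is correct and explains why the most naive attempts (using $[x^3,y]$, $[x,y^3]$, $[x,y]^3$) do not immediately work, but the paper's two explicit words show that one does not need to go as far as the Engel word to close the gap.
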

\begin{proof}
One implication follows directly from Corollary \ref{npowersf}. For the other, note that the winding invariant of $x^3(x^{-1}y)^3y^{-3}(yx)^3(x^{-1}y^{-1}x)^3x^{-3}$ is $Y(1-X)$. Symmetrically, $X(1-Y)$ is the winding invariant of a product of cubes. The winding invariant $1+X+Y$ of $x^3(x^{-1}y)^3y^{-3}$ has signed area $3$. Therefore, the ideal $\langle 1-Y, 1-X, 1+X+Y \rangle \subseteq \ZZ$ of polynomials with signed area divisible by $3$ is contained in the ideal $\{P_w | w\in \F' $ is a product of cubes$\}$. We deduce then that if $z\in \F$ has exponents $k,l$ divisible by $3$ and $y^{-l}x^{-k}z$ has signed area divisible by $3$, then $y^{-l}x^{-k}z$ is a product of cubes in $\M$. By Proposition \ref{propcubes}, $y^{-l}x^{-k}z$ is a product of cubes in $\F$ and then so is $z$.
\end{proof}

\begin{coro}
Let $n,k\ge 1$. If $n$ is odd and $n \nmid k$, then $[x,y]^k\in \F$ is not a product of $n$-th powers. If $n$ is even and $\frac{n}{2} \nmid k$, then $[x,y]^k\in \F$ is not a product of $n$-th powers. 
\end{coro}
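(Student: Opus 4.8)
The plan is to deduce this immediately from Theorem \ref{npowers}, the only input being a one-line computation of the winding invariant of $[x,y]^k$. Since $\W$ is a group homomorphism by Proposition \ref{basica}(ii), and $P_{[x,y]}=1\in\ZZ$ by the first example, we get $P_{[x,y]^k}=kP_{[x,y]}=k$. In particular the signed area is $P_{[x,y]^k}(1,1)=k$.

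Now suppose for contradiction that $[x,y]^k$ is a product of $n$-th powers in $\F$. Since $[x,y]^k\in\F'$, Theorem \ref{npowers} applies directly and tells us that the signed area $P_{[x,y]^k}(1,1)=k$ is a multiple of $n$ when $n$ is odd, and a multiple of $\frac{n}{2}$ when $n$ is even. This contradicts the hypothesis $n\nmid k$ (resp. $\frac{n}{2}\nmid k$), which completes the proof. Equivalently one could invoke Corollary \ref{npowersf} with $\exp(x,[x,y]^k)=\exp(y,[x,y]^k)=0$, which gives the same conclusion.

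I do not expect any real obstacle here: the statement is a straightforward corollary, and the only thing one must be careful about is the even/odd dichotomy inherited from Theorem \ref{npowers}, namely that for even $n$ one only gets divisibility by $\frac{n}{2}$ rather than by $n$. (This is sharp: for instance $[x,y]^{2}=x^{2}(yx^{-1}y^{-1})^{2}y^{-2}(yxy^{-1})^{2}\cdots$ type identities show that $[x,y]^{n/2\cdot m}$ can indeed be a product of $n$-th powers, so one cannot hope to improve the even case to divisibility by $n$.) Hence the statement as given is exactly what the area obstruction yields.
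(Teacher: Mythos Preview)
Your main argument is correct and is exactly the paper's intended proof: the corollary is stated without proof precisely because it is immediate from Theorem \ref{npowers} together with the computation $P_{[x,y]^k}(1,1)=k$.

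However, your parenthetical remark about sharpness is wrong and should be deleted. You claim that ``$[x,y]^{n/2\cdot m}$ can indeed be a product of $n$-th powers,'' but the very next result in the paper (Proposition \ref{color} and its corollary) shows that $[x,y]^2$ is \emph{not} a product of fourth powers, via the coloring invariant $\kappa$. So for $n=4$, $k=2$ one has $\tfrac{n}{2}\mid k$ yet $[x,y]^k$ fails to be a product of $n$-th powers; the bound $\tfrac{n}{2}\mid k$ is genuinely weaker than the truth, and the paper devotes Proposition \ref{color} to improving it in this specific case. The incomplete identity you wrote down does not express $[x,y]^2$ as a product of fourth powers (and cannot, by the above). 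Since this side remark is not needed for the proof and is factually incorrect, simply drop it.
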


If $n|k$, clearly $[x,y]^k$ is a product of $n$-th powers. However, for $n$ even, the fact that $\frac{n}{2}| k$ does not guarantee that $[x,y]^k$ is a product of $n$-th powers as the next result shows. We are going to use a coloring argument to define a variant of the signed area. We will paint the squares determined by the grid $\Z \times \R \cup \R \times \Z$. All the squares in rows congruent to $0$ or $1$ modulo $4$ will be painted with black, while the remaining rows are white (see Figure \ref{sombra}).

\begin{figure}[h] 
\begin{center}
\includegraphics[scale=0.4]{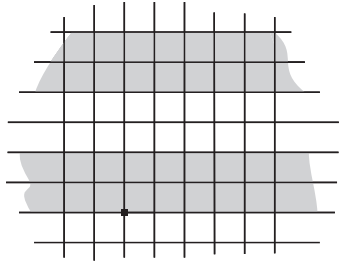}
\caption{A coloring of the squares in the grid.}\label{sombra}
\end{center}
\end{figure}

Now for a given $z\in \F'$ we will consider the coefficients $P_z\{(k,l)\}$ of the winding invariant $P_z$. We add all the coefficients corresponding to black squares and we subtract all the coefficients corresponding to white squares. Concretely, for $P \in \ZZ$, define $$\kappa_P=\sum\limits_{l \equiv 0,1 \ \textrm{mod} (4)} (\sum\limits_{k\in \Z} P\{(k,l)\})- \sum\limits_{l \equiv 2,3 \ \textrm{mod} (4)} (\sum\limits_{k\in \Z} P\{(k,l)\}) \in \Z,$$ and for $z\in \F'$ define $\kappa_z=\kappa_{P_z}$.

\begin{prop} \label{color}
Let $z\in \F'$. If $z$ is a product of fourth powers in $\F$, then $4|\kappa_z$.
\end{prop}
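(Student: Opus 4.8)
The functional $\kappa\colon\ZZ\to\Z$, $P\mapsto\kappa_P$, is $\Z$-linear. Writing $\epsilon(l)=1$ for $l\equiv 0,1$ and $\epsilon(l)=-1$ for $l\equiv 2,3$ modulo $4$, one has $\epsilon(l)=\operatorname{Re}\big((1-\sqrt{-1})\,(\sqrt{-1})^{\,l}\big)$, so that $\kappa_P=\operatorname{Re}\big((1-\sqrt{-1})\,P(1,\sqrt{-1})\big)$, where $P(1,\sqrt{-1})\in\Z[\sqrt{-1}]$ is the image of $P$ under the ring homomorphism $\ZZ\to\Z[\sqrt{-1}]$, $X\mapsto 1$, $Y\mapsto\sqrt{-1}$. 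The plan is as follows. First I would reduce to a small number of factors: if $z\in\FF$ is a product of fourth powers, then by the grouping construction in the proof of Theorem~\ref{npowers} (inserting factors $v_i^{\pm 4}$) one can write $z$ as a product of elements of $\FF$, each of which is a product of at most three fourth powers. Since $\W$ is a homomorphism and $\kappa$ is linear, it suffices to prove $4\mid\kappa_y$ whenever $y\in\FF$ is a product of at most three fourth powers.

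\textbf{At most two fourth powers, and independence of representatives.} If $y=a^{4}b^{4}\in\FF$, then by Proposition~\ref{lngeneral} there are $n,m$ with $1+M+M^{2}+M^{3}$ dividing $P_{y}$ in $\ZZ$, where $M=X^{n}Y^{m}$; write $P_y=(1+M+M^2+M^3)Q$. Evaluating at $X=1,\,Y=\sqrt{-1}$ gives $1+(\sqrt{-1})^{m}+(\sqrt{-1})^{2m}+(\sqrt{-1})^{3m}\in\{0,4\}$, hence $\kappa_{P_y}=\operatorname{Re}\big((1-\sqrt{-1})\,\{0\ \text{or}\ 4\}\cdot Q(1,\sqrt{-1})\big)$ is either $0$ or $4\kappa_Q$, in any case divisible by $4$. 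The same computation shows $\kappa$ is divisible by $4$ on every ideal $(1+M+M^{2}+M^{3})\ZZ$. I would then use this to show that, for $z=u^{4}v^{4}w^{4}\in\FF$, the residue of $\kappa_z$ modulo $4$ depends only on the vectors $\vv_u,\vv_v,\vv_w\in\Z^{2}$: replacing $u$ by $\tilde u$ with $\vv_{\tilde u}=\vv_u$ multiplies $z$ by a conjugate of $(u^{-1})^{4}\tilde u^{4}\in\FF$, a product of two fourth powers whose winding invariant lies in such an ideal (and likewise for $v$ and $w$).

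\textbf{Products of three fourth powers: the geometric computation.} It then remains to evaluate $\kappa_z\bmod 4$ for one convenient choice of representatives in each class. Following the proof of Theorem~\ref{npowers}, I would take $u,v,w$ to be single-generator staircases (each of the form $x^{a}y^{b}$ or $y^{b}x^{a}$) chosen so that $\gamma_{z}$, with $z=u^{4}v^{4}w^{4}$, misses the interior of the triangle $T$ with vertices $(0,0)$, $4\vv_u$, $4\vv_u+4\vv_v$. Then the region carrying $P_z$ is, up to an overall sign, the difference between the circumscribed rectangle $R$ and finitely many pieces, each being either a rectangle of height divisible by $4$ or a four-step \emph{staircase triangle} arising from one of the four-step staircases $\gamma_{u^4},\gamma_{v^4},\gamma_{w^4}$. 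Using linearity of $\kappa$ and decomposing each region row by row: the rectangle $R$ has height a multiple of $4$, so $\kappa_R=0$ (any four consecutive values of $\epsilon$ sum to $0$), and the same holds for the rectangular pieces. A four-step staircase triangle occupies, over four consecutive blocks of $\delta_y$ rows, columns of widths $\{c\delta_x,2c\delta_x,3c\delta_x,4c\delta_x\}$ in some order, whence its $\kappa$-value is $\pm c\,\delta_x(3S_1+2S_2+S_3)$, where $S_j$ is the sum of $\epsilon$ over the $j$-th block. Now $3S_1+2S_2+S_3=2S(q_0,2\delta_y)+(S_1+S_3)$, where $S(q_0,2\delta_y)$ is even because consecutive pairs of values of $\epsilon$ sum to $0$ or $\pm 2$, and $S_1+S_3$ is divisible by $4$ (it equals $2S_1$ with $S_1$ even when $\delta_y$ is even, and equals $0$ when $\delta_y$ is odd, since shifting $\epsilon$ by $2\delta_y\equiv 2 \pmod 4$ negates it). Hence each piece contributes a multiple of $4$, so $4\mid\kappa_z$.

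\textbf{Main obstacle.} The genuinely fiddly step is the last one: checking that in each configuration case of the construction in Theorem~\ref{npowers} (one vertex of $T$ at a corner of $R$, versus two opposite vertices of $T$ at corners of $R$) the complement of $T$ in $R$ really does decompose into the rectangles and four-step staircase triangles described, while keeping track of orientations and of the signs of the coordinates $a,b,c,d$. The elementary facts about $\epsilon$ used above — that a sum of an even number of consecutive values is even, and that shifting by $2$ negates — present no difficulty, and the two-fourth-powers case is immediate from Proposition~\ref{lngeneral}.
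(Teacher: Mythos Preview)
Your proof is correct and follows essentially the same route as the paper: reduce to three fourth powers, show $\kappa_z\bmod 4$ depends only on the exponent vectors of $u,v,w$, pick staircase representatives as in Theorem~\ref{npowers}, and verify each piece of the rectangle decomposition has $\kappa\equiv 0\pmod 4$. Your identity $\kappa_P=\operatorname{Re}\big((1-\sqrt{-1})\,P(1,\sqrt{-1})\big)$ is a pleasant reformulation that makes the ``independence of representatives'' step immediate (the paper instead checks directly that $\kappa_{m'(1+m+m^2+m^3)}\in\{0,\pm 4\}$), and your row-sum manipulation $3S_1+2S_2+S_3=2(S_1+S_2)+(S_1+S_3)$ replaces the paper's block-matching argument on the six $n\times m$ blocks of the stair; the underlying structure is the same. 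One harmless slip: you state the staircase widths as $\{1,2,3,4\}$ but then write the formula for widths $\{3,2,1,0\}$; since $S_1+S_2+S_3+S_4=0$ over any four consecutive blocks these yield the same expression, so nothing is lost.
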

\begin{proof}
We follow the proof of Theorem \ref{npowers}. Since $\kappa: \F' \to \Z$ is a group homomorphism, we can assume that $z=u^4v^4w^4$ is a product of three fourth powers. Replacing $u$, $v$ or $w$ by other elements in $\F$ with the same total exponents does not change the congruence of $\kappa_z$ modulo $4$ since for any monomials $m=X^kY^l, m'=X^{k'}Y^{l'}$, the polynomial $P=m'(1+m+m^2+m^3)$ has $\kappa_P \equiv 0 \mod 4$. Indeed, if $l\equiv 0 \mod 4$, then $\kappa_P=4$ or $-4$, and otherwise $\kappa_P=0$. So we may suppose $u,v$ and $w$ are as in the proof of Theorem \ref{npowers}. Since the rectangles in the cases 1. and 2. have sides of length a multiple of $4$, then we only need to prove that the invariant $\kappa$ is trivial modulo $4$ in the case of the stair-like regions. That is, for $z'=x^{4n}(x^{-n}y^m)^4y^{-4m}$ we want to prove that $4|\kappa_{z'}$. In other words, for a three-step stair as in Figure \ref{escalerita}, we want to prove that the number of black squares minus the number of white squares is a multiple of $4$.

\begin{figure}[h] 
\begin{center}
\includegraphics[scale=0.8]{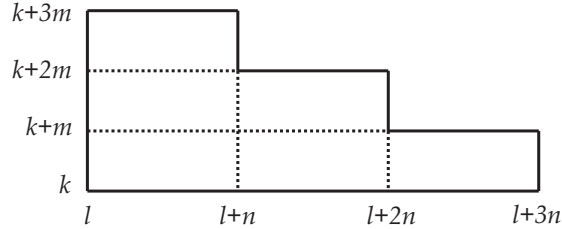}
\caption{A three-step stair.}\label{escalerita}
\end{center}
\end{figure}

The stair consists of six $n\times m$ blocks, three in the first level, two in the second and one in the third. We don't know which squares are black and which white, since the stair may be anywhere. If $m$ is even, then the block in the third step is painted with the same colors as the blocks in the first level, so there are four of these blocks and we only care about the two blocks in the second level. Since $m$ and $2n$ are even it is easy to see that the number of black squares minus the number of white squares is a multiple of $4$. If $m$ is odd, then the block in third step is painted with the opposite colors as the block in the first step (the rightmost block in the first level), so we only care about the $2n\times 2m$ rectangle formed by the remaining four blocks. Again since its sides have even length then the difference between the numbers of squares of each color is a multiple of $4$. 
\end{proof}

\begin{coro}
$[x,y]^2$ is not a product of fourth powers in $\F$. 
\end{coro}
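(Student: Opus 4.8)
The plan is to deduce this directly from Proposition \ref{color}, applied to $z=[x,y]^2$. First I would compute the winding invariant: since $P_{[x,y]}=1$, Proposition \ref{basica}(ii) gives $P_{[x,y]^2}=2\in \ZZ$, the constant Laurent polynomial. Thus the only nonzero coefficient of $P_{[x,y]^2}$ is $P_{[x,y]^2}\{(0,0)\}=2$, sitting in the row $l=0$.

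Next I would evaluate the coloring invariant $\kappa$. Since $0\equiv 0 \pmod 4$, the square with center $(\tfrac12,\tfrac12)$ lies in a black row, so in the defining sum for $\kappa$ the coefficient $2$ is added with a plus sign and no other coefficients contribute. Hence $\kappa_{[x,y]^2}=2$. As $4\nmid 2$, Proposition \ref{color} yields that $[x,y]^2$ is not a product of fourth powers in $\F$, which is the claim.

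There is no real obstacle here once Proposition \ref{color} is in hand; the only thing worth remarking is \emph{why} the cruder invariants are insufficient. The signed area of $[x,y]^2$ is $P_{[x,y]^2}(1,1)=2$, and since $\tfrac{4}{2}=2$ does divide $2$, Theorem \ref{npowers} (and Corollary \ref{npowersf}) give no contradiction; likewise the hypotheses of Proposition \ref{lngeneral} for products of two fourth powers need not fail. So the point of the corollary is precisely to exhibit an element whose obstruction to being a product of $n$-th powers is detected only by the finer coloring invariant $\kappa$ rather than by the signed area.
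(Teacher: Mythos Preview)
Your proof is correct and is exactly the intended application of Proposition \ref{color}: compute $P_{[x,y]^2}=2$, observe the single nonzero coefficient sits in a black row, so $\kappa_{[x,y]^2}=2\not\equiv 0\pmod 4$. Your added remark that the signed area $P_{[x,y]^2}(1,1)=2$ is divisible by $\tfrac{4}{2}=2$, so Theorem \ref{npowers} alone is inconclusive, correctly identifies the purpose of the corollary.
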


Recall that the Burnside problem is related to the problem of expressing Engel words as a product of powers. It is known that the fifth Engel word $e_5=[y[y[y[y[y,x]]]]]$ is a product of fourth powers in $\F$.

\begin{coro} \label{etres}
The third Engel word $e_3=[y,[y,[y,x]]]$ is not a product of fourth powers in $\F$.
\end{coro}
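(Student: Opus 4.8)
The plan is to invoke Proposition \ref{color}: if $z\in\F'$ is a product of fourth powers in $\F$, then $4\mid\kappa_z$. So it suffices to compute $\kappa_{e_3}$ and observe that it is not divisible by $4$. Since $e_3$ is an Engel word, hence a commutator (in fact $e_3\in\gamma_4(\F)\subseteq\F'$), Proposition \ref{color} applies to it.

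First I would compute $P_{e_3}$. As in the proof of Corollary \ref{coroengel2}, Proposition \ref{basica}(iv) applied to $e_{n+1}=[y,e_n]$ (where $u=y$ has total exponents $0$ and $1$) gives $P_{e_{n+1}}=(Y-1)P_{e_n}$, and a direct look at the curve $\gamma_{[y,x]}$ (which traverses the unit square at the origin clockwise once) gives $P_{e_1}=-1$. Hence $P_{e_3}=-(Y-1)^2=-1+2Y-Y^2$, so the only nonzero coefficients are $P_{e_3}\{(0,0)\}=-1$, $P_{e_3}\{(0,1)\}=2$, $P_{e_3}\{(0,2)\}=-1$, all in the column $X^0$.

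Then I would evaluate $\kappa_{e_3}=\kappa_{P_{e_3}}$ straight from the definition, being careful with the coloring convention: rows $l\equiv 0,1\pmod 4$ are black and contribute their coefficient sum positively, while rows $l\equiv 2,3\pmod 4$ are white and contribute negatively. Here row $l=0$ is black with row sum $-1$, row $l=1$ is black with row sum $2$, and row $l=2$ is white with row sum $-1$, so $\kappa_{e_3}=(-1)+2-(-1)=2$. Since $4\nmid 2$, Proposition \ref{color} forces $e_3$ not to be a product of fourth powers in $\F$.

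There is essentially no obstacle here: the real content is already encapsulated in Proposition \ref{color} (and the Burnside-flavored motivation it carries), and what remains is the two-line computation of $P_{e_3}$ and the short evaluation of $\kappa$. The only point demanding a little care is the bookkeeping of the black/white coloring and the signs in $\kappa$; even there, a sign error would only replace $2$ by $-2$, which is still not divisible by $4$, so the conclusion is robust.
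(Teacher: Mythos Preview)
Your proof is correct and follows exactly the same approach as the paper: compute $P_{e_3}=-(Y-1)^2$, evaluate $\kappa_{e_3}=2$, and apply Proposition \ref{color}. The paper's proof is simply a terser version of what you wrote.
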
 
\begin{proof}
The winding invariant of $e_3$ is $-(Y-1)^2$, thus $\kappa_{e_3}=2$ is not a multiple of $4$ and Proposition \ref{color} applies.
\end{proof}

As we mentioned in Section \ref{secuno} it is an open problem whether $e_6$ is a product of fifth powers in $\F$. If it is not, then the Burnside group $B(2,5)$ is not finite. The signed area of $e_6$, as for any other Engel word $e_n$ with $n\ge 2$, is trivial, so Theorem \ref{npowers} is not useful. In fact $e_6$ is a product of three fifth powers in $\M$. Concretely, $$e_6=(y[y,x]y[x,y]^2y[y,x]^2y[x,y]y^{-4})^5(xyx^{-1})^5(yxy^{-1}x^{-1}y^{-1})^5 \in \M.$$

More generally we prove the following

\begin{prop} \label{ppowers}
Let $p$ be a positive prime number. Then $e_{p+1}$ is a product of three $p$-th powers in $\M$. In particular, $e_n$ is a product of $p$-th powers in $\M$ for every $n>p$.
\end{prop}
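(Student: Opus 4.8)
The plan is to transport the problem, via the isomorphism $\W\colon \M'\to\ZZ$ of Theorem \ref{conway}, to a membership question in the Laurent polynomial ring, where it collapses to the Freshman's dream. First I would record that $P_{e_{p+1}}=-(Y-1)^{p}$, which follows from the computation in the proof of Corollary \ref{coroengel2} (there $P_{e_n}=-(Y-1)^{n-1}$). Next I would analyze the winding invariant of a product of three $p$-th powers of the special shape $z=t^{p}v^{p}w^{p}$ with $t\in\F'$, $\vv_v=(0,1)$ and $\vv_w=(0,-1)$ (so that $\vv_t+\vv_v+\vv_w=0$, consistent with $z\in\F'$). Since $t^{p}$ and $v^{p}w^{p}$ both lie in $\F'$, their winding invariants add; $P_{t^p}=p\,P_t$; and the factorization $v^{p}w^{p}=(vw)\bigl(w^{-1}(vw)w\bigr)\cdots\bigl(w^{-(p-1)}(vw)w^{p-1}\bigr)$ used in the proof of Proposition \ref{lngeneral} — whose factors are conjugates of $vw\in\F'$ by powers of $w^{-1}$, which has exponent vector $(0,1)$ — gives $P_{v^pw^p}=(1+Y+\cdots+Y^{p-1})\,P_{vw}$. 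Hence $P_z=p\,P_t+(1+Y+\cdots+Y^{p-1})\,P_{vw}$, and since $P_t$ and $P_{vw}$ can be prescribed arbitrarily (for $P_{vw}$: take $v=y$, $w=y^{-1}c'$ with $c'\in\F'$, and use surjectivity of $\W$ to choose $P_{c'}=P_{vw}$), such a $z$ equals $e_{p+1}$ in $\M$ precisely when $-(Y-1)^{p}$ lies in the ideal $\langle\, p,\ 1+Y+\cdots+Y^{p-1}\,\rangle\subseteq\ZZ$.

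The only genuinely non-formal step, and the sole place where primality of $p$ enters, is verifying this membership: $p\mid\binom{p}{j}$ for $0<j<p$ yields $(Y-1)^{p}\equiv Y^{p}-1\pmod p$, and $Y^{p}-1=(Y-1)(1+Y+\cdots+Y^{p-1})$, so $(Y-1)^{p}\in\langle\, p,\ 1+Y+\cdots+Y^{p-1}\,\rangle$. Making this explicit: set $F=\bigl((Y^{p}-1)-(Y-1)^{p}\bigr)/p\in\Z[Y]$, pick $t\in\F'$ with $P_t=F$ and $c'\in\F'$ with $P_{c'}=1-Y$, and put $v=y$, $w=y^{-1}c'$. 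Then
\[
P_{t^{p}v^{p}w^{p}}=pF+(1+Y+\cdots+Y^{p-1})(1-Y)=pF+(1-Y^{p})=-(Y-1)^{p}=P_{e_{p+1}},
\]
so $t^{p}v^{p}w^{p}=e_{p+1}$ in $\M$ by Theorem \ref{conway}; this establishes the first assertion, with exactly three $p$-th powers.

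For the statement about all $n>p$, I would induct using $e_{n+1}=[y,e_n]$: if $e_n=a_1^{p}\cdots a_k^{p}$ in $\M$, then the group identity $[y,a_1^{p}\cdots a_k^{p}]=(ya_1y^{-1})^{p}\cdots(ya_ky^{-1})^{p}(a_k^{-1})^{p}\cdots(a_1^{-1})^{p}$ exhibits $e_{n+1}$ as a product of $2k$ $p$-th powers; starting from the base case $n=p+1$ already handled, $e_n$ is a product of $p$-th powers in $\M$ for every $n>p$.

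I do not anticipate a real obstacle: once the reduction through $\W$ is in place, the entire content is the congruence $(Y-1)^{p}\equiv Y^{p}-1\pmod p$. The points requiring a little care are purely bookkeeping — making sure the relevant words lie in $\F'$ so that winding invariants are additive, and checking that $F$ has integer coefficients (which is precisely the Freshman's dream) — neither of which is substantive.
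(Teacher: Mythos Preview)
Your proof is correct and follows essentially the same route as the paper: reduce via $\W$ to showing $-(Y-1)^p\in\langle p,\ 1+Y+\cdots+Y^{p-1}\rangle$, use the Freshman's dream $(Y-1)^p\equiv Y^p-1\pmod p$, realize the $p$-multiple part as a single $p$-th power of an element of $\F'$ and the remaining part as two $p$-th powers via the factorization of $v^pw^p$ from Proposition~\ref{lngeneral}, then induct with $e_{n+1}=[y,e_n]$. The only cosmetic difference is that the paper treats $p=2$ by an ad hoc explicit element and writes the two non-$\F'$ $p$-th powers as $(xyx^{-1})^p$ and $(yxy^{-1}x^{-1}y^{-1})^p$, whereas your parametrization $v=y$, $w=y^{-1}c'$ handles all primes uniformly; the underlying idea is identical.
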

\begin{proof}
For $p=2$ note that $P_{e_3}=-(Y-1)^2=-1+2Y-Y^2$ coincides with the winding invariant of $y[x,y]^2y^{-1}y^4(y^{-1}xy^{-1}x^{-1})^2$, so $e_3$ is a product of three squares in $\M$. If $p\ge 3$, the winding invariant of $e_{p+1}$ is $(1-Y)^p=1-Y^p+pQ$ for certain $Q\in \ZZ$. Let $z\in \F'$ be such that $P_z=Q$, so $P_{z^p}=pQ$. The winding invariant of $w=(xyx^{-1})^py(xy^{-1}x^{-1})^py^{-1}$ is $P_w=1-Y^p$. Thus $e_{p+1}=z^pw\in \M$ is a product of three $p$-th powers. Since $e_n=[y,e_{n-1}]=ye_{n-1}y^{-1}e_{n-1}^{-1}$, the fact that $e_{n-1}$ is a product of $p$-th powers implies that $e_n$ is a product of $p$-th powers. The second part of the statement follows. 
\end{proof}


\section{Cocommutative presentations and the Andrews-Curtis conjecture}	\label{sectionac}

In this section we use the winding invariant to study $Q$-equivalence of presentations with relators in the commutator subgroup. 

\subsection{Nielsen equivalence}
	
Let $w_1, w_2, \ldots , w_m\in \FF$ and let $w\in \F$. If $w$ is in the normal subgroup of $\F$ generated by the $w_j$, then $w\in \FF$ and $P_w$ is a $\Z-$linear combination of the polynomials $X^kY^lP_{w_j}$ for $1\le j\le m$, $k,l\in \Z$. This follows directly from Proposition \ref{basica}. In particular, in this case, $P_w(1,1)$ is a linear combination of the integers $P_{w_j}(1,1)$. In other words the greatest common divisor $\textrm{gcd}\{P_{w_1}(1,1), P_{w_2}(1,1),\ldots , P_{w_m}(1,1) \}$ (if defined) divides $P_{w}(1,1)$. With the same idea one proves that $\textrm{gcd}\{P_{w_1}(\epsilon,\epsilon'), P_{w_2}(\epsilon,\epsilon'),\ldots , P_{w_m}(\epsilon,\epsilon') \}$ divides $P_{w}(\epsilon,\epsilon')$ for every $\epsilon, \epsilon'\in \{1,-1\}$. The idea of defining a map $\phi: \FF\to A$ to an abelian group $A$ such that $\phi(w) \notin \langle \phi(w_j) \rangle_{1\le j\le m}$ is what Conway and Lagarias call a generalized coloring argument in \cite[Section 5]{CL}.  

\begin{ej}
Let $w_1=x^2yx^{-1}yx^{-1}y^{-2}$, $w_2=[x,y]x^2[x,y]x^{-2}$, $w=x^2y^2x^{-1}y^{-1}x^{-1}y^{-1} \in \FF$. Is $w$ in the normal subgroup $N$ of $\F$ generated by $w_1$ and $w_2$?

Following the idea of the previous paragraph we compute $P_{w_1}=1+X+Y$, $P_{w_2}=1+X^2$, $P_w=1+X+XY$. $P_{w_1}(1,-1)=P_{w_1}(-1,1)=1$  and $P_{w_1}(-1,-1)=-1$. Moreover, $P_{w_1}(1,1)=3$ while $P_{w_2}(1,1)=2$. Therefore $\textrm{gcd}\{P_{w_1}(\epsilon, \epsilon'), P_{w_2}(\epsilon, \epsilon')\}$ divides $P_{w}(\epsilon, \epsilon ')$ for every $\epsilon, \epsilon ' \in \{1,-1\}$. However a further computation gives $P_{w_1}(3,1)=5$, $P_{w_2}(3,1)=10$, $P_w(3,1)=7$. If $w\in N$, then $P_w$ is a linear combination of polynomials $X^kY^lP_{w_j}$ for $k,l \in \Z$. In particular $7=P_w(3,1)=\sum\limits_{k\in \Z} (5n_k3^k+10m_k3^k)$ for certain integers $n_k, m_k$, only finitely many of them nonzero. Thus, $5$ divides $3^k7$ for some $k\ge 0$, a contradiction. Therefore, $w\notin N$. 
\end{ej}

Let $S,T$ be two subsets of $\F'$ and suppose that $\langle x,y | S \rangle$, $\langle x,y | T \rangle$ are presentations of isomorphic groups with an isomorphism which fixes the generators (that is the normal closures $N(S)$ and $N(T)$ are equal). Let $(\epsilon, \epsilon ')\in \{1,-1\}^2$ and suppose there exist $w\in S, u\in T$ with $P_w(\epsilon, \epsilon')\neq 0 \neq P_u(\epsilon, \epsilon')$. Then by the comments above $$\textrm{gcd}\{P_{w}(\epsilon,\epsilon') | w\in S \}=\textrm{gcd}\{P_{w}(\epsilon,\epsilon') | w\in T \}.$$

Two presentations $\PP=\langle x_1,x_2, \ldots, x_n | S \rangle$, $\Q=\langle x_1,x_2,\ldots, x_n | T \rangle$ with the same generator set are said to be Nielsen equivalent if there exists an automorphism $\phi$ of $\mathbb{F}_n$ such that $\phi(N(S))=N(T)$. Of course, Nielsen equivalent presentations present isomorphic groups. The converse is not true, not even for one-relator groups (see \cite{Bru, Dun-1, DP, MP, Rap, Pri1, Pri15}). However, it does hold for the presentations that we are interested in. Call a presentation $\langle x,y| S \rangle$ \textit{cocommutative} if $S\subseteq \F'$. We have already studied these presentations in Section \ref{subcoco}.  

\begin{lema} [Dunwoody] \label{propdun}
Let $\langle x,y | S\rangle$ and $\langle x,y | T\rangle$ be cocommutative presentations of a group $G$. Then $\PP=\langle x,y | S\cup \F'' \rangle$ and $\Q=\langle x,y | T \cup \F''\rangle$ are Nielsen equivalent.
\end{lema}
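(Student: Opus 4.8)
The plan is to realize the passage from the cocommutative presentations $\langle x,y\mid S\rangle$ and $\langle x,y\mid T\rangle$ to $\PP$ and $\Q$ by adding the single "relator" $\F''$ in a way that makes the relation modules visibly isomorphic, and then invoke Dunwoody's theorem \cite[Theorem 4.10]{Dun} to upgrade a $\Z[G/G']$-module (equivalently $\Z G$-module) isomorphism between relation modules to a Nielsen equivalence. Concretely, both $\PP$ and $\Q$ are presentations of $G/G''$: indeed $N(S\cup\F'')=N(S)\F''$ and $N(T\cup\F'')=N(T)\F''$, and since $N(S)=N(T)$ (the two original presentations have the same normal closure) these two normal subgroups coincide. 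So $\PP$ and $\Q$ present the \emph{same} quotient group $\overline{G}:=\F/N(S)\F''$, and what we need is that they are Nielsen equivalent as presentations of $\overline{G}$.

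First I would identify the relation module of each presentation. For $\PP$ the relation module is $N(S\cup\F'')/N(S\cup\F'')'$; I would compare this with the situation governed by the winding invariant. Writing $R(S)=N(S)\F''/(\text{its commutator})$, Proposition \ref{conwayg} and Lemma \ref{pregunta} tell us exactly how $\ZZ/I(\W(S))$ describes the relevant quotient, and the point already recorded in the text after Proposition \ref{conwayg} is that for cocommutative presentations of the \emph{same} group, $\ZZ/I(\W(S))$ and $\ZZ/I(\W(T))$ are isomorphic \emph{as rings} (this is Proposition \ref{isorings}, itself proved using \cite[Theorem 4.10]{Dun}). The relation module of $\PP$, as a $\Z[\overline{G}]$-module, should be computed from the short exact sequences relating $N(S)$, $\F''$, and $\F'$: one gets an extension of the relation module of $\langle x,y\mid S\rangle$ by $\F'/\F''\cong\ZZ$, and the "$\F''$ part" contributes a free $\Z[\Z\times\Z]$-module piece that matches on both sides. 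The key technical step is to check that the $\Z[\overline{G}]$-module isomorphism between the relation modules of $\PP$ and $\Q$ can be chosen compatibly with the identity automorphism on $\F^{ab}=\Z\times\Z$, i.e. it is an isomorphism of modules over the \emph{same} group ring with the \emph{same} augmentation data — precisely the input Dunwoody's theorem requires in order to produce an automorphism $\phi$ of $\F$ with $\phi(N(S\cup\F''))=N(T\cup\F'')$.

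Then I would apply Dunwoody's result. In the form relevant here (two-generator presentations, relators in $\F'$, so the deficiency and abelianized data are pinned down), \cite[Theorem 4.10]{Dun} asserts that two finite presentations of a group with isomorphic relation modules — the isomorphism respecting the module structure over $\Z G$ — are Nielsen equivalent after stabilization, and in the balanced situation at hand no stabilization is needed. So from the module isomorphism of the previous paragraph we obtain $\phi\in\mathrm{Aut}(\F)$ with $\phi(N(S)\F'')=N(T)\F''$, which is exactly the assertion that $\PP$ and $\Q$ are Nielsen equivalent.

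I expect the main obstacle to be the bookkeeping in the second paragraph: verifying that the relation modules of $\PP$ and $\Q$ really are isomorphic as $\Z[\overline G]$-modules \emph{with the correct basing}, rather than merely isomorphic abstractly. This is where the ring isomorphism $\ZZ/I(\W(S))\cong\ZZ/I(\W(T))$ (Proposition \ref{isorings}) must be promoted to a statement about the full relation module sitting inside $\F/N(S)\F''$, using the fact that $\F'/\F''\cong\ZZ$ is free of rank one over $\Z[\Z\times\Z]$ and that the extension by the free part coming from $\F''$ is the same on both sides. Once that identification is in place, the invocation of Dunwoody is immediate, so the lemma follows.
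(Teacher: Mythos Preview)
Your proposal has two genuine gaps.

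First, you assume $N(S)=N(T)$. The hypothesis is only that $\F/N(S)$ and $\F/N(T)$ are both isomorphic to $G$, not that the two normal closures coincide. Indeed, if $N(S)=N(T)$ then already $N(S\cup\F'')=N(S)\F''=N(T)\F''=N(T\cup\F'')$ and the lemma is trivial via the identity automorphism of $\F$; your entire relation-module discussion would be superfluous. The content of the lemma is precisely the case where $N(S)\neq N(T)$ but the quotients are abstractly isomorphic.

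Second, the argument is circular. You invoke Proposition~\ref{isorings} to get the ring isomorphism $\ZZ/I(\W(S))\cong\ZZ/I(\W(T))$, but in the paper that proposition is proved \emph{via} Lemma~\ref{propdun}: the automorphism $\phi\in\mathrm{Aut}(\F)$ used in its proof (and in the preceding proposition on gcd's of signed areas) is exactly the one supplied by this lemma. So you cannot appeal to it here.

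The paper's proof bypasses all of this. One observes that $\F/N(S)\F''\cong(\F/N(S))/(\F/N(S))''\cong G/G''$ and likewise for $T$, so $\PP$ and $\Q$ both present the abstract group $G/G''$. This group is metabelian and is a $(0,2)$-group in Dunwoody's sense (two-generated with an epimorphism onto $\Z\times\Z$, since $S,T\subseteq\F'$). Dunwoody's Theorem~4.10 in his thesis \cite{Dun0} --- not the relation-modules paper \cite{Dun} you cite --- then gives directly that any two two-generator presentations of a metabelian $(0,2)$-group are Nielsen equivalent. The input to Dunwoody's theorem is just the abstract group being presented; no comparison of relation modules or their basings is needed.
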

\begin{proof}
Since $G\simeq \F / N(S) \simeq \F/ N(T)$, then $G/G''\simeq \F/N(S)\F''\simeq \F/N(T)\F''$. Thus, $\PP$ and $\Q$ present $G/G''$.  This group is a $(0,2)$-group in the sense of Dunwoody \cite[pp. 18]{Dun0}: it is generated by two elements and there is an epimorphism $G/G'' \to \Z \times \Z$. Since $G/G''$ is metabelian, the proposition follows from Theorem 4.10 in \cite{Dun0}.  
\end{proof}

Perhaps it is worth recalling that the isomorphism problem for metabelian groups remains open \cite{Bau8, Bau9}.

\begin{prop}
Suppose that $\PP=\langle x,y | w_1, w_2, \ldots , w_m \rangle$ and $\Q=\langle x,y | u_1, u_2, \ldots , u_{m'} \rangle$ are cocommutative presentations of the same group. Moreover suppose not every relator of $\PP$ has signed area equal to $0$ and the same holds for the relators of $\Q$. Then $$\textrm{gcd}\{P_{w_1}(1,1), P_{w_2}(1,1),\ldots , P_{w_m}(1,1) \}=\textrm{gcd}\{P_{u_1}(1,1), P_{u_2}(1,1),\ldots , P_{u_{m'}}(1,1) \}.$$ 
\end{prop}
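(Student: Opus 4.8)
The plan is to translate the statement into one about ideals of $\ZZ$ and then apply the augmentation map $\epsilon\colon\ZZ\to\Z$, $\epsilon(P)=P(1,1)$. Write $S=\{w_1,\dots,w_m\}$ and $T=\{u_1,\dots,u_{m'}\}$, and set $I_S=I(\W(S))$ and $I_T=I(\W(T))$. For any ideal $I=\langle g_1,\dots,g_k\rangle\subseteq\ZZ$, the image $\epsilon(I)$ is an ideal of $\Z$ (the image of an ideal under a surjective ring homomorphism), generated by $\epsilon(g_1),\dots,\epsilon(g_k)$; thus $\epsilon(I_S)=\gcd\{P_{w_j}(1,1)\}\,\Z$ and $\epsilon(I_T)=\gcd\{P_{u_i}(1,1)\}\,\Z$. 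The hypothesis that not every relator of $\PP$ (resp. of $\Q$) has signed area $0$ says precisely that these ideals of $\Z$ are nonzero, so each has a well-defined positive generator, namely the corresponding $\gcd$. Hence it is enough to prove $\epsilon(I_S)=\epsilon(I_T)$.

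Next I would invoke Dunwoody's Lemma~\ref{propdun}: since $\PP$ and $\Q$ present the same group, the presentations $\langle x,y\mid S\cup\F''\rangle$ and $\langle x,y\mid T\cup\F''\rangle$ are Nielsen equivalent, so there is $\phi\in\textrm{Aut}(\F)$ with $\phi\big(N(S)\F''\big)=N(T)\F''$. Because $S,T\subseteq\F'$, the subgroups $N(S)\F''$ and $N(T)\F''$ lie in $\F'$, and applying $\W$ to them gives, by Lemma~\ref{pregunta} together with the surjectivity of $\W$ (Theorem~\ref{conway}), exactly $\W\big(N(S)\F''\big)=I_S$ and $\W\big(N(T)\F''\big)=I_T$. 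Writing $W(\phi)\colon\ZZ\to\ZZ$ for the map $P\mapsto P_{\phi([x,y])}\,\Z[\overline{\phi}](P)$, Proposition~\ref{propzdephi} gives $\W(\phi(w))=W(\phi)(\W(w))$ for all $w\in\F'$; since $\phi$ preserves $\F'$ and $\F''$, applying $\W$ to $\phi\big(N(S)\F''\big)=N(T)\F''$ yields $W(\phi)(I_S)=I_T$.

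It remains to compare $\epsilon\circ W(\phi)$ with $\epsilon$. The map $\Z[\overline{\phi}]$ is the ring automorphism of $\ZZ$ induced by a matrix in $GL_2(\Z)$, so it permutes the monomials $X^iY^j$; hence $\epsilon\circ\Z[\overline{\phi}]=\epsilon$. By Corollary~\ref{corounit}, $P_{\phi([x,y])}$ is a unit of $\ZZ$, so $\delta:=\epsilon(P_{\phi([x,y])})$ is a unit of $\Z$, i.e. $\delta=\pm1$. Consequently $\epsilon(W(\phi)(P))=\delta\,\epsilon(P)$ for every $P\in\ZZ$, and from $W(\phi)(I_S)=I_T$ we obtain $\epsilon(I_T)=\delta\cdot\epsilon(I_S)=\epsilon(I_S)$, since multiplying an ideal of $\Z$ by $\pm1$ does not change it. Taking positive generators gives $\gcd\{P_{w_j}(1,1)\}=\gcd\{P_{u_i}(1,1)\}$, as desired.

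I expect the main (and essentially only) obstacle to be a bookkeeping one: $W(\phi)$ is not a ring homomorphism, so one cannot directly argue ``$W(\phi)$ is an automorphism of $\ZZ$ carrying $I_S$ onto $I_T$''. The fix is to observe that $W(\phi)$ is the composite of the ring automorphism $\Z[\overline{\phi}]$ with multiplication by a unit, and that both of these preserve ideals and commute with the augmentation up to the global sign $\delta$, which is invisible to the gcd. Everything else is a direct assembly of Lemma~\ref{propdun}, Lemma~\ref{pregunta}, Proposition~\ref{propzdephi} and Corollary~\ref{corounit}.
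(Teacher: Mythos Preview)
Your proof is correct and follows essentially the same route as the paper: invoke Lemma~\ref{propdun} to obtain $\phi\in\mathrm{Aut}(\F)$ with $\phi(N(S)\F'')=N(T)\F''$, and then use that automorphisms preserve the signed area up to sign (the paper cites Corollaries~\ref{endo} and~\ref{corounit}; you go through Proposition~\ref{propzdephi} and Corollary~\ref{corounit}, which amounts to the same thing). The only difference is packaging: the paper argues element by element (each $P_{\phi(w_j)}(1,1)$ is a $\Z$-combination of the $P_{u_l}(1,1)$), while you phrase it in terms of the ideals $I_S,I_T$ and the augmentation, observing that $W(\phi)$ is a ring automorphism composed with multiplication by a unit; this is a cosmetic rather than a substantive distinction.
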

\begin{proof}
By Lemma \ref{propdun} there is an automorphism $\phi \in \textrm{Aut}(\F)$ such that $\phi(N(w_1,w_2,\ldots, w_m)$ $\F'')=N(u_1,u_2, \ldots, u_{m'})\F''$. By Corollaries \ref{endo} and \ref{corounit}, the signed area of $\phi (w_j)$ is equal to the signed area of $w_j$ up to sign. Since $\phi(w_j)\in N(u_1,u_2,\ldots, u_{m'})\F''$, $P_{\phi(w_j)}(1,1)$ (and then $P_{w_j}(1,1)$) is a linear combination of the numbers $P_{u_l}(1,1)$, $1\le l\le m'$. Thus the greatest common divisor of these numbers divides $P_{w_j}(1,1)$. The same holds in the other direction. 
\end{proof}

Recall that for a subset $S\subseteq \F'$, $I(W(S))$ denotes the ideal of $\ZZ$ generated by the polynomials $P_w$ with $w\in S$. 

\begin{prop} \label{isorings}
If $\PP=\langle x,y | S\rangle$ and $\Q=\langle x,y | T \rangle$ are cocommutative presentations of the same group, $\ZZ /I(W(S))$ and $\ZZ/I(W(T))$ are isomorphic rings.
\end{prop}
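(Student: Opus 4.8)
The plan is to use Dunwoody's Lemma \ref{propdun} to transport one ideal onto the other by a ring automorphism of $\ZZ$, and then pass to the quotients. First I would apply Lemma \ref{propdun} to obtain an automorphism $\phi\in\textrm{Aut}(\F)$ with $\phi(N(S)\F'')=N(T)\F''$ (note $N(S\cup\F'')=N(S)\F''$ since $\F''$ is normal in $\F$, and similarly for $T$). Let $\overline{\phi}\colon\Z^2\to\Z^2$ be the automorphism of the abelianization induced by $\phi$; it lies in $GL_2(\Z)$, so the induced map $\psi:=\Z[\overline{\phi}]\colon\ZZ\to\ZZ$ is a ring automorphism, with inverse $\Z[\overline{\phi}^{-1}]$.

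Next I would recall from Proposition \ref{propzdephi} and the remark following it that $W(\phi)\colon\ZZ\to\ZZ$, defined by $W(\phi)(P)=P_{\phi([x,y])}\,\psi(P)$, satisfies $\W(\phi(w))=W(\phi)(\W(w))$ for every $w\in\F'$, and that by Corollary \ref{corounit} the element $u:=P_{\phi([x,y])}$ is a unit of $\ZZ$. Independently, Lemma \ref{pregunta} combined with the surjectivity of $\W\colon\FF\to\ZZ$ (Theorem \ref{conway}) shows that the image $\W(N(S)\F'')$ equals exactly $I(\W(S))$: the inclusion $\subseteq$ is immediate since $\W$ is a homomorphism, and for $\supseteq$ one picks, for $P\in I(\W(S))$, a word $w\in\FF$ with $\W(w)=P$ and invokes Lemma \ref{pregunta} to get $w\in N(S)\F''$. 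The same holds for $T$, so $\W(N(T)\F'')=I(\W(T))$.

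Putting these together is the heart of the argument. Applying $\W$ to the equality $\phi(N(S)\F'')=N(T)\F''$ and using $\W\circ\phi=W(\phi)\circ\W$ on $\FF$, I obtain
\[
I(\W(T))=\W\big(N(T)\F''\big)=\W\big(\phi(N(S)\F'')\big)=W(\phi)\big(\W(N(S)\F'')\big)=W(\phi)\big(I(\W(S))\big)=u\cdot\psi\big(I(\W(S))\big).
\]
Since $u$ is a unit of $\ZZ$ and $\psi$ is a ring automorphism (so $\psi(I(\W(S)))$ is an ideal), one has $u\cdot\psi(I(\W(S)))=\psi(I(\W(S)))$ as ideals; hence $\psi$ carries $I(\W(S))$ onto $I(\W(T))$ and therefore descends to a well-defined ring isomorphism $\ZZ/I(\W(S))\xrightarrow{\ \sim\ }\ZZ/I(\W(T))$.

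I expect the only delicate point to be the bookkeeping around the fact that $W(\phi)$ itself is \emph{not} a ring homomorphism --- it differs from the genuine ring automorphism $\psi$ by multiplication by the unit $u=P_{\phi([x,y])}$. The argument works precisely because multiplying an ideal by a unit returns the same ideal, so the ring structure on the quotients is carried by $\psi$ alone, while $W(\phi)$ is only needed to match the two images $\W(N(S)\F'')$ and $\W(N(T)\F'')$. A secondary point to keep in mind is that $\W$ is here being applied to entire subgroups, and one genuinely needs the surjectivity of $\W$ (Theorem \ref{conway}), not merely that it is a homomorphism, to upgrade $\W(N(S)\F'')\subseteq I(\W(S))$ to an equality.
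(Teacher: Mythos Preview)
Your proof is correct and follows essentially the same approach as the paper's: obtain $\phi$ from Lemma \ref{propdun}, then use Proposition \ref{propzdephi} and Corollary \ref{corounit} to show that $\Z[\overline{\phi}]$ carries $I(\W(S))$ onto $I(\W(T))$. You have simply filled in the details that the paper leaves implicit, including the use of Lemma \ref{pregunta} and surjectivity of $\W$ to identify $\W(N(S)\F'')$ with $I(\W(S))$, and the observation that multiplication by the unit $P_{\phi([x,y])}$ does not change the ideal.
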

\begin{proof}
Let $\phi \in \textrm{Aut}(\F)$ be as in the previous proposition. Let $\overline{\phi}:\Z^2\to \Z^2$ denote the map induced in the abelianization. By Proposition \ref{propzdephi} and Corollary \ref{corounit}, $\Z[\overline{\phi}]:\ZZ\to \ZZ$ induces the desired ring isomorphim.
\end{proof}

\subsection{The Andrews-Curtis conjectures}

Given a presentation $\PP=\langle x_1,x_2,\ldots, x_n | w_1,w_2,$ $ \ldots,$ $w_m \rangle$, one way to obtain another presentation $\Q=\langle x_1,x_2,\ldots, x_n | u_1,u_2,\ldots , u_m \rangle$ of the same group with an isomorphism fixing the generators, is by a sequence of $Q$-transformations, that is transformations of the following type:

\medskip

\noindent (i) replace a relator $w_j$ by $w_j^{-1}$,

\noindent (ii) replace a relator $w_j$ by $w_jw_i$, for some $i\neq j$,

\noindent (iii) replace a relator $w_j$ by a conjugate $uw_ju^{-1}$, for some $u\in \mathbb{F}_n$.

\medskip

If we furthermore allow the following transformation

\medskip

\noindent (iv) replace each relator $w_j$ by $\phi(w_j)$ for some automorphism $\phi$ of $\mathbb{F}_n$,

\medskip

\noindent we obtain what are called $Q^*$-transformations.

Presentations connected by a sequence of $Q$-transformations or $Q^*$-transformations are called $Q$-equivalent and $Q^*$-equivalent respectively. $Q^*$-equivalence implies Nielsen equivalence. 
Finally, we add the moves

\medskip

\noindent (v) add a generator $x_{n+1}$ and a relator $w_{m+1}=x_{n+1}$ and

\noindent (vi) the inverse of (iv), if possible,

\medskip

\noindent to obtain the $Q^{**}$-transformations, which generate the $Q^{**}$-equivalence relation.

The original Andrews-Curtis conjecture \cite{AC} states that any balanced (same number of relators as generators) presentation $\langle x_1,x_2,\ldots , x_n | w_1, w_2,\ldots , w_n \rangle$ of the trivial group is $Q$-equivalent to $\langle x_1,x_2,\ldots , x_n | x_1, x_2,\ldots , x_n \rangle$.  The weak version of the Andrews-Curtis conjecture states that any balanced presentation of the trivial group is $Q^{**}$-equivalent to the trivial presentation $\langle \ | \ \rangle$ with no generators and no relators. In this particular case it can be proved that move (iv) is not needed in virtue of Nielsen's theorem that any automorphism of a free group is given by a sequence of Nielsen transformations (moves (i) and (ii)).

The most interesting feature of these problems is probably their geometric counterpart, which is given by the standard construction which associates a $2$-dimensional CW-complex to each presentation. Two CW-complexes $K$ and $L$ are simple homotopy equivalent if we can obtain one from the other by a sequence of collapses and expansions \cite{Coh}. Simple homotopy equivalence implies homotopy equivalence and the converse is false as proved by Whitehead. In fact there are $2$-dimensional complexes which are homotopy equivalent and not simple homotopy equivalent \cite{Lus, Met}. When the complexes are simply connected, or more generally their Whitehead group is trivial, simple homotopy and homotopy are the same. Nielsen equivalence classes and simple homotopy types are distinguished in \cite{Lus, Lus2} with the 1 and 2 dimensional versions of the same invariant. If the sequence of collapses and expansions above involves only complexes of dimension less than or equal to $3$, we say that $K$ $3$-deforms to $L$. The weak Andrews-Curtis conjecture is equivalent to the statement that any finite and contractible $2$-dimensional complex $3$-deforms into a point.

The generalized Andrews-Curtis conjecture \cite[Section 4.1]{HM} states that any two finite presentations with simple homotopy equivalent standard complexes are $Q^{**}$-equivalent. Of course, this version implies the weak version of the conjecture. This conjecture is equivalent to the following conjecture: two simple homotopy equivalent finite $2$-complexes $3$-deform one into the other. All these three versions of the Andrews-Curtis conjecture are open.

Cocommutativity is preserved by $Q^*$-transformations and we can use the winding invariant to study $Q^*$-equivalence of such presentations. We can associate with each cocommutative presentation $\PP=\langle x,y | w_1,w_2, \ldots, w_m \rangle$ the vector $\Lambda(\PP)=\{P_{w_1},P_{w_2},\ldots, P_{w_m}\}$. The effect on $\Lambda(\PP)$ of performing a $Q$-transformation to $\PP$ is to change a polynomial $P_{w_j}$ by $-P_{w_j}$, or by $P_{w_j}+P_{w_i}$ for certain $i\neq j$ or by $X^kY^lP_{w_j}$ for certain $k,l\in \Z$. In particular the new vector is the original column vector $\Lambda(\PP)$ multiplied by an elementary or diagonal matrix in $GL_m(\ZZ)$. Recall that a square matrix is said to be elementary if all the diagonal coefficients are $1$ and all the other coefficients but one are $0$. We exploited this idea in \cite{Barac} to obtain the presentations $$\PP=\langle x,y | [x,y],1 \rangle$$ and $$\Q=\langle x,y | [x,[x,y^{-1}]]^2y[y^{-1},x]y^{-1},[x,[[y^{-1},x],x]] \rangle $$ with vectors $\Lambda(\PP)=(1,0)$, $\Lambda(\Q)=(1-2(X-1)Y^{-1}, -(X-1)^2Y^{-1})$ which do not differ in a multiplication by a product of elementary and diagonal matrices. Then $\PP$ and $\Q$ are not $Q$-equivalent. Moreover, if $\phi \in Aut(\F)$, then by Corollary \ref{corounit}, $P_{\phi([x,y])}$ is a unit in $\ZZ$ so $\Lambda(\PP)=(1,0)$ and $\Lambda(\langle x,y | \phi([x,y]),1\rangle)$ differ in a multiplication by a diagonal invertible matrix. Then $\PP$ and $\Q$ are not $Q^*$-equivalent either. On the other hand, it can be proved that the standard complexes $K_{\PP}$ and $K_{\Q}$ are homotopy equivalent, and since their Whitehead group is trivial, we conclude:

\begin{teo} (\cite{Barac})
The presentations $\PP$ and $\Q$ are not $Q^*$-equivalent, though their standard complexes are simple homotopy equivalent. 
\end{teo}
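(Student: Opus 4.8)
The plan is to reduce $Q^{*}$-equivalence of two-relator cocommutative presentations to an orbit question for column vectors over $\ZZ$ under the group $GE_{2}(\ZZ)$ of matrices generated by elementary and invertible diagonal matrices, and then to invoke the fact that $\ZZ$ is not generalized Euclidean. To a two-relator cocommutative presentation $\mathcal R=\langle x,y\mid w_{1},w_{2}\rangle$ attach the column $\Lambda(\mathcal R)=(P_{w_{1}},P_{w_{2}})$. First I would record the effect of the $Q^{*}$-moves on $\Lambda$: by Proposition \ref{basica}, a move of type (i), (ii) or (iii) replaces $\Lambda$ by $E\Lambda$ where $E$ is, respectively, a diagonal matrix with a single $-1$, an elementary matrix $E_{ji}(1)$, or a diagonal matrix with a single unit $\pm X^{k}Y^{\ell}$; conversely, conjugating a relator by $x^{k}y^{\ell}$ before adding it to another one realizes $E_{12}(r)$ and $E_{21}(r)$ for all $r\in\ZZ$, so $Q$-moves realize exactly $GE_{2}(\ZZ)$ on $\Lambda$. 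A move of type (iv), applying $\phi\in\mathrm{Aut}(\F)$ to both relators, replaces $\Lambda$ by $P_{\phi([x,y])}\,\Z[\overline{\phi}](\Lambda)$ by Proposition \ref{propzdephi}, where $P_{\phi([x,y])}$ is a unit of $\ZZ$ by Corollary \ref{corounit} and $\Z[\overline{\phi}]$ is the ring automorphism of $\ZZ$ induced by $\overline{\phi}\in GL_{2}(\Z)$.

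Let $\mathcal O\subseteq\ZZ^{2}$ be the set of first columns of the matrices in $GE_{2}(\ZZ)$; equivalently $\mathcal O$ is the $GE_{2}(\ZZ)$-orbit of $(1,0)$. The key observation is that $\mathcal O$ is preserved by every $Q^{*}$-move: being an orbit it is stable under left multiplication by elements of $GE_{2}(\ZZ)$; it is stable under applying a ring automorphism $\rho$ of $\ZZ$ entrywise, since $\rho\bigl(GE_{2}(\ZZ)\bigr)=GE_{2}(\ZZ)$ and $\rho$ fixes $(1,0)$; and it is stable under scaling by a unit $u$, since $\mathrm{diag}(u,u)\in GE_{2}(\ZZ)$. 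Together with the first paragraph this shows that if a presentation is obtained from $\PP$ by a sequence of $Q^{*}$-moves then its associated vector lies in the $GE_{2}(\ZZ)$-image of $\Lambda(\PP)=(1,0)$, i.e.\ in $\mathcal O$.

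It remains to check that $\Lambda(\Q)\notin\mathcal O$. Computing with Proposition \ref{basica} gives
\[
\Lambda(\Q)=\bigl(1-2(X-1)Y^{-1},\ -(X-1)^{2}Y^{-1}\bigr),
\]
and a short B\'ezout computation (set $t=(X-1)Y^{-1}$, so the entries are $1-2t$ and $-t^{2}Y$) shows the entries generate the unit ideal of $\ZZ$; hence $\Lambda(\Q)$ occurs as the first column of some $M_{\Q}\in GL_{2}(\ZZ)$. Any two completions of $\Lambda(\Q)$ to a matrix of $GL_{2}(\ZZ)$ differ by right multiplication by a matrix fixing $(1,0)$, hence by an element of $GE_{2}(\ZZ)$; since $GE_{2}(\ZZ)$ is a group, $\Lambda(\Q)\in\mathcal O$ if and only if $M_{\Q}\in GE_{2}(\ZZ)$. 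But the completion produced in \cite{Barac} is a version of Evans's matrix \cite{Eva}, which by the theorem of Bachmuth and Mochizuki \cite{BM} lies in $GL_{2}(\ZZ)\smallsetminus GE_{2}(\ZZ)$. Therefore $\Lambda(\Q)\notin\mathcal O$, so $\PP$ and $\Q$ are not $Q^{*}$-equivalent (a fortiori not $Q$-equivalent).

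For the simple homotopy statement: by \cite{Barac}, $\Q$ (like $\PP$) presents $\Z\times\Z$ — this does not follow from the winding invariant alone, which by Proposition \ref{conwayg} only gives that the presented group $G$ satisfies $G'=G''$, so the vanishing of $G'$ must be argued from the chosen relators. Both standard complexes then have fundamental group $\Z^{2}$ and Euler characteristic $1$, and since the cellular $d_{2}$ of the universal cover of a cocommutative two-relator complex is (by Proposition \ref{basica}, via the identification of the Fox derivatives as $(1-Y)P_{w}$ and $(X-1)P_{w}$) the product of the column $\Lambda$ with the row $(1-Y,X-1)$, the module $\pi_{2}$ is in each case the syzygy module of the two entries of $\Lambda$, which is $\Z[\Z^{2}]$-free of rank one because those entries generate the unit ideal. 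As $\Z^{2}$ has cohomological dimension $2$ the first $k$-invariant lies in $H^{3}(\Z^{2};\pi_{2})=0$, so both complexes are homotopy equivalent to $T^{2}\vee S^{2}$, in particular to each other; and a homotopy equivalence between finite complexes has a Whitehead torsion in $\mathrm{Wh}(\Z^{2})=0$, so it is a simple homotopy equivalence. The real obstacle is none of this bookkeeping but the external input that the matrix attached to $\Q$ is not generalized Euclidean — the Bachmuth--Mochizuki theorem, made concrete by Evans — which cannot be circumvented; a secondary point requiring care is the direct verification that $\Q$ presents $\Z\times\Z$, needed both for $\pi_{1}(K_{\Q})\cong\pi_{1}(K_{\PP})$ and for pinning down the homotopy type.
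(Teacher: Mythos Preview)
Your proposal is correct and follows essentially the same approach as the paper: reduce $Q^*$-equivalence to a $GE_2(\ZZ)$-orbit question via $\Lambda$, invoke the Bachmuth--Mochizuki/Evans input that the completion of $\Lambda(\Q)$ lies in $GL_2(\ZZ)\smallsetminus GE_2(\ZZ)$, and handle the simple-homotopy statement by showing both complexes have the same homotopy type together with $\mathrm{Wh}(\Z^2)=0$. Your treatment is in fact more explicit than the paper's sketch in two places: your orbit-invariance argument for move~(iv) handles arbitrary interleaving of $Q$-moves and automorphisms directly (the paper only remarks that applying $\phi$ to $\PP$ itself lands back in the $GE_2$-orbit of $(1,0)$, implicitly relying on commuting the automorphism past the $Q$-moves), and your $\pi_2$/$k$-invariant computation spells out what the paper defers entirely to \cite{Barac}.
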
  

The idea of mapping the relators of a presentation to a test group $G$ and studying $Q$ or $Q^{**}$-transformations of vectors in $G$ is not new (see \cite{HM3}). However this is the first counterexample we know about of the stronger conjecture that simple homotopy equivalence implies $Q^*$-equivalence. It is not clear how to extend our methods to study the original version of Andrews-Curtis.

\subsection{$Q$-equivalence in the free metabelian group of rank $2$}

The notion of $Q$-equivalence can be extended to arbitrary groups. An $m$-tuple $(w_1,w_2,\ldots, w_m)$ of elements in a group $G$ can be transformed into another $m$-tuple by replacing one $w_j$ by $w_j^{-1}$, by $w_jw_i$ for some $i\neq j$ or by $gw_jg^{-1}$ for some $g\in G$. We call these \textit{$Q$-transformations} as well, and they generate the notion of $Q$-equivalence.

Myasnikov proved in \cite{Mya} the following

\begin{teo}[Myasnikov]
Let $F$ be a finitely generated free group and let $N\trianglelefteq F$ be a normal subgroup such that $F/N$ is free abelian of rank $n\ge 2$. Let $G=F/N'$. Then two $n$-tuples in $G$ whose normal closure is $G$ are always $Q$-equivalent.
\end{teo}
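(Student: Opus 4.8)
The plan is to run, in this metabelian setting, the two-step scheme behind Theorem~\ref{conway} and Proposition~\ref{bavard}: first move the ``abelian part'' of a normally generating tuple into standard position, then kill the ``derived part'' using commutator-type $Q$-moves read off on the relation module. Write $G=F/N'$, so $G'=N/N'$ is a module over $\Z[G/G']=\Z[F/N]=\Z[X_1^{\pm 1},\dots,X_n^{\pm 1}]$ (the relation module of the presentation of $\Z^n$ afforded by $F$); let $J=(X_1-1,\dots,X_n-1)$ be its augmentation ideal, so $JG'=[G,G']=\gamma_3(G)$. I would first treat $F=\mathbb{F}_n$ in full, so that $G=\mathbb{F}_n/\mathbb{F}_n''$ is the free metabelian group of rank $n$ (recovering $G=\M$ when $n=2$, where $\W\colon\M'\to\ZZ$ is an isomorphism of $\ZZ$-modules); write $y_1,\dots,y_n$ for the free basis of $F$ and also for their images in $G$.

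Two elementary observations. First, an $n$-tuple $(g_1,\dots,g_n)$ normally generates $G$ iff its image in $G/G'\cong\Z^n$ is a basis: the image must generate $\Z^n$, hence is a basis; conversely, if the image is a basis and $M$ is the normal closure of the $g_i$, then $MG'=G$, and since modulo $M$ each generator $y_i$ of $G$ becomes an element of the abelian group $G'M/M$, the quotient $G/M$ is abelian, so $G'\le M$ and $M=G$. Second, the $Q$-moves $g_j\mapsto g_j^{\pm 1}$, $g_j\mapsto g_jg_i$ and $g_j\mapsto hg_jh^{-1}$ induce on $\Z^n$ the elementary transvections and sign changes, which for $n\ge 2$ generate $GL_n(\Z)$. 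Hence, up to $Q$-equivalence, we may assume $g_i=y_ic_i$ with $c_i\in G'$; by the first observation every such tuple normally generates $G$, so it remains to show that $(y_1c_1,\dots,y_nc_n)$ is $Q$-equivalent to $(y_1,\dots,y_n)$ for arbitrary $c_i\in G'$.

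I would clear $c_1,\dots,c_{n-1}$ one coordinate at a time, using $Q$-moves that fix the other coordinates. Two families suffice: conjugating $g_j$ by $h\in G'$ replaces $c_j$ by $c_j+(X_j-1)h$; and the composite ``conjugate $g_i$ by $w$, multiply $g_j$ by $g_i$, conjugate $g_i$ back, multiply $g_j$ by $g_i^{-1}$'' ($i\ne j$, a composite of the basic $Q$-moves) restores $g_i$ and replaces $c_j$ by $c_j+[w,g_i]$. For $h,w\in G'$ these add arbitrary elements of $(X_j-1)G'$ and $(X_i-1)G'$; for general $w$ the second adds, modulo $JG'$, the class of $[\bar w,y_i]$, and conjugating $g_j$ by $y_k$ adds modulo $JG'$ the class of $[y_k,y_j]$. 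Using $G'/JG'=\gamma_2(\mathbb{F}_n)/\gamma_3(\mathbb{F}_n)\cong\wedge^2\Z^n$ (basis: the classes of the $[y_a,y_b]$), one checks that, as $k$, $i\ne j$, and $w$ vary, these classes span $G'/JG'$; so finitely many of these moves push $c_j$ into $JG'=\sum_i(X_i-1)G'$, and then finitely many more (write $c_j=\sum_i(X_i-1)h_i$) send $c_j$ to $0$. This brings us to $(y_1,\dots,y_{n-1},y_nc_n)$; finally $G/\langle\langle y_1,\dots,y_{n-1}\rangle\rangle$ is cyclic, hence abelian, so $G'\subseteq\langle\langle y_1,\dots,y_{n-1}\rangle\rangle$, whence $c_n^{-1}$ is a product of conjugates of $y_1^{\pm 1},\dots,y_{n-1}^{\pm 1}$ and multiplying $g_n$ by it yields $(y_1,\dots,y_n)$. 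Every normally generating $n$-tuple is therefore $Q$-equivalent to $(y_1,\dots,y_n)$, hence to every other. For $n=2$ this is especially clean: $G'=\ZZ$ is free of rank one, the single move ``multiply $g_1$ by $[x,g_2]$'' shifts $c_1$ by $1$ modulo $J$, conjugations and commutator-multiplications realize all of $J$, and $\M'\subseteq\langle\langle x\rangle\rangle$ finishes it.

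The point needing care is that the available moves span the full quotient $G'/JG'$ — in rank $n$ the concrete assertion that $\{y_k\wedge y_j:k\}\cup\{v\wedge y_i:v\in\Z^n,\ i\ne j\}$ spans $\wedge^2\Z^n$, which holds since any $y_a\wedge y_b$ has $a\ne b$ — and, the main obstacle, the extension to $F$ of rank strictly greater than $n$. There $G'=N/N'$ carries extra module generators coming from the generators of $F$ lying in $N$, and these are not visibly reachable by moves fixing the other coordinates; one must either use the explicit structure of the relation module of $\Z^n$ (for instance via the Magnus embedding) or arrange a reduction to the free metabelian case. That is where I expect the real work to be; the rank-$n$ case — which covers all situations relevant to this paper, in particular $G=\M$ — is settled completely by the scheme above.
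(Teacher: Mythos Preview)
The paper does not prove this theorem; it is quoted from Myasnikov \cite{Mya} without proof, as context preceding Theorem~\ref{qineq}. So there is no argument in the paper to compare yours against.

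Your proof for the case $\mathrm{rk}\,F=n$ (so $N=\mathbb{F}_n'$ and $G$ is the free metabelian group of rank $n$, recovering $\M$ when $n=2$) is correct. The two-layer scheme --- first reduce $c_j$ modulo $JG'=\gamma_3(G)$ using conjugation by the $y_k$ and the commutator move $g_j\mapsto g_j[w,g_i]$, then clear the remainder in $JG'$ with the same moves taking $w\in G'$ --- works as stated, and the last coordinate is handled by the observation that $G'$ lies in the normal closure of $y_1,\dots,y_{n-1}$. The only places worth writing out more explicitly are that the composite move genuinely restores $g_i$ (it does: conjugate, multiply, conjugate back), and that the exact contributions from the coarse step are absorbed in the fine step since $(X_i-1)G'$ is reachable for every $i$.

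As you yourself acknowledge, the case $\mathrm{rk}\,F>n$ is not settled by your argument: there $N/N'$ has additional $\Z[\Z^n]$-module generators beyond those coming from the commutators $[y_i,y_j]$, and it is not clear that the moves fixing the other coordinates still reach all of $G'/JG'$. One needs either the explicit structure of the relation module (via the Magnus embedding or the exact sequence~(\ref{mika})) or a reduction to the free-metabelian case; that is where Myasnikov's original proof does further work.
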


When the normal closures are not $G$, the result does not hold. In \cite[Theorem 1.6]{MMS}, Myasnikov, Myasnikov and Shpilrain proved the following

\begin{teo}[Myasnikov, Myasnikov, Shpilrain]
Let $n\ge 3$ and let $N$ be the normal closure of $\{[x_1,x_2],x_3\}$ in $F=F(x_1,x_2,\ldots ,x_n)$. Then in $G=F/N'$ there are infinitely many pairwise $Q$-inequivalent 2-element sets which have normal closure $N/N'$.
\end{teo}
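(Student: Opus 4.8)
The plan is to read the statement through the ``winding invariant of a presentation'' of Section~\ref{sectionhow}, so that the group $G$, the subgroup $N/N'$ and the $Q$-action become explicit linear algebra over a group ring, and then to reduce everything to the single fact that $GL_2(\ZZ)/GE_2(\ZZ)$ is infinite.

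\emph{Step 1 (identifying the module and the action).} First I would observe that $\langle x_1,\dots,x_n\mid [x_1,x_2],\,x_3\rangle$ is an aspherical presentation of $F/N$: the $2$-cell attached along $x_3$ collapses together with the edge $x_3$, leaving the presentation complex of $\langle x_1,x_2,x_4,\dots,x_n\mid[x_1,x_2]\rangle$, which is aspherical since $[x_1,x_2]$ is not a proper power \cite{Lyn}. By Appendix~A the relation module $M:=N/N'$ is then a free $\Z[F/N]$-module of rank $2$ on the classes $e_1$ of $[x_1,x_2]$ and $e_2$ of $x_3$, and on $e_1$ the identification is literally the original winding invariant. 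Since $M\trianglelefteq G$ is abelian, the normal closure in $G$ of a subset of $M$ is exactly the $\Z[F/N]$-submodule it generates, so a $2$-element subset of $G$ with normal closure $N/N'$ is the same thing as a generating pair of the module $M$. Exactly as in Section~\ref{sectionac}, a $Q$-transformation sends a coordinate $m_j$ to $-m_j$, to $m_j+m_i$, or to $g\cdot m_j$ with $g\in F/N$; composing these one realizes right multiplication of the matrix of the pair by an arbitrary elementary matrix and by any diagonal matrix with unit entries. Hence two bases $A,B$ of $M$ give $Q$-equivalent pairs if and only if $A^{-1}B\in GE_2(\Z[F/N])$.

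\emph{Step 2 (reduction to $n=3$).} When $n=3$ we have $F/N=\Z\times\Z$ and $\Z[F/N]=\ZZ$. When $n\ge4$, $F/N$ is the free product $(\Z\times\Z)*F(x_4,\dots,x_n)$, and the retraction onto the factor $\Z\times\Z$ gives a ring retraction $\Z[F/N]\to\ZZ$ carrying $GE_2$ into $GE_2$, together with a ring section $\ZZ\hookrightarrow\Z[F/N]$. Consequently any family of bases of $\ZZ^2$ lying in pairwise distinct $GE_2(\ZZ)$-cosets, transported along the section, is a family of generating pairs of $M$ that are pairwise $Q$-inequivalent, since applying the retraction to a hypothetical $Q$-equivalence upstairs would yield one downstairs. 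So it suffices to exhibit infinitely many matrices of $GL_2(\ZZ)$ no two of which differ by an element of $GE_2(\ZZ)$.

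\emph{Step 3 (the infinite family) and main obstacle.} That $GL_2(\ZZ)\ne GE_2(\ZZ)$ is the Bachmuth--Mochizuki phenomenon \cite{BM}, witnessed by Evans' concrete matrix \cite{Eva} and already used in Section~\ref{sectionac}. The genuinely hard point is to promote ``$\ne$'' to ``infinitely many cosets'': I would build a one-parameter family $\{A_k\}_{k\ge1}\subseteq GL_2(\ZZ)$ modelled on that matrix (for instance by twisting it by suitable automorphisms or monomial substitutions, or by carrying an extra degree parameter through the Cohn-type construction) and show $A_k^{-1}A_l\notin GE_2(\ZZ)$ for $k\ne l$, by producing for each such pair a specialization $\ZZ\to R$ under which $A_l$ becomes a product of elementary and diagonal matrices while $A_k$ does not — the obstruction for $A_l$ being killed and that for $A_k$ surviving because the normal form that controls $GE_2(\ZZ)$-membership tracks the relevant parameter. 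Transporting the columns of $A_k$ back through Steps~1 and~2 then yields the required infinite family of pairwise $Q$-inequivalent $2$-element subsets of $G$ with normal closure $N/N'$. The two reduction steps are formal once the winding invariant is in hand; Bachmuth--Mochizuki's argument, being a subtle induction on the length of a $GE_2$-factorization, does not by itself output an infinite family, so the whole weight of the proof sits in engineering the family $\{A_k\}$ and the specializations — exactly what Myasnikov--Myasnikov--Shpilrain carry out in \cite{MMS}. The contribution of the winding-invariant viewpoint is to strip the group-theoretic statement down to precisely this matrix question.
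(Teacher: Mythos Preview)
The paper does not give its own proof of this theorem; it is stated as a result of Myasnikov--Myasnikov--Shpilrain \cite{MMS} and used only to motivate the paper's own Theorem~\ref{qineq} about~$\M$. So there is no proof here to compare against directly.

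Your Steps~1 and~2 are correct and are precisely the reduction the paper advocates: the presentation $\langle x_1,\dots,x_n\mid [x_1,x_2],x_3\rangle$ is aspherical, so $N/N'$ is free of rank~$2$ over $\Z[F/N]$; a $Q$-move on a pair in $N/N'$ amounts to multiplying its matrix by an element of $GE_2(\Z[F/N])$; and the retraction $F/N\to\Z\times\Z$ (killing $x_4,\dots,x_n$) reduces everything to the ring $R=\ZZ$.

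The weakness is Step~3. You read Bachmuth--Mochizuki as giving only $GL_2(\ZZ)\ne GE_2(\ZZ)$ and then propose to ``engineer'' a one-parameter family $\{A_k\}$ and ad~hoc specializations to separate their $GE_2$-cosets. This is unnecessary. Theorem~\ref{bach} as quoted in the paper already says that \emph{every} generating set of $SL_2(\ZZ)$ contains infinitely many matrices outside $E_2(\ZZ)$. From this one gets infinitely many cosets of $E_2$ in $SL_2$ for free: were there only finitely many, their representatives together with the elementary matrices would generate $SL_2$ with only finitely many members outside $E_2$. Since $GE_2(\ZZ)\cap SL_2(\ZZ)=E_2(\ZZ)$ (Whitehead's Lemma, noted in the paper just before Theorem~\ref{bach}), distinct $E_2$-cosets in $SL_2$ give distinct $GE_2$-cosets in $GL_2$, and the infinite family of pairwise $Q$-inequivalent generating pairs drops out with no further construction. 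The paper's proof of the neighbouring Theorem~\ref{qineq} proceeds in exactly this way (via the slight upgrade Theorem~\ref{bachnuevo}, which is needed there because $\M'$ is a rank-$1$ module rather than rank~$2$); for the Myasnikov--Myasnikov--Shpilrain statement no such upgrade is required and your proposed explicit family and specializations are superfluous.
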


We use the winding invariant to prove that something very similar holds in the free metabelian group of rank 2, $\M=\F/\F''$. The proof is similar to that in \cite{MMS}, but we need stronger results to be able to deal with non-aspherical presentations.

Denote $R=\ZZ$. Let $E_2(R)$ be the subgroup of $SL_2(R)$ generated by the elementary matrices and $GE_2(R)$ the subgroup of $GL_2(R)$ generated by the elementary matrices and the diagonal matrices. \label{pagmatrices} Since the subgroup $D_2(R)\subseteq GL_2(R)$ of diagonal matrices normalizes $E_2(R)$, then $GE_2(R)=D_2(R).E_2(R)$. Since $D_2(R)\cap SL_2(R)\subseteq E_2(R)$ by Whitehead's Lemma, $SL_2(R)\cap GE_2(R)= E_2(R)$. 

\begin{teo} [Bachmuth, Mochizuki \cite{BM}] \label{bach}
Any generating set of $SL_2(R)$ contains infinitely many elements which are not in $E_2(R)$.
\end{teo}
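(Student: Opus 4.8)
The plan is to argue by contradiction and reduce the assertion to a non-finite-generation statement. Suppose $\mathcal{S}$ generates $SL_2(R)$ and only finitely many of its members, say $M_1,\dots,M_k$, lie outside $E_2(R)$; then $SL_2(R)=\langle E_2(R),M_1,\dots,M_k\rangle$. It is worth noting at the outset that the plain inequality $E_2(R)\neq SL_2(R)$ will not by itself produce a contradiction, since a proper subgroup may generate everything after a single extra element is adjoined (for instance $2\Z\subsetneq\Z=\langle 2\Z,1\rangle$). That inequality is in any case already at our disposal: the $\Lambda$-vector $\Lambda(\Q)=(1-2(X-1)Y^{-1},-(X-1)^2Y^{-1})$ of the presentation $\Q$ of the previous subsection is a unimodular row (one checks this directly, or invokes Lemma \ref{propdun}), hence completes to a matrix $M\in SL_2(R)$ with $M\,\Lambda(\PP)=\Lambda(\Q)$; and $M\notin E_2(R)$, for otherwise $\Lambda(\PP)=(1,0)$ and $\Lambda(\Q)$ would lie in the same $GE_2(R)$-orbit, contrary to \cite{Barac}. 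Alternatively one may use Evans's explicit matrix \cite{Eva}.

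What is genuinely required is a surjective group homomorphism $\theta\colon SL_2(R)\to A$ onto a group $A$ that is not finitely generated, with $\theta(E_2(R))$ finitely generated — the cleanest case being $\theta$ trivial on $E_2(R)$, which then factors $\theta$ through $SL_2(R)$ modulo the normal closure of $E_2(R)$. Granting such a $\theta$, the proof finishes immediately: $A=\theta(SL_2(R))=\langle\theta(E_2(R)),\theta(M_1),\dots,\theta(M_k)\rangle$ would be finitely generated, a contradiction. To construct $\theta$ I would follow Bachmuth and Mochizuki \cite{BM}: their analysis of the $GE_2$-relations over $R=\ZZ$ produces not merely one matrix escaping $E_2(R)$ but, in effect, an infinite sequence $N_1,N_2,\dots\in SL_2(R)$ and an invariant witnessing $N_j\notin\langle E_2(R),N_1,\dots,N_{j-1}\rangle$ for every $j$; assembling these invariants yields a map onto an infinite direct sum. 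Concretely one starts from an escaping matrix as above and spreads it over the monomials, forming $N_n$ from the rescaled data $X^n,Y^n$, and then checks that the obstruction attached to $N_n$ is independent of those attached to $N_1,\dots,N_{n-1}$.

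The main obstacle is exactly this independence — showing that the $GE_2$-obstruction of $\ZZ$ is genuinely infinitely generated, not merely nonzero — together with the bookkeeping forced by the fact that $E_2(R)$ need not be normal in $SL_2(R)$. This is the technical heart of \cite{BM}, which I would cite. In the terms of the present paper the same content can be phrased as: the unimodular rows of length $2$ over $\ZZ$ fall into infinitely many $E_2(R)$-orbits, with no finite family of matrices moving all of them onto one another — which, via the dictionary between relator vectors and $Q$-transformations developed in the preceding subsections, is precisely what underlies the $Q$-inequivalence phenomena in $\M$. The remaining steps, namely the reduction to a generating set of $SL_2(R)$ and the extraction of a contradiction from $\theta$, are routine; the whole argument rests on that single non-finite-generation input.
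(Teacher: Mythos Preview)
The paper does not prove Theorem~\ref{bach}; it is quoted verbatim from Bachmuth--Mochizuki \cite{BM} as an external input, and is then used (with a Schreier-type argument due to McCullough) to derive the $GL_2$ version, Theorem~\ref{bachnuevo}. So there is no ``paper's own proof'' to compare against: the paper's proof is the citation.

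Your proposal is a correct but essentially tautological reduction. The equivalence ``every generating set of $SL_2(R)$ meets $SL_2(R)\smallsetminus E_2(R)$ in an infinite set'' $\Longleftrightarrow$ ``$SL_2(R)\neq\langle E_2(R),M_1,\dots,M_k\rangle$ for any finite list $M_1,\dots,M_k$'' is immediate, and you state it cleanly. The paragraph on $\Lambda(\Q)$ and Evans's matrix correctly establishes $E_2(R)\neq SL_2(R)$, and you rightly flag that this alone is insufficient. But then the substantive step --- producing a homomorphism $\theta$ onto a non--finitely-generated target that is trivial (or finitely generated) on $E_2(R)$, or equivalently exhibiting an infinite independent family of $GE_2$-obstructions --- you defer entirely to \cite{BM}. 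That is the whole theorem. Your ``spreading over monomials'' heuristic (replace $X,Y$ by $X^n,Y^n$) is suggestive but is not an argument: one must actually verify that the resulting obstructions are independent, and that verification \emph{is} the Bachmuth--Mochizuki computation. So what you have written is not a proof but an outline of the shape a proof would take, terminating in the same citation the paper makes. There is no gap in the sense of a wrong step, but there is no content beyond the citation either.
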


We will need a version of Theorem \ref{bach} for $GL_2(R)$ and $GE_2(R)$. The proof is inspired by the proof of McCullough of \cite[Theorem 3]{Mcc}.

\begin{teo} \label{bachnuevo}
Any generating set of $GL_2(R)$ contains infinitely many elements which are not in $GE_2(R)$.
\end{teo}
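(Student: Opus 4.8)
The plan is to deduce Theorem \ref{bachnuevo} from Theorem \ref{bach} by a reduction to $SL_2(R)$. Suppose, for a contradiction, that $\mathcal{S}$ is a generating set of $GL_2(R)$ only finitely many of whose elements, say $t_1,\dots ,t_n$, lie outside $GE_2(R)$; put $\mathcal{S}_0=\{t_1,\dots ,t_n\}$ and $\mathcal{S}_1=\mathcal{S}\smallsetminus \mathcal{S}_0\subseteq GE_2(R)$. The aim is to manufacture from $\mathcal{S}$ a generating set of $SL_2(R)$ having at most $n$ elements outside $E_2(R)$, which is impossible by Theorem \ref{bach}.

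The structural input is the following. Since $\det\colon GL_2(R)\to R^{\times}$ is split by $u\mapsto \left(\begin{smallmatrix}u&0\\0&1\end{smallmatrix}\right)$, we have $GL_2(R)=SL_2(R)\rtimes D$, where $D=\{\left(\begin{smallmatrix}u&0\\0&1\end{smallmatrix}\right):u\in R^{\times}\}$; as the units of $R$ are $\pm X^{i}Y^{j}$, the group $D\cong R^{\times}$ is \emph{finitely generated}, by the three diagonal matrices with upper-left entries $-1,X,Y$. Recall also that $GE_2(R)=D_2(R)\cdot E_2(R)$, that $SL_2(R)\cap GE_2(R)=E_2(R)$, and that $D\subseteq D_2(R)$ normalizes $E_2(R)$. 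Write each $s\in \mathcal{S}$ as $s=M_{s}\,p(s)$ with $p\colon GL_2(R)\to D$ the projection of the semidirect product, so $M_{s}=s\,p(s)^{-1}\in SL_2(R)$. Since $p(\mathcal{S})$ generates $D$, a standard argument (Reidemeister--Schreier with the subgroup transversal $D$, or the Dedekind modular law) shows that $SL_2(R)$ is generated by $\{d\,M_{s}\,d^{-1}:d\in D,\ s\in \mathcal{S}\}$. For $s\in \mathcal{S}_1$ one has $M_{s}\in GE_2(R)\cap SL_2(R)=E_2(R)$, and since $D$ normalizes $E_2(R)$ each such generator $d\,M_{s}\,d^{-1}$ also lies in $E_2(R)$. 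Hence the only generators of $SL_2(R)$ on this list that can fail to lie in $E_2(R)$ are the $D$-conjugates of the $n$ matrices $M_{i}:=M_{t_i}\in SL_2(R)$, $1\le i\le n$.

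The crux---and the step modelled on McCullough's proof of \cite[Theorem 3]{Mcc}---is to replace the infinite family $\{d\,M_{i}\,d^{-1}:d\in D,\ 1\le i\le n\}$ by a generating set of the subgroup it generates having only finitely many elements outside $E_2(R)$. The key point is that conjugation by $D$ acts trivially on the coset space $E_2(R)\backslash SL_2(R)$; equivalently, $d\,M\,d^{-1}M^{-1}\in E_2(R)$ for every $M\in SL_2(R)$ and every $d\in D$. Granting this, $\{d\,M_{i}\,d^{-1}:d\in D\}\subseteq E_2(R)\,M_{i}$, so $\langle d\,M_{i}\,d^{-1}:d\in D\rangle$ is generated by $M_{i}$ together with the elements $d\,M_{i}\,d^{-1}M_{i}^{-1}\in E_2(R)$; consequently $SL_2(R)$ is generated by $M_{1},\dots ,M_{n}$ together with (possibly infinitely many) elements of $E_2(R)$, contradicting Theorem \ref{bach} and finishing the proof.

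I expect this last triviality statement to be the main obstacle. One natural way to attack it is a direct matrix computation: conjugation by $\left(\begin{smallmatrix}u&0\\0&1\end{smallmatrix}\right)$ rescales the off-diagonal entries of a matrix by $u$ and $u^{-1}$, so that for $M\in SL_2(R)$ the commutator $d\,M\,d^{-1}M^{-1}$ is congruent to the identity modulo the ideal $(u-1)$ of $R$ and one must then see that it lies in $E_2(R)$ (it suffices to do this for $u\in\{-1,X,Y\}$); the triangular case of $M$ is immediate, and the general case should follow after factoring out suitable elementary matrices. Alternatively, one may try to read off the triviality of the $D$-action directly from the explicit description of $SL_2(R)/E_2(R)$---equivalently, of the obstruction detecting membership in $E_2(R)$---that underlies the proof of Theorem \ref{bach}.
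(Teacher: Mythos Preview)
Your overall strategy---reduce to $SL_2(R)$ and invoke Theorem~\ref{bach}---is the right one, and the Schreier computation showing that $SL_2(R)$ is generated by the conjugates $\{dM_sd^{-1}:d\in D,\ s\in\mathcal S\}$ is fine. The problem is the step you yourself flag as ``the main obstacle'': the claim that $dMd^{-1}M^{-1}\in E_2(R)$ for every $M\in SL_2(R)$ and $d\in D$. This is not proved, and neither of your suggested attacks goes through. The direct matrix computation gives
\[
dMd^{-1}M^{-1}=\begin{pmatrix}1-(u-1)bc & (u-1)ab\\ (u^{-1}-1)cd' & 1-(u^{-1}-1)bc\end{pmatrix},
\]
which is indeed congruent to the identity modulo $(u-1)$, but there is no mechanism for ``factoring out suitable elementary matrices'' to reduce to the triangular case: the whole content of Bachmuth--Mochizuki is precisely that elementary reductions can fail in $SL_2(R)$, and nothing in your argument explains why this particular family of matrices should be special. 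The alternative suggestion---reading the claim off from the proof of Theorem~\ref{bach}---is not carried out either, and the Bachmuth--Mochizuki argument does not hand you a description of $E_2(R)\backslash SL_2(R)$ on which the triviality of the $D$-action is visible. As it stands, then, the proposal is a plausible outline with its central lemma missing.

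The paper's proof sidesteps this difficulty entirely. Instead of trying to control $D$-conjugation on $E_2$-cosets, it passes to the finite-index subgroup $H=\det^{-1}(R_0)$, where $R_0=\langle X^2,Y^2\rangle\subseteq R^{\times}$, so that a \emph{finite} transversal (eight diagonal matrices $N_u$) can be used in Schreier's lemma. The point of choosing $R_0$ is that determinants in $R_0$ admit square roots in $R^{\times}$, which yields a genuine retraction $f:H\to SL_2(R)$, $f(M)=\det(M)^{-1/2}M$. A short computation then shows that every Schreier generator coming from $GE_2(R)$ is sent by $f$ into $E_2(R)$: the diagonal correction that appears has the form $\mathrm{diag}(w,w^{-1})$, which lies in $E_2(R)$ by Whitehead's lemma. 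Thus only finitely many generators of $SL_2(R)$ survive outside $E_2(R)$, contradicting Theorem~\ref{bach}. The finite-index/square-root trick is exactly what replaces your unproven commutator claim.
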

\begin{proof}
The group $R^*$ of units of $R$ is generated by $-1,X,Y$. Let $R_0\subseteq R$ be the subgroup generated by $X^2$ and $Y^2$ and let $H=\det ^{-1}(R_0) \subseteq GL_2(R)$.
 
Suppose $A=GE_2(R) \cup \{M_1,M_2,\ldots ,M_k\}$ is a generating set of $GL_2(R)$. A left transversal of $H$ in $GL_2(R)$ is given by the eight matrices $N_u=\left(
\begin{array}{cc}
u & 0 \\
0 & 1 
\end{array} \right)$ with $u\in \{1,-1,X,Y,$ $XY,$ $-X,-Y,-XY\}$. By Schreier's Lemma, a generating set $B$ of $H$ is given by the matrices $N_uEN_v^{-1}\in H$ with $E\in GE_2(R)$ and the matrices $N_uM_iN_v^{-1}\in H$.

As in \cite{Mcc}, let $f:H\to SL_2(R)$ be defined by $f(M)=\det(M)^{-\frac{1}{2}}M$, where for $u=X^{2n}Y^{2m}\in R_0$, $u^{\frac{1}{2}}$ denotes the element $X^nY^m\in R^*$. Then $f$ is a retraction, so $f(B)$ is a generating set for $SL_2(R)$. Now, suppose $N_uEN_v^{-1}\in H$ is one of the elements in $B$ for some $E\in GE_2(R)$, $u,v\in \{1,-1,X,Y,XY,-X,-Y,-XY\}$. Then 

$$N_uEN_v^{-1}=\left(
\begin{array}{cc}
uv^{-1}\det(E) & 0 \\
0 & 1 
\end{array} \right) \left(
\begin{array}{cc}
v & 0 \\
0 & 1 
\end{array} \right) \left(
\begin{array}{cc}
\det(E)^{-1} & 0 \\
0 & 1 
\end{array} \right) E \left(
\begin{array}{cc}
v^{-1} & 0 \\
0 & 1 
\end{array} \right).$$ Since $ \left(
\begin{array}{cc}
\det(E)^{-1} & 0 \\
0 & 1 
\end{array} \right) E \in GE_2(R)\cap SL_2(R)=E_2(R)$ and $D_2(R)$ normalizes $E_2(R)$, then $N_uEN_v^{-1}= \left(
\begin{array}{cc}
uv^{-1}\det(E) & 0 \\
0 & 1 
\end{array} \right) E'$ for some $E'\in E_2(R)$. 

Finally, $f(N_uEN_v^{-1})=\left(
\begin{array}{cc}
(uv^{-1}\det(E))^{\frac{1}{2}} & 0 \\
0 & (uv^{-1}\det(E))^{-\frac{1}{2}} 
\end{array} \right)E'$, which lies in $E_2(R)$ by Whitehead's Lemma. Therefore, $f(B)$ contains only finitely many elements not in $E_2(R)$, which contradicts Theorem \ref{bach}.
\end{proof}

\begin{teo} \label{qineq}
There are infinitely many pairwise $Q$-inequivalent 2-element sets in $G=\M$ which have normal closure $\M'$.
\end{teo}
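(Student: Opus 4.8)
The plan is to transport the matrix-theoretic statement of Theorem \ref{bachnuevo} into a statement about $Q$-equivalence of pairs in $\M$, using the winding invariant dictionary developed in Section \ref{subcoco} and the Nielsen-rigidity Lemma \ref{propdun}. The starting observation is that a $2$-element set $\{w_1,w_2\}\subseteq \FF$ with $N(w_1,w_2)\F'' = N([x,y],x^{?})\F''$... more precisely: fix once and for all a pair whose normal closure in $\M$ is $\M'$, for instance one can take pairs $\{u_M, v_M\}$ with winding invariants forming the columns of a matrix $M\in GL_2(R)$, $R=\ZZ$. Indeed, if $M=(P_1,P_2\mid Q_1,Q_2)\in GL_2(R)$ then, by surjectivity of $\W$ (Theorem \ref{conway}), choose $w_1,w_2\in\FF$ with $P_{w_1}=P_1+P_2$ arranged so that $w_1,w_2$ generate... the cleanest route: by Lemma \ref{pregunta}, the normal closure $N(w_1,w_2)\F''$ depends only on the ideal $I(\W(\{w_1,w_2\})) = \langle P_{w_1},P_{w_2}\rangle\subseteq R$, and this ideal is all of $R$ exactly when the pair of polynomials is unimodular (completable to an invertible matrix). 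So to each $M\in GL_2(R)$ we associate the pair $\PP_M$ of words in $\F$ whose winding invariants are the two columns of $M$; then $N(\PP_M)\F'' = N([x,y],z\text{-type relator})\F''$... I would instead phrase it as: $N(\PP_M)\F''=\FFF'$-complement so that in $\M=\F/\FFF$ the normal closure of $\PP_M$ is $\M'$, precisely because the columns of $M$ generate the unit ideal.

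The key step is to track the effect of $Q$-transformations on the associated column vector of winding invariants. By Proposition \ref{basica}, a $Q$-transformation on a pair $(w_1,w_2)$ of elements of $\FF$ — replace $w_j$ by $w_j^{-1}$, by $w_jw_i$, or by $gw_jg^{-1}$ — changes $(P_{w_1},P_{w_2})$ by left-multiplication by an elementary matrix, a "swap-sign" diagonal matrix with entries $\pm1$, or a diagonal matrix $\mathrm{diag}(X^nY^m,1)$ (up to reordering), i.e.\ by a generator of $GE_2(R)$. Conversely every generator of $GE_2(R)$ of this shape is realized by a $Q$-transformation. Hence two pairs $\PP_M,\PP_{M'}$ coming from $M,M'\in GL_2(R)$ are $Q$-equivalent \emph{in $\M$} only if $M' = EM$ for some $E\in GE_2(R)$ — here I must be careful that $Q$-equivalence of the pairs of \emph{group elements} in $\M$ is what is claimed, and that changing the representative word $w_j\in\F$ by an element of $\FFF$ does not change $P_{w_j}$, by Theorem \ref{conway}; so the map "pair in $\M$ with normal closure $\M'$" $\longmapsto$ "column span coset in $GL_2(R)/GE_2(R)$ acted on the left" is well defined on $Q$-equivalence classes. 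Now take a generating set of $GL_2(R)$ consisting of $GE_2(R)$ together with one matrix for each $Q$-equivalence class of our pairs; if there were only finitely many such classes, this generating set would be $GE_2(R)\cup\{M_1,\dots,M_k\}$ with all the $M_i$ accounting for all left cosets, which by Theorem \ref{bachnuevo} is impossible (that theorem says any generating set of $GL_2(R)$ has infinitely many elements outside $GE_2(R)$). Therefore there are infinitely many $Q$-inequivalent such pairs.

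The main obstacle I expect is the bookkeeping in the correspondence between pairs of group elements (in $\M$, up to $Q$-equivalence) and left cosets of $GE_2(R)$ in $GL_2(R)$: one must verify (a) every pair in $\M$ with normal closure $\M'$ arises, via $\W$, from a matrix in $GL_2(R)$ — this needs that the ideal generated by $P_{w_1},P_{w_2}$ being $R$ forces the column vector $(P_{w_1},P_{w_2})^{t}$ to be completable to a $GL_2(R)$-matrix, which is the stable-rank/unimodular-row fact for $R=\ZZ$ (true since $R$ has stable rank $2$, or can be cited directly); (b) the $Q$-moves on $\M$ realize exactly left multiplication by $GE_2(R)$ and nothing more, where the "nothing more" direction uses Theorem \ref{conway} to know $\ker\W=\FFF$ so that conjugation/multiplication data is fully recorded by the polynomials; and (c) conversely that no extra identifications occur — i.e.\ $M\mapsto \PP_M$ descends to an injection on $GE_2(R)\backslash GL_2(R)$ into $Q$-classes, which again is (a)+(b). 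Once this dictionary is nailed down, the conclusion is immediate from Theorem \ref{bachnuevo}, exactly paralleling the Myasnikov–Myasnikov–Shpilrain argument \cite{MMS} but now over the Laurent polynomial ring in two variables rather than in an aspherical situation.
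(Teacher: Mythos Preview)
Your approach is essentially the paper's: associate to a pair $(w_1,w_2)\in(\M')^2$ the column vector $(P_{w_1},P_{w_2})^t\in R^2$, observe that $Q$-transformations act as left multiplication by $GE_2(R)$, complete unimodular columns to matrices in $GL_2(R)$, and conclude from Theorem~\ref{bachnuevo}. Two small clean-ups: you do not need Lemma~\ref{propdun} at all (the paper's proof never invokes it), and the correct bookkeeping is not in terms of left $GE_2$-cosets but rather that $Q$-equivalence of $\PP_M$ and $\PP_{M'}$ forces $M'\in GE_2(R)\, M\, GE_2(R)$ --- equivalently $M\in\langle GE_2(R),M'\rangle$ --- which is exactly what contradicts the choice of the sequence $M_1,M_2,\ldots$ from Theorem~\ref{bachnuevo}.
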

\begin{proof}
We use an idea similar to that in \cite[Proposition 5.1, Theorem 1.6]{MMS} with some modifications. By Theorem \ref{bachnuevo} there exists a sequence $M_1,M_2, \ldots$ of matrices in $GL_2(R)$ such that $M_i$ does not lie in the subgroup generated by $GE_2(R)$ and $M_1, M_2, \ldots, M_{i-1}$ for every $i\ge 1$. For each $i\ge 1$ take $w_i, w_i'\in \F'$ such that $(P_{w_i}, P_{w_i'})^t\in R^{2\times 1}$ is the first column of $M_i$. Taking classes modulo $\F''$ we obtain the pairs $S_i=(\overline{w}_i, \overline{w_i'}) \in \M^2$. We will prove that the sets $S_i$ satisfy the statement of the theorem.

Since $w_i,w_i' \in \F'$, the normal closure $N(S_i)$ of $S_i$ is contained in $\F'/\F''=\M'$. We prove the other inclusion. Since $M_i\in GL_2(R)$, its determinant lies in $R^*$. Then there exist Laurent polynomials $Q_i,Q_i'\in R$ such that $P_{w_i}Q_i+P_{w_i'}Q_i'=1\in R$. By Proposition \ref{basica} there exists $u_i$ in the normal closure of $\{w_i,w_i'\}$ such that $P_{u_i}=1=P_{[x,y]}$. Then $\F'/\F''=N(\overline{[x,y]})=N(\overline{u}_i)$ is contained in $N(S_i)$.

Now, suppose that $S_i$ and $S_j$ are $Q$-equivalent for some $i>j$. As in the beginning of the section, we can associate a vector $\Lambda (S)\in R^2$ to each $S\in (\M')^2$. Namely, for $S=(\overline{w}, \overline{w'})$ we define $\Lambda(S)=(P_{w},P_{w'})$. Performing a $Q$-transformation on $S$ amounts to multiplying $\Lambda(S)$ by a matrix in $GE_2(R)$. Thus, there exists $E\in GE_2(R)$ such that $EM_i\binom{1}{0}=E\Lambda(S_i)^t=\Lambda(S_j)^t=M_j\binom{1}{0}$. Then the first column of $M_j^{-1}EM_i$ is $\binom{1}{0}$. As in \cite{Barac} we can multiply by another matrix $E'\in GE_2(R)$ to obtain $E'M_j^{-1}EM_i=Id$. In particular $M_i$ lies in the subgroup of $GL_2(R)$ generated by $GE_2(R)$ and $M_j$, a contradiction.
\end{proof}

%


In Section \ref{seccayley} we will see the relationship between Theorem \ref{qineq}, the relation lifting problem, and the existence of infinitely many presentations $\langle x,y| r_i,s_i\rangle$ which are simple homotopy equivalent but pairwise $Q$-inequivalent.  

To finish this section we want to mention a connection with a question raised in \cite{BM}. It is known that $GE_2(\Z[X])=GL_2(\Z[X])$ (see \cite{Cohn}), but the following question is still open.

\begin{ques}
Is $GE_2(\Z[X,X^{-1}])$ equal to $GL_2(\Z[X,X^{-1}])$?
\end{ques} 

The relationship between this question and $Q$-equivalences was first noticed in \cite[Proposition 1.7]{MMS}. We state now a result similar to that one in the same spirit of Theorem \ref{qineq}.

\begin{teo} \label{xxmenos1}
Let $N$ be the normal closure of $y$ in $\F$ and let $G=\F/ N'$. Then every pair $(w_1,w_2)\in G^2$ with normal closure equal to $N/N'$ is $Q$-equivalent to $(\overline{y},1)$ if and only if $GE_2(\Z[X,X^{-1}])=GL_2(\Z[X,X^{-1}])$.
\end{teo}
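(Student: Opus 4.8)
The plan is to translate the statement into the language of $2\times 2$ matrices over $R:=\Z[X,X^{-1}]$, exactly as in the proof of Theorem \ref{qineq}. First I would recall that the relation module $N/N'$ of the presentation $\langle x,y\mid y\rangle$ of $\Z=\langle x\rangle$ is a free $\Z[\Z]$-module of rank one: the classes of the free generators $x^{k}yx^{-k}$ $(k\in\Z)$ of $N$ form a $\Z$-basis, and conjugation by $x$ identifies $N/N'$ with $R$ via $x^{k}yx^{-k}\mapsto X^{k}$ (the general fact recalled in the introduction, cf.\ Appendix A). Let $W:N\to R$ be the associated winding invariant, i.e.\ the abelianization map $N\to N/N'$ followed by this identification. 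Just as in Proposition \ref{basica} and Theorem \ref{conway}, $W$ is a surjective homomorphism with kernel $N'$, it induces an isomorphism $N/N'\xrightarrow{\ \sim\ }R$, it satisfies $W(uv)=W(u)+W(v)$ and $W(gug^{-1})=X^{k}W(u)$ for $u,v\in N$ and $g\in\F$ of $x$-exponent sum $k$, and $W(y)=1$.

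Next I would set up the dictionary. For a pair $S=(w_1,w_2)\in G^{2}$ with $w_1,w_2\in N/N'$, put $\Lambda(S)=(W(w_1),W(w_2))^{t}\in R^{2\times1}$; since $W:N/N'\to R$ is an isomorphism, $S\mapsto\Lambda(S)$ is a bijection from the set of such pairs onto $R^{2}$. A $Q$-transformation of $S$ (replacing one $w_j$ by $w_j^{-1}$, by $w_jw_i$, or by $gw_jg^{-1}$) preserves the condition $w_i\in N/N'$ and corresponds to left-multiplying $\Lambda(S)$ by an elementary matrix or a diagonal matrix of $GL_2(R)$; these generate $GE_2(R)$, so two such pairs are $Q$-equivalent iff their $\Lambda$-vectors lie in one $GE_2(R)$-orbit. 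A pair $S$ with normal closure $N/N'$ automatically has $w_1,w_2\in N/N'$ (the normal closure contains each $w_i$), and then that normal closure is the $R$-submodule of $N/N'\cong R$ generated by $W(w_1),W(w_2)$; hence $S$ has normal closure $N/N'$ precisely when $W(w_1)Q_1+W(w_2)Q_2=1$ for some $Q_i\in R$, i.e.\ when $(W(w_1),W(w_2))$ is a unimodular row. Finally $\Lambda\big((\overline y,1)\big)=(1,0)^{t}$.

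Then I would invoke the length-two unimodular completion lemma: over any commutative ring a column $\binom{f_1}{f_2}$ with $f_1g_1+f_2g_2=1$ is the first column of $\left(\begin{array}{cc}f_1&-g_2\\ f_2&g_1\end{array}\right)\in SL_2(R)\subseteq GL_2(R)$, while the first column of any invertible matrix is unimodular. Since $W$ is surjective, as $S$ runs over the pairs with normal closure $N/N'$, $\Lambda(S)$ runs over exactly the first columns of matrices in $GL_2(R)$. Thus ``every pair in $G^{2}$ with normal closure $N/N'$ is $Q$-equivalent to $(\overline y,1)$'' is equivalent to ``every first column of a matrix in $GL_2(R)$ is also the first column of a matrix in $GE_2(R)$''. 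It remains to identify this with $GE_2(R)=GL_2(R)$. One implication is trivial. For the converse, given $M\in GL_2(R)$ choose $E\in GE_2(R)$ with the same first column as $M$; then $E^{-1}M$ has first column $E^{-1}\cdot(\text{first column of }E)=(1,0)^{t}$, so $E^{-1}M=\left(\begin{array}{cc}1&b\\ 0&u\end{array}\right)$ with $u=\det(E^{-1}M)\in R^{*}$, which equals $\left(\begin{array}{cc}1&0\\ 0&u\end{array}\right)\left(\begin{array}{cc}1&b\\ 0&1\end{array}\right)\in GE_2(R)$; hence $M=E(E^{-1}M)\in GE_2(R)$.

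I do not expect a genuine obstacle: the theorem is essentially a restatement of the Bachmuth–Mochizuki-type question as a question about $Q$-equivalence of pairs. The step requiring the most care is building the dictionary in the second paragraph — checking that the three $Q$-moves realize exactly the generators of $GE_2(R)$ acting on $\Lambda$, and that ``normal closure $=N/N'$'' is unimodularity — which is the same bookkeeping as in the proof of Theorem \ref{qineq}. The only inputs from outside the paper are the standard identification of the relation module of $\langle x,y\mid y\rangle$ with $R$ and the elementary (length-two specific) unimodular completion fact.
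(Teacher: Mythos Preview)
Your proposal is correct and follows essentially the same route as the paper: identify $N/N'$ with $R=\Z[X,X^{-1}]$ via the appropriate winding invariant, translate $Q$-moves into the $GE_2(R)$-action on $\Lambda$-vectors, translate ``normal closure $=N/N'$'' into unimodularity, and then match the two conditions by the elementary first-column argument. The paper carries out the two implications separately (completing a unimodular column to an $SL_2$-matrix in one direction, and clearing the first column with a further $E'\in GE_2$ in the other), which is exactly your final paragraph unpacked; the only cosmetic difference is that you phrase the intermediate step as ``first columns of $GL_2$ = first columns of $GE_2$'' before reducing it to $GE_2=GL_2$.
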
 
\begin{proof}
We define a map $W':N\to \Z[X,X^{-1}]$ as follows. Consider the space $\widetilde{K}$ obtained from the real line $\mathbb{R}$ by attaching a 1-dimensional sphere at each integer $n\in \Z\subseteq \mathbb{R}$. For $w\in N$ construct the curve $\gamma_w$ in $\widetilde{K}$ which walks from the origin following the letters of $w$: it moves one unit in the line to the right or to the left if the corresponding letter is $x$ or $x^{-1}$, and goes through the corresponding loop if the letter is $y$ or $y^{-1}$. The $n$-th coefficient of $W'(w)$ is the winding number of $\gamma_w$ around the loop attached at $n\in \mathbb{R}$. The complex $\widetilde{K}$ is the cover of the figure eight $K$ corresponding to the subgroup $N\triangleleft \pi_1(K)=\F$ and $W'$ is the abelianization $N\to N/N'=H_1(\widetilde{K})$. This variation of the winding invariant will be discussed with more generality in Section \ref{sectionhow}.

To each pair $S=(\overline{w}_1,\overline{w}_2)\in N/N'$ we associate $\Lambda(S)=(W'(w_1),W'(w_2))\in \Z[X,X^{-1}]^2$. A $Q$-transformation is reflected in $\Lambda(S)^t$ as a multiplication by a matrix in $GE_2(\Z[X,X^{-1}])$. If the normal closure of $\{\overline{w}_1,\overline{w}_2\}$ is $N/N'$ then there exist $A_1,A_2\in \Z[X,X^{-1}]$ such that $A_1W'(w_1)+A_2W'(w_2)=1$, so $\left(
\begin{array}{cc}
A_1 & A_2 \\
-W'(w_2) & W'(w_1) 
\end{array} \right) \Lambda(S)^t=\binom{1}{0}$.

If $GE_2(\Z[X,X^{-1}])=GL_2(\Z[X,X^{-1}])$, the matrix above lies in $GE_2(\Z[X,X^{-1}])$ and then there is a sequence of $Q$-transformations that we can perform on $S$ to obtain a pair $T=(\overline{u}_1,\overline{u}_2)$ with $\Lambda(T)=(1,0)$. Then $T=(\overline{y},1)\in G^2$.

Conversely, if $M=\left(
\begin{array}{cc}
A & B \\
C & D 
\end{array} \right) \in GL_2(\Z[X,X^{-1}])$, take $w_1,w_2\in N$ such that $W'(w_1)=A$ and $W'(w_2)=C$. Then the fact that $AD-CB$ is a unit in $\Z[X,X^{-1}]$ says that the normal closure of $\{\overline{w}_1,\overline{w}_2\}$ is $N/N'$. By hypothesis $(\overline{w}_1,\overline{w}_2)$ is $Q$-equivalent to $(1,0)$ so there exists $E\in GE_2(\Z[X,X^{-1}])$ such that the first column of $EM$ is $\binom{1}{0}$. Then a multiplication by a new matrix $E'\in GE_2(\Z[X,X^{-1}])$ gives $E'EM=Id$, so $M\in GE_2(\Z[X,X^{-1}])$.   
\end{proof}


\section{Asphericity and relation modules} \label{sectionhow}

We will see that the winding invariant is a very particular case of a more general idea which can be used to attack problems that appear in previous works.

\subsection{A general version of the winding invariant} \label{subversion}

In the previous sections we have used the winding invariant to study $\M'=\F ' / \F ''$ and more generally $G' / G''$ when $G$ is a group presented by a cocommutative presentation. The winding invariant $\W : \FF \to \ZZ$ can be interpreted as follows. Let $K$ be the standard complex of the presentation $\langle x,y| \ \rangle$, that is the CW-complex with a unique $0$-dimensional cell $e^0$ and two cells of dimension $1$, $x$ and $y$, so $K$ is a wedge of two circles, the ``figure eight". The fundamental group of $K$ is $\pi_1(K)=\F=\langle x,y \rangle$. Let $p:\widetilde{K}\to K$ be the covering associated to the subgroup $\FF \vartriangleleft \F$, that is $p_*(\pi_1(\widetilde{K}))=\FF$. The complex $\widetilde{K}$ can be identified with the grid $\Z\times \R \cup \R\times \Z \subseteq \R^2$. An element $w$ of $\FF$ has a representative loop $\gamma$ in $K$ which can be lifted to a loop $\widetilde{\gamma}$ in $\widetilde{K}$ from a fixed $0$-dimensional cell $\widetilde{e}^0=(0,0)\in \R^2$ of $\widetilde{K}$. The loop $\widetilde{\gamma}$ is the curve $\gamma_w$ of Definition \ref{definicion}. The abelianization $\FF=\pi_1(\widetilde{K})\to H_1(\widetilde{K})=\FF/\F''$ which maps $w\in \FF$ to the homology class of $\widetilde{\gamma}$ is the winding invariant. From this interpretation the statement of Theorem \ref{conway} ($\W$ is surjective and $\textrm{ker}(W)=\F ''$) is obvious. The deck transformation group $G=\F / \FF=\Z \times \Z$ turns $H_1(\widetilde{K})$ into a $G$-module. In this case $H_1(\widetilde{K})$ is a free $G$-module of rank $1$, so $H_1(\widetilde{K})=\Z[G]=\ZZ$. The group $\F$ acts on $\FF$ by conjugation and it is clear that $W: \FF \to \FF/\F''$ is equivariant: if $u\in \F$ and $w\in \FF$, then $\gamma_{uwu^{-1}}=\gamma_u* \overline{u}\cdot \gamma_w * (\gamma_u^{-1})$, which in homology is the class $\overline{u} \cdot W(w)$ of $\overline{u} \cdot \gamma_w$. Here $\overline{u}$ denotes the class of $u$ in $\F/\FF$. The group $H_1(\widetilde{K})=\FF / \FFF$ is the relation module of the presentation $\langle x,y | [x,y] \rangle$. In general the relation module of a presentation $\mathcal{P}=\langle X | R \rangle$ of a group $G$ is the $\Z[G]$-module $\frac{N(R)}{N(R)'}$, where $N(R)\trianglelefteq F(X)$ is the normal closure of $R$ in the free group with generator set $X$. The action of $G$ on $N(R)/N(R)'$ is again $\overline{u}\cdot \overline{w}=\overline{uwu^{-1}}$ for $u\in F(X)$ and $w\in N(R)$. We can call the abelianization map $W:N(R) \to N(R)/N(R)'$ the \textit{winding invariant} of $\mathcal{P}$ by analogy with the original case. In Theorem \ref{xxmenos1} we have already used the winding invariant of $\langle x,y | y\rangle$. In many cases a similar geometric interpretation makes sense (see Section \ref{seccayley}). The winding invariant satisfies then $\W(uwu^{-1})=\overline{u}\cdot \W(w)$ for any $w\in N(R)$ and $u\in F(X)$. In general, the relation module of a presentation can be non-free. Dunwoody provides in \cite{Dun} a presentation of the trefoil group $G=\langle x,y | x^2y^{-3} \rangle$ with two generators and two relators whose relation module is a projective $\Z[G]$-module which is not free (see also \cite{BD,HJ}). Nevertheless, the relation module is always a submodule of a free $\Z[G]$-module (see \cite{Mag2} and Appendix A).

Recall that a path-connected space $X$ is said to be aspherical if its homotopy groups $\pi_n(X)$ are trivial for $n\ge 2$. If $X$ is a connected $2$-dimensional CW-complex, then it is aspherical if and only if $\pi_2(X)=0$. A presentation $\mathcal{P}=\langle X|R\rangle$ is aspherical if its standard complex $K_{\mathcal{P}}$ is aspherical. Asphericity is a natural yet elusive property. The Whitehead conjecture stated in 1941 says that if a $2$-dimensional CW-complex is aspherical, then any connected subcomplex is also aspherical. This problem remains open despite decades of effort. Important progress has been made, for instance by Bestvina and Brady in connection with the Eilenberg-Ganea Conjecture and by Howie and others in connection with labeled oriented trees and the Andrews-Curtis Conjecture.

If $\mathcal{P}=\langle X| R \rangle$ is a presentation of a group $G$, and $K$ is its standard complex, the cellular chain complex of the universal cover of $K$ gives the following exact sequence of $\Z[G]$-modules

\begin{equation}
0\to \pi_2(K) \to \Z[G]^{\oplus |R|} \overset{\tau}{\to} \Z[G]^{\oplus |X|} \to \Z[G] \to \Z \to 0 
\label{mika}
\end{equation}

\bigskip

A description of the maps involved can be found for instance in \cite[Theorem 1.39]{MS}. The image of $\tau$ is exactly the relation module $N(R)/N(R)'$ of $\mathcal{P}$. In particular one has the following

\begin{prop} \label{asf}
If $\mathcal{P}$ is aspherical, then the relation module is a free $\Z[G]$-module with basis $\{rN(R)'\}_{r\in R}$.
\end{prop}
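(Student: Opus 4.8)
The plan is to read off the conclusion directly from the exact sequence (\ref{mika}) together with the identification of the image of $\tau$ with the relation module. First I would recall the setup: $\mathcal{P}=\langle X\mid R\rangle$ presents $G$, and $K=K_{\mathcal{P}}$ is its standard $2$-complex, so $K$ has one $0$-cell, a $1$-cell for each $x\in X$, and a $2$-cell for each $r\in R$. Lifting to the universal cover $\widetilde K$, the cellular chain complex of $\widetilde K$ is a complex of free $\Z[G]$-modules $C_2(\widetilde K)=\Z[G]^{\oplus|R|}\to C_1(\widetilde K)=\Z[G]^{\oplus|X|}\to C_0(\widetilde K)=\Z[G]$, where the free basis of $C_2(\widetilde K)$ is given by (fixed lifts of) the $2$-cells, i.e. indexed by $R$. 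By definition $H_2(\widetilde K)=\ker\tau$ (the map $C_0\to\Z$ being the augmentation, which is surjective with kernel the image of $\partial_1$), and since $\widetilde K$ is simply connected, the Hurewicz theorem gives $\pi_2(K)=\pi_2(\widetilde K)\cong H_2(\widetilde K)=\ker\tau$; this is the content of exactness of (\ref{mika}) at $\Z[G]^{\oplus|R|}$.

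Next I would invoke the asphericity hypothesis: $\mathcal{P}$ aspherical means $\pi_2(K)=0$, hence $\ker\tau=0$, so $\tau$ is injective. Therefore $\tau$ restricts to an isomorphism of $\Z[G]$-modules from $\Z[G]^{\oplus|R|}$ onto its image $\operatorname{im}\tau$. The final ingredient, which the excerpt already grants (it states explicitly that the image of $\tau$ is the relation module $N(R)/N(R)'$ of $\mathcal{P}$, with a reference to \cite[Theorem 1.39]{MS}), is that $\operatorname{im}\tau\cong N(R)/N(R)'$ as $\Z[G]$-modules, and moreover under this identification the standard free basis of $C_2(\widetilde K)$ indexed by $R$ corresponds to the family of classes $\{rN(R)'\}_{r\in R}$. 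Composing, $N(R)/N(R)'\cong\Z[G]^{\oplus|R|}$ as $\Z[G]$-modules, with the image of the standard basis being $\{rN(R)'\}_{r\in R}$; hence the relation module is free with that basis, which is exactly the assertion.

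The only genuinely delicate point — the main obstacle, if any — is making precise the identification of $\operatorname{im}\tau$ with $N(R)/N(R)'$ and tracking that the chosen free basis of $C_2(\widetilde K)$ is sent to the residues $rN(R)'$; this is a standard but slightly fiddly computation with the boundary map expressed via Fox derivatives, and it is precisely what \cite[Theorem 1.39]{MS} supplies, so I would simply cite it rather than reprove it. Everything else is formal: exactness of (\ref{mika}), Hurewicz, and the implication $\pi_2(K)=0\Rightarrow\tau$ injective. I would close by remarking that the same argument shows, more generally, that whenever $\mathcal{P}$ is aspherical the whole sequence (\ref{mika}) is a free $\Z[G]$-resolution of $\Z$ of length $2$, but for the statement at hand only the injectivity of $\tau$ is needed.
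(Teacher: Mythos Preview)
Your proposal is correct and follows essentially the same route as the paper's own argument (given in Appendix~A): read off the conclusion from the cellular chain complex of $\widetilde K$, use Hurewicz and simple connectivity to identify $\ker\tau=\pi_2(K)$, then invoke asphericity to make $\tau$ an isomorphism onto the relation module with the standard basis going to $\{rN(R)'\}_{r\in R}$. The only minor difference is that where you cite \cite[Theorem~1.39]{MS} for the identification $\operatorname{im}\tau\cong N(R)/N(R)'$ and the tracking of basis elements, the paper's appendix supplies this directly by observing that $(\widetilde K)^1$ is the cover of $K^1$ corresponding to $N(R)\le F(X)$, so $\ker d_1=H_1((\widetilde K)^1)=N(R)/N(R)'$.
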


For more details on this result see Appendix A.

Dyer and Vasquez proved in \cite[Theorem 2.1]{DV} (1973) that if $\mathcal{P}=\langle X| r\rangle$ is a one-relator presentation such that $r$ is not a proper power in the free group $F(X)$, then $\mathcal{P}$ is aspherical. The proof is implicit in Lyndon's work \cite{Lyn} (1950) and in Cockroft's \cite{Coc} (1954).

In the words of Lyndon:

\begin{teo} (Lyndon's Simple Identity Theorem)
Let $F$ be a free group and $r\in F$ not a proper power. If $$\prod\limits_{i =1}^{n} a_i r^{\epsilon_i} a_i^{-1}=1$$ for certain $n\ge 1$, $a_i\in F$, $\epsilon_i=\pm 1$, then the indices $1,2,\ldots, n$ can be grouped into pairs $(i,j)$ with $\epsilon_i=-\epsilon_j$ and $a_i \equiv a_j$ mod $N(r)$.  
\end{teo}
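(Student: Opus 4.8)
The plan is to deduce the statement directly from the asphericity of one-relator presentations together with Proposition \ref{asf}. Let $X$ be a free basis of $F$ and consider the presentation $\mathcal{P}=\langle X\mid r\rangle$ of $G=F/N(r)$. Since $r$ is not a proper power, $\mathcal{P}$ is aspherical by the theorem of Dyer and Vasquez quoted above, so by Proposition \ref{asf} the relation module $M=N(r)/N(r)'$ is a \emph{free} $\Z[G]$-module on the single basis element $\bar r = rN(r)'$. (This holds regardless of the rank of $F$, so no reduction to the finitely generated case is needed; alternatively one may restrict to the finitely generated free subgroup of $F$ containing $r$ and all the $a_i$.)

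Next I would track the hypothesis under the winding invariant $\W:N(r)\to M$ of $\mathcal{P}$. Each $a_i r^{\epsilon_i}a_i^{-1}$ lies in $N(r)$, and by the equivariance $\W(uwu^{-1})=\bar u\cdot\W(w)$ (with $\bar u$ the class of $u$ in $G$) together with the fact that $\W$ is a homomorphism into an abelian group, its image in $M$ is $\epsilon_i\,\bar a_i\cdot\bar r$. The product $\prod_{i=1}^{n} a_i r^{\epsilon_i}a_i^{-1}$ equals $1$ in $F$, hence its image in $M$ vanishes, giving
\[
\Big(\sum_{i=1}^{n}\epsilon_i\,\bar a_i\Big)\cdot\bar r=0\quad\text{in }M .
\]
Since $\{\bar r\}$ is a free basis of $M$ over $\Z[G]$, this forces $\sum_{i=1}^{n}\epsilon_i\,\bar a_i=0$ in the group ring $\Z[G]$.

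Finally I would run an elementary counting argument inside $\Z[G]$: as a free abelian group $\Z[G]$ has $G$ as a basis, so collecting terms the relation reads $\sum_{g\in G}\bigl(\sum_{i:\,\bar a_i=g}\epsilon_i\bigr)g=0$, meaning that for each $g\in G$ the signs $\epsilon_i$ over the indices with $\bar a_i=g$ sum to zero. Thus within each such block there are equally many $+1$'s and $-1$'s, and the indices can be paired as $(i,j)$ with $\epsilon_i=-\epsilon_j$ and $\bar a_i=\bar a_j$, i.e. $a_i\equiv a_j$ mod $N(r)$; assembling these pairings over all $g$ partitions $\{1,2,\dots,n\}$ as required. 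The only real content is the asphericity of $\langle X\mid r\rangle$ when $r$ is not a proper power, which is imported as a black box; beyond that the proof is purely formal, the one point of care being the identification of the $\Z[G]$-action on $M$ with conjugation, which is precisely what makes $\W$ equivariant.
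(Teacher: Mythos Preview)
The paper does not give its own proof of Lyndon's Simple Identity Theorem: it is stated as a known result due to Lyndon, immediately after the remark that Dyer and Vasquez's asphericity theorem for one-relator presentations is ``implicit in Lyndon's work.'' The paper is presenting the Identity Theorem and the asphericity of $\langle X\mid r\rangle$ (for $r$ not a proper power) as two formulations of essentially the same fact.

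Your deduction is formally correct: granted the asphericity of $\langle X\mid r\rangle$, the relation module is free on $\bar r$ by Proposition~\ref{asf}, the equation $\sum_i\epsilon_i\,\bar a_i=0$ in $\Z[G]$ follows from equivariance of $\W$, and the pairing is then immediate. The problem is circularity. The standard route to asphericity of one-relator presentations (Cockroft~\cite{Coc}, Dyer--Vasquez~\cite{DV}) goes \emph{through} Lyndon's Identity Theorem, not around it: Lyndon's original paper~\cite{Lyn} establishes the combinatorial statement directly, and asphericity is then read off (this is exactly what the paper means by ``implicit in Lyndon's work''). So what you have written is not an independent proof of the theorem but the easy implication asphericity $\Rightarrow$ Identity Theorem; the black box you import is precisely the theorem you are asked to prove. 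If you want a genuine argument you must either reproduce Lyndon's direct combinatorial proof or cite an independent proof of asphericity that does not rely on it.
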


Recall that an aspherical presentation presents a torsion-free group (see \cite[Proposition 2.45]{Hat}, for instance).


\subsection{Residual properties of one-relator groups} \label{resfin}

In the fifties R.L. Vaught made the following conjecture:

\begin{conj} \label{vaughtconjecture}
Let $u,v$ be two elements in a free group $F$ such that $u^2v^2$ is a square $w^2$ for some $w\in F$. Then $u$ and $v$ commute.
\end{conj}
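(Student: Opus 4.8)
The plan is to prove the Vaught conjecture (Conjecture \ref{vaughtconjecture}) by reducing the equation $u^2 v^2 = w^2$ in a free group $F$ to a problem about the winding invariant and the one-relator group $\langle x, y \mid [x,y]\rangle$, exploiting Lyndon's Simple Identity Theorem, which is available since $[x,y]$ is not a proper power. The first observation is that it suffices to treat the case where $F = \F$ is free of rank $2$: if $u^2 v^2 = w^2$ and $u, v$ do not commute, then $u, v$ generate a subgroup of $F$ which is free of rank $1$ or $2$; rank $1$ forces commutativity, so the subgroup has rank $2$, hence is free of rank $2$, and we may as well assume $F = \F$ with $u, v$ a basis-free pair inside it. The next step is the arithmetic reduction already used in the proof of Proposition \ref{propbm}: passing to $\F/\F'$ one gets $(uv)^2 = w^2$ in $\Z\times\Z$, which is torsion-free, so $uv \equiv w \pmod{\F'}$. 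Writing $w = uv c$ with $c \in \F'$ and substituting into $u^2 v^2 = w^2 = uvc\,uvc$, one rearranges to obtain that a certain word built from $u$, $v$, $c$ lies in $\F''$; more precisely, as in Proposition \ref{propbm}, the identity $u^2v^2 = (uv)v^{-1}(uv)v$ combined with $w^2 = uvc\,uvc$ yields that $c v^{-1} (uv) c v (uv)^{-1}$, equivalently an element of the form $c\cdot{}^{\!x}c^{-1}\cdot[\text{stuff}]$, lies in $\F''$.

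The heart of the argument is then to upgrade the mod-$\F''$ conclusion to an honest identity in $\F$. This is where I would invoke Lyndon's Simple Identity Theorem together with the exactness of the relation module sequence (\ref{mika}) for the aspherical presentation $\langle x, y \mid [x,y]\rangle$: the kernel of $\W : \FF \to \ZZ$ being exactly $\FFF = \F''$ (Theorem \ref{conway}) is the rank-$2$ shadow of this asphericity. Concretely, from $u^2v^2 w^{-2} = 1$ one writes this element as a product $\prod a_i [x,y]^{\epsilon_i} a_i^{-1}$ of conjugates of $[x,y]^{\pm1}$ — legitimate because $u^2v^2w^{-2} \in \FF$ — and applies Proposition \ref{basica} to compute its winding invariant as $\sum \epsilon_i X^{n_i}Y^{m_i}$, where $(n_i, m_i)$ are the abelianized exponents of the $a_i$. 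Since $u^2v^2w^{-2} = 1$, this polynomial is $0$, so by Lyndon's theorem the indices pair up with $\epsilon_i = -\epsilon_j$ and $a_i \equiv a_j \bmod N([x,y]) = \FF$; feeding this pairing back through the relations forces the factors to cancel in pairs, which after unwinding the rearrangement gives the desired relation $uv = vu$ — or, examined more carefully, shows that the factorization of $u^2v^2w^{-2}$ forces $v^{-1}$ and $uvu^{-1}v^{-1}$-type building blocks to be trivial, hence $[u,v] = 1$.

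The main obstacle, and the place where the plan needs the most care, is that the winding-invariant reduction only directly controls things modulo $\F''$ (it proves the metabelian version, Proposition \ref{propbm}), whereas the genuine Vaught conjecture is a statement in the free group itself. Bridging this gap requires the full strength of Lyndon's Simple Identity Theorem rather than just Theorem \ref{conway}: one must track not merely the winding polynomial but the combinatorial pairing of the conjugating elements $a_i$ that Lyndon's theorem provides, and argue that this pairing, combined with the specific form $u^2 v^2 w^{-2}$ of the relator, propagates to a cancellation of the $u, v, w$ letters themselves. I expect the delicate step to be showing that the Lyndon pairing is compatible with the three-block structure $u^2 \mid v^2 \mid w^{-2}$ in a way that yields $uv = vu$ directly, rather than merely some weaker consequence; one likely needs an induction on the number $n$ of conjugates, peeling off a cancelling pair at each stage as in the proof of Theorem \ref{conway}, and checking that the peeling is consistent with the surgery that replaced $w$ by $uvc$. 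An alternative route, should the direct combinatorics prove unwieldy, is to cite the known case $k = 2$ of the general Vaught-type result (e.g. \cite{Lyn2, Sch, LS}) and instead present this section's contribution as the clean metabelian analogue plus the observation that the winding invariant makes Lyndon's identity theorem effective in rank $2$.
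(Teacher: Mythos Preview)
The paper does not actually give its own proof of Conjecture~\ref{vaughtconjecture}. It records the statement as Vaught's conjecture, then immediately notes that Lyndon proved it in \cite{Lyn2} (showing in fact that $\langle u,v,w\rangle$ is cyclic), and that Edmunds \cite{Edm} gave a short proof by observing that an automorphism of $F(x,y,z)$ carries $x^2y^2z^2$ to $[x,y]z^2$, so the equation $u^2v^2=w^2$ is equivalent to $[u,v]=w^2$; one then finishes with Wicks' Theorem~\ref{wicks}. The paper's own contribution in this direction is only the metabelian analogue, Proposition~\ref{propbm}.

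Your proposed route has a genuine gap, and I do not think it can be closed as written. Lyndon's Simple Identity Theorem, specialized to $r=[x,y]$, says that any identity $\prod a_i[x,y]^{\epsilon_i}a_i^{-1}=1$ admits a pairing of indices with opposite signs and $a_i\equiv a_j\bmod \F'$. But this is a statement about \emph{every} expression of the trivial element as a product of conjugates of $[x,y]$; it does not single out any information about $u,v,w$. When you write $u^2v^2w^{-2}=1$ as such a product, the decomposition is far from canonical, and the Lyndon pairing merely confirms that the identity sequence is Peiffer-equivalent to the empty one --- which is true of any identity sequence over an aspherical presentation. There is no mechanism by which ``feeding this pairing back'' recovers $[u,v]=1$: the conjugators $a_i$ live in $\F$ and are only controlled modulo $\F'$, which is exactly the metabelian information you already have from Theorem~\ref{conway}. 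In short, Lyndon's identity theorem here is equivalent to asphericity of the torus presentation, and asphericity is what makes the winding invariant faithful on $\F'/\F''$; it does not give you anything beyond Proposition~\ref{propbm}.

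A secondary issue: your reduction to $F=\F$ assumes $w\in\langle u,v\rangle$, which is not automatic from $w^2\in\langle u,v\rangle$. One really works in the free group $\langle u,v,w\rangle$ of rank at most $3$. This is easily fixed, but it is another place where the sketch is loose. If you want a proof in the spirit of this paper, Edmunds' automorphism trick followed by Wicks is the clean path; the winding invariant alone only reaches the metabelian statement.
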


Lyndon proved \cite{Lyn2} that in the hypothesis of the conjecture the group generated by $u,v,w$ is cyclic, and in particular $u$ and $v$ commute. Later, Edmunds gave in \cite{Edm} a short proof of the same fact by showing that the equation $u^2v^2=w^2$ is equivalent up to an automorphism to the equation $[u,v]=w^2$. Then he used Wicks characterization of commutators, Theorem \ref{wicks}. In the following we will prove some results in the same spirit as the Vaught conjecture: If $u,v$ are elements of a free group $F$ which satisfy certain equation, then they must commute. See \cite{Sch, LS} for generalizations of the Vaught conjecture. 

One of the key results in the article \cite{BMT} by Baumslag, Miller and Troeger, is the following \cite[Lemma 1]{BMT}

\begin{lema} (Baumslag, Miller, Troeger) \label{propbmt}
Let $u$ and $v$ be two words in a free group $F$ which do not commute. Then $vuv^{-1}u^{-2}$ is not conjugate in $F$ to either $u$ or $u^{-1}$.
\end{lema}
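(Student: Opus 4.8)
The plan is to use the winding invariant machinery developed in Section~\ref{secparalelogramos}, specifically the invariant $\iota$ and the relationship between conjugation and multiplication by monomials. Suppose for contradiction that $u,v\in F$ do not commute but $vuv^{-1}u^{-2}$ is conjugate to $u^{\pm 1}$. The first observation is that $vuv^{-1}u^{-2}$ has total exponents $-\exp(x,u), -\exp(y,u)$ in $x,y$, so for it to be conjugate to $u$ we need $\exp(x,u)=\exp(y,u)=0$, i.e. $u\in\FF$; and similarly for $u^{-1}$. Hence I would reduce immediately to the case $u\in\FF$, and then both $vuv^{-1}u^{-2}\in\FF$ and the conjugate ($u$ or $u^{-1}$) lie in $\FF$, so the winding invariant applies.

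Next I would compute winding invariants. Writing $(n,m)=\vv_v\in\Z^2$, Proposition~\ref{basica} gives $P_{vuv^{-1}u^{-2}}=X^nY^mP_u-2P_u=(X^nY^m-2)P_u$. On the other hand, if $vuv^{-1}u^{-2}$ is conjugate to $u^{\varepsilon}$ (with $\varepsilon=\pm1$) by some $g\in\F$ with $\vv_g=(k,l)$, then by Proposition~\ref{basica}(iii) its winding invariant equals $X^kY^l P_{u^{\varepsilon}}=\varepsilon X^kY^lP_u$. Therefore
\begin{equation}
(X^nY^m-2)P_u=\varepsilon X^kY^lP_u \quad\text{in }\ZZ.
\label{eqbmt}
\end{equation}
Since $u$ and $v$ do not commute, $u$ is nontrivial; if $P_u=0$ then $u\in\FFF\subseteq\F''$ and a separate, easier argument is needed (one can then work modulo $\F''$ or invoke that $u\in\F''$ forces $u=1$ because a free group is residually torsion-free nilpotent, hence $\bigcap\gamma_k(\F)=1$, combined with the fact that $vuv^{-1}u^{-2}$ being conjugate to $u$ would have to be handled directly — but actually the cleaner route is to note $u\in\FF\setminus\FFF$ can be assumed, else $u\in\F''$ in the metabelian quotient and one appeals to Lemma~\ref{propbmt}'s role in Lemma~\ref{Baum} rather than proving the free statement here). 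Assuming $P_u\neq0$, since $\ZZ$ is an integral domain we may cancel $P_u$ in \eqref{eqbmt}, obtaining $X^nY^m-2=\varepsilon X^kY^l$. Comparing this polynomial identity monomial-by-monomial: the left side has two terms unless $(n,m)=(0,0)$, in which case it is the constant $-1$; the right side is a single monomial with coefficient $\pm1$. The two-term case is impossible (a single monomial cannot equal $X^nY^m-2$), and in the case $(n,m)=(0,0)$ we get $-1=\varepsilon X^kY^l$, forcing $k=l=0$ and $\varepsilon=-1$; but $(n,m)=(0,0)$ means $\vv_v=0$, i.e. $v\in\FF$.

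So the remaining case is $u,v\in\FF$ with $vuv^{-1}u^{-2}$ conjugate to $u^{-1}$ — and here \eqref{eqbmt} becomes $(1-2)P_u=\varepsilon X^kY^lP_u$ only after we also know the conjugating element; more carefully, $P_{vuv^{-1}u^{-2}}=P_u-2P_u=-P_u$ directly (since $\vv_v=0$), which is indeed $P_{u^{-1}}$, so the winding invariant gives no contradiction and we must work harder. The main obstacle is exactly this degenerate case $u,v\in\FF$, where the abelianized/metabelian invariant is blind to the phenomenon. The plan here is to pass to the deeper structure: the statement of Lemma~\ref{propbmt} is really the base case that powers the general asphericity-based Lemma~\ref{Baum} (a generalization of the Baumslag--Miller--Troeger result) announced in the excerpt, which is proved using that one-relator presentations whose relator is not a proper power are aspherical (Lyndon's Simple Identity Theorem, quoted in Section~\ref{sectionhow}). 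Concretely, I expect the original proof to argue inside the one-relator group $\langle x,y\mid u\rangle$ (or to use Magnus' Freiheitssatz / the structure of one-relator groups) to handle the case where the winding invariant degenerates, rather than using $\W$ at all for that case. Thus my proposal is: use the winding invariant for a quick treatment of the generic case (reducing to $u\in\FF$ and killing everything except $u,v\in\FF$), and for the residual degenerate case invoke the aspherical one-relator machinery of Section~\ref{sectionhow} (Lyndon's identity theorem) applied to $\langle x,y\mid u\rangle$, exploiting that $u$ can be taken not a proper power after replacing $u$ by a root — the hardest point being to show $vuv^{-1}u^{-2}\sim u^{-1}$ with $u,v\in\FF$ forces $[u,v]=1$, which is precisely the content that the winding invariant cannot see and that requires genuine one-relator / free-group combinatorics.
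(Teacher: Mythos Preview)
Your reduction to $u\in\FF$ is the crucial flaw. Abelianizing, $vuv^{-1}u^{-2}$ has the same image as $u^{-1}$, so conjugacy to $u^{-1}$ imposes \emph{no} constraint on $\exp(x,u),\exp(y,u)$; your ``and similarly for $u^{-1}$'' is simply false. Thus for the case $vuv^{-1}u^{-2}\sim u^{-1}$ with $u\notin\FF$, the ordinary winding invariant $\W:\FF\to\ZZ$ is not even defined on the element in question, and your argument never gets started. Even in the cases where your reduction does go through, you end up (as you yourself note) with a residual degenerate case that the invariant $\W$ cannot see, and you then propose to switch to the one-relator machinery of Section~\ref{sectionhow}. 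So the first half of your plan is partly incorrect and entirely superfluous.

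The paper's proof (given as the special case $n=1$, $m=-2$, $k=\pm1$ of Lemma~\ref{Baum}) uses the one-relator machinery \emph{from the outset} and in a uniform way. One takes the root $r$ of $u$, reduces to a rank-$2$ free group $H=\langle x,y\rangle$ containing $r,v,w$ (where $w$ is the conjugator), and works with the generalized winding invariant $N(r)\to N(r)/N(r)'$; by asphericity of $\langle x,y\mid r\rangle$ this is a free $\Z[G]$-module on one generator $\rho$, with $G=\F/N(r)$. The equation $vuv^{-1}u^{-2}=wu^{\pm1}w^{-1}$ becomes $\overline{v}-2=\pm\overline{w}$ in $\Z[G]$, which forces $\overline{v}=\overline{w}=1$; hence $r,v,w\in N(r)$, so $\F=N(r)$, contradicting that a one-relator two-generator group cannot be trivial. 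No case analysis on whether $u\in\FF$ is needed, and the argument treats conjugacy to $u$ and to $u^{-1}$ simultaneously.
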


Their proof is two pages long and considers several cases taking into account whether the words involved are reduced or if there are cancellations to be performed. Here we give a shorter proof of a more general statement using the appropriate version of the winding invariant.

\begin{lema} \label{Baum}
Let $u$ and $v$ be two words in a free group $F$ which do not commute. Let $n,m,k\in \Z$ with $n,m\neq 0$. Then $vu^nv^{-1}u^{m}$ is not conjugate in $F$ to $u^k$.
\end{lema}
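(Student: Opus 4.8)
The plan is to use the general version of the winding invariant attached to the one-relator presentation $\langle x_1,x_2,\ldots \mid r\rangle$ where $r$ is a suitable relator built from the equation, exactly as foreshadowed in Subsection \ref{subversion}. Suppose for contradiction that $u,v\in F$ satisfy $gvu^nv^{-1}u^mg^{-1}=u^k$ for some $g\in F$, i.e. $vu^nv^{-1}u^m$ is conjugate to $u^k$. First I would reduce to the case where $\langle u,v\rangle$ is free of rank $2$: if $u,v$ do not commute in $F$, the subgroup they generate is a free group, and since it is $2$-generated and non-abelian it is free of rank exactly $2$ (a subgroup of a free group is free, and a $2$-generated non-abelian free group has rank $2$). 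All the elements appearing in the equation lie in this subgroup, so we may assume $F=F(u,v)$ is free on $u,v$.

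The key step is to feed this into Lemma \ref{Baum}'s engine, which I expect to be Lyndon's Simple Identity Theorem (stated just above) applied to the one-relator presentation $\mathcal{P}=\langle x,y \mid r\rangle$ with $r=xyx^{-1}y^{-1}$-type data — more precisely, I would pick the presentation with relator $r = (vu^nv^{-1}u^m)(u^k)^{-1}$ rewritten in the free generators $x,y$ corresponding to $u,v$, after checking $r$ is not a proper power. The hypothesis that $vu^nv^{-1}u^m$ is conjugate to $u^k$ means that $r$ together with a conjugate of $u^k$, or rather the relation forces $u^k$ to be trivial in the one-relator quotient $G=F/N(r)$; but a one-relator group with relator not a proper power is torsion-free (asphericity, Proposition \ref{asf}, plus the remark that aspherical presentations present torsion-free groups), so $u^k=1$ forces $u=1$ in $G$ — and then a Magnus-Freiheitssatz-type argument or a direct winding-invariant computation on the Cayley-graph lift shows this is impossible unless $u$ and $v$ already commute in $F$. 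Concretely, I would lift the loop represented by $r$ to the appropriate cover $\widetilde{K}$ of the standard complex (the cover corresponding to $F'$ or to the kernel of $F\to \Z^2$), compute its homology class, and observe that the coefficient pattern — governed by $1+T+T^2+\cdots+T^{k-1}$ in the $u$-direction paired against the $v$-translate — cannot vanish when $n,m\neq 0$, giving the contradiction.

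Let me sharpen which invariant to use: since $u^n$ and $u^m$ appear with $u$ alone while $v$ conjugates, the relevant grading is by $\exp(v,\cdot)\in\Z$, so the right tool is the variant $W':N(y)\to \Z[X^{\pm1}]$ from Theorem \ref{xxmenos1} (with the roles of the two generators set so that $x\leftrightarrow u$, $y\leftrightarrow v$, and we track winding around the loops indexed by the $v$-exponent). Writing $T$ for the variable, the curve of $vu^nv^{-1}u^m$ contributes, after the conjugation by $g$ is absorbed (it only multiplies $W'$ by a monomial $T^{\exp(v,g)}$), a Laurent polynomial of the form $T^{s}\cdot n' + \cdots$ that is genuinely non-constant because $n\neq 0\neq m$ and the two blocks of $u$-powers sit at different $v$-levels $0$ and $1$; on the other hand $u^k$ lies at a single $v$-level, so its winding invariant is a constant times $k$. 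Matching the two (they must be equal up to a unit monomial, since the two words are conjugate and conjugation acts by monomial multiplication on $W'$) forces the non-constant polynomial to be a monomial, hence $n=0$ or the two $v$-levels coincide, i.e. $[u,v]=1$ — contradiction.

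The main obstacle I anticipate is bookkeeping the conjugating element $g$ correctly and making the "$W'$ of conjugate elements differ by a unit monomial" statement precise in the variant setting of Theorem \ref{xxmenos1} (the equivariance $W'(hwh^{-1}) = \overline{h}\cdot W'(w)$ needs $w\in N$, so I must first verify $vu^nv^{-1}u^m$ and $u^k$ genuinely lie in the relevant normal subgroup $N$, which they do since both have trivial total $v$-exponent: $\exp(v, vu^nv^{-1}u^m)=0$ and $\exp(v,u^k)=0$). Once that membership and equivariance are in hand, the degree/monomial argument is short. A secondary subtlety is ruling out the degenerate sub-case where $u$ itself is a proper power in $F(u,v)$ or where $k=0$; if $k=0$ the statement says $vu^nv^{-1}u^m\neq 1$, which is immediate in a free group since $\exp(u,\cdot)=n+m$ could vanish but then the word is a non-trivial element of the commutator-like subgroup with non-zero winding invariant, again by the computation above — so this folds into the same argument with $k=0$ giving constant polynomial $0$.
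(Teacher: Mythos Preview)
Your proposal has a genuine gap in the very first reduction: you assert that since $vu^nv^{-1}u^m$ and $u^k$ both lie in $\langle u,v\rangle$, you may assume $F=F(u,v)$. But the conjugating element $g$ need \emph{not} lie in $\langle u,v\rangle$, and conjugacy in $F$ does not descend to conjugacy in a subgroup. A small counterexample: in $F(a,b)$ the elements $a$ and $bab^{-1}$ are conjugate, both lie in $H_0=\langle a,\, bab^{-1}\rangle$ (which is free of rank~$2$ on those two generators), yet they are \emph{not} conjugate in $H_0$ since distinct basis elements of a free group are never conjugate. So your sentence ``All the elements appearing in the equation lie in this subgroup'' overlooks $g$, and the whole computation with $W':N(u)\to\Z[X^{\pm 1}]$ (which, incidentally, requires $x\leftrightarrow v$, $y\leftrightarrow u$ rather than the labelling you wrote, so that $N(y)=N(u)$ is the set of words of trivial $v$-exponent---your membership check is for the right subgroup but your stated identification is backwards) only proves the lemma under the extra hypothesis that the conjugating element already lies in $\langle u,v\rangle$.

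The paper repairs exactly this by bringing the conjugator into the picture: it passes to the subgroup $H=\langle r,v,w\rangle$, where $w$ is the conjugating element and $r$ is the \emph{root} of $u$ (so that $\langle\text{basis of }H\mid r\rangle$ is aspherical---if you used $u$ itself it might be a proper power in $H$ and asphericity would fail). One then argues on the rank of $H$: rank~$3$ gives an immediate contradiction since the two sides are distinct reduced words in the basis $\{r,v,w\}$; rank~$1$ forces $u,v$ to commute; rank~$2$ is the interesting case, where the winding invariant $W:N(r)\to N(r)/N(r)'\cong\Z[G]$ yields $n\overline{v}+m=k\overline{w}$ in $\Z[G]$ and the two-terms-versus-one-term argument (essentially your final idea) forces $\overline{v}=\overline{w}=1$, whence $H\subseteq N(r)$, which is absurd for a one-relator two-generator presentation. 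Your detour through the relator $(vu^nv^{-1}u^m)(u^k)^{-1}$ and a Freiheitssatz argument is not needed and does not obviously lead anywhere.
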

\begin{proof}
Suppose first $k\neq 0$ and that there exists $w\in F$ such that $vu^nv^{-1}u^{m}=wu^{k}w^{-1}$. Let $r\in F$ be the root of $u$, that is $r$ is not a proper power and there exists $l\ge 1$ such that $u=r^l$. The subgroup $H$ of $F$ generated by $r,v,w$ is free. If $\textrm{rk}(H)=3$, then $\{r,v,w\}$ is a basis of $H$ and $vr^{ln}v^{-1}r^{lm}$, $wr^{lk}w^{-1}$ are different reduced words in $F(r,v,w)$, which is absurd. Since $u$ and $v$ do not commute we must have $\textrm{rk}(H)=2$. We may assume $H=\F=\langle x,y\rangle$. Let $\mathcal{P}=\langle x,y| r\rangle$. Then $\mathcal{P}$ is aspherical and by Proposition \ref{asf}, the relation module $N(r)/N(r)'$ is a free $\Z[G]$-module generated by $\rho=rN(r)'$. The group $G$ is of course $\F/N(r)$. We apply the corresponding version of the winding invariant, that is the abelianization map $W: N(r)\to N(r)/N(r)'$. By hypothesis $ln\overline{v} \rho+lm \rho =W(vr^{ln}v^{-1}r^{lm})=W(wr^{lk} w^{-1})=lk \overline{w} \rho$. Here, $\overline{v}$ and $\overline{w}$ denote classes in $G$. Therefore $n\overline{v}+m=k \overline{w} \in \Z[G]$. Since $n,m,k\neq 0$, this implies that $\overline{v}=\overline{w}=1\in G$, that is $v,w\in N(r)$. Since $H=\F$ is generated by $r,v,w$, then $\F \subseteq N(r)$ and then $\mathcal{P}$ is a presentation of the trivial group with two generators and one relator. This is absurd. The case $k=0$ is simpler because we have $vu^nv^{-1}u^m=1\in F$ and since $u,v$ do not commute, the group generated by $r,v$ is free of rank $2$. Exactly as before we obtain $n\overline{v} +m=0 \in \Z[G]$ and since $n,m\neq 0$, $r,v\in N(r)$, which is absurd.
\end{proof}

\begin{coro} \label{doscom2}
Let $u,v$ be two elements in a free group $F$, and let $n,k\in \Z$ with $k\neq n\neq 0$. Suppose that there exists $w\in F$ such that $[v,u^n]=[w,u^k]$. Then $u$ and $v$ commute.
\end{coro}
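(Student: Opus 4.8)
The plan is to reduce Corollary \ref{doscom2} to Lemma \ref{Baum} by a short algebraic manipulation of the given equation. First I would expand the commutators: with the convention $[a,b]=aba^{-1}b^{-1}$, the hypothesis $[v,u^n]=[w,u^k]$ reads $vu^nv^{-1}u^{-n}=wu^kw^{-1}u^{-k}$. Multiplying both sides on the right by $u^k$ rearranges this to $vu^nv^{-1}u^{k-n}=wu^kw^{-1}$. Setting $m=k-n$, this says precisely that the word $vu^nv^{-1}u^m$ is conjugate in $F$ (namely by $w$) to $u^k$.

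Next I would check that the integers satisfy the hypotheses of Lemma \ref{Baum}: we are given $n\neq 0$, and since $k\neq n$ we have $m=k-n\neq 0$. No restriction on $k$ is needed, since Lemma \ref{Baum} is stated for arbitrary $k\in\Z$ (including $k=0$, where ``conjugate to $u^k$'' simply means equal to $1$). Therefore, if $u$ and $v$ did not commute, Lemma \ref{Baum} applied with these $n,m,k$ would assert that $vu^nv^{-1}u^m$ is not conjugate to $u^k$, contradicting the identity obtained in the previous step. Hence $u$ and $v$ must commute, which is the claim.

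Since the whole argument is this one-step reduction, I do not expect a genuine obstacle: all the substantive work already resides in Lemma \ref{Baum}, whose proof in turn rests on the asphericity of one-relator presentations whose relator is not a proper power (Proposition \ref{asf}) together with the equivariance of the generalized winding invariant. The only point that requires a moment's care is bookkeeping with the commutator sign convention $[a,b]=aba^{-1}b^{-1}$ when multiplying through by $u^k$, so that the exponent appearing next to the inner conjugate of $u$ is indeed $k-n$; one should also record at the outset the trivial case $u=1$, in which $u$ and $v$ commute for free.
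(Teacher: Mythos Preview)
Your reduction is correct and is exactly the intended one: the paper states the corollary immediately after Lemma~\ref{Baum} without a separate proof, precisely because rewriting $[v,u^n]=[w,u^k]$ as $vu^nv^{-1}u^{k-n}=wu^kw^{-1}$ and applying the contrapositive of Lemma~\ref{Baum} with $m=k-n\neq 0$ is the whole argument. Your bookkeeping on the commutator convention and the check that $n,m\neq 0$ are both fine.
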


The authors of \cite{BMT} use Lemma \ref{propbmt} to show that if $u$ and $w$ are two non-commuting elements in the free group $F(X)$ generated by a set $X$, then $\langle X | [wuw^{-1},u]u^{-1}\rangle$ is not residually finite. Recall that a group $G$ is said to be residually finite if for every $1\neq g\in G$ there exists a finite group $H$ and a homomorphism $G\to H$ which maps $g$ to a non-identity element. Equivalently, for every $1\neq g\in G$ there exists a normal subgroup $N \triangleleft G$ of finite index not containing $g$. Every finitely generated residually finite group is hopfian, so in particular Baumslag-Solitar groups give examples of one-relator non-residually finite groups. There is no general method known which decides for a given one-relator group whether it is residually finite or not. A well-known conjecture by Baumslag \cite[Conjecture A]{Bau5} proved by Wise in \cite[Section 18]{Wis} says:

\begin{teo} (Wise)
Every one-relator group with torsion is residually finite.
\end{teo}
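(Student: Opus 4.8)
The plan is to follow Wise's argument, which in fact yields the stronger conclusion that every one-relator group with torsion is \emph{virtually special}, hence linear over $\Z$ and therefore residually finite. So suppose $G=\langle X\mid w^n\rangle$ with $n\ge 2$, $w$ cyclically reduced and not a proper power in $F(X)$. The first step is to recall that such a $G$ is word-hyperbolic; this is classical, following from B.B.\ Newman's spelling theorem. Hyperbolicity is what makes the later quasiconvexity and cube-complex machinery available.

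The second step is the Magnus--Moldavanskii hierarchy. By an iterated Tietze-transformation argument of Magnus, refined by Moldavanskii, $G$ is either free or splits as an amalgamated free product $A *_C B$ or an HNN extension $A *_C$ along a finitely generated free subgroup $C$ (a \emph{Magnus subgroup}), where $A$ and $B$ are one-relator groups with torsion of strictly smaller complexity, measured say by the number of generators or the relator length. One checks that these edge groups are quasiconvex (and, in the relevant cases, malnormal) in the ambient group, so iterating produces a finite \emph{quasiconvex hierarchy} for $G$ terminating in free groups.

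The third and decisive step is Wise's Quasiconvex Hierarchy Theorem: a word-hyperbolic group admitting a finite quasiconvex hierarchy is virtually special. Its proof first cubulates $G$, by applying Sageev's construction to a family of codimension-$1$ quasiconvex subgroups assembled inductively from the splittings together with wall structures on the vertex groups, yielding a proper cocompact action on a finite-dimensional CAT(0) cube complex; then it argues by induction along the hierarchy, at each stage passing to a finite-index subgroup whose cube-complex quotient is special --- no self-osculating, one-sided or inter-osculating hyperplanes --- the inductive step being powered by the Malnormal Special Quotient Theorem. Finally one invokes Haglund--Wise's embedding of a special group into a right-angled Artin group, the $\Z$-linearity of right-angled Artin groups, and Mal'cev's theorem that finitely generated linear groups are residually finite.

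The main obstacle is the third step. The Quasiconvex Hierarchy Theorem, and the Malnormal Special Quotient Theorem it rests on, require the full apparatus of special cube complexes --- canonical completion and retraction, separability of quasiconvex subgroups, a combination theorem for special complexes glued along quasiconvex subgroups --- and there is no shortcut. In particular, none of the winding-invariant ideas of this paper contribute here: the winding invariant detects obstructions to residual nilpotence or residual solvability, not to residual finiteness, which is geometric in nature.
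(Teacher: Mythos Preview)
The paper does not prove this theorem at all: it simply quotes it as a result of Wise and cites \cite[Section 18]{Wis}. There is no ``paper's own proof'' to compare against; the statement appears only as background in Subsection~\ref{resfin} to contextualize the discussion of residual properties of one-relator groups.

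Your sketch is a faithful high-level outline of Wise's actual argument --- hyperbolicity via Newman's spelling theorem, the Magnus--Moldavanskii quasiconvex hierarchy, the Quasiconvex Hierarchy Theorem yielding virtual specialness, and then Haglund--Wise plus Mal'cev --- and you are right that nothing in the winding-invariant toolkit of this paper bears on it. As a self-contained proof it is of course only a sketch: each of the three steps (especially the third, with the Malnormal Special Quotient Theorem and the cubulation) is a substantial piece of work that you have named rather than carried out. But since the paper itself gives no argument, your outline already goes well beyond what the paper contains.
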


For more results on residually finite one-relator groups see \cite{All0, All1, All2, Bau, Bor}.

The proof of Baumslag, Miller and Troeger in \cite{BMT} can be easily adapted to prove the following.

\begin{prop}(Baumslag, Miller, Troeger \cite[Theorem 1]{BMT}) \label{bmtgeneral}
Let $u$ and $w$ be two non-commuting elements in the free group $F(X)$ generated by a set $X$. Then the group $G=\langle X | \ [wu^{k}w^{-1},u]u^{-1}\rangle$ is not residually finite for any $k\neq 0$.
\end{prop}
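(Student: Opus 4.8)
The plan is to write $v=wu^{k}w^{-1}$, so that the relator becomes $r=[wu^{k}w^{-1},u]u^{-1}=vuv^{-1}u^{-2}$ and in $G$ one has the Baumslag--Solitar type relation $vuv^{-1}=u^{2}$. The first thing I would record is that $u$ and $v$ do not commute in $F$. Indeed, let $r_{0}$ be the root of $u$ in $F$, so $u=r_{0}^{\ell}$ with $r_{0}$ not a proper power; if $v$ commuted with $u$ then $v$ would be a power $r_{0}^{t}$ of $r_{0}$, and since $v=wu^{k}w^{-1}=(wr_{0}w^{-1})^{\ell k}$ with $\ell k\neq 0$ (here the hypothesis $k\neq 0$ is used) and $t\neq 0$ (otherwise $v=1$, hence $u=1$), comparing root-free bases forces $wr_{0}w^{-1}=r_{0}^{\pm1}$; then either $w$ commutes with $r_{0}$, or $w^{2}$ does, the latter again giving that $w$ commutes with $r_{0}$ because $F$ is torsion-free. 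In all cases $w$ commutes with $u$, contradicting the hypothesis.

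Next I would show that $G$ is not residually finite as soon as the image $\bar u$ of $u$ in $G$ is nontrivial. Let $Q$ be any finite quotient of $G$ and let $d$ be the order of the image of $u$ in $Q$. Since $v$ is a conjugate of $u^{k}$, the image of $v$ has order $e=d/\gcd(d,k)$, which divides $d$. Conjugating the image of $u$ by the $e$-th power of the image of $v$ and using $vuv^{-1}=u^{2}$ gives $\bar u=\bar u^{\,2^{e}}$ in $Q$, so $d\mid 2^{e}-1$; in particular $e\mid 2^{e}-1$. An elementary argument finishes the point: if $e>1$ and $p$ is the smallest prime dividing $e$, then the multiplicative order of $2$ modulo $p$ divides $\gcd(e,p-1)=1$, forcing $p\mid 1$, which is absurd. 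Hence $e=1$, so $d\mid 2-1=1$, i.e. the image of $u$ is trivial in $Q$. Thus the image of $u$ is trivial in every finite quotient of $G$, so if $\bar u\neq 1$ in $G$ then $G$ is not residually finite.

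It remains to prove $\bar u\neq 1$ in $G$, equivalently $u\notin N(r)$, and this is the step that adapts the argument of Baumslag, Miller and Troeger, with their Lemma \ref{propbmt} replaced by the more general Lemma \ref{Baum}. Suppose $u\in N(r)$. As $r=vuv^{-1}u^{-2}$ is a product of conjugates of $u^{\pm1}$ we always have $N(r)\subseteq N(u)$, so the assumption yields $N(r)=N(u)$; by the classical fact that two elements of a free group with the same normal closure are conjugate or inverse-conjugate, $u$ would then be conjugate in $F$ to $r$ or to $r^{-1}$, that is, $vuv^{-1}u^{-2}$ would be conjugate to $u$ or to $u^{-1}$. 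This contradicts Lemma \ref{Baum} applied to the non-commuting pair $u,v$ with $n=1$ and $m=-2$, which says that $vu^{n}v^{-1}u^{m}$ is not conjugate to any power of $u$. Hence $u\notin N(r)$ and, by the previous paragraph, $G$ is not residually finite.

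The essential difficulty has already been dealt with in Lemma \ref{Baum}, whose proof rests on the asphericity of one-relator presentations and on the relation module being free; granting that lemma, the only new ingredients are the number-theoretic remark that $e\mid 2^{e}-1$ forces $e=1$ and the bookkeeping with normal closures. Within the present argument the one delicate point is ensuring that $u$ and $v=wu^{k}w^{-1}$ do not commute, which is exactly where the standing hypotheses $k\neq 0$ and $[u,w]\neq 1$ are needed; beyond that I expect no obstacle.
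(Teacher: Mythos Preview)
Your proof is correct and follows the same overall strategy as the paper's: show that $u$ and $v=wu^kw^{-1}$ do not commute, invoke the lemma together with Magnus's theorem to get $\bar u\neq 1$ in $G$, and then use a smallest-prime argument to show $\bar u$ dies in every finite quotient. The only cosmetic differences are that the paper cites the original Lemma~\ref{propbmt} rather than its generalization Lemma~\ref{Baum} (your case $n=1$, $m=-2$ is exactly Lemma~\ref{propbmt}), and packages the finite-quotient step via the automorphism induced by $v$ on the order-$p$ subgroup of $\langle \bar u\rangle$ rather than via the divisibility $e\mid 2^e-1$; both arguments hinge on the same smallest-prime trick.
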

\begin{proof}
Since $u$ and $w$ do not commute, they freely generate a free group of rank 2. Thus, $v=wu^kw^{-1}$ and $u$ do not commute either. By Lemma \ref{propbmt}, $vuv^{-1}u^{-2}$ is not a conjugate of $u$ nor $u^{-1}$. A well-known result by Magnus says that if two elements in a free group have the same normal closure, then one is conjugate to the other or to its inverse. Thus, $u\neq 1\in G$. We show that for any homomorphism $G\to H$ onto a finite group $H$, $u=1\in H$. Suppose that $u\neq 1\in H$. Let $m\ge 2$ be the order of $u$ in $H$ and let $p$ be the smallest positive prime dividing $m$. Since $vuv^{-1}=u^2$ in $H$, then $vu^{\frac{m}{p}}v^{-1}=u^{\frac{2m}{p}}$, so $p\neq 2$ and the conjugation by $v$ induces an automorphism $\mu$ of $\Z_p \simeq \langle u^{\frac{m}{p}} \rangle \leq H$. The order $\textrm{ord}(\mu)$ of $\mu$ divides $p-1$. On the other hand $\mu^m$ is the conjugation by $v^m=wu^{km}w^{-1}=1\in H$, so $\textrm{ord}(\mu)$ divides $m$. If $\textrm{ord}(\mu)\ge 2$, a prime $q$ dividing $\textrm{ord}(\mu)$ yields a contradiction. Thus $\mu$ is the identity of $\langle u^{\frac{m}{p}}\rangle$, which means that $u^{\frac{m}{p}}=u^{\frac{2m}{p}}$, so $u^{\frac{m}{p}}=1\in H$, which is again a contradiction. This proves that $u=1\in H$.      
\end{proof}

The presentations in Theorem \ref{bmtgeneral} are of the form $\langle X| vuv^{-1}u^{-2}\rangle$ for $u$ and $v$ non-commuting. The presentations of the form $\langle X| vu^lv^{-1}u^{m}\rangle$ for $u,v$ non-commuting are known to be non-residually finite if $|l|, |m|$ and $1$ are pairwise different, by a result of Meskin \cite[Theorem B]{Mes}. According to Meskin, no example of a non-residually finite presentation of the form above is known if $l=-m$ or $l=m\neq \pm 1$. 

Lemma \ref{Baum} can be used to obtain examples of non-residually solvable one-relator groups.
Recall that a group $G$ is said to be residually solvable if the intersection $\bigcap\limits_{i\ge 0} G^{(i)}$ of the members in the derived series is the trivial group. Equivalently $G$ is residually solvable if for every non-identity element $g\in G$ there is a normal subgroup $N\vartriangleleft G$ such that $g\notin N$ and $G/N$ is solvable. Kropholler proves in \cite{Kro} that Baumslag-Solitar groups have second derived subgroup free, and in particular they are residually solvable, though they are non-Hopfian and thus not residually finite. In \cite{Bau2} and \cite[4.3]{Bau} Baumslag describes other classes of one-relator groups which are residually solvable. Residually nilpotent implies residually solvable, so the results of \cite{BM2} give more examples of residually solvable one-relator groups. See \cite{KDB} for more on this topic.

All the groups in the statement of Theorem \ref{bmtgeneral} are non-residually solvable. We can use our generalization Lemma \ref{Baum} to prove that this holds for a larger class.

\begin{prop} \label{ressol}
Let $u$ and $v$ be two non-commuting elements in $F(X)$ such that $v$ lies in the normal closure $N(u)$ of $u$. Then $G=\langle X | [v, u^k]u^{-1}\rangle$ is not residually solvable for any $k\neq 0,-1$.
\end{prop}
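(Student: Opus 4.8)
The plan is to combine Lemma \ref{Baum} with the structure theory of one-relator groups and a dynamical argument analogous to the one in Proposition \ref{bmtgeneral}. First I would observe that since $v\in N(u)$ but $u,v$ do not commute, the relator $r=[v,u^k]u^{-1} = vu^kv^{-1}u^{-k}u^{-1}$ is a genuinely nontrivial element of $N(u)$; I want to check it is not a proper power in $F(X)$ so that the one-relator presentation $\langle X\mid r\rangle$ is aspherical (Dyer–Vasquez, via Lyndon). Then, as in Proposition \ref{bmtgeneral}, by Magnus's theorem on normal closures in free groups, to show $u\neq 1$ in $G$ it suffices to verify that $r$ is not conjugate in $F(X)$ to $u$ or $u^{-1}$; and indeed $r = (vu^kv^{-1})u^{-k-1}$ has the form $vu^nv^{-1}u^m$ with $n=k\neq 0$ and $m=-k-1\neq 0$ (here $k\neq 0,-1$ is exactly what guarantees $m\neq 0$), so Lemma \ref{Baum} applies and tells us $r$ is not conjugate to any power of $u$, in particular not to $u^{\pm 1}$. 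Hence $u$ is a nontrivial element of $G$.

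Next I would show that $u$ dies in every solvable quotient of $G$, which yields non-residual-solvability. Suppose $\phi\colon G\to S$ with $S$ solvable and $\phi(u)\neq 1$. In $G$ the relation reads $vu^kv^{-1}=u^{k+1}$, so in $S$ the conjugation by $\phi(v)$ sends $\phi(u^k)\mapsto \phi(u^{k+1})$. Since $v\in N(u)\leq F(X)$, the image $\phi(v)$ lies in the normal closure of $\phi(u)$ in $S$; I want to use solvability of $S$ to derive a contradiction from the relation $\phi(v)\phi(u)^k\phi(v)^{-1}=\phi(u)^{k+1}$ together with $\phi(v)$ being a product of conjugates of $\phi(u)^{\pm1}$. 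The cleanest route is probably to pass to the finite-order or residually-finite setting is not available here, so instead I would argue inside $S$ directly: let $A$ be the last nontrivial term of the derived series of $S$ containing a power of $\phi(u)$ — more precisely, work in the abelian section where $\phi(u)$ first becomes detectable — and use that conjugation acts trivially on sufficiently deep abelian sections modulo taking further commutators, forcing $\phi(u)^k=\phi(u)^{k+1}$, i.e. $\phi(u)=1$, a contradiction.

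Concretely, I expect the argument to run as follows: since $\phi(v)$ is a product of conjugates of $\phi(u)^{\pm1}$ and $S$ is solvable of derived length $d$, an induction on $d$ shows that $\phi(v)$ acts on $\langle\langle \phi(u)\rangle\rangle / [\langle\langle \phi(u)\rangle\rangle, \langle\langle\phi(u)\rangle\rangle]$ — and then on successive abelian quotients — in a way that is eventually unipotent, while the relation $\phi(v)\phi(u)^k\phi(v)^{-1}=\phi(u)^{k+1}$ scales $\phi(u)$ by the non-unipotent factor $(k+1)/k$ on the abelianization; matching these forces $\phi(u)$ to vanish in each abelian section, hence $\phi(u)=1$. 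Therefore no solvable quotient of $G$ detects the nontrivial element $u$, so $G$ is not residually solvable.

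The main obstacle will be the second paragraph: getting from "$\phi(v)$ is a product of conjugates of $\phi(u)$ and $S$ is solvable" to "$\phi(v)$ acts trivially on the relevant abelian sections" requires some care — one must identify the right filtration of $\langle\langle\phi(u)\rangle\rangle$ and check that the relator's consequence genuinely propagates down it. An alternative that may be cleaner is to mimic Kropholler's style argument or to reduce to the metabelian quotient $G/G''$ and apply the winding invariant of $\langle X\mid r\rangle$ directly: there $W(r)=(1-\overline{u}^{\,k})(\ldots)$ can be computed, and one checks the relation module element corresponding to $u$ is nonzero modulo the ideal generated by $W(r)$, using $k\neq 0,-1$; but since we only want non-residual-solvability, it actually suffices to find one solvable (indeed metabelian) quotient where $u$ survives — wait, that is the wrong direction. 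We need $u$ to die in *every* solvable quotient; so the filtration argument, or an induction on derived length using that $u^{k+1}u^{-k}=u$ is a "$\phi(v)$-eigenvector-type" relation incompatible with unipotent action, seems unavoidable, and is where I would concentrate the effort.
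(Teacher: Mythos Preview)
Your argument that $u\neq 1$ in $G$ is correct and matches the paper's: since $r=[v,u^k]u^{-1}=vu^kv^{-1}u^{-k-1}\in N(u)$, if $u=1$ in $G$ then $N(u)=N(r)$, and Magnus' theorem forces $r$ to be conjugate to $u^{\pm 1}$, contradicting Lemma~\ref{Baum} with $n=k\neq 0$, $m=-k-1\neq 0$. (The asphericity discussion is unnecessary here.)

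Where you go astray is the second half. You are trying to build a filtration/unipotence argument to show $u$ dies in every solvable quotient, and you flag this as ``the main obstacle.'' But it is a one-line induction, and this is exactly what the paper does. In $G$ the relation reads $u=[v,u^k]$, so $u\in G'$. Because $v\in N(u)$ in $F(X)$, the image of $v$ in $G$ is a product of conjugates of $u^{\pm 1}$; since $G'$ is normal and $u\in G'$, this gives $v\in G'$ as well. Then $u=[v,u^k]\in [G',G']=G''$. Iterating, $u\in G^{(n)}$ for every $n$. That is the entire content of the paper's ``It is clear that $u$ is in every term of the derived series of $G$.''

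Your talk of $(k+1)/k$ scaling and eventual unipotence is circling this same point from outside: in any abelian quotient the relation $vu^kv^{-1}=u^{k+1}$ already collapses to $u=1$, and the hypothesis $v\in N(u)$ is precisely what lets you push this down the derived series. Once you see the direct induction above, there is no obstacle at all.
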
 
\begin{proof}
It is clear that $u$ is in every term of the derived series of $G$. We must show that $u\neq 1$. Otherwise, by Magnus' result, $[v,u^k]u^{-1}=vu^kv^{-1}u^{-k-1}$ would be conjugate to $u$ or $u^{-1}$.
\end{proof}

\begin{prop} \label{resnil}
Let $u$ and $v$ be two non-commuting elements in $F(X)$. Then the group $G=\langle X | [v, u^k]u^{-1}\rangle$ is not residually nilpotent for any $k\neq 0,-1$.
\end{prop}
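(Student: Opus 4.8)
The plan is to show that $u$ itself is a non-trivial element lying in every term $\gamma_n(G)$ of the lower central series of $G$; this contradicts residual nilpotence at once.

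First, note that the defining relation $[v,u^k]u^{-1}=1$ says exactly that $u=[v,u^k]$ in $G$. From this I would prove by induction on $n$ that $u\in\gamma_n(G)$. The case $n=1$ is trivial. Assuming $u\in\gamma_n(G)$, also $u^k\in\gamma_n(G)$, hence $[v,u^k]\in[G,\gamma_n(G)]=\gamma_{n+1}(G)$; since $[v,u^k]=u$ in $G$, we conclude $u\in\gamma_{n+1}(G)$. Therefore $u\in\bigcap_{n\ge 1}\gamma_n(G)$.

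It remains to check that $u\neq 1$ in $G$, and this is the only real content. Since $u$ and $v$ do not commute in $F(X)$, they generate a free subgroup of rank two, and the relator $r=[v,u^k]u^{-1}=vu^kv^{-1}u^{-(k+1)}$ clearly lies in the normal closure $N(u)$, so $N(r)\subseteq N(u)$. Suppose $u=1$ in $G$, i.e.\ $u\in N(r)$; then also $N(u)\subseteq N(r)$, so $N(u)=N(r)$. By the classical theorem of Magnus (equal normal closures in a free group force one element to be conjugate to the other or to its inverse), $u$ would be conjugate to $r$ or to $r^{-1}$, equivalently $r$ would be conjugate to $u$ or to $u^{-1}$. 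But $r=vu^kv^{-1}u^{-(k+1)}$ has the form $vu^nv^{-1}u^m$ with $n=k\neq 0$ and $m=-(k+1)\neq 0$ (here we use $k\neq 0,-1$), so Lemma \ref{Baum} forbids $r$ from being conjugate to any power of $u$, a contradiction. Hence $1\neq u\in\bigcap_{n\ge 1}\gamma_n(G)$, and $G$ is not residually nilpotent.

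The main obstacle is precisely the non-triviality of $u$ in $G$; the inductive membership $u\in\gamma_n(G)$ is immediate once that is known. Fortunately this is exactly the situation Lemma \ref{Baum} is designed for, so the argument reduces to combining Magnus' normal-closure theorem with that lemma, in the same spirit as the proof of Proposition \ref{ressol}. Note, however, that the present lower-central-series argument needs no extra hypothesis on $v$, whereas the derived-series argument of Proposition \ref{ressol} requires $v\in N(u)$ in order to iterate inside the derived series rather than the lower central series.
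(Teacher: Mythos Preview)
Your proof is correct and follows exactly the same route as the paper's (one-line) proof: show that $u$ lies in every term of the lower central series via the identity $u=[v,u^k]$, and rule out $u=1$ in $G$ by combining Magnus' normal-closure theorem with Lemma~\ref{Baum}. You have simply spelled out in full the details the paper leaves implicit (the induction on $n$ and the Magnus step, the latter being copied verbatim from the proof of Proposition~\ref{ressol} immediately above).
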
 
\begin{proof}
Note that $u$ is in every element of the lower central series of $G$ and $u\neq 1\in G$ by Lemma \ref{Baum}.
\end{proof}

\subsection{More applications}

This subsection contains a list of comments which do not intend to be original. They are simple applications of the ideas used in the rest of the section and they are motivated by the proofs in \cite{BMT} and \cite{LN} which could be substantially shortened by using these methods. 

From Lyndon's Identity Theorem we immediately deduce the following

\begin{prop} \label{aumentacion}
Let $u$ be a nontrivial element of a free group $F$. Let $n\ge 1$, $m_1,m_2,\ldots ,m_n\in \Z$ and let $a_1,a_2,\ldots ,a_n\in F$ be such that $\prod\limits_{i=1}^n a_iu^{m_i}a_i^{-1}=1\in F$. Then $\sum\limits_{i=1}^n m_i=0$.
\end{prop}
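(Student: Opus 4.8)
The plan is to deduce Proposition \ref{aumentacion} from Lyndon's Simple Identity Theorem, which is quoted just before the statement. The subtlety is that Lyndon's theorem as stated requires the element being conjugated to be \emph{not a proper power}, whereas $u$ here is arbitrary nontrivial. So the first step is to reduce to that case: write $u=r^l$ where $r\in F$ is the root of $u$ (i.e. $r$ is not a proper power and $l\ge 1$), which exists because centralizers in free groups are cyclic. Then the relation $\prod_{i=1}^n a_iu^{m_i}a_i^{-1}=1$ becomes $\prod_{i=1}^n a_i r^{lm_i} a_i^{-1}=1$.

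Now I would like to apply Lyndon's theorem, but the exponents $lm_i$ need not all be $\pm 1$. The clean fix is to rewrite each factor $a_i r^{lm_i}a_i^{-1}$ as a product of $|lm_i|$ conjugates of $r^{\pm 1}$: namely $a_i r^{lm_i}a_i^{-1}=\prod_{j=1}^{|lm_i|}(a_ir^{(j-1)\operatorname{sgn}(lm_i)})\,r^{\operatorname{sgn}(lm_i)}\,(a_ir^{(j-1)\operatorname{sgn}(lm_i)})^{-1}$ (an empty product when $m_i=0$). Substituting all of these into the original identity produces an identity $\prod_{t=1}^{N} b_t r^{\eta_t} b_t^{-1}=1$ with each $\eta_t=\pm1$, to which Lyndon's Simple Identity Theorem applies directly. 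The theorem then partitions the indices $\{1,\dots,N\}$ into pairs $(t,t')$ with $\eta_t=-\eta_{t'}$. In particular the number of indices with $\eta_t=+1$ equals the number with $\eta_t=-1$, so $\sum_t \eta_t=0$. But by construction $\sum_t \eta_t=\sum_{i=1}^n lm_i=l\sum_{i=1}^n m_i$, and since $l\ge 1$ this forces $\sum_{i=1}^n m_i=0$.

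Alternatively, and perhaps more in the spirit of this section, one can argue via the relation module: for the aspherical presentation $\mathcal{P}=\langle X\mid r\rangle$, Proposition \ref{asf} says $N(r)/N(r)'$ is a free $\Z[G]$-module on $\rho=rN(r)'$ where $G=F/N(r)$, and the winding invariant $W:N(r)\to N(r)/N(r)'$ sends $\prod a_i r^{lm_i}a_i^{-1}$ to $\big(\sum_i lm_i\,\overline{a_i}\big)\rho$; composing with the augmentation $\Z[G]\to\Z$ (which is well defined since $W$ lands in a free module) gives $l\sum_i m_i=0$. Either route is short; I would present the first since it needs nothing beyond the already-quoted Identity Theorem and the existence of roots in free groups. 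I do not expect any genuine obstacle here — the only thing to be slightly careful about is handling the case $m_i=0$ (empty product) and the reduction from arbitrary exponents to $\pm1$ exponents, both of which are routine.
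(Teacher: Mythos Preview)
Your proposal is correct. Both routes you sketch are valid, and the first one (reduce to the root $r$, expand each $a_ir^{lm_i}a_i^{-1}$ into conjugates of $r^{\pm1}$, then pair via Lyndon) is exactly what the paper has in mind when it says the result follows ``immediately'' from Lyndon's Simple Identity Theorem. One cosmetic remark: your decomposition formula is more elaborate than necessary, since $r^{j-1}$ commutes with $r$ and so every factor in your product equals $a_ir^{\pm1}a_i^{-1}$; the simpler $a_ir^{lm_i}a_i^{-1}=(a_ir^{\operatorname{sgn}(lm_i)}a_i^{-1})^{|lm_i|}$ would do.

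The paper also records a third argument you did not mention: since $F$ is residually nilpotent, pick $k$ with $u\in\gamma_k(F)\smallsetminus\gamma_{k+1}(F)$; modulo $\gamma_{k+1}(F)$ the relation becomes $u^{\sum m_i}=1$, and torsion-freeness of $\gamma_k(F)/\gamma_{k+1}(F)$ forces $\sum m_i=0$. Conversely, your second route (augmentation of the relation module) is not spelled out at this point in the paper but is precisely the method the paper uses a few lines later for the stronger Proposition~\ref{main1}, so it fits the section well.
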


The previous result can also be proved using the idea in the proof of \cite[Lemma 1]{BMT}: Since $u\neq 1\in F$ and $F$ is residually nilpotent, there exists $k\ge 1$ such that $u\in \gamma_k(F) \smallsetminus \gamma_{k+1}(F)$. The equation of the statement reduces modulo $\gamma_{k+1}(F)$ to $u^{\sum m_i}=\prod\limits_{i=1}^n u^{m_i}=1$. Since $\gamma_k(F)/\gamma_{k+1}(F)$ is torsion-free, we must have $\sum\limits_{i=1}^n m_i=0$. 

Proposition \ref{aumentacion} says that if $a_1,a_2,\ldots, a_n$ are elements in a free group $F$ whose product $a_1a_2\ldots a_n$ is $1$ and the equation $a_1t^{\epsilon_1}a_2t^{\epsilon_2}\ldots a_nt^{\epsilon_n}=1$ has a nontrivial solution in $F$, then the equation is singular (the total exponent of $t$ is zero). When one looks for solutions in an overgroup of the coefficient group, then singularity is the only obstruction according to the Kervaire-Laudenbach conjecture. When the coefficient group $G$ is torsion-free the only equations without solutions in an overgroup are the conjugates of elements of $G$ in $G*\langle t\rangle$ according to a conjecture of Levin \cite{Bro, Edj, Eva2, GR, How, Hol3, Kly, Lev, Neu}.

\begin{prop} \label{main1}
Let $u$ be a nontrivial element in a free group $F$. Let $n\ge 1$, $m_1,m_2,\ldots ,m_n\in \Z$ and let $a_1,a_2,\ldots ,a_n\in F$ be such that $\prod\limits_{i=1}^n a_iu^{m_i}a_i^{-1}=1\in F$. Suppose moreover that $a_j=1$ for some $1\le j\le n$. Finally, suppose that there is no proper subset $\emptyset\neq P \subsetneq \{1,2,\ldots, n\}$ such that the sum of the exponents $m_i$ with $i\in P$ is zero. Then all the coefficients $a_i$ commute and they also commute with $u$. 
\end{prop}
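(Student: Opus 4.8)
The plan is to imitate the structure of the proof of Lemma~\ref{Baum}, but now exploiting the full strength of Lyndon's Identity Theorem together with Proposition~\ref{asf}. Let $r\in F$ be the root of $u$, so $u=r^l$ with $r$ not a proper power, and let $H\leqslant F$ be the subgroup generated by $r,a_1,a_2,\dots,a_n$. The relation $\prod_{i=1}^n a_iu^{m_i}a_i^{-1}=1$ says that $\prod_{i=1}^n a_ir^{lm_i}a_i^{-1}=1$ in $H$, and since $r$ is not a proper power in $F$ it is not a proper power in the free group $H$ either. First I would handle the trivial cases: if $\operatorname{rk}(H)=1$, then $r$ and all the $a_i$ lie in a cyclic group, so everything commutes and we are done. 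So assume $\operatorname{rk}(H)\geq 2$, and aim to derive a contradiction with the hypothesis that no proper nonempty subset of the exponents sums to zero, unless in fact all the $a_i$ commute with $r$.

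The key step is to apply Lyndon's Identity Theorem to the relator $r$ inside $H$: it tells us that the indices $\{1,2,\dots,n\}$ can be grouped into pairs $(i,j)$ with $m_i=-m_j$ (after absorbing $l$) and $a_i\equiv a_j \pmod{N(r)}$. Wait --- more precisely the theorem as stated pairs up the $r^{\epsilon}$-occurrences with $\epsilon=\pm1$; so I first expand each $a_ir^{lm_i}a_i^{-1}$ into $|lm_i|$ consecutive conjugates of $r^{\pm1}$ (the conjugating elements being $a_i$ times powers of $r$, all congruent to $a_i$ mod $N(r)$), apply Lyndon's pairing, and then read off the consequences. The upshot I expect is: the multiset of conjugating cosets $\{\bar a_i\}\subseteq H/N(r)=:G$, counted with multiplicity $|lm_i|$ and sign $\operatorname{sgn}(m_i)$, must cancel. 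Equivalently, working with the winding invariant $W:N(r)\to N(r)/N(r)'\cong \Z[G]\cdot\rho$ of the aspherical presentation $\langle H\text{-basis}\mid r\rangle$ (using Proposition~\ref{asf}), the relation forces
\[
\sum_{i=1}^n l m_i\,\bar a_i = 0 \in \Z[G].
\]
Now I use the hypothesis $a_j=1$ for some $j$: the coefficient of the group element $1\in G$ in this sum is $l\sum_{i:\,\bar a_i=1} m_i$, and since $l\neq 0$ this forces $\sum_{i:\,\bar a_i=1} m_i=0$. If not all $a_i$ lie in $N(r)$, this is a sum over a \emph{proper} nonempty subset (it contains $j$ but misses some index), contradicting the hypothesis. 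Hence every $a_i\in N(r)$, i.e. $\bar a_i=1$ for all $i$.

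But then $H=\langle r,a_1,\dots,a_n\rangle\subseteq N(r)$, so $N(r)=H$ and $G=H/N(r)$ is trivial; since $\langle H\text{-basis}\mid r\rangle$ is an aspherical one-relator presentation of the trivial group with $\operatorname{rk}(H)\geq 2$ generators and one relator, this is absurd (a one-relator group on $\geq 2$ generators is nontrivial). Therefore $\operatorname{rk}(H)=1$ after all, which means $r$ and all the $a_i$ lie in a common cyclic subgroup; in particular all the $a_i$ commute with each other and with $u=r^l$, as claimed. \qedsymbol

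\medskip

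\textbf{Main obstacle.} The delicate point is the bookkeeping in turning Lyndon's pairing statement into the clean linear identity $\sum lm_i\bar a_i=0$ in $\Z[G]$: one must be careful that the conjugating elements appearing when $r^{lm_i}$ is written as a product of $r^{\pm1}$'s are genuinely all congruent to $a_i$ mod $N(r)$ (they are, since they differ from $a_i$ by powers of $r\in N(r)$), and that the signs $\epsilon_i$ track $\operatorname{sgn}(m_i)$ correctly. I expect this is cleanly handled by phrasing everything through the winding invariant $W$ of the presentation $\langle X\mid r\rangle$ directly --- $W\bigl(\prod a_i r^{lm_i}a_i^{-1}\bigr)=\sum lm_i\bar a_i\,\rho$ by $G$-equivariance of $W$ and freeness of the relation module (Proposition~\ref{asf}) --- which bypasses the explicit expansion entirely. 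The rest is the coefficient-extraction argument above, which is routine once the identity in $\Z[G]$ is in hand.
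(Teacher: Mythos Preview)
Your proof is correct and follows essentially the same route as the paper: reduce to the root $r$ of $u$, work in the subgroup $H=\langle r,a_1,\dots,a_n\rangle$, apply the winding invariant of the aspherical one-relator presentation $\langle X\mid r\rangle$ to obtain $\sum lm_i\bar a_i=0$ in $\Z[G]$, extract the coefficient of $1\in G$ to force the subset $P=\{i:\bar a_i=1\}$ to be all of $\{1,\dots,n\}$, and conclude $\operatorname{rk}(H)=1$. The only cosmetic difference is that you split off the case $\operatorname{rk}(H)=1$ at the outset and phrase the endgame as a contradiction, whereas the paper runs the argument uniformly and reads off $\operatorname{rk}(H)=1$ at the end; your ``main obstacle'' paragraph also correctly identifies that the winding-invariant formulation is the clean way to avoid the explicit Lyndon-pairing bookkeeping, which is precisely what the paper does.
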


Note that $P=\{1,2,\ldots, n\}$ satisfies that the sum of the corresponding $m_i$ is zero by Proposition \ref{aumentacion}. This result is a generalization of Lemma \ref{Baum}.

\bigskip

\textit{Proof of Proposition \ref{main1}}.
As in the proof of Lemma \ref{Baum}, let $r\in F$ be the root of $u$ with $u=r^l$ and let $H$ be the subgroup of $F$ generated by $r,a_1,a_2,\ldots, a_n$. Let $X$ be a free basis of $H$ and let $\mathcal{P}=\langle X| r\rangle$ be the presentation of a group $G$. It is aspherical. When we apply the winding invariant of $\mathcal{P}$ to the equation of the statement, we obtain $\sum\limits_{i=1}^n lm_i\overline{a}_i=0\in \Z[G]$. Let $P=\{1\le i\le n | \ \overline{a}_i=1\in G\}$. Then $P\neq \emptyset$ and the sum of the $m_i$ with $i\in P$ is zero. By hypothesis $P=\{1,2,\ldots, n\}$. Therefore, $a_i\in N(r)$ for every $i$ and then $H\subseteq N(r)$. Then $\textrm{rk}(H)=1$, so all the $a_i$ and $u$ lie in a cyclic group. \qed

\bigskip

\begin{ej}
Let $u,v,w,z$ be elements in a free group $F$ with $u\neq 1$. Then $$vu^2v^{-1}wu^{-3}w^{-1}u^2zu^{-1}z^{-1}=1$$ if and only if $v,w$ and $z$ commute with $u$. To prove this simply observe that no proper multiset of $\{2,-3,2,-1\}$ satisfies that the sum of its members is $0$. 
\end{ej}

\begin{prop}
Let $u,v$ be two elements in a free group $F$. Let $n\ge 1$ and $m_1,m_2,\ldots ,m_n\in \Z$ not all zero. If $\prod\limits_{i=1}^{n} u^{m_i}v^{m_i}=1$, then $u$ and $v$ commute.
\end{prop}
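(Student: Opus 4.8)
The plan is to apply the winding invariant of the aspherical one-relator presentation $\langle X \mid r \rangle$, exactly in the spirit of Lemma \ref{Baum} and Proposition \ref{main1}, but with a twist to handle the mixed product $\prod_{i=1}^n u^{m_i} v^{m_i}$. First I would dispose of the trivial cases: if all $m_i = 0$ the hypothesis is vacuous, and if $u = 1$ or $v = 1$ there is nothing to prove (one of the generators is trivial and commutativity is automatic). So assume $u, v \neq 1$ and not all $m_i$ are zero; suppose for contradiction that $u$ and $v$ do not commute. Then the subgroup $H \leqslant F$ generated by $u$ and $v$ is free of rank $2$, and after an automorphism of $H$ we may identify $H = \F = \langle x, y \rangle$.

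The key observation is the following rewriting. Let $s_j = m_1 + m_2 + \cdots + m_j$ be the partial sums, so $s_0 = 0$ and, since the product $\prod_{i=1}^n u^{m_i} v^{m_i}$ evaluates in $H$, abelianizing shows $s_n \cdot (\overline{u} + \overline{v}) = 0$ in $\Z \times \Z = H/H'$; as $\overline u, \overline v$ are a basis we get $s_n = 0$. Now I would conjugate-expand the word: writing $w = \prod_{i=1}^n u^{m_i} v^{m_i}$, one has
\begin{equation}
w = \Bigl(\prod_{j=1}^n \bigl(v^{s_{j-1}+m_j}\,(v^{-1}u^{-1})^{?}\cdots\bigr)\Bigr),
\end{equation}
but more cleanly: in a group where every relator is a conjugate of $r$, $w = 1$ means $w$ lies in $N(r)$, and I can rewrite $w$ as a product of conjugates $a_i u^{m_i} a_i^{-1} \cdot b_i v^{m_i} b_i^{-1}$ with explicit $a_i, b_i$ coming from telescoping the partial products. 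Concretely, set $P_j = u^{m_1}v^{m_1}\cdots u^{m_j}v^{m_j}$; then $u^{m_j} = P_{j-1}^{-1}\cdot(\text{stuff})$, and one obtains that $w$ equals a product in which each $u^{\pm}$-block is conjugated by a word whose image in $H/H' = \Z\times\Z$ is $(s_{j-1}, s_{j-1})$ (using $v^{s_{j-1}}u^{s_{j-1}}$-type prefixes), and each $v^{\pm}$-block conjugated by a word with image $(s_j, s_{j-1})$ or similar. The exact prefixes are a routine telescoping calculation that I will not grind through here.

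Next I would invoke the machinery: let $r \in F$ be the root of... wait — here $u$ and $v$ are distinct and there is no single root. Instead I apply the presentation $\mathcal P = \langle x, y \mid w \rangle$ itself, where $w = \prod u^{m_i}v^{m_i} \in \FF$ (it lies in $\FF$ since $s_n = 0$ forces zero total exponents). This is a one-relator presentation; if $w$ is not a proper power it is aspherical by Dyer--Vasquez (Lyndon's Simple Identity Theorem), so by Proposition \ref{asf} the relation module $N(w)/N(w)'$ is free of rank one on $\rho = wN(w)'$. But $w = 1$ in $G = \F/N(w)$ trivially, so this route needs the equation to come from *outside*: the hypothesis is that $w = 1$ already holds in $F$ itself, i.e. $w$ is the trivial word up to the free reduction — no, the hypothesis is $\prod u^{m_i}v^{m_i} = 1$ in $F$. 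So apply Lyndon's Identity Theorem directly, or better, residual nilpotence as in the remark after Proposition \ref{aumentacion}: reduce the equation modulo $\gamma_{k+1}(F)$ where $k$ is largest with $u \in \gamma_k(F)$... but $u$ and $v$ may have different "depths". The clean argument: since $F$ is residually nilpotent and $u,v$ both nontrivial, and assuming $[u,v]\neq 1$, consider the image in $\M = F(u,v)/F(u,v)''$ and apply the winding invariant $\W$; the relation becomes a polynomial identity forcing a contradiction with not all $m_i$ zero. The main obstacle will be making the telescoping bookkeeping of the conjugating prefixes precise enough that the winding-invariant (or Magnus-embedding / lower-central-series) computation cleanly contradicts non-commutativity — that is where all the real content sits, and it is essentially the same difficulty already handled in Proposition \ref{main1}, now with two interleaved families of powers rather than one.
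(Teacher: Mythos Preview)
Your proposal does not contain a complete argument: you cycle through several strategies (telescoping into conjugates, using $w$ itself as a relator, residual nilpotence, passing to $\M$) and explicitly abandon each one without finishing. Two of these are dead ends as written. Using $\mathcal P=\langle x,y\mid w\rangle$ is vacuous because the hypothesis is precisely that $w=1$ in the free group, so the relator is trivial and the presentation is just $\F$. Passing to $\M$ and applying $\W$ to $w$ is equally empty: $w=1$ implies $P_w=0$, which carries no information.

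The key idea you are missing, and the one the paper uses, is hidden in your own aside ``let $r$ be the root of\ldots wait --- here $u$ and $v$ are distinct and there is no single root.'' There \emph{is} a natural single element to take the root of: the product $uv$. If $uv=1$ we are done; otherwise let $r$ be the root of $uv$, so $uv=r^{l}$, and work with the aspherical presentation $\langle X\mid r\rangle$ where $X$ is a basis of $\langle r,v\rangle$. The point is the rewriting
\[
u^{m}v^{m}=(uv)\cdot v^{-1}(uv)v\cdot v^{-2}(uv)v^{2}\cdots v^{1-m}(uv)v^{m-1}\qquad(m\ge 1),
\]
and an analogous one for $m\le -1$, which expresses each factor $u^{m_i}v^{m_i}$ as a product of conjugates of $r^{\pm l}$. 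Now the winding invariant $W:N(r)\to N(r)/N(r)'=\Z[G]\cdot\rho$ applies cleanly, and the relation $\prod u^{m_i}v^{m_i}=1$ becomes a polynomial identity in $\Z[G]$ involving powers of $g=\overline v$. Torsion-freeness of $G$ (from asphericity) then forces $g=1$, hence $v\in N(r)$, hence $\langle r,v\rangle$ is cyclic, and $u,v$ commute.

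Incidentally, once you correctly note that non-commuting $u,v$ freely generate a rank-$2$ free group, there is a one-line elementary proof you also overlook: after deleting the indices with $m_i=0$, the word $\prod u^{m_i}v^{m_i}$ is a reduced nontrivial word in the free generators $u,v$ and cannot equal $1$. The paper explicitly remarks that several proofs are available and chooses the winding-invariant route for illustration.
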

\begin{proof}
This proof can be done in different ways. We choose one which uses the winding invariant. If $uv=1$, we are done. Suppose then $uv\neq 1$ and let $r$ be the root of $uv$, $uv=r^l$. Let $X$ be a basis of the group $H$ generated by $r$ and $v$. Then $\mathcal{P}=\langle X| r\rangle$ is aspherical. Let $\rho=rN(r)'$ and let $\W:N(r)\to N(r)/N(r)'$ be the winding invariant of $\mathcal{P}$. Let $g$ be the class of $v$ in $G=F(X)/N(r)$. If $m\ge 1$, $$u^mv^m=(uv)v^{-1}(uv)vv^{-2}(uv)v^2v^{-3}\ldots v^{1-m}(uv) v^{m-1},$$ so $\W(u^mv^m)=l(1+g^{-1}+g^{-2}+\ldots +g^{1-m})\rho$. If $m\le -1$, $$u^{m}v^{m}=(u^{-1}v^{-1})v(u^{-1}v^{-1})v^{-1}v^2(u^{-1}v^{-1})v^{-2}\ldots v^{-m-1}(u^{-1}v^{-1})v^{m+1}$$ and since $u^{-1}v^{-1}=v(uv)^{-1}v^{-1}$, $\W(u^{-1}v^{-1})=-lg\rho$ and then $\W(u^mv^m)=-l(g+g^2+g^3+\ldots +g^{-m})\rho$. Therefore 
\begin{equation}
\sum\limits_{m_i\ge 1} (1+g^{-1}+g^{-2}+\ldots +g^{1-m_i})- \sum\limits_{m_i\le -1} (g+g^2+g^3+\ldots +g^{-m_i})=0\in \Z[G]
\label{zg}
\end{equation}
Since $\mathcal{P}$ is aspherical, $G$ is torsion-free. Therefore, if $g\neq 1\in G$, since not all the $m_i$ are zero, the coefficient of $1\in G$ or $g\in G$ in the left hand side of Equation (\ref{zg}) is nonzero, a contradiction. Therefore $g=1$, i.e. $v\in N(r)$. Thus $H\subseteq N(r)$ and then $H$ is cyclic. Then $r$ and $v$ commute, so $uv=r^l$ and $v$ commute, and then $u$ and $v$ commute.
\end{proof}

If $v\in \F= \langle x,y\rangle$, then $[[x,y],v]\in \F ''$ if and only if $v\in \F '$. This follows easily from an application of the winding invariant $\F '\to \ZZ$. The following is a generalization of that result.

\begin{prop}
Let $u,v$ be elements of a free group $F$ with $u\neq 1$. Let $r$ be the root of $u$. Then $[u,v]\in N(r)'$ if and only if $v\in N(r)$.
\end{prop}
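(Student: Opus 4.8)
The plan is to mimic the proof of Lemma \ref{Baum}, using the generalized winding invariant of the one-relator presentation $\mathcal{P} = \langle X \mid r\rangle$, where $r$ is the root of $u$. First I would dispose of the trivial direction: if $v \in N(r)$ then $[u,v] = uvu^{-1}v^{-1}$ lies in $N(r)$, and since $N(r)/N(r)'$ is abelian, the image of $[u,v]$ under the winding invariant $W : N(r) \to N(r)/N(r)'$ is $0$. But here one has to be slightly careful: $[u,v] \in N(r)$ and $W([u,v]) = 0$ only tells us $[u,v] \in N(r)'$ once we know $W$ is the abelianization map, which it is by definition. So the forward implication $v \in N(r) \Rightarrow [u,v] \in N(r)'$ is immediate from the fact that $u, v \in N(r)$ and $N(r)/N(r)'$ is abelian.

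For the converse, suppose $[u,v] \in N(r)'$. Write $u = r^l$ with $l \geq 1$ and $r$ not a proper power. Consider the subgroup $H$ of $F$ generated by $r$ and $v$; it is free, and I would take a free basis $X$ of $H$ and form the presentation $\mathcal{P} = \langle X \mid r\rangle$, which is aspherical by the Dyer–Vasquez / Lyndon result quoted in Section \ref{sectionhow} (since $r$ is not a proper power). By Proposition \ref{asf}, the relation module $N(r)/N(r)'$ is the free $\Z[G]$-module on $\rho = rN(r)'$, where $G = F(X)/N(r)$. Since $[u,v] = r^l \cdot (v r^{-l} v^{-1}) \in N(r)$, applying the winding invariant and using its equivariance ($W(vwv^{-1}) = \overline{v}\cdot W(w)$) gives
\begin{equation}
0 = W([u,v]) = W(r^l) - W(v r^l v^{-1}) = l\rho - l\,\overline{v}\,\rho = l(1 - \overline{v})\rho \in \Z[G],
\end{equation}
where $\overline{v}$ is the class of $v$ in $G$. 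Since $l \geq 1$ and $\rho$ is a free generator, this forces $\overline{v} = 1$ in $\Z[G]$, i.e. $v \in N(r)$, as desired.

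The main obstacle — really the only subtle point — is making sure $H$ has rank $2$ so that $\mathcal{P} = \langle X \mid r \rangle$ is a genuine two-generator one-relator presentation and $\Z[G]$ is large enough for the argument (in particular, so that ``$\overline v = 1$'' is a real constraint rather than vacuous). If $u$ and $v$ commute then $v \in N(r)$ already holds trivially (since $v$ is a power of $r$, or at worst $H$ is cyclic and $r, v$ lie in the same cyclic group), so the conclusion is fine in that degenerate case too; one should note this at the start. If $u$ and $v$ do not commute, then $\langle r, v\rangle$ is free of rank $\geq 2$, and $\mathrm{rk}(H) \le 2$ since $H$ is generated by two elements; hence $\mathrm{rk}(H) = 2$, and everything goes through. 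A minor point to check is that $W$ is well-defined on $N(r)$ independent of presentation choices, but this is exactly the content of the identification of $N(r)/N(r)'$ as the relation module discussed in Subsection \ref{subversion}, so no extra work is needed.
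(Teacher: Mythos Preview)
Your proof is correct and follows essentially the same approach as the paper: apply the winding invariant of $\langle X\mid r\rangle$ with $X$ a basis of $\langle r,v\rangle$, compute $W([u,v])=l(1-\overline v)\rho$, and read off $\overline v=1$. Your extra case analysis on $\mathrm{rk}(H)$ is harmless but unnecessary---the equation $l(1-\overline v)=0$ in $\Z[G]$ forces $\overline v=1$ regardless, and when $G$ is trivial this just says $v\in N(r)=H$, which is the conclusion anyway.
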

\begin{proof}
If $v\in N(r)$, then trivially $[u,v]\in N(r)'$. Conversely, suppose $[u,v]\in N(r)'$. We use the winding invariant of $\langle X|r\rangle$, where $X$ is a basis of $\langle r,v\rangle$. Since $r^lvr^{-l}v^{-1}\in N(r)'$, $l-l\overline{v}=0\in \Z[G]$. Thus, $v=1\in G=F(X)/N(r)$. 
\end{proof}

We have used the hypothesis of asphericity to make sure that the relation module is a free $\Z[G]$-module. In general, for presentations with more that one relator, asphericity is a difficult property to check. For example, the Whitehead conjecture open for 78 years can be restated in terms of presentations as follows. If $\langle X| R\rangle$ is an aspherical presentation, then any subpresentation $\langle Y|S\rangle$ (i.e. a presentation with $Y\subseteq X$ and $S\subseteq R$) must be aspherical. The particular case in which $S$ has only one element was proved by Cockroft in \cite[Theorem 2]{Coc}. This can be seen with the ideas we have used so far: Let $\mathcal{P}=\langle x_1,x_2,\ldots, x_n | r_1,r_2,\ldots, r_m \rangle$ be an aspherical presentation. Then no relator $r_j$ is a proper power in $F(x_1,x_2,\ldots, x_n)$. Indeed, suppose $1\le j\le m$ is such that $r_j=s^l$ for some $s\in F(x_1, x_2,\ldots,x_n)$ and $l\ge 1$. Since $\mathcal{P}$ is aspherical, the presented group $G$ is torsion-free. Therefore $s$ must be in the normal subgroup $N$ generated by the relators. Let $W:N\to N/N'$ be the winding invariant. By hypothesis $N/N'$ is a free $\Z[G]$-module with basis $\{r_kN'\}_{1\le k\le m}$, so $r_jN'=W(s^l)=lW(s)$ implies $l=1$.

Although there is no general criterion to prove that a presentation is aspherical when we have more than one relator, there are many conditions which are easy to check in some cases and guarantee asphericity \cite{BarMin, Ger, How, Huc, Pri2}. Presentations with more that one relator can be used to study equations over groups which are not free. We mention just one example.

\begin{ej}
Let $\langle x,y| u\rangle$ be a presentation of a group $H$ and let $v\in \F$. If $\mathcal{P}=\langle x,y | u,v \rangle$ is aspherical, then the equation $vtvt^{-1}=1$ over $H$ has no solution in $H$.

Suppose $t\in \F$ is such that $vtvt^{-1}=1$ in $H$, that is $vtvt^{-1}\in \F$ lies in the normal closure $N(u) \vartriangleleft \F$ of $u$. Let $N=N(u,v)\trianglelefteq \F$ and $G=\F/N$. The relation module $N/N'$ of $\mathcal{P}$ is a free $\Z[G]$-module with basis $\{uN',vN'\}$. We apply the winding invariant of $\mathcal{P}$ to obtain that $W(vtvt^{-1})=(1+\overline{t})vN'=TuN'$ for some $T\in \Z[G]$. Then $1+\overline{t}=0\in \Z[G]$, which is absurd.
\end{ej}

\subsection{Relation with the Magnus embedding} \label{submagnus}

Let $\PP=\langle x_1,x_2,\ldots, x_n| R\rangle$ be a presentation of a group $G$ and let $M$ be a free $\Z[G]$-module with basis $\{\alpha_1,\alpha_2,\ldots ,\alpha_n\}$. Consider the group $H$ of matrices   
\begin{displaymath}
\left( \begin{array}{cc}
g & 0 \\
a & 1 
\end{array} \right)
\end{displaymath} with $g\in G$, $a\in M$.
The homomorphism $\mu: \mathbb{F}_n \to H$ determined by $\mu (x_i)=\left( \begin{array}{cc}
\overline{x}_i & 0 \\
\alpha_i & 1 
\end{array} \right)$ has kernel $N(R)'$ and induces an isomorphism $\mathbb{F}_n/ N(R)' \to H$ which is known as the Magnus embedding \cite{Fox, Mag, Mag2}. 
Every element in $N(R)/N(R)'$ is mapped by this homomorphism to a matrix of the form $\left( \begin{array}{cc}
1 & 0 \\
a & 1 
\end{array} \right)$, so we can compose the restriction to $N(R)/N(R)'$ with the projection $p:H\to M$ onto the $(2,1)$-coefficient to obtain a homomorphism $\overline{\mu}:N(R)/N(R)'\to \Z[G]^n$. This map also receives the name of Magnus embedding and it coincides with the inclusion $\textrm{Im}(\tau)\to \Z[G]^n$ in the exact sequence (\ref{mika}).  

We will concentrate now on the original version of the winding invariant, so $\PP=\langle x,y| [x,y]\rangle$, $N(R)=\F'$, $G=\Z \times \Z$ and $N(R)/N(R)'=\F'/\F''$ can be identified with $\Z[G]=\ZZ$ by means of $W$. The map $W$ associates a Laurent polynomial to every element $w\in \F'$ while the map $\overline{\mu}$ associates two Laurent polynomials to $w$. The Magnus embedding gives in this case duplicate information, in the sense that $\overline{\mu}(w)$ is determined by $W(w)$. The map $\overline{\mu}:\F'/\F'' \to \ZZ^2$ is up to an isomorphism the invariant considered by Sarkar in \cite{Sar} (see also \cite[Section 3.2]{Sal}). The next result shows the explicit relation between these two invariants.  

Let $M$ be a free $\Z [G]$-module with basis $\{\alpha,\beta \}$ and $H$ the group of matrices 

\begin{displaymath}
\left( \begin{array}{cc}
g & 0 \\
a & 1 
\end{array} \right)
\end{displaymath} with $g\in G$, $a\in M$.
Let $t=(1-\overline{y}^{-1})\overline{x}^{-1}\alpha + (\overline{x}^{-1}-1)\overline{y}^{-1}\beta\in M$. There is a well-defined group homomorphism $\varphi :\ZZ \to H$ which maps a monomial $X^nY^m$ to 

\begin{displaymath}
\left( \begin{array}{cc}
1 & 0 \\
\overline{x}^{-n}\overline{y}^{-m}t & 1 
\end{array} \right).
\end{displaymath} 

\begin{prop} \label{embedding}
The map $\varphi \W: \FF \to H$ can be extended to the Magnus homomorphism $\mu : \F \to H$. In particular, $\varphi$ is a monomorphism and the Magnus embedding $\overline{\mu}$ is a monomorphism $p\varphi$ composed with $\W: \F'/\F'' \to \ZZ$.
\end{prop}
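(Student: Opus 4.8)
The plan is to verify directly that $\varphi \W$ and the Magnus homomorphism $\mu$ agree on $\F'$, and then to leverage the known properties of $\mu$ to deduce everything else. First I would recall that the Magnus embedding $\mu:\F\to H$ is defined on generators by $\mu(x)=\left(\begin{smallmatrix}\overline{x}&0\\\alpha&1\end{smallmatrix}\right)$ and $\mu(y)=\left(\begin{smallmatrix}\overline{y}&0\\\beta&1\end{smallmatrix}\right)$, with kernel $\F''$, so it factors through an embedding $\F/\F''\to H$. The claim ``$\varphi\W$ extends to $\mu$'' means precisely that $\mu|_{\F'}=\varphi\circ\W$ (both have target $H$, and $\mu|_{\F'}$ lands in the subgroup of matrices with top-left entry $1$, which is naturally $M$). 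Since both $\W$ and $\mu|_{\F'}$ are homomorphisms, and since $\F'$ is generated as a normal subgroup of $\F$ by $[x,y]$, it suffices to check the identity on the single element $[x,y]$ together with compatibility with the conjugation action — but in fact it is cleanest to check it on $[x,y]$ and use equivariance, as I explain below.

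The core computation is to evaluate $\mu([x,y])$. Using $\mu(x)^{-1}=\left(\begin{smallmatrix}\overline{x}^{-1}&0\\-\overline{x}^{-1}\alpha&1\end{smallmatrix}\right)$ and similarly for $y$, multiplying out $\mu(x)\mu(y)\mu(x)^{-1}\mu(y)^{-1}$ gives a matrix $\left(\begin{smallmatrix}1&0\\a&1\end{smallmatrix}\right)$ where $a$ is an explicit $\Z[G]$-combination of $\alpha$ and $\beta$; a direct calculation should yield $a = (1-\overline{y}^{-1})\overline{x}^{-1}\alpha + (\overline{x}^{-1}-1)\overline{y}^{-1}\beta$, which is exactly the element $t$ appearing in the definition of $\varphi$. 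On the other hand $\W([x,y])=P_{[x,y]}=1=X^0Y^0$, so $\varphi(\W([x,y]))=\varphi(1)=\left(\begin{smallmatrix}1&0\\t&1\end{smallmatrix}\right)$. Thus the two maps agree on $[x,y]$. To pass to all of $\F'$, I would invoke equivariance: for $u\in\F$ and $w\in\F'$, Proposition \ref{basica}(iii) gives $\W(uwu^{-1})=X^nY^m\W(w)$ where $(n,m)$ are the exponents of $u$, i.e. $\W$ is $\Z[G]$-equivariant with $\overline{u}\in G$ acting on $\ZZ$ by multiplication by $X^nY^m$; meanwhile $\mu(uwu^{-1})=\mu(u)\mu(w)\mu(u)^{-1}$, and conjugating $\left(\begin{smallmatrix}1&0\\a&1\end{smallmatrix}\right)$ by $\mu(u)$ (whose top-left entry is $\overline{u}$) multiplies $a$ on the left by $\overline{u}^{-1}$... one must be careful here with the variance, which is why $\varphi$ is defined to send $X^nY^m$ to the matrix with $(2,1)$-entry $\overline{x}^{-n}\overline{y}^{-m}t$ rather than $\overline{x}^n\overline{y}^m t$. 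Once the equivariance bookkeeping matches (this is the step most likely to hide a sign/inversion subtlety, so it deserves care), the identity $\mu=\varphi\W$ on the normal generator $[x,y]$ propagates to all conjugates $u[x,y]^{\pm1}u^{-1}$ and hence, both maps being homomorphisms, to all of $\F'$.

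Finally, the consequences are immediate. Since $\mu|_{\F'}$ has kernel $\F''$ (restriction of the Magnus embedding) and $\W$ has kernel $\F''$ by Theorem \ref{conway}, the equality $\mu|_{\F'}=\varphi\circ\W$ forces $\varphi$ to be injective on the image of $\W$, which is all of $\ZZ$ by surjectivity of $\W$ (Theorem \ref{conway}); hence $\varphi:\ZZ\to H$ is a monomorphism. The Magnus embedding $\overline{\mu}:\F'/\F''\to \ZZ^2$ is by definition $p\circ\mu|_{\F'}$ where $p:H\to M\cong\ZZ^2$ is projection onto the $(2,1)$-entry; substituting $\mu|_{\F'}=\varphi\circ\W$ gives $\overline{\mu}=p\varphi\circ\W$, i.e. $\overline{\mu}$ is $\W$ followed by the monomorphism $p\varphi:\ZZ\to\ZZ^2$, as claimed. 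The main obstacle, as noted, is not conceptual but the careful matrix multiplication for $\mu([x,y])$ and keeping the left/right and inverse conventions for the $G$-action straight so that the explicit formula for $t$ and the exponent convention in $\varphi$ line up exactly.
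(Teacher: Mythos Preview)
Your proposal is correct and follows essentially the same approach as the paper: compute $\mu([x,y])$ directly to find its $(2,1)$-entry equals $t$, then use that conjugation by $\mu(x^ny^m)$ rescales this entry by $\overline{x}^{-n}\overline{y}^{-m}$ to match $\varphi(X^nY^m)$, so the two homomorphisms agree on the conjugates $x^ny^m[x,y]y^{-m}x^{-n}$ generating $\F'$. The consequences you draw about injectivity of $\varphi$ and the factorization $\overline{\mu}=p\varphi\circ\W$ are also exactly what the paper intends.
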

\begin{proof}
Since $\F '$ is generated by the conjugates $x^ny^m[x,y]y^{-m}x^{-n}$ of $[x,y]$, we only have to check that $\varphi \W (x^ny^m[x,y]y^{-m}x^{-n})=\mu (x^ny^m[x,y]y^{-m}x^{-n})$. The left hand side is $\varphi (X^n Y^m)=\left( \begin{array}{cc}
1 & 0 \\
\overline{x}^{-n}\overline{y}^{-m}t & 1 
\end{array} \right)$. Now, the right hand side is $\mu (x^ny^m) \mu ([x,y]) \mu (y^{-m}x^{-n})$.
A direct computation shows that $\mu ([x,y])=\left( \begin{array}{cc}
1 & 0 \\
t & 1 
\end{array} \right)$ and the effect of conjugating by $\mu (x^ny^m)$ such matrix is replacing $t$ by $\overline{x}^{-n}\overline{y}^{-m}t$.
\end{proof}


\section{Planar Cayley graphs} \label{seccayley}

If $\langle x_1,x_2,\ldots, x_n | r_1,r_2,\ldots, r_m \rangle$ has a planar Cayley graph, then the winding invariant $\W:N(R) \to N(R)/N(R)'$ defined in Subsection \ref{subversion} can be interpreted geometrically as in the case of $\langle x,y| [x,y]\rangle$. We can take a particular embedding of $\widetilde{K}$ in $\R^2$ for $\widetilde{K}$ the covering of the figure eight $K$ corresponding to $N(R)\leqslant \pi_1(K)$ and then consider some points $p_0,p_1, \ldots, p_{k-1} \in \R^2 \smallsetminus \widetilde{K}$. Then we can define a map $\W':N(R)\to \Z^k$ as follows. For $w\in N(R)$, take the loop in $K$ representing $w$, lift it to a loop $\widetilde{w}$ in $\widetilde{K}$ and consider the winding numbers $\w(\widetilde{w},p_i)$ about the points $p_i$. Then $\W':N(R)\to \Z^k$ defined by $\W'(w)=(\w(\widetilde{w},p_1), \w(\widetilde{w},p_2), \ldots, \w(\widetilde{w},p_k))$ is just $\W:N(R)\to H_1(\widetilde{K})$ composed with a map $H_1(\widetilde{K}) \to \Z^k$.

\subsection{Two examples}

If $k\ge 1$, the Cayley graph of $\langle x,y | x^k, y\rangle$ consists of a $k$-gon with vertices $0,1,\ldots, k-1$ with a circle $C_i$ attached to each vertex $i$. We consider the embedding of this graph $\widetilde{K}$ in $\R^2$ illustrated in Figure \ref{planar1}.

\begin{figure}[h] 
\begin{center}
\includegraphics[scale=0.8]{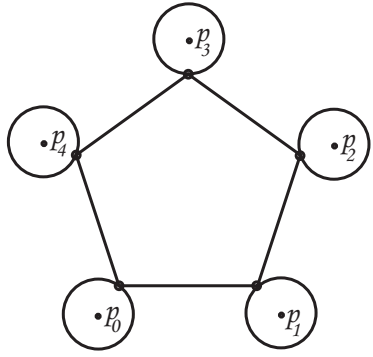}
\caption{The Cayley graph $\Gamma(\Z_k,\{x,y\})$ for $k=5$.}\label{planar1}
\end{center}
\end{figure}

Take a point $p_i$ in the interior of each circle $C_i$. In this case $H_1(\widetilde{K})$ is a free abelian group of rank $k+1$, but $\W: N\to \Z^k$ maps the normal closure $N$ of $\{x^k,y\}$ in $\F$ to a free abelian group of rank $k$ (we have chosen to ignore the winding numbers about the polygon). The group $\Z^k \leqslant H_1(\widetilde{K})$ is in fact a $G=\F /N=\Z_k$-submodule of $H_1(\widetilde{K})$. If we identify $\Z^k$ with $\Z[G_k]$ for $G_k$ the cyclic group of order $k$ generated by an element $g$, then the action of $G$ on $\Z^k=\Z[G_k]$ is described as follows. If $w\in \F$ with $\exp(x, w)=j$ and $v\in \Z[G_k]$, then $\overline{w}.v=g^j.v$. In other words, $\overline{w}$ shifts the coordinates of $v$ $j$ times. We will apply this idea in the following criterion. Note that if $w\in \F$ is a product $a^kb^k$ of two $k$-th powers, then $k| \exp(x,w)$. The next result concerns products $w=a^pb^p$ for $p$ prime in which $p^2| \exp(x,w)$.

\begin{prop}
Let $w=x^{n_1}y^{m_1}x^{n_2}y^{m_2}\ldots x^{n_s}y^{m_s} \in \F$, $n_i,m_i\in \Z$ for every $i$. Let $p\ge 2$ be a prime number such that $p^2$ divides $\exp(x,w)$ and let $G_p=\langle g\rangle$ be the cyclic group of order $p$. Let $$\alpha = \sum\limits_{i=1}^s m_i g^{n_1+n_2+\ldots+n_i} \in \Z[G_p].$$ If $\alpha$ is not a multiple of $p\in \Z$ and it is not a multiple of $1+g+g^2+\ldots +g^{p-1}\in \Z[G_p]$, then $w=a^pb^p$ has no solution $a,b\in \F$. 
\end{prop}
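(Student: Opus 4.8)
The plan is to apply the winding invariant of the presentation $\mathcal{P}=\langle x,y\mid x^p,y\rangle$ of the cyclic group $G_p=\Z_p$, whose Cayley graph is the planar complex $\widetilde{K}$ just described: a $p$-gon with vertices $0,1,\dots,p-1$ and a circle $C_i$ attached at each vertex $i$, with a marked point $p_i$ in the interior of $C_i$. As recalled in Subsection \ref{subversion}, the associated winding invariant $\W\colon N\to\Z^p\cong\Z[G_p]$, where $N$ is the normal closure of $\{x^p,y\}$ in $\F$, is a $G_p$-equivariant group homomorphism, the action of $G_p$ being $\overline{u}\cdot v=g^{\exp(x,u)}v$ for $u\in\F$.

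First I would check the two membership facts. If $w=a^pb^p$, then $\exp(x,w)=p\bigl(\exp(x,a)+\exp(x,b)\bigr)$ is divisible by $p$, so $w\in N$; and since $p^2\mid\exp(x,w)$ by hypothesis, $\exp(x,ab)=\exp(x,w)/p$ is divisible by $p$, so $ab\in N$ as well. Next I would compute $\W(w)$ directly from the curve $\gamma_w$ in $\widetilde{K}$: reading $w=x^{n_1}y^{m_1}\cdots x^{n_s}y^{m_s}$ from the base vertex $0$, each block $x^{n_j}$ walks $n_j$ steps around the $p$-gon — which contributes nothing to the winding numbers about the points $p_i$ — while the following $y^{m_j}$ traverses the circle at the current vertex $n_1+\cdots+n_j\pmod p$ exactly $m_j$ times. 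Hence $\W(w)=\sum_{j=1}^{s}m_j\,g^{\,n_1+\cdots+n_j}=\alpha$.

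Then, still assuming $w=a^pb^p$, I would use the identity
\[
a^pb^p=(ab)\cdot b^{-1}(ab)b\cdot b^{-2}(ab)b^2\cdots b^{-(p-1)}(ab)b^{p-1}
\]
from the proof of Proposition \ref{lngeneral} and apply $\W$ together with equivariance. Writing $t=\exp(x,b)$, conjugation by $b^{-j}$ multiplies by $g^{-jt}$, so
\[
\alpha=\W(w)=\sum_{j=0}^{p-1}g^{-jt}\,\W(ab)=\Bigl(\sum_{j=0}^{p-1}g^{-jt}\Bigr)\W(ab).
\]
If $t\equiv 0\pmod p$ the sum equals $p$, whence $\alpha$ is a multiple of $p$ in $\Z[G_p]$. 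If $t\not\equiv 0\pmod p$, then — and this is the only place primality of $p$ is used — $j\mapsto -jt$ permutes $\Z_p$, so the sum equals $1+g+\cdots+g^{p-1}$ and $\alpha$ is a multiple of $1+g+\cdots+g^{p-1}$. In either case one of the two excluded conclusions holds, contradicting the hypothesis; therefore $w=a^pb^p$ has no solution.

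I do not expect a serious obstacle: the only points requiring care are the bookkeeping that $ab\in N$ — which is precisely what the hypothesis $p^2\mid\exp(x,w)$ provides — and the identification of the $G_p$-action on $\Z[G_p]$ so that equivariance of $\W$ reads as a cyclic shift by $\exp(x,\cdot)$; once these are settled the computation is routine.
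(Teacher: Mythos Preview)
Your proposal is correct and follows essentially the same route as the paper: both use the winding invariant $\W'$ associated to $\langle x,y\mid x^p,y\rangle$, identify $\W'(w)=\alpha$, use the hypothesis $p^2\mid\exp(x,w)$ to ensure $ab\in N$, apply the identity $a^pb^p=\prod_{j=0}^{p-1}b^{-j}(ab)b^{j}$ together with equivariance, and split into the two cases $p\mid\exp(x,b)$ and $p\nmid\exp(x,b)$. Your write-up is in fact slightly more explicit than the paper's in spelling out the computation $\W'(w)=\alpha$ and in noting exactly where primality of $p$ is used.
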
  
\begin{proof}
Suppose $w=a^pb^p$ for certain $a,b\in \F$. Thus $$w=(ab).b^{-1}(ab)b.b^{-2}(ab)b^2 \ldots b^{-(p-1)}(ab)b^{p-1}.$$ We consider the Cayley graph of $\langle x,y | x^p,y\rangle$, $N$ the normal closure of $\{x^p,y\}$ in $\F$, and the map $\W':N\to \Z[G_p]$ defined above. Note that $\W'(w)=\alpha$. Since $\exp(x,w)=p \exp(x,ab)$, by hypothesis $p | \exp(x,ab)$. It is easy to see then that $ab\in N$. Therefore $$\W'(w)=(1+(\overline{b})^{-1}+(\overline{b})^{-2}+\ldots+(\overline{b})^{-(p-1)})\W'(ab).$$ If $p|\exp(x,b)$ then $\W'(w)=p\W'(ab)$. If $p\nmid \exp(x,b)$, then $(1+(\overline{b})^{-1}+(\overline{b})^{-2}+\ldots+(\overline{b})^{-(p-1)})\W'(ab)=(1+g+g^2+\ldots+g^{p-1})\W'(ab)$. 
\end{proof}

\begin{ej}
For example $w=x^7yxy^{-2}xy^4$ is not a product of two cubes. Note that for $p=3$, $p^2| \exp(x,w)=9$ and $\alpha=g^7-2g^8+4g^9=4e+g-2g^2\in \Z[G_3]$ is not a multiple of $3$ nor a multiple of $1+g+g^2$. Thus, the proposition applies. On the other hand note that $w$ is a product of cubes by Theorem \ref{caracterizacioncubos}.
\end{ej}

A second application of the same idea is with the presentation $\langle x,y|x^2,y^2,[x,y]\rangle$ of $\Z_2 \times \Z_2$. In this case $N(R)$ is the subgroup of $\F$ consisting of the words $w$ such that $\exp(x,w)$ and $\exp(y,w)$ are both even. The Cayley graph $\widetilde{K}=\Gamma(\Z_2\times \Z_2,\{x,y\})$ has an embedding in the plane illustrated in Figure \ref{z2z2}.

\begin{figure}[h] 
\begin{center}
\includegraphics[scale=0.9]{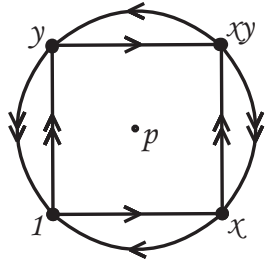}
\caption{The Cayley graph $\Gamma(\Z_2\times \Z_2, \{x,y\})$.}\label{z2z2}
\end{center}
\end{figure}

 In this case we take only one point $p$ in the interior of the square, so $W':N(R)\to \Z$ maps a word $w\in N(R)$ to the winding number of $\widetilde{w}$ around $p$. The number $W'(w)$ is very easy to compute. Given $w\in N(R)$, consider $q(w)\in \Z_2* \Z_2$, where $q:\F=\langle x,y \rangle \to \Z_2*\Z_2=\langle x,y\rangle$ maps $x$ to $x$ and $y$ to $y$. If the reduced word representing $q(w)$ has length $l$, then $W'(w)=\frac{l}{4}$ if $x$ is the first letter of the word, and $W'(w)=-\frac{l}{4}$ if $y$ is the first letter. For instance, for $w=x^2yx^{-1}y^{-2}x^4yx$, $q(w)=yxyx$, so $W'(w)=-1$.
We equip $\Z$ with an action of $G=\Z_2\times \Z_2$: $\overline{x}\cdot n=\overline{y}\cdot n=-n$. Then $W':N(R)/N(R)'\to \Z$ is a homomorphism of $\Z[G]$-modules. For $w\in N(R)$, the curves $\widetilde{w}$ and $\overline{x}\widetilde{w}$ are symmetric with respect to the vertical line through $p$, while $\overline{y}\widetilde{w}$ is symmetric to $\widetilde{w}$ with respect to the horizontal line through $p$.

\begin{prop}
Let $w\in \F$ be such that $4|\exp(x,w)$ and $4|\exp(y,w)$. If $W'(w)\in \Z$ is odd, $w$ is not a product of two squares.
\end{prop}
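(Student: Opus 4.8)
The plan is to mimic the proof of Proposition \ref{ln}, but now using the variant $W':N(R)/N(R)'\to \Z$ of the winding invariant associated to the presentation $\langle x,y\mid x^2,y^2,[x,y]\rangle$ of $G=\Z_2\times\Z_2$, together with the $\Z[G]$-module structure on $\Z$ described just before the statement (both $\overline x$ and $\overline y$ act by $n\mapsto -n$). Suppose $w=a^2b^2$ for some $a,b\in\F$. First I would check that $ab\in N(R)$: since $4\mid\exp(x,w)=2\exp(x,ab)$ and $4\mid\exp(y,w)=2\exp(y,ab)$, both $\exp(x,ab)$ and $\exp(y,ab)$ are even, which is exactly the condition for $ab$ to lie in $N(R)$. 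Writing $a^2b^2=(ab)\,b^{-1}(ab)\,b$ as in the proof of Proposition \ref{ln}, and using that $W'$ is a homomorphism of $\Z[G]$-modules and $N(R)'\subseteq\ker W'$, I get
\begin{equation*}
W'(w)=W'(ab)+\overline{b}^{-1}\cdot W'(ab)=(1+\overline{b}^{-1})\cdot W'(ab).
\end{equation*}

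Now the key point is that the coefficient $(1+\overline{b}^{-1})$ acts on $\Z$ either as multiplication by $2$ (if $\exp(x,b)$ and $\exp(y,b)$ are both even, so $\overline b$ is trivial in $G$) or as multiplication by $0$ (if $\overline b$ acts by $n\mapsto -n$, i.e. at least one of $\exp(x,b),\exp(y,b)$ is odd). In either case $W'(w)\in 2\Z$. This contradicts the hypothesis that $W'(w)$ is odd, so no such $a,b$ exist.

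I expect the only subtle point — and the one I would spell out carefully — is the reduction that $ab\in N(R)$ and the verification that $W'$ really is $G$-equivariant with the stated action; both are already set up in the text preceding the statement (the description via $q:\F\to\Z_2*\Z_2$ and the symmetry remarks about $\widetilde w$, $\overline x\widetilde w$, $\overline y\widetilde w$), so the argument is genuinely three lines once those are in hand. There is no real obstacle here; the proof is a direct transcription of the Lyndon–Newman argument with the ring $\ZZ$ replaced by the $\Z[\Z_2\times\Z_2]$-module $\Z$, exploiting that the relevant ``divisibility by $1+\text{monomial}$'' now forces divisibility by $2$ (or by $0$) because the only values a monomial can take in this module-action are $\pm 1$.
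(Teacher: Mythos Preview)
Your proof is correct and follows exactly the paper's approach: write $a^2b^2=(ab)\,b^{-1}(ab)\,b$, observe $ab\in N(R)$ from the divisibility hypothesis, and conclude $W'(w)=(1+\overline{b}^{\,-1})W'(ab)$ is either $0$ or $2W'(ab)$. One small inaccuracy worth fixing: your parenthetical ``i.e.\ at least one of $\exp(x,b),\exp(y,b)$ is odd'' mischaracterizes when $\overline{b}$ acts by $-1$; since both $\overline{x}$ and $\overline{y}$ act by $-1$, the element $\overline{xy}$ acts trivially, so $\overline{b}$ acts by $-1$ exactly when $\exp(x,b)+\exp(y,b)$ is odd (and trivially otherwise). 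This does not affect the argument, as in every case $(1+\overline{b}^{\,-1})$ still acts by $0$ or $2$.
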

\begin{proof}
Suppose $w=a^2b^2$ for some $a,b\in \F$. Then $ab\in N(R)$ by hypothesis and $W'(w)=(1+(\overline{b})^{-1})W'(ab)$. This is $0$ or $2W'(ab)$.
\end{proof}

This provides one last proof of Proposition \ref{ln}: $q([x,y])=xyxy\in \Z_2*\Z_2$, so $W'([x,y])=1$ and then $[x,y]$ is not a product of two squares. 

\subsection{Relation lifting and an example of Dunwoody}

If $\langle x_1,x_2,\ldots, x_n |r_1,r_2,\ldots, r_m \rangle$ is a presentation of a group $G$, and $N$ is the normal closure of $\{r_1,r_2,\ldots, r_m\}$, then the classes of $r_1,r_2,\ldots, r_m$ generate the relation module $N/N'$ as $\Z[G]$-module. The relation lifting problem asks for a given set $\{s_1,s_2,\ldots,s_k\}\subseteq N$ whose classes generate $N/N'$ if there exists a set $\{s_1',s_2',\ldots, s_k'\} \subseteq N$ whose normal closure is $N$ and such that the class of $s_i'$ in the relation module coincides with the class of $s_i$ for every $1\le i\le k$. In this case we say that the classes $\overline{s}_i\in N/N'$ can be lifted. This is connected to the relation gap problem as follows. If for a given presentation $\langle x_1,x_2,\ldots, x_n |r_1,r_2,\ldots, r_m \rangle$ any set of generators of $N/N'$ can be lifted, in particular the minimum number $d_{\Z[G]}(N/N')$ of generators of $N/N'$ coincides with the minimum cardinality $d(N)$ of a set $S\subseteq N$ whose normal closure in $F(x_1,x_2,\ldots, x_n)$ is $N$. When $d(N)=d_{\Z[G]}(N/N')$ we say that the presentation does not have a relation gap. It is an open problem whether there exists a finite presentation with a relation gap. For more details on the relation gap problem see \cite{Har1}.

Wall conjectured in \cite{Wal} that the relation lifting of generators of $N/N'$ was always possible. Dunwoody proved that this conjecture is false in \cite{Dun}. We will explain Dunwoody's example using the ideas of this section. Our argument is very similar to Harlander's in \cite[Section 1.5]{Har2}.

Let $\mathcal{P}=\langle x,y | x^5\rangle$ be a presentation of $G=\Z_5*\Z$. Then the class $\overline{x^5}$ generates $N/N'$ as $\Z[G]$-module. Here $N=N(x^5)$ is the normal closure of $x^5$ in $\F$. Note that $1-x+x^2\in \Z[G]$ is a unit, with inverse $x+x^2-x^4$. Therefore, $(1-x+x^2)y$ is also a unit and then $(1-x+x^2)y\overline{x^5}$ is also a generator of $N/N'$. This element is the class of $s=yx^5y^{-1}xyx^{-5}y^{-1}x^{-1}x^2yx^5y^{-1}x^{-2}\in N$ in $N/N'$. We claim that this class cannot be lifted. Otherwise there exists $s'\in N$ whose normal closure is $N(s')=N$ and $\overline{s'}=\overline{s}\in N/N'$. Since $N(s')=N(x^5)$, by Magnus' result $s'$ is a conjugate of $x^5$ or $x^{-5}$.

The Cayley graph $\Gamma (\Z_5*\Z, \{x,y\})$ is planar and can be embedded in the plane as shown in Figure \ref{fractal}.

\begin{figure}[h] 
\begin{center}
\includegraphics[scale=0.7]{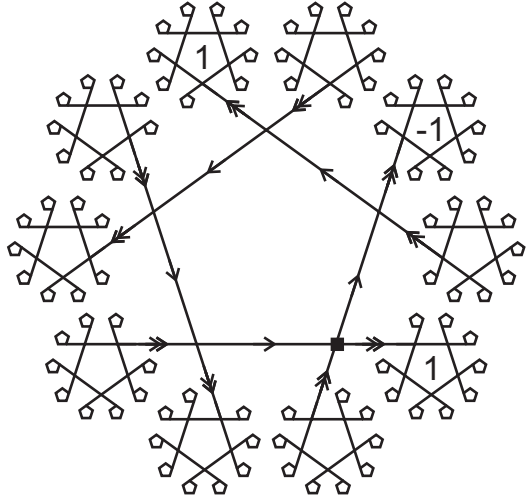}
\caption{The Cayley graph $\Gamma(G, \{x,y\})$.}\label{fractal}
\end{center}
\end{figure}

$\Gamma (G, \{x,y\})$ contains countably many pentagons $P_i$, $i\in \mathbb{N}$, and we can choose a point $p_i$ inside each $P_i$. Then $W':N\to \Z^{\mathbb{N}}$ maps an element $w\in N$ to the collection of winding numbers $\w(\widetilde{w},p_i)$, $i\in \mathbb{N}$. Note that $W'(x^5)$ has one coordinate equal to $1$ and all the others equal to $0$. Since $s'$ is conjugate to either $x^5$ or $x^{-5}$, then $W'(s')$ has one coordinate equal to $1$ or $-1$ and all the other coordinates are $0$. Since $\overline{s'}=\overline{s}$, $W'(s')=W'(s)$. But $W'(s)$ has three nonzero coordinates, two equal to $1$ and the other equal to $-1$ (see Figure \ref{fractal}). This is a contradiction. 

\bigskip

To finish this section we would like to make a comment connected to relation lifting and the results in Section \ref{sectionac} and \cite{Barac}. The following is inspired by a discussion with Jens Harlander.

Dunwoody example used that $\Z_5*\Z$ has non-trivial units. There is no example known of a torsion free group for which relation lifting fails. 

\begin{teo}
If every $2$-generator set of the relation module of $\langle x,y| [x,y]\rangle$ has a lifting, then there exist infinitely many presentations $\mathcal{P}_i=\langle x,y | r_i,r'_i\rangle$ which are simple homotopy equivalent but pairwise $Q$-inequivalent.
\end{teo}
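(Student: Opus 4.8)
The plan is to realize the presentations $\mathcal{P}_i$ by \emph{lifting} the pairs produced in Theorem~\ref{qineq}, to deduce their pairwise $Q$-inequivalence directly from that theorem, and to obtain simple homotopy equivalence of their standard complexes from the $GL_2(\ZZ)$-technique of \cite{Barac}. Recall from the proof of Theorem~\ref{qineq} the matrices $M_1,M_2,\dots\in GL_2(R)$, $R=\ZZ$, with $M_i$ not lying in the subgroup generated by $GE_2(R)$ and $M_1,\dots,M_{i-1}$, and the pairs $S_i=(\overline{w}_i,\overline{w'}_i)\in(\M')^2$ where $(P_{w_i},P_{w'_i})^{t}$ is the first column of $M_i$. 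Expanding $\det M_i$ along that column gives $a_iP_{w_i}-b_iP_{w'_i}=\det M_i\in R^{*}$ for suitable $a_i,b_i\in R$, so $P_{w_i}$ and $P_{w'_i}$ generate the unit ideal of $\ZZ$. Under the identification $\W$ of $\F'/\F''$ with $\ZZ$ furnished by Theorem~\ref{conway}, this says exactly that the classes $\overline{w}_i,\overline{w'}_i$ generate the relation module $N([x,y])/N([x,y])'=\F'/\F''$ of $\langle x,y\mid[x,y]\rangle$ as a $\Z[\Z\times\Z]$-module. By the lifting hypothesis there are $r_i,r'_i\in\F'$ with $r_i\equiv w_i$ and $r'_i\equiv w'_i\pmod{\F''}$ and with $N(\{r_i,r'_i\})=\F'=N([x,y])$; hence $\mathcal{P}_i:=\langle x,y\mid r_i,r'_i\rangle$ is a cocommutative presentation of $\F/\F'=\Z\times\Z$.

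For $Q$-inequivalence: each of the three $Q$-transformations of a relator tuple in $\F$ descends, under $\F\to\M$, to a $Q$-transformation of the corresponding tuple in $\M$, and the image in $\M$ of the tuple $(r_i,r'_i)$ is $(\overline{w}_i,\overline{w'}_i)=S_i$ because $r_i\equiv w_i$ and $r'_i\equiv w'_i\pmod{\F''}$. Therefore a $Q$-equivalence $\mathcal{P}_i\sim\mathcal{P}_j$ would produce a $Q$-equivalence $S_i\sim S_j$ in $\M$, which for $i\neq j$ contradicts Theorem~\ref{qineq}. So the infinitely many presentations $\mathcal{P}_1,\mathcal{P}_2,\dots$ are pairwise $Q$-inequivalent.

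For simple homotopy equivalence: by Theorem~\ref{conway}, $r_i\equiv w_i$ and $r'_i\equiv w'_i\pmod{\F''}$ give $P_{r_i}=P_{w_i}$ and $P_{r'_i}=P_{w'_i}$, so $\Lambda(\mathcal{P}_i)=(P_{r_i},P_{r'_i})$ is the first column of $M_i$, whence $\Lambda(\mathcal{P}_i)^{t}=(M_iM_j^{-1})\,\Lambda(\mathcal{P}_j)^{t}$ with $M_iM_j^{-1}\in GL_2(\ZZ)$. I would then invoke the argument of \cite{Barac} — the one used above to show that $\mathcal{P}$ and $\mathcal{Q}$ have simple homotopy equivalent standard complexes — that two cocommutative presentations of the same group whose $\Lambda$-vectors differ by a matrix in $GL_2(\ZZ)$ have homotopy equivalent standard complexes: $K_{\mathcal{P}_i}$ and $K_{\mathcal{P}_j}$ share their $1$-skeleton and their $\pi_1=\Z\times\Z$, and by Proposition~\ref{embedding} the image of $\pi_2$ inside $\Z[\Z\times\Z]^{2}$ under the Magnus embedding is the same submodule for both, so the invertible matrix $M_iM_j^{-1}$ (together with its inverse) provides explicit expressions of the relators of each presentation as products of conjugates of those of the other modulo $\F''$, promoting the resulting chain equivalence of universal covers to a cellular homotopy equivalence. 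Since $\mathrm{Wh}(\Z\times\Z)=0$, any homotopy equivalence between these finite complexes is simple, and hence the $K_{\mathcal{P}_i}$ are pairwise simple homotopy equivalent, which completes the proof.

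The main obstacle is precisely this last step. The $GL_2(\ZZ)$-relation between the $\Lambda$-vectors yields on its face only a chain-homotopy equivalence of the cellular chain complexes of the universal covers, and one must still realize it by an actual cellular map; this realization is the technical core of \cite{Barac} and is exactly what makes the "simple homotopy equivalent yet $Q$-inequivalent" phenomenon available in this setting. The earlier parts — the lifting and the transfer of $Q$-inequivalence from $\M$ to $\F$ — are routine once Theorem~\ref{qineq} and the hypothesis are in hand.
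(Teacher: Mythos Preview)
Your argument is correct and follows the same overall strategy as the paper: lift the pairs $S_i$ from Theorem~\ref{qineq} to relator pairs $(r_i,r'_i)$ normally generating $\F'$, transfer $Q$-inequivalence from $\M$ to $\F$ by noting that $Q$-transformations descend, and then invoke \cite{Barac} together with $\mathrm{Wh}(\Z\times\Z)=0$ for simple homotopy equivalence.

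The only noteworthy difference is in how you handle the homotopy equivalence step. You work to exploit the explicit $GL_2(\ZZ)$-relation $\Lambda(\mathcal{P}_i)^t=(M_iM_j^{-1})\Lambda(\mathcal{P}_j)^t$, and you correctly flag the passage from a chain equivalence to a genuine cellular homotopy equivalence as the delicate point. The paper sidesteps this entirely: since each $\mathcal{P}_i$ presents $\Z\times\Z$ with two generators and two relators, the standard complexes $K_{\mathcal{P}_i}$ all have $\pi_1\cong\Z\times\Z$ and Euler characteristic $1$, and the arguments in \cite{Barac} show that these two invariants already determine the homotopy type. So the $GL_2$ matrices, which were essential for producing the $Q$-inequivalence, play no role in the homotopy-equivalence half of the argument; you can drop your discussion of $\Lambda$-vectors, the Magnus embedding, and the realization problem there.
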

\begin{proof}
Consider the infinitely many pairs $S_i=(\overline{w}_i, \overline{w_i'}) \in \F/\F'' \times \F/\F''$ of Theorem \ref{qineq}. Since their normal closure is $\F'/\F''$, they generate $\F'/\F''$ as $\Z[G]$-module, where $G=\F/\F'=\Z \times \Z$. Suppose we can lift each $S_i$ to a pair $(r_i,r_i')\in N\times N$ with normal closure $N=\F'$, $\overline{r}_i=\overline{w}_i \in \F'/\F''$ and $\overline{r_i'}=\overline{w_i'} \in \F'/\F''$. Let $\mathcal{P}_i=\langle x,y | r_i,r_i'\rangle$. Then the standard complexes $K_{\mathcal{P}_i}$ all have fundamental group isomorphic to $\Z \times \Z$ and Euler characteristic $1$. By the arguments in \cite{Barac}, all these complexes have the same homotopy type. Since the Whitehead group of $\Z\times \Z$ is trivial, they are simple homotopy equivalent. By Theorem \ref{qineq}, the pairs $S_i=(\overline{w}_i, \overline{w_i'})=(\overline{r}_i, \overline{r_i'}) \in \F/\F''\times \F/\F''$ are pairwise $Q$-inequivalent, and then so are the pairs $(r_i,r_i')\in \F \times \F$. 
\end{proof}

\section{Bisections and tilings} \label{secbisec}

Although this work has followed a different path, the original motivation for defining the winding invariant was a problem about bisections of regions in the integer lattice and an idea by Michael Hitchman.

\subsection{The motivation}

In 2015, Michael Hitchman published a paper called ``The topology of tile invariants" \cite{Hit}. A \textit{tile} is a subspace of $\R^2$, homeomorphic to a disk, which is a union of finitely many squares in the integer lattice. A \textit{region} is formally defined in the same way as a tile, but it plays a different role and thus receives a different name. Given a set $T=\{t_1,t_2,\ldots, t_n\}$ of tiles and a region $\Gamma$, one wants to know if $\Gamma$ can be tiled by $T$, that is if it can be covered without overlapping by translates of tiles in $T$. For a tiling $\alpha$ of $\Gamma$ by $T$ and $1\le i\le n$, we define $b_i(\alpha)$ to be the number of translates of $t_i$ used in $\alpha$. A \textit{tile invariant} is a relation among the numbers $b_i(\alpha)$ which depends on $\Gamma$ but not on $\alpha$. For instance, when $T=\{t_1,t_2,t_3,t_4\}$ is the set of tiles of Figure \ref{tiles}, Conway and Lagarias proved \cite{CL} that for any region $\Gamma$, $b_3(\alpha)-b_2(\alpha)$ is independent of $\alpha$. Hitchman gave in \cite{Hit} an alternative proof of this result using that a particular presentation $\Q$ has an associated complex $K_{\Q}$ which is Cockroft. Recall that this means that the Hurewicz map $h:\pi_2(K_{\Q})\to H_2(K_{\Q})$ is trivial. Tile invariants have been further studied in \cite{MPak,MuP, Pak}. The connection between tilings and $\pi_2$ is given by the following idea by Hitchman. Given a tile $t$ and a point $p$ with integer coordinates in the boundary, we define the \textit{boundary word} of $t$ from $p$ to be the word $w_{t,p}\in \F$ read when we travel in clockwise direction along the boundary of $t$: when we move one step to the right we write $x$, when we move to the left we write $x^{-1}$, upwards $y$ and downwards $y^{-1}$. If $p=(0,0)\in \R^2$, the curve $\gamma_{w_t}$ that we used in the definition of the winding invariant is the curve which starts in $p$ and travels along the boundary of $t$. If we consider a different basepoint $p'$, the boundary word $w_{t,p'}$ differs from $w_{t,p}$ by a conjugation.

\begin{figure}[h] 
\begin{center}
\includegraphics[scale=0.5]{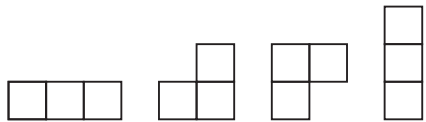}
\caption{The four tiles in $T$.}\label{tiles}
\end{center}
\end{figure}

If $T=\{t_1,t_2,\ldots, t_n\}$ is a set of tiles and $w_i$ is the boundary word of $t_i$ for certain basepoint, then the \textit{tile boundary group} $G_T$ is the group presented by $\PP_T=\langle x,y | w_1,w_2, \ldots, w_n\rangle$. The tile boundary group was originally defined by Conway and Lagarias in \cite{CL} (in fact they work with the derived group $G_T'$) and used by Thurston \cite{Thu} and Reid \cite{Rei}. Now, if $\Gamma$ is a region which can be tiled by $T$, then each tiling $\alpha$ gives a van Kampen diagram over $\PP_T$ with boundary label the boundary word of $\Gamma$. The dual of this diagram is a picture $\mathbb{B}_{\alpha}$ over $\PP_T$. If $\beta$ is a second tiling of $\Gamma$ by $T$, then $\mathbb{B}_{\beta}$ has the same boundary label as $\mathbb{B}_{\alpha}$ and $\mathbb{B}_{\alpha}\mathbb{B}_{\beta}^{-1}$ is equivalent to a spherical picture, which represents an element of $\pi_2(\PP_T)$. For basic notions on van Kampen diagrams and the theory of pictures see \cite{BP}.

In 2000 and 2001 the Argentinian Puzzle Championship counted with the following type of tiling problems created by Jaime Poniachik and Ivan Skvarca: given a region $\Gamma$, exhibit a bisection of $\Gamma$, that is a tiling by only two tiles, one a rotation or reflection of the other. See Figure \ref{anubisl} for an example.

\begin{figure}[h] 
\begin{center}
\includegraphics[scale=0.7]{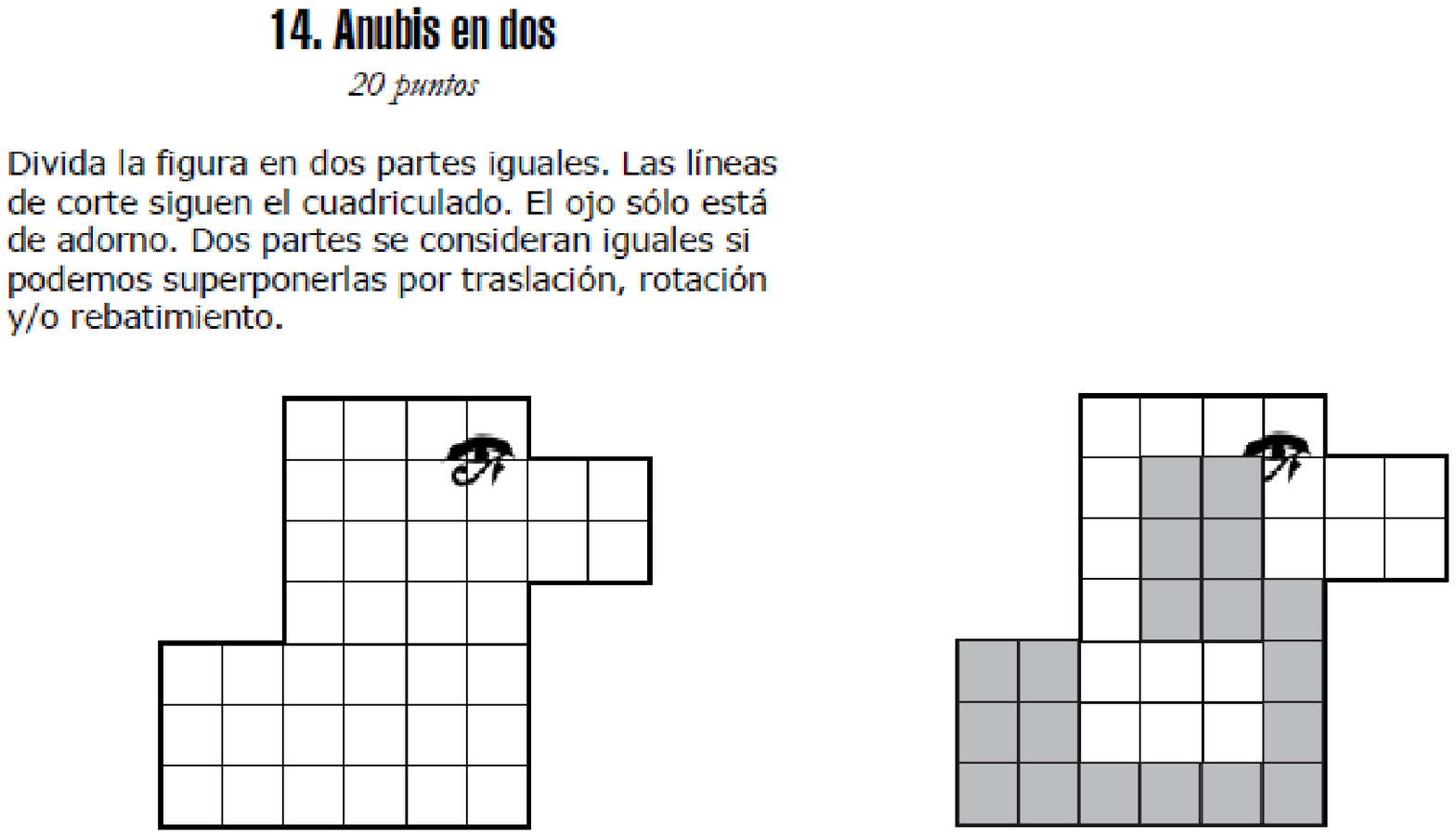}
\caption{A region and a bisection.}\label{anubisl}
\end{center}
\end{figure}

Our first plan was to try to adapt Hitchman's ideas to study this problem, though this plan quickly changed to the point of view of the present article. Of course, an exhaustive search solves the problem for any given region.

A particular case is when we only allow translations. We have then the following problem: Given a region $\Gamma$ in the integer lattice can it be tiled with exactly two tiles $t$, $t'$, one a translate of the other?

We can associate a Laurent polynomial $P_{\Gamma}\in \ZZ$ to each region $\Gamma$: it is the sum of the monomials $X^nY^m$ such that the square with lower left corner $(n,m)$ appears in $\Gamma$. If $t'$ is a translate of $t$, then $P_{t'}$ and $P_{t}$ differ in a multiplication by a monomial $X^kY^l$. Therefore, if $\Gamma$ can be tiled by two equal tiles, which differ only by a translation, then $P_{\Gamma}=(1+X^kY^l)P_t$.

On the other hand the boundary word of $\Gamma$ is a product of two squares! This is clear as the tiling describes a van Kampen diagram over $\PP_{\{t\}}$ with two regions. In other words, suppose $\Gamma$ is tiled by $t$ and a translate $t'$. The assumptions imply that $t \cap t'$ is homeomorphic to a segment. Let $p\in \Z^2$ be one of the endpoints of $t \cap t'$. When we travel through the boundary of $\Gamma$ in clockwise direction starting from $p$, we first travel through part of the boundary of one tile, say $t$, and then through part of the boundary of the other, $t'$. The boundary word $w_{\Gamma,p}$ is the concatenation of two words $w$ and $w'$, each corresponding to one of these parts. Let $u$ be the word associated to $t\cap t'$ starting at $p$. Then $w_{\Gamma}=ww'=wu^{-1}uw' \in \F$. But $wu^{-1}=w_{t,p}$ is the boundary word of $t$ starting at $p$ and $uw'=w_{t',p}$ is the boundary word of $t'$ starting at $p$. Since $t'$ is a translate of $t$, $w_{t',p}$ and $w_{t,p}$ differ in a conjugation. Thus $w_{\Gamma}=abab^{-1}=(ab)^2b^{-2}$ for some $a,b\in \F$.

In conclusion, when $\Gamma$ can be tiled with two equal tiles, only translation allowed, the boundary word of $\Gamma$ is a product of two squares and the polynomial $P_{\Gamma}$ is a multiple of $1+X^kY^l$ for some $k,l\in \Z$. The definition of the winding invariant was originally introduced to generalize this result. For a word $w\in \F'$ we wanted to associate a polynomial which was a multiple of $1+X^kY^l$ if $w$ is a product of two squares. 

What can be said in the general case, when $\Gamma$ is tiled by two equal tiles and rotation and symmetry are allowed? In this case $P_{\Gamma}=P(X,Y)+X^kY^lQ(X,Y)$ for some $k,l\in \Z$ and $Q(X,Y)$ one of the following eight: $P(X,Y), P(Y,X^{-1}), P(X^{-1},Y^{-1}), P(Y^{-1},X),$ $P(X^{-1},Y),$ $P(Y^{-1},X^{-1}), P(X,Y^{-1}), P(Y,X)$. And the boundary words of such regions are of the form $w_{\Gamma}=a\varphi(a)\varphi(b)b$ for some automorphism $\varphi$ of $\F$, if $t'$ is obtained from $t$ by an orientation preserving transformation, while $w_{\Gamma}=a\varphi(b)\varphi(a)^{-1}b^{-1}$ if the transformation is orientation reversing. They are a twisted product of squares and a twisted commutator, respectively. If not a topological or group theoretical answer, maybe another simple necessary condition for the existence of a bisection can be obtained from the polynomial condition.

\subsection{The equation $a^2b^2c^2d^2=1$ and normal roots}

The equation $a^2=1$ has only one solution in $\F$. The equation $a^2b^2=1$ has infinitely many solutions in $\F$: they are exactly the pairs $a=w,b=w^{-1}$ with $w\in \F$. The equation $a^2b^2c^2=1$ has its solutions described by the proof of the Vaught conjecture: they are of the form $a=w^n, b=w^m, c=w^k$ for $w\in \F$ and $n,m,k\in \Z$ such that $n+m+k=0$. The equation $a^2b^2c^2d^2$ has a set of solutions which is more difficult to describe. A solution must satisfy that $\langle a,b,c,d\rangle \leqslant \F$ is a free group of rank $1$ or $2$ \cite[Section 5]{Lyn4}. See also \cite[Section 4]{CE} to learn about some solutions of this equation (see \cite{BEF} and \cite{BEF2} for the solution of $[a,b]=[c,d]$ in free groups and free products). One way to produce solutions of this equation is by considering regions $\Gamma$ that admit two different bisections, each of them a tiling by a tile and a translate.

\begin{ej}

Figure \ref{cuatro} shows a region $\Gamma$ and two different bisections by translates of a tile.

\begin{figure}[h] 
\begin{center}
\includegraphics[scale=0.5]{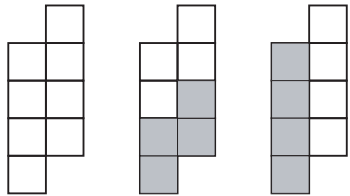}
\caption{A region with two bisections by a tile and a translate.}\label{cuatro}
\end{center}
\end{figure}

The boundary word $w=y^4xyxy^{-4}x^{-1}y^{-1}x^{-1}$ has then two descriptions as a product of squares. Let $u=y^2xyxy^{-2}x^{-1}y^{-1}x^{-1}$ be the the boundary word of the tile in the first bisection. Then $w=uvuv^{-1}=(uv)^2v^{-2}$ for $v=xyxy^2x^{-1}y^{-1}x^{-1}$. Let $u'=y^4xy^{-4}x^{-1}$ be the boundary word of the tile in the second bisection. Then $w=u'v'u'(v')^{-1}=(u'v')^2(v')^{-2}$ for $v'=xy$. Thus, we have the following solution for $a^2b^2c^2d^2=1$: $a=uv=y^2$, $b=v^{-1}=xyxy^{-2}x^{-1}y^{-1}x^{-1}$, $c=v'=xy$, $d=(u'v')^{-1}=y^3x^{-1}y^{-4}$. 

\end{ej} 
    
Of course, a similar idea can be used to produce solutions of the equation $a^nb^n=c^md^m$ for $n,m\ge 2$. In this case we should find a region which admits a tiling by $n$ translates of tile $t$ and another tiling by $m$ translated of a tile $t'$. In each case the translation vectors must be of the form $0, \vv, 2\vv, \ldots, (n-1)\vv$ and $0, \vv', 2\vv',\ldots, (m-1)\vv'$.

\bigskip

If a region $\Gamma$ can be tiled by a unique tile $t$, then the boundary word of $\Gamma$ lies in the normal closure of the boundary word of $t$. When a word $w\in \F$ is in the normal closure of a word $r\in \F$ we say that $r$ is a \textit{normal root} of $w$. If $r$ is a normal root of $w$ then any conjugate of $r$ or $r^{-1}$ is also a normal root. There are words which admit many normal roots, none of them a conjugate of another. In fact, if $w\in \F'$ then any primitive element of $\F$ is a normal root: if $\{u,v\}$ is a basis of $\F$, $w$ is a product of powers of $u$ and $v$, and since $w\in \F'$ the total exponent of $v$ in this expression is $0$, so $w\in N(u)$. The following remains open (\cite[Problem F27]{BMS}).

\begin{ques} (Magnus)
Can an element in $\mathbb{F}_n$ which is not in $\mathbb{F}_n'$ have infinitely many pairwise non-conjugate normal roots?
\end{ques}		

Magnus studied some cases in \cite{Mag3}, McCool \cite{Mccool} extended results of Steinberg \cite{Ste} about normal roots of $x^ny^m$, and also characterized the normal roots of $[x^n,y]$.

One way to produce elements in $\F'$ with many (finite) normal roots, also in $\F'$, is to find a region $\Gamma$ with many tilings by a unique tile.

\begin{ej}
Figure \ref{muchos} shows a region $\Gamma$ and five tilings by a unique tile. The two tilings in the left are real tiles. The partitions in the middle are not tilings strictly speaking, because the pieces are not homeomorphic to disks. But they are simply connected and the same ideas work. The partition in the right is done with a piece which is not even connected, but if we allow maximal $1$-cells, we can connect both squares to obtain a simply connected piece and everything works fine. There is one trivial tiling by a unique tile and another trivial tiling by $1\times 1$ squares which does not appear in the picture. An eighth potential tiling by another non-connected piece (of size $4$) does not produce a normal root.
\begin{figure}[h] 
\begin{center}
\includegraphics[scale=0.7]{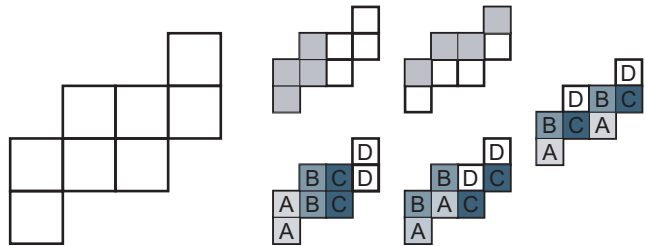}
\caption{Tilings by tiles and other pieces.}\label{muchos}
\end{center}
\end{figure}
The seven normal roots of $w_{\Gamma}=y^2xyx^2yxy^{-2}x^{-1}y^{-1}x^{-2}y^{-1}x^{-1}=[y^2,xyx^2yx]$ which are obtained from the boundary words of the tiles are: $w_{\Gamma}, [y^2,xyx], [y^2,x], [y,xyx^2yx], [y,xyx],$ $[yx^2y,x], [x,y]$.
\end{ej}

\subsection{Full invariant tile sets}

To finish we want to mention an application of Hitchman's ideas. We will say that a set $T=\{t_1,t_2,\ldots, t_n\}$ of tiles is \textit{full invariant} if for each tileable region $\Gamma$, the number $b_i(\alpha)$ of tiles of type $i$ in a tiling $\alpha$ by $T$ is uniquely determined by $\Gamma$. In other words, for any two tilings $\alpha$, $\beta$ of a region $\Gamma$, the numbers $b_i(\alpha)$ and $b_i(\beta)$ are equal for every $1\le i\le n$.

\begin{ej}

The set $T=\{t_1,t_2\}$ of tiles in Figure \ref{nofull} is not full invariant since there is a region which can be tiled by $2$ translates of $t_1$ and $4$ of $t_2$ and it can also be tiled by $8$ translates of $t_1$ and none of $t_2$.

\begin{figure}[h] 
\begin{center}
\includegraphics[scale=0.5]{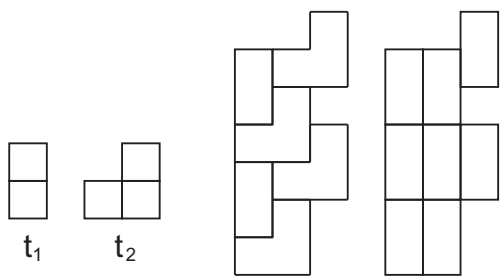}
\caption{Two tilings with different number of tiles of each type.}\label{nofull}
\end{center}
\end{figure}

\end{ej}

To be full invariant means that the \textit{tile counting group} is the whole group $\Z[T]$ (see \cite{Hit}).

The next proposition follows immediately from Hitchman's results.

\begin{prop}
Let $T=\{t_1,t_2,\ldots, t_n\}$ be a set of tiles. If $\PP_T$ is aspherical, $T$ is full invariant.
\end{prop}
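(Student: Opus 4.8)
The plan is to reduce the statement to a property of $\pi_2$ already exploited by Hitchman. Recall the setup: for a tile set $T=\{t_1,\ldots,t_n\}$ with boundary words $w_1,\ldots,w_n$ we form the presentation $\PP_T=\langle x,y\mid w_1,\ldots,w_n\rangle$, and a tiling $\alpha$ of a region $\Gamma$ gives a van Kampen diagram over $\PP_T$ whose dual is a picture $\mathbb{B}_\alpha$ with boundary label $w_\Gamma$; the number of discs of $\mathbb{B}_\alpha$ labelled $w_i$ is exactly $b_i(\alpha)$. Two tilings $\alpha,\beta$ of the same $\Gamma$ produce pictures with the same boundary label, so $\mathbb{B}_\alpha \mathbb{B}_\beta^{-1}$ (glue $\mathbb{B}_\alpha$ to the reverse of $\mathbb{B}_\beta$ along their common boundary) is equivalent to a spherical picture $\mathbb{S}$, hence represents an element of $\pi_2(\PP_T)=\pi_2(K_{\PP_T})$.

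The key step is the observation that for \emph{any} spherical picture $\mathbb{S}$ over $\PP_T$, the algebraic count of $w_i$-discs (discs with label $w_i$ counted $+1$, with label $w_i^{-1}$ counted $-1$) equals the $i$-th coordinate of the image of $[\mathbb{S}]\in\pi_2$ under the Hurewicz map $h:\pi_2(K_{\PP_T})\to H_2(K_{\PP_T})$, read off in the cellular basis of $2$-cells of $K_{\PP_T}$. Concretely, $H_2(K_{\PP_T})$ sits inside the free abelian group $\Z^n$ on the $2$-cells, and the class of a spherical picture maps to the vector of algebraic disc-counts. If $\PP_T$ is aspherical then $\pi_2(K_{\PP_T})=0$, so $h$ is trivially zero; in particular the vector of algebraic disc-counts of $\mathbb{S}=\mathbb{B}_\alpha\mathbb{B}_\beta^{-1}$ is $0$. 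Since all discs in $\mathbb{B}_\alpha$ and $\mathbb{B}_\beta$ coming from actual tiles carry positive orientation (a tile is traversed once clockwise around its boundary), the algebraic count of $w_i$-discs in $\mathbb{B}_\alpha\mathbb{B}_\beta^{-1}$ is precisely $b_i(\alpha)-b_i(\beta)$. Hence $b_i(\alpha)=b_i(\beta)$ for every $i$, which is exactly the statement that $T$ is full invariant.

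So the proof is essentially one line once the machinery is in place: \emph{If $\PP_T$ is aspherical, then $\pi_2(K_{\PP_T})=0$, so every spherical picture over $\PP_T$ has all algebraic disc-counts equal to zero; applying this to $\mathbb{B}_\alpha\mathbb{B}_\beta^{-1}$ for two tilings $\alpha,\beta$ of a region $\Gamma$ gives $b_i(\alpha)-b_i(\beta)=0$ for all $i$, i.e. $T$ is full invariant.} The main thing to be careful about — and the only real obstacle — is the bookkeeping that identifies the algebraic $w_i$-disc count of a spherical picture with a well-defined homological (hence, under asphericity, vanishing) quantity, and the verification that bridges/cancelling discs introduced when passing from $\mathbb{B}_\alpha\mathbb{B}_\beta^{-1}$ to an honest spherical picture do not change this count. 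All of this is standard in the theory of pictures (for which we refer to \cite{BP}) and is exactly the argument Hitchman uses when he only assumes $K_{\PP_T}$ Cockroft — asphericity is a strictly stronger hypothesis than triviality of the Hurewicz map, so the conclusion follows a fortiori. We also note in passing that, by the exact sequence (\ref{mika}), asphericity makes the relation module free with basis $\{w_iN(R)'\}$, which is the algebraic shadow of the same vanishing statement.
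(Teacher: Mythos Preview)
Your proof is correct. It follows the picture-theoretic route via the Hurewicz map, which is exactly the first argument the paper cites (it says the result ``follows directly from \cite[Theorem 3.1]{Hit} since in this case $h(J)=0$''), so you have essentially reconstructed Hitchman's argument under the stronger hypothesis of asphericity.

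The paper then offers an \emph{alternative} proof that bypasses pictures entirely, using instead the freeness of the relation module (Proposition~\ref{asf}): two tilings give two expressions $w_\Gamma=\prod_j u_j w_{i_j} u_j^{-1}=\prod_j v_j w_{k_j} v_j^{-1}$ in $N(R)$; passing to the relation module $N(R)/N(R)'\cong\Z[G_T]^n$ gives $\sum_j \overline{u}_j e_{i_j}=\sum_j \overline{v}_j e_{k_j}$, and applying the augmentation $\Z[G_T]^n\to\Z^n$ coordinatewise yields $b_i(\alpha)=b_i(\beta)$. You allude to this at the end (``the algebraic shadow of the same vanishing statement''), and indeed the two arguments are equivalent: your Hurewicz computation in $H_2(K_{\PP_T})$ is what one gets after applying $\Z\otimes_{\Z[G_T]}{-}$ to the relation-module identity. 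The algebraic version has the minor advantage of avoiding the bookkeeping about bridges and cancelling pairs that you (rightly) flag as needing care.
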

\begin{proof}
This follows directly from \cite[Theorem 3.1]{Hit} since in this case $h(J)=0$. Alternatively we can avoid the argument with pictures and use the fact that the relation module of $\PP_T$ is a free $\Z[G_T]$-module generated by the classes of the relators $w_i=w_{t_i}$. Two tilings $\alpha$ and $\beta$ of a region $\Gamma$ determine van Kampen diagrams with the same boundary word $w_{\Gamma}$, so we have $$w_{\Gamma}=\prod\limits_{j\in J} u_jw_{i_j}u_j^{-1}=\prod\limits_{j\in J'} v_jw_{k_j}v_j^{-1}$$ for certain $u_j,v_j\in \F$. The number $b_i(\alpha)$ is the number of $j\in J$ such that $i_j=i$ and $b_i(\beta)$ is the number of $j\in J'$ such that $k_j=i$. In the relation module $\Z[G]^n$ we have $\sum \overline{u}_j w_{i_j}=\sum \overline{v}_j w_{k_j}$. Applying the augmentation $\epsilon^n :\Z[G]^n\to \Z^n$ we obtain the required identities $b_i(\alpha)=b_i(\beta)$. 
\end{proof}


Thus, usual strategies for proving asphericity can be used to deduce properties of tile invariants.

\begin{ej}
The set $T=\{t_1,t_2\}$ of tiles in Figure \ref{sifull} is full invariant.
\begin{figure}[h] 
\begin{center}
\includegraphics[scale=0.5]{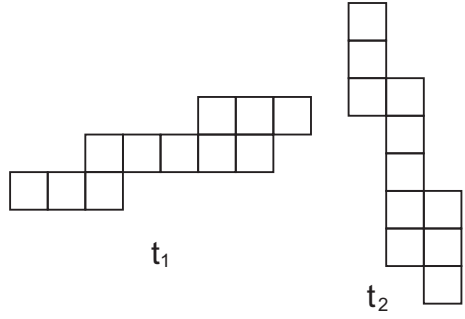}
\caption{A full invariant tile set.}\label{sifull}
\end{center}
\end{figure}
It is easy to check that $\PP_T=\langle x,y | w_1,w_2\rangle$ is aspherical by noting it satisfies the small cancellation condition $C(6)$: no relator is a product of fewer than $6$ pieces. The relator $w_1=yx^2yx^3yx^3y^{-1}x^{-1}y^{-1}x^{-4}y^{-1}x^{-3}$ contains six letters $y$ but no piece of $w_1$ contains two $y$. The same happens with $w_2=xy^{-2}xy^{-3}xy^{-3}x^{-1}yx^{-1}y^4x^{-1}y^3$ and the letter $x$.
\end{ej}

\section*{A. Aspherical presentations and freedom of relation modules} \label{appendix}

If a presentation is aspherical its relation module is free with a basis given by the classes of the relators. This result is well-known for the researchers in the area but we could not find a good reference for a proof.
Let $\PP=\langle X | R \rangle$ be a presentation of a group $G$ and let $K$ be its standard complex. So $K$ has a unique $0$-cell, one $1$-cell for each generator and one $2$-cell for each relator. The attaching map of the $2$-cell corresponding to $r\in R$ follows the $1$-cells associated to the letters in $r$ with orientations given by the signs. The fundamental group of $K$ is $\pi_1(K)=G$. Let $p:\widetilde{K} \to K$ be the universal cover of $K$. The restriction $q:(\widetilde{K})^1 \to K^1$ to $1$-skeletons is also a covering. The fundamental group of $K^1$ is $\pi_1(K^1)=F(X)$ and the covering $q$ corresponds to the subgroup $N(R) \leqslant F(X)$. Indeed, if $w\in N(R)$, the corresponding loop in $K^1$ becomes trivial in $\pi_1(K)=F(X)/N(R)$ so it lifts to (the $1$-skeleton of) $\widetilde{K}$. This proves that $N(R)\leqslant q_*(\pi_1((\widetilde{K})^1))$. On the other hand, if $i:K^1\to K$ denotes the inclusion, $i_*q_*:\pi_1((\widetilde{K})^1)\to \pi_1(K)$ factorizes through $\pi_1(\widetilde{K})=0$. Thus, $q_*(\pi_1((\widetilde{K})^1))\subseteq \textrm{ker}(i_*)=N(R)$. Note that $(\widetilde{K})^1$ is the Cayley graph $\Gamma (G,X)$. The relation module of $\PP$ is $N(R)/N(R)'=H_1((\widetilde{K})^1)$. This coincides with the kernel of $d_1:C_1((\widetilde{K})^1)\to C_0((\widetilde{K})^1)$ in the cellular chain complex. On the other hand, since $\widetilde{K}$ is simply-connected, we have an exact sequence of $\Z[G]$-modules

$$0\to \textrm{ker}(d_2) \to C_2(\widetilde{K}) \overset{d_2}{\to} C_1(\widetilde{K}) \overset{d_1}{\to} C_0(\widetilde{K}) \to \Z \to 0. $$

\bigskip

Of course, $\textrm{ker}(d_2)=H_2(\widetilde{K})=\pi_2(\widetilde{K})=\pi_2(K)$. So, if $\PP$ is aspherical ($\pi_2(K)=0$), $d_2:C_2(\widetilde{K})\to \textrm{ker}(d_1)=N(R)/N(R)'$ is an isomorphism and then the relation module is a free $\Z[G]$-module. For a relator $r\in R$, the corresponding $2$-cell of $K$ is lifted to one $2$-cell $g\widetilde{r}$ of $\widetilde{K}$ for each element $g\in G$. $C_2(\widetilde{K})$ is the free $\Z[G]$-module with basis $\{\widetilde{r}\}_{r\in R}$. The homomorphism $d_2$ maps $\widetilde{r}$ to $rN(R)'\in N(R)/N(R)'$. Thus, if $\PP$ is aspherical, $N(R)/N(R)'$ is the free $\Z[G]$ module with basis $B=\{rN(R)'\}_{r\in R}$. Conversely, if the relation module is free with basis $B$, then $d_2$ is injective and $\PP$ is aspherical.

\section*{List of symbols}




\begin{tabular}{l l}
\noindent $\F$ & the free group of rank $2$ with basis $\{x,y\}$ \\
$[u,v]=uvu^{-1}v^{-1}$ \\
$G'=[G,G]$ & the derived/commutator subgroup \\
$G''=[G',G']$ \\
$\Gamma(G,X)$ & the Cayley graph of $G$ with generating set $X$ \\
$\ZZ$ & the ring of Laurent polynomials in two variables \\
$\exp (x,w), \exp(y,w)$ & the total exponents of $x$ and $y$ in $w\in \F$ \\
$\gamma_w$ & the curve associated to $w\in \F$, p. \pageref{definicion} \\
$\w(\gamma,p)$ & the winding number of $\gamma$ around $p\in \R^2$ \\
$P_w=\W(w)$ & the winding invariant of $w\in \F'$, p. \pageref{definicion} \\
$N(r), N(R)$ & the normal closure of the element $r$ or the subset $R$ \\
$[u_1,u_2,\ldots, u_n]$ & $=[u_1, [u_2,\ldots, u_n]]$ \\
$F(X)$ & the free group with basis $X$ \\
$\mathbb{F}_n=F(x_1,x_2,\ldots, x_n)$ & the free group of rank $n$ \\
$c_n$ & $=[x_1,x_2,\ldots, x_{n+1}]\in \mathbb{F}_{n+1}$ \\
$e_{n}=[y,e_{n-1}]$ & the Engel words ($e_1=[y,x]$) \\
$\M=\F/\F''$ & the free metabelian group of rank $2$ with generators $\{x,y\}$ \\ 
$\gamma_{n}(G)=[G,\gamma_{n-1}(G)]$ & the groups in the lower central series of $G$ ($\gamma_1(G)=G$) \\
$\tau_k(G)$ & $=\{g \in G | g^n\in \gamma_k(G) $ for some $n\ge 1 \}$ \\
$u(G)$ & the verbal subgroup associated to the word $u\in \mathbb{F}_n$, p. \pageref{pagverbal} \\
$E_2(R), D_2(R), GE_2(R)$ & p. \pageref{pagmatrices} \\
$B(n,m)$ & the Burnside group, p. \pageref{pagburnside} \\
$I(W(S))$ & the ideal of $\ZZ$ generated by $\{P_s\}_{s\in S}$\\
$\vv_a$ & $=(\exp (x,a),\exp (y,a))\in \Z^2$, for $a\in \F$ \\
$\iota, \iota_{a,b}, \iota_{p,a,b}$ & p. \pageref{pagiota} \\
$L=L_{a,b}$ & the subgroup of $\Z^2$ generated by $\vv_a$ and $\vv_b$ \\
$P\{(n,m)\}$ & the coefficient of $X^nY^m$ in $P\in \ZZ$ \\
$\g _G(g)$ & the commutator length of $g\in G'$, p. \pageref{genero} \\ 
$\Sq(G)$ & the subgroup of $G$ generated by the squares \\
$\Sq(g)$ &  the square length of $g\in \Sq(G)$, p. \pageref{pagsq} \\ 
$m_w$ & $=X^{\exp (x,w)}Y^{\exp(y,w)}$ for $w\in \F$\\
\end{tabular}

%

\end{document}